\theoremstyle{plain}
\newtheorem{thm}{Theorem}[section]
\newtheorem{lemm}[thm]{Lemma}
\theoremstyle{definition}
\newtheorem{df}[thm]{Definition}
\newtheorem{rem}[thm]{Remark}
\renewcommand{\div}{\operatorname{div}}
\newcommand{\dB}{\dot{B}}
\newcommand{\supp}{\operatorname{supp}}
\begin{document}
\title[Low Mach number limit for the compressible flow]
{Low Mach number limit of the global solution to the compressible Navier--Stokes system for large data in the critical Besov space}
\author{Mikihiro Fujii}
\address{Institute of Mathematics for Industry Kyushu University, Fukuoka 819--0275, Japan}
\email{fujii.mikihiro.096@m.kyushu-u.ac.jp}
\keywords{low Mach number limit, compressible Navier--Stokes system, global well-posedness,  large data, critical Besov space}
\subjclass[2020]{35Q35, 35B25, 76N10}
\begin{abstract}
In this paper, we consider the compressible Navier--Stokes system around the constant equilibrium states and prove the existence of a unique global solution for {\it arbitrarily large initial data} in the scaling critical Besov space provided that the Mach number is sufficiently small and the incompressible part of the initial velocity generates the global solution of the incompressible Navier--Stokes equation.
Moreover, we consider the low Mach number limit and show that the compressible solution converges to the solution of the incompressible Navier--Stokes equation in some space time norms.
\end{abstract}
\maketitle

\section{Introduction}\label{sec:intro}
We consider the initial value problem for the compressible Navier--Stokes system,
describing the motion of the barotropic fluid in the whole space $\mathbb{R}^d$ with $d \geqslant 2$:
\begin{align}\label{eq:comp-1}
    \begin{dcases}
    \partial_t \rho^{\varepsilon} + \div (\rho^{\varepsilon} u^{\varepsilon}) = 0, & \qquad t>0,x \in\mathbb{R}^d,\\
    \rho^{\varepsilon}(\partial_t u^{\varepsilon} +(u^{\varepsilon} \cdot \nabla)u^{\varepsilon} ) + \frac{1}{\varepsilon^2}\nabla P(\rho^{\varepsilon}) = \mathcal{L}u^{\varepsilon}, & \qquad t>0,x \in\mathbb{R}^d,\\
    \rho^{\varepsilon}(0,x)=\rho^{\varepsilon}_0(x),\quad u^{\varepsilon}(0,x)=u_0(x), & \qquad x \in \mathbb{R}^d,
    \end{dcases}
\end{align}
where two unknown functions $\rho^{\varepsilon}=\rho^{\varepsilon}(t,x):(0,\infty) \times \mathbb{R}^d \to (0,\infty)$ and $u^{\varepsilon}=u^{\varepsilon}(t,x):(0,\infty) \times \mathbb{R}^d \to \mathbb{R}^d$ represent the density of the fluid and the velocity field of the fluid.
The given real valued smooth function $P:(0,\infty) \to \mathbb{R}$ represents the pressure of the fluid
and satisfies $c_{\infty}:=P'(\rho_{\infty})>0$ with some positive constant $\rho_{\infty}$.
For the viscosity term $\mathcal{L}u$ in the second equation of \eqref{eq:comp-1},  $\mathcal{L}=\mu \Delta + (\mu+\lambda)\nabla \div$ is called the Lam\'e operator, where the constant coefficients $\mu$ and $\lambda$ satisfy the ellipticity condition $\mu>0$ and $\nu:=2\mu+\lambda>0$.
The constant $\varepsilon>0$ is called the Mach number, which plays a significant role in this article.
We assume that the given initial density $\rho^{\varepsilon}_0=\rho^{\varepsilon}_0(x):\mathbb{R}^d \to (0,\infty)$ satisfies $\rho^{\varepsilon}_0(x)=\rho_{\infty} + \varepsilon \rho_{\infty} a_0(x)$ with some bounded function $a_0=a_0(x):\mathbb{R}^d \to \mathbb{R}$ which is independent of $\varepsilon$, and the given initial velocity $u_0=u_0(x):\mathbb{R}^d \to \mathbb{R}^d$ is also independent of $\varepsilon$.
If $(\rho^{\varepsilon},u^{\varepsilon})$ is a solution to the system \eqref{eq:comp-1},
then the scaled functions 
\begin{align}\label{scaling}
    \rho^{\varepsilon}_{\alpha}(t,x) = \rho^{\varepsilon}( \alpha^2 t , \alpha x),\qquad
    u^{\varepsilon}_{\alpha}(t,x) = \alpha u^{\varepsilon}( \alpha^2 t , \alpha x)
\end{align}
also solve to \eqref{eq:comp-1} with $P$ replaced by $\alpha^2P$ for all $\alpha > 0$. 
Then, the critical Besov spaces corresponding to the invariant scaling \eqref{scaling} are given by 
$\dB_{p,\sigma}^{\frac{d}{p}}(\mathbb{R}^d) \times \dB_{p,\sigma}^{\frac{d}{p}-1}(\mathbb{R}^d)^d$.

The aim of this paper is to construct the global solution around the constant equilibrium state 
for given {\it large} initial perturbation in the critical Besov space $(\dB_{2,1}^{\frac{d}{2}-1}(\mathbb{R}^d) \cap \dB_{2,1}^{\frac{d}{2}}(\mathbb{R}^d)) \times  \dB_{2,1}^{\frac{d}{2}-1}(\mathbb{R}^d)^d$ 
if 
the Mach number $\varepsilon$ is sufficiently small and 
there exists a solution $w$ of the incompressible Navier--Stokes equation \eqref{main:eq-incomp} below with the initial data $\mathbb{P}u_0$.
Here, 
\begin{align}
    \mathbb{P}:=I+\nabla\div(-\Delta)^{-1}
\end{align}
denotes the Helmholtz projection on to the divergence free vector fields.
We note that our result implies that the global solution of the weakly compressible Navier--Stokes equation uniquely exists for arbitrarily {\it large} data in the critical regularity setting for the two dimensional case.
(See Remark \ref{rem:thm} (1) below.)
Moreover, we also consider the low Mach number singular limit $\varepsilon \downarrow 0$ and show that the solution $(\rho^{\varepsilon},u^{\varepsilon})$ converges to the incompressible flow $(\rho_{\infty},w)$ in  some space time norms.

Before we state our main result precisely, we first reformulate the equation \eqref{eq:comp-1} and recall previous studies related to our topic.
By virtue of the scaling transforms
\begin{gather}
	\rho^{\varepsilon} (t, x) \mapsto \frac1{\rho_\infty} \rho^{\varepsilon} \bigg(\frac{\nu t}{\rho_\infty c_{\infty} } , \frac{\nu x}{\rho_\infty \sqrt{c_{\infty}}}\bigg), \quad
	u^{\varepsilon} (t, x) \mapsto \frac{1}{\sqrt{c_{\infty}}} u^{\varepsilon} \bigg(\frac{\nu t}{\rho_\infty c_{\infty} } , \frac{\nu x}{\rho_\infty \sqrt{c_{\infty}}}\bigg),\\
    \mu \mapsto \frac{\mu}{\nu},\quad
    \lambda \mapsto \frac{\lambda}{\nu},\quad 
    P(\rho^{\varepsilon}) \mapsto \frac{1}{\rho_{\infty}c_{\infty}}P(\rho_{\infty}\rho^{\varepsilon}),
\end{gather}
we may assume that 
\begin{align}
    \rho_{\infty} = 1,\quad
    c_{\infty} = P'(1) = 1,\quad 
    \nu = 1.
\end{align}
Let $a^{\varepsilon}$ satisfy $\rho^{\varepsilon}=1+\varepsilon a^{\varepsilon}$ that is $a^{\varepsilon}=(\rho^{\varepsilon}-1)/\varepsilon$.
Then, $(a^{\varepsilon},u^{\varepsilon})$ should solve
\begin{align}\label{eq:re_comp-1}
    \begin{dcases}
    \partial_t a^{\varepsilon} + \frac{1}{\varepsilon} \div u^{\varepsilon} = -\div (a^{\varepsilon}u^{\varepsilon}), & \qquad t>0,x \in\mathbb{R}^d,\\
    \partial_t u^{\varepsilon} - \mathcal{L}u^{\varepsilon}  + \frac{1}{\varepsilon}\nabla a^{\varepsilon} = -\mathcal{N}^{\varepsilon}[a^{\varepsilon},u^{\varepsilon}], & \qquad t>0,x \in\mathbb{R}^d,\\
    a^{\varepsilon}(0,x)=a_0(x),\quad u^{\varepsilon}(0,x)=u_0(x), & \qquad x \in \mathbb{R}^d.
    \end{dcases}
\end{align}
Here, $\mathcal{N}^{\varepsilon}[a^{\varepsilon},u^{\varepsilon}]$ represents the nonlinear terms defined as
\begin{align}
    \mathcal{N}^{\varepsilon}[a,u]
    :=
    (u \cdot \nabla)u
    +
    \mathcal{J}(\varepsilon a) \mathcal{L}u
    +
    \frac{1}{\varepsilon}
    \mathcal{K}(\varepsilon a) \nabla a,
\end{align}
where
\begin{align}
    \mathcal{J}(a):= \frac{a}{1+a},\qquad
    \mathcal{K}(a):= \frac{P'(1+a)}{1+a}-1.
\end{align}
It was Danchin \cites{Dan-01-L,Dan-00-G} who first proved the local and global well-posedness of the compressible Navier--Stokes system \eqref{eq:re_comp-1} with $\varepsilon=1$ in the critical regularity setting.
These results are improved by many researchers.
For the local well-posedness, Danchin \cites{Dan-01-L, Dan-05-U, Dan-14} and Chen--Miao--Zhang \cite{Che-Mia-Zha-10} showed it for large initial data $(a_0,u_0)$ in the critical Besov spaces $\dB_{p,1}^{\frac{d}{p}}(\mathbb{R}^d) \times  \dB_{p,1}^{\frac{d}{p}-1}(\mathbb{R}^d)^d$ with $1 \leqslant p < 2d$.
Here, Chen--Miao--Zhang \cite{Che-Mia-Zha-15} and Iwabuchi--Ogawa \cite{Iw-Og-22} proved the range $1 \leqslant p < 2d$ of the integrable exponent is optimal by virtue of the ill-posedness argument.
In the case of the global well-posedness, Danchin \cite{Dan-00-G} proved it for small data in  $(\dB_{2,1}^{\frac{d}{2}-1}(\mathbb{R}^d) \cap \dB_{2,1}^{\frac{d}{2}}(\mathbb{R}^d)) \times  \dB_{2,1}^{\frac{d}{2}-1}(\mathbb{R}^d)^d$.
Charve--Danchin \cite{Cha-Dan-10} and Haspot \cite{Has-11-2} considered the $L^p$ setting and
showed the global well-posedness of \eqref{eq:re_comp-1} with $\varepsilon=1$ for initial data satisfying
\begin{align}
    \| ( a_0,  \mathbb{Q}u_0 ) \|_{\dB_{2,1}^{\frac{d}{2}-1}}^{\ell;\beta_0}
    +
    \| a_0 \|_{\dB_{p,1}^{\frac{d}{p}}}^{h;\beta_0}
    +
    \| \mathbb{Q}u_0 \|_{\dB_{p,1}^{\frac{d}{p}-1}}^{h;\beta_0}
    +
    \| \mathbb{P}u_0 \|_{\dB_{p,1}^{\frac{d}{p}-1}}
    \ll
    1
\end{align}
for some $\beta_0 >0$,
where $p$ satisfies $2 \leqslant p \leqslant \min \{ 4, 2d/(d-2)\}$ for $d \geqslant 3$ and $2\leqslant p<4$ for $d=2$. Here,
\begin{align}
    \mathbb{Q} 
    := I - \mathbb{P}
    = -\nabla \div (- \Delta)^{-1}
\end{align}
is the projection onto the curl free vector fields.
We give the definition of the truncated Besov norms $\| \cdot \|_{\dB_{p,\sigma}^{s}}^{\ell;\beta_0}$ and $\| \cdot \|_{\dB_{p,\sigma}^{s}}^{h;\beta_0}$ in Section \ref{sec:pre} below.
Next, we review the results on the incompressible limit of the solution to \eqref{eq:comp-1} in the critical regularity setting.
Danchin \cite{Dan-02-R} and Bahouri--Chemin--Danchin \cite{Bah-Che-Dan-11} proved that the global solution converges to the incompressible flow in the low Mach number limit $\varepsilon \downarrow 0$ in the critical setting $(\dB_{2,1}^{\frac{d}{2}-1}(\mathbb{R}^d) \cap \dB_{2,1}^{\frac{d}{2}}(\mathbb{R}^d)) \times  \dB_{2,1}^{\frac{d}{2}-1}(\mathbb{R}^d)^d$ and the subcritical setting.
The corresponding result on the periodic boundary condition case is shown in \cite{Dan-02-T}.
Danchin--He \cite{Dan-He-16} extended the results of \cites{Dan-02-R,Bah-Che-Dan-11} to the critical $L^p$ framework.
See \cites{Dan-Muc-17,Dan-Muc-19,Che-Zha-19} for the incompressible limits of $\lambda \to \infty$ type.
See \cites{Dan-05-L,Dan-Ducom-16,Des-Gre-Lio-Mas-99,Des-Gre-99,Fei-09,Has-20,Ha-Lo-98,Hoff,Kre-Lor-Nau-91} also for other results on the incompressible limits. 
Here, we focus on the results of \cites{Dan-02-R,Bah-Che-Dan-11}.
In \cite{Bah-Che-Dan-11}*{Theorem 10.29} and \cite{Dan-02-R}, they proved that for the initial data $(a_0,u_0)$ satisfying 
\begin{align}
    \left\| \left(a_0,u_0\right) \right\|_{\dB_{2,1}^{\frac{d}{2}-1}}^{\ell;\frac{\beta_0}{\varepsilon}}
    +
    \varepsilon\| a_0 \|_{\dB_{2,1}^{\frac{d}{2}}}^{h,\frac{\beta_0}{\varepsilon}}
    \ll 
    1,
\end{align}
the global solution $(a,u)$ converges to the incompressible flow of \eqref{main:incomp} below as $\varepsilon \downarrow 0$
in the following sense:
\begin{align}
    \left\| \left(a^{\varepsilon}, \mathbb{Q}u^{\varepsilon} \right) \right\|_{L^2(0,\infty;\dB_{p,1}^{\frac{d}{p}-\frac{1}{2}})}
    +
    \left\| \mathbb{P}u^{\varepsilon}-w \right\|_{L^{\infty}(0,\infty;\dB_{p,1}^{\frac{d}{p}-\frac{3}{2}}) \cap L^1(0,\infty;\dB_{p,1}^{\frac{d}{p}+\frac{1}{2}})}
    \leqslant
    C
    \varepsilon^{\frac{1}{2}}
\end{align}
for  $d \geqslant 4$ and $2(d-1)/(d-3) \leqslant p \leqslant \infty$,
\begin{align}
    \left\| \left(a^{\varepsilon}, \mathbb{Q}u^{\varepsilon} \right) \right\|_{L^{\frac{2p}{p-2}}(0,\infty;\dB_{p,1}^{\frac{2}{p}-\frac{1}{2}})}
    +
    \left\| \mathbb{P}u^{\varepsilon}-w \right\|_{L^{\infty}(0,\infty;\dB_{p,1}^{\frac{4}{p}-\frac{3}{2}}) \cap L^1(0,\infty;\dB_{p,1}^{\frac{4}{p}+\frac{1}{2}})}
    \leqslant
    C
    \varepsilon^{\frac{1}{2}-\frac{1}{p}}
\end{align}
for $d=3$ and $2 \leqslant p < \infty$  
and
\begin{align}
    \left\| \left(a^{\varepsilon}, \mathbb{Q}u^{\varepsilon} \right) \right\|_{L^{\frac{4p}{p-2}}(0,\infty;\dB_{p,1}^{\frac{3}{2p}-\frac{3}{4}})}
    +
    \left\| \mathbb{P}u^{\varepsilon} - w \right\|_{L^{\infty}(0,\infty;\dB_{p,1}^{\frac{5}{2p}-\frac{5}{4}}) \cap L^1(0,\infty;\dB_{p,1}^{\frac{5}{2p}+\frac{3}{4}})}
    \leqslant
    C
    \varepsilon^{\frac{1}{4}-\frac{1}{2p}}
\end{align}
for $d=2$ and $2 \leqslant p \leqslant 6$.
Moreover, it is also considered the scaling subcritical case and shown in \cite{Bah-Che-Dan-11}*{Theorem 10.31} and \cite{Dan-02-R} that for {\it large} initial data 
$a_0 \in \dB_{2,1}^{\frac{d}{2}-1}(\mathbb{R}^d) \cap \dB_{2,1}^{\frac{d}{2}+\alpha}(\mathbb{R}^d)$
and 
$u_0 \in \dB_{2,1}^{\frac{d}{2}-1}(\mathbb{R}^d)^d \cap \dB_{2,1}^{\frac{d}{2}-1+\alpha}(\mathbb{R}^d)^d$
with some $0<\alpha \leqslant 1/6$,
the unique global large solution $(\rho^{\varepsilon},u^{\varepsilon})$ exists provided that the Mach number is sufficiently small $0 < \varepsilon \ll 1$ and the incompressible part of the initial data admits the global incompressible flow $w$ of \eqref{main:eq-incomp} below. 
Moreover, the large solution converges to the incompressible flow $w$ in the following sense:
\begin{align}
    \| \mathbb{P}u^{\varepsilon} - w \|_{
    L^{\infty}(0,\infty ;\dB_{2,1}^{\frac{d}{2}-1} \cap \dB_{2,1}^{\frac{d}{2}-1+\alpha})
    \cap 
    L^1(0,\infty ;\dB_{2,1}^{\frac{d}{2}+1} \cap \dB_{2,1}^{\frac{d}{2}+1+\alpha})}
    \leqslant
    C 
    \varepsilon^{\frac{2\alpha}{2 + d + 2\alpha}}
\end{align}
and 
\begin{align}
    \left\| \left(a^{\varepsilon}, \mathbb{Q}u^{\varepsilon} \right) \right\|_{L^{p}(0,\infty;\dB_{\infty,1}^{\alpha-1+\frac{1}{p}})}
    \leqslant
    C \varepsilon^{\frac{1}{p}},
\end{align}
where $p=2$ if $d\geqslant 4$, $2<p<\infty$ if $d=3$ and $p=4$ if $d=2$.
Note that the proof of the result for large data fails in the critical case $\alpha=0$.

The aim of the present paper is to improve the above result on the incompressible limit for large data and show the existence of the global {\it large} solution $(a^{\varepsilon},u^{\varepsilon})$ of \eqref{eq:re_comp-1} in the scaling {\it critical} regularity setting $(\dB_{2,1}^{\frac{d}{2}}(\mathbb{R}^d) \cap \dB_{2,1}^{\frac{d}{2}-1}(\mathbb{R}^d)) \times \dB_{2,1}^{\frac{d}{2}-1}(\mathbb{R}^d)^d$ 
if the Mach number is sufficiently small and the incompressible part $\mathbb{P}u_0$ of the initial velocity generates the global solution $w$ of the incompressible Navier--Stokes equation in the weaker regularity class $\dB_{q,1}^{\frac{d}{q}-1}(\mathbb{R}^d)^d$ for some $q>2$.
Moreover, we also show that the large compressible solution $(a^{\varepsilon},u^{\varepsilon})$ converges to the incompressible solution $(0,w)$ in the sense of the low regularity setting with the convergence rate:
\begin{align}
    &\left\| \left(a^{\varepsilon},\mathbb{Q}u^{\varepsilon}\right) \right\|_{{L^r}(0,\infty;\dB_{q,1}^{\frac{d}{q}-1+\frac{1}{r}})}
    +
    \| \mathbb{P}u^{\varepsilon}-w \|_{{L^{\infty}}(0,\infty;\dB_{q,1}^{\frac{d}{q}-1-\frac{1}{r}}) \cap L^1(
    0,\infty;\dB_{q,1}^{\frac{d}{q}+1-\frac{1}{r}})}
    \leqslant
    C 
    \varepsilon^{\frac{1}{r}}
\end{align}
and the scaling critical regularity setting:
\begin{align}
    \lim_{\varepsilon \to +0}
    \left(
    \| (a^{\varepsilon},\mathbb{Q}u^{\varepsilon}) \|_{L^r(0,\infty;\dB_{q,1}^{\frac{d}{q}-1+\frac{2}{r}})}
    +
    \| \mathbb{P}u^{\varepsilon}-w \|_{L^{\infty}(0,\infty;\dB_{q,1}^{\frac{d}{q}-1}) \cap L^1(
    0,\infty;\dB_{q,1}^{\frac{d}{q}+1})}
    \right)
    =0
\end{align}
for some $2 < q,r < \infty$.


Now, we state our main theorem.  
Note that we provide the definitions of the function spaces, appearing in the following theorem, in Section \ref{sec:pre}.
The main result of this paper reads as follows.
\begin{thm}\label{thm:2}
Let $q$ and $r$ satisfy
\begin{align}\label{main:q}
    2 < q < \min \left\{ 4, \frac{2d}{d-2}\right\}
\end{align}
and
\begin{gather}\label{main:r}
    0 < \frac{1}{r} \leqslant{} \min \left\{ \frac{1}{2} - \frac{d}{2} \left( \frac{1}{2} - \frac{1}{q} \right), \frac{d-1}{2}\left( \frac{1}{2} - \frac{1}{q} \right) \right\} ,\quad
    \frac{1}{r} < \frac{2d}{q} - 1 .
\end{gather}
Let the intial data $(a_0,u_0)$ satisfy the following two assumptions:
\begin{itemize}
    \item [{\bf (A1)}] $(a_0,u_0) \in (\dB_{2,1}^{\frac{d}{2}-1}(\mathbb{R}^d) \cap \dB_{2,1}^{\frac{d}{2}}(\mathbb{R}^d)) \times  \dB_{2,1}^{\frac{d}{2}-1}(\mathbb{R}^d)^d$.
    \item [{\bf (A2)}] There exists a global solution 
    \begin{align}\label{main:incomp-reg}
        w \in C([0,\infty);\dB_{q,1}^{\frac{d}{q}-1}(\mathbb{R}^d))^d \cap L^1(0,\infty;\dB_{q,1}^{\frac{d}{q}+1}(\mathbb{R}^d))^d
    \end{align}
    to the following incompressible Navier--Stokes equation:
    \begin{align}\label{main:eq-incomp}
    \begin{dcases}
    \partial_t w - \mu \Delta w + \mathbb{P}(w \cdot \nabla)w = 0, & \qquad t > 0, x \in\mathbb{R}^d,\\
    \div w = 0, & \qquad t \geqslant 0, x \in\mathbb{R}^d,\\
    w(0,x)= \mathbb{P}u_0(x), & \qquad x \in \mathbb{R}^d.
    \end{dcases}
\end{align}
\end{itemize}
Then, there exists
a positive constant
$\varepsilon_0=\varepsilon_0(d,q,r,\mu,P,a_0,u_0)$
such that 
for any
$0 < \varepsilon \leqslant \varepsilon_0$,
\eqref{eq:re_comp-1} possesses a unique global solution $(a^{\varepsilon},u^{\varepsilon})$ in the class
\begin{align}\label{main:class}
    \begin{split}
    &a^{\varepsilon} \in C([0,\infty); \dB_{2,1}^{\frac{d}{2}-1}(\mathbb{R}^d) \cap \dB_{2,1}^{\frac{d}{2}}(\mathbb{R}^d)),\\
    &u^{\varepsilon} \in C([0,\infty); \dB_{2,1}^{\frac{d}{2}-1}(\mathbb{R}^d))^d \cap L^1(0,\infty; \dB_{2,1}^{\frac{d}{2}+1}(\mathbb{R}^d))^d
    \end{split}
\end{align}
with $\rho^{\varepsilon}(t,x) = 1 + \varepsilon a^{\varepsilon}(t,x) > 0$.
Moreover, there exists a positive constant $C=C(d,q,r,\mu,P,a_0,u_0)$ such that
\begin{align}
    &\| (a^{\varepsilon},\mathbb{Q}u^{\varepsilon}) \|_{L^r(0,\infty;\dB_{q,1}^{\frac{d}{q}-1+\frac{1}{r}})}
    +
    \| \mathbb{P}u^{\varepsilon}-w \|_{L^{\infty}(0,\infty;\dB_{q,1}^{\frac{d}{q}-1-\frac{1}{r}}) \cap L^1(
    0,\infty;\dB_{q,1}^{\frac{d}{q}+1-\frac{1}{r}})}
    \leqslant
    C 
    \varepsilon^{\frac{1}{r}},\quad \label{main:incomp}
\end{align}
for all $0<\varepsilon \leqslant \varepsilon_0$
and
it also holds
\begin{align}
    \lim_{\varepsilon \to +0}
    \left(
    \| (a^{\varepsilon},\mathbb{Q}u^{\varepsilon}) \|_{L^r(0,\infty;\dB_{q,1}^{\frac{d}{q}-1+\frac{2}{r}})}
    +
    \| \mathbb{P}u^{\varepsilon}-w \|_{L^{\infty}(0,\infty;\dB_{q,1}^{\frac{d}{q}-1}) \cap L^1(
    0,\infty;\dB_{q,1}^{\frac{d}{q}+1})}
    \right)
    =0.\qquad\label{main:incomp-critical}
\end{align}
\end{thm}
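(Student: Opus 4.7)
The plan is to prove Theorem~\ref{thm:2} by a bootstrap/continuation argument applied to the decomposition $u^{\varepsilon} = \mathbb{Q}u^{\varepsilon} + w + \delta^{\varepsilon}$ with $\delta^{\varepsilon} := \mathbb{P}u^{\varepsilon} - w$. Local-in-time existence of a unique solution in the class \eqref{main:class} is standard from Danchin \cite{Dan-00-G, Dan-01-L}, and the critical Besov blow-up criterion reduces the proof of globality to establishing an $\varepsilon$-uniform a priori bound on the Danchin energy norm
\begin{align}
\mathcal{E}(T) := \|a^{\varepsilon}\|_{\widetilde{L}^{\infty}(0,T;\dB^{\frac{d}{2}-1}_{2,1}\cap \dB^{\frac{d}{2}}_{2,1})} + \|u^{\varepsilon}\|_{\widetilde{L}^{\infty}(0,T;\dB^{\frac{d}{2}-1}_{2,1})} + \|u^{\varepsilon}\|_{L^{1}(0,T;\dB^{\frac{d}{2}+1}_{2,1})}.
\end{align}
The aim of the bootstrap is to propagate simultaneously three bounds: (i) $\mathcal{E}(T) \leqslant C_0$ with $C_0$ depending on the data but not on $\varepsilon$ or $T$; (ii) $\|(a^{\varepsilon}, \mathbb{Q}u^{\varepsilon})\|_{L^r(0,T;\dB^{d/q-1+1/r}_{q,1})} \leqslant C\,\varepsilon^{1/r}$; and (iii) $\|\delta^{\varepsilon}\|_{L^{\infty}(0,T;\dB^{d/q-1-1/r}_{q,1})\cap L^1(0,T;\dB^{d/q+1-1/r}_{q,1})} \leqslant C\,\varepsilon^{1/r}$.

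Bound (ii) comes from treating the linearized acoustic part of the system as a damped wave equation with propagation speed $\varepsilon^{-1}$: its symbol symmetrizes and the standard $L^q$-dispersive estimates on $\mathbb{R}^d$ produce $L^r_t$-Strichartz bounds with a gain $\varepsilon^{1/r}$; the conditions \eqref{main:q}--\eqref{main:r} on $(q,r)$ are exactly the admissibility range. Bound (iii) is obtained by parabolic maximal regularity applied to the heat equation for $\delta^{\varepsilon}$, whose source is $\mathbb{P}[(w\cdot\nabla)\delta^{\varepsilon} + (\delta^{\varepsilon}\cdot\nabla)w + (\delta^{\varepsilon}\cdot\nabla)\delta^{\varepsilon}] + \mathcal{R}^{\varepsilon}$ with $\mathcal{R}^{\varepsilon}$ collecting all terms carrying at least one factor $\mathbb{Q}u^{\varepsilon}$ or $a^{\varepsilon}$; the terms linear in $\delta^{\varepsilon}$ are absorbed via a Grönwall loop against $\|w\|_{L^1(0,\infty;\dB^{d/q+1}_{q,1})}$, which is finite by (A2), while $\mathcal{R}^{\varepsilon}$ is $O(\varepsilon^{1/r})$ thanks to (ii). Bound (i) is then derived from the usual Danchin $L^2$-based low/high-frequency energy estimate for \eqref{eq:re_comp-1}: the only terms not directly controlled by the data are nonlinear interactions involving $u^{\varepsilon}$, which we rewrite via $u^{\varepsilon} = w + \delta^{\varepsilon} + \mathbb{Q}u^{\varepsilon}$ and close using (ii), (iii), and the a priori control on $w$ inherited from (A2). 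For $\varepsilon \leqslant \varepsilon_0$ sufficiently small, this closes the three-way loop and produces the global solution together with \eqref{main:incomp}.

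The critical-scale limit \eqref{main:incomp-critical} is obtained by a density argument: approximate $(a_0,u_0)$ in the topology of (A1) by data with one extra degree of regularity, run the above with a half-index shift to extract rate $\varepsilon^{2/r}$ for the smoothed data, and use a stability estimate to make the contribution of the approximation error uniformly small in $\varepsilon$. The main obstacle I anticipate lies in bound (i): since (A2) only provides $w$ in an $L^q$-based Besov space with $q > 2$, the standard $L^2$-paraproduct and commutator calculus does not directly accommodate the interactions between $w$ and the perturbation inside the critical $L^2$-based energy estimate, and one must systematically re-express $w = \mathbb{P}u^{\varepsilon} - \delta^{\varepsilon}$ and trade the missing $L^2$-regularity of $w$ against the $\varepsilon^{1/r}$-smallness of $\delta^{\varepsilon}$ and of the compressible part, without ever losing a derivative at the critical scaling.
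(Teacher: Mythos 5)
Your overall decomposition into three propagated quantities (energy norm, acoustic Strichartz norm with rate $\varepsilon^{1/r}$, incompressible perturbation $\delta^{\varepsilon}=\mathbb{P}u^{\varepsilon}-w$) matches the architecture of the paper, and the identification that the acoustic part gains $\varepsilon^{1/r}$ from dispersive estimates is exactly right. However, there are two genuine gaps in the execution.

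First, you propose to close the bootstrap on all of $[0,\infty)$ at once. This cannot work: the energy estimate for \eqref{eq:re_comp-1} has the form $\|(a^{\varepsilon},u^{\varepsilon})\|_{X^{\varepsilon}(0,T)} \leqslant C\|(a_0,u_0)\|_{D^{\varepsilon}} + C\,A\cdot\|(a^{\varepsilon},u^{\varepsilon})\|_{X^{\varepsilon}(0,T)}$ where $A$ is a space-time norm of the solution that must be made small to absorb the last term. Your plan to make $A$ small via the $\varepsilon^{1/r}$-smallness of the compressible part and of $\delta^{\varepsilon}$ fails because $A$ also contains $w$ itself (through $u^{\varepsilon}=w+\delta^{\varepsilon}+\mathbb{Q}u^{\varepsilon}$), and $\|w\|_{L^{r}(0,\infty;\dB^{d/q-1+2/r}_{q,1})\cap L^1(0,\infty;\dB^{d/q+1}_{q,1})}$ is a fixed, possibly large number independent of $\varepsilon$. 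The paper resolves this with Lemma \ref{lemm:time}: one slices $[0,\infty)$ into finitely many intervals on each of which the $L^r\cap L^1$-norm of $w$ is at most a small $\delta_0$, runs the continuation argument on each slice (accepting that the constants grow geometrically like $(4C_0)^n$), and feeds the endpoint data of slice $n$ into slice $n+1$. Without time-slicing, the quadratic-type term cannot be absorbed when $w$ is large, so the bootstrap does not close.

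Second, for bound (iii) you propose to absorb $\mathbb{P}(w\cdot\nabla)\delta^{\varepsilon}$ via Gr\"onwall against $\|w\|_{L^1(0,\infty;\dB^{d/q+1}_{q,1})}$. At the critical scaling this fails: the product estimate
\begin{align}
\|(w\cdot\nabla)\delta^{\varepsilon}\|_{\dB^{s}_{q,1}} \lesssim \|w\|_{\dB^{d/q+1}_{q,1}}\|\delta^{\varepsilon}\|_{\dB^{s}_{q,1}}
\end{align}
with $s=d/q-1-1/r$ is not true (the paraproduct $\dot{T}_{w}\nabla\delta^{\varepsilon}$ forces $\|w\|_{\dB^{d/q}_{q,1}}\|\delta^{\varepsilon}\|_{\dB^{s+1}_{q,1}}$, which puts $\delta^{\varepsilon}$ one derivative too high, and you cannot move that derivative onto $w$ without the transport structure). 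This is precisely why the paper proves Lemma \ref{lemm:lin-P-2}: one applies $\dot{\Delta}_j\mathbb{P}$ to the equation, moves $(w\cdot\nabla)\dot{\Delta}_j\mathbb{P}\delta^{\varepsilon}$ to the left and uses $\div w=0$ to kill its contribution to the $L^q$-energy, and is left with only the commutator $[w\cdot\nabla,\mathbb{P}\dot{\Delta}_j]\delta^{\varepsilon}$ as a source, which by Lemma \ref{lemm:nonlin-3}(3) is indeed controlled by $\|w\|_{L^{r'}(\dB^{d/q-1+2/r'}_{q,1})}\|\delta^{\varepsilon}\|_{L^{r}(\dB^{d/q-1+1/r}_{q,1})}$. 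You correctly anticipated that there is a difficulty in this region, but located it in the $L^2/L^q$ mismatch rather than in the impossibility of the direct absorption at the critical index; the commutator/energy trick is the missing idea. Finally, your density argument for \eqref{main:incomp-critical} would require a subcritical rate $\varepsilon^{2/r}$ plus a stability estimate, neither of which is established by the proposed scheme; the paper instead refines the slicing parameter $\delta$ and the cutoff $\alpha$ simultaneously, which stays entirely within the critical framework.
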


\begin{rem}\label{rem:thm}
We mention some remarks on Theorem \ref{thm:2}.
\begin{itemize}
    \item [(1)]
    In the two dimensional case $d=2$,
    the assumption {\bf (A2)} is removable.
    Indeed, it is shown in \cite{Dan-Muc-17} that
    for any large initial data $\mathbb{P}u_0 \in \dB_{2,1}^{0}(\mathbb{R}^2)^2$, 
    there exists a unique solution $w$ of \eqref{main:eq-incomp} with $d=2$ in the class 
    \begin{align}
        w \in C([0,\infty);\dB_{2,1}^0(\mathbb{R}^2))^2 \cap L^1(0,\infty;\dB_{2,1}^2(\mathbb{R}^2))^2,
    \end{align}
    which is a stronger regularity class than \eqref{main:incomp-reg}.
    \item [(2)]
    In the case of $d \geqslant 3$, it is known that there exists a unique global solution $w$ of \eqref{main:incomp} in the class \eqref{main:incomp-reg} if the initial data satisfies 
    \begin{align}\label{rem:small}
        \| \mathbb{P}u_0 \|_{\dB_{q,1}^{\frac{d}{q}-1}} \leqslant \eta_q
    \end{align}
    for some positive constant $\eta_q$. 
    See \cite{Bah-Che-Dan-11}*{Theorem 5.10} for the proof.
    Therefore, if the incompressible part $\mathbb{P}u_0$ of the initial velocity is sufficiently small in $\dB_{q,1}^{\frac{d}{q}-1}(\mathbb{R}^d)$, then the assumption {\bf (A2)} is satisfied.
    Here, we note that by the continuous embedding $\dB_{2,1}^{\frac{d}{2}-1}(\mathbb{R}^d) \hookrightarrow \dB_{q,1}^{\frac{d}{q}-1}(\mathbb{R}^d)$, the smallness condition \eqref{rem:small} may hold even if $\mathbb{P}u_0$ is not necessarily small in $\dB_{2,1}^{\frac{d}{2}-1}(\mathbb{R}^d)$.
    Indeed, if $d=3$, $3<q<4$ and the incompressible part of the initial data is given by 
    \begin{align}
        \mathbb{P}u_0(x):=\sin(Mx_3)\big(-\partial_{x_2}\phi(x), \partial_{x_1}\phi(x),0\big),
    \end{align}
    where $M>0$ and a real valued Schwartz function $\phi \in \mathscr{S}(\mathbb{R}^3) \setminus \{ 0 \}$ whose Fourier transform has a compact support,
    then we see that
    \begin{align}
        \| \mathbb{P} u_0 \|_{\dB_{2,1}^{\frac{1}{2}}} \sim M^{\frac{1}{2}} \gg 1,
        \qquad
        \| \mathbb{P} u_0 \|_{\dB_{q,1}^{\frac{3}{q}-1}} \sim M^{\frac{3}{q}-1} \ll 1
    \end{align}
    as $M \gg 1$.
    \item [(3)]
    In contrast to the previous results of the low Mach number limit stated in \cites{Dan-02-R,Bah-Che-Dan-11,Dan-He-16}, our method to obtain \eqref{main:incomp} does not require different arguments for different spatial dimensions.
    The key ingredient of the proof is to control $\mathbb{P}(w \cdot \nabla)(u^{\varepsilon} - w)$ in the equation of $\mathbb{P}u^{\varepsilon} - w$ by using the energy method and a commutator estimate. 
    See Lemmas \ref{lemm:lin-P-2}, \ref{lemm:nonlin-3} and \ref{lemm:lim} for the detail.
    \item [(4)]
    The norm of the second incompressible limit \eqref{main:incomp-critical} is the scaling critical,
    whereas
    the previous results \cites{Dan-02-R,Bah-Che-Dan-11,Dan-He-16} for the low Mach number limit for the critical regularity solutions dealt with only low regularity setting, as is mentioned above.
    Therefore, the limit result \eqref{main:incomp-critical} is one of the novelties of this paper.
\end{itemize}
\end{rem}


Let us mention the idea of the proof of Theorem \ref{thm:2}.
Let $q$ and $r$ satisfy \eqref{main:q} and \eqref{main:r}.
Let $\delta_0>0$ be sufficiently small constant and let $w$ be the global solution to \eqref{main:eq-incomp}.
Then, for this $\delta_0$, we decompose the global time interval $[0,\infty)$ into finitely many parts $[T_0,T_1]$, $[T_1,T_2]$,...,$[T_{N_0-1},\infty)$ with 
\begin{align}\label{w-small}
    \| w \|_{L^r(T_{n-1},T_n; \dB_{q,1}^{\frac{d}{q}-1+\frac{2}{r}}) \cap L^1(T_{n-1},T_n; \dB_{q,1}^{\frac{d}{q}+1})} \leqslant \delta_0,\qquad
    n=1,2,...,N_0,
\end{align}
where $T_0=0$ and $T_{N_0}=\infty$.
We first construct the solution on the time interval $[0,T_1]$.
To this end, we consider the energy norm 
\begin{align}
    \| (a^{\varepsilon},u^{\varepsilon})\|_{X^{\varepsilon}(0,T)}
    :={}
    &
    \varepsilon 
    \| a^{\varepsilon} \|_{{L^{\infty}}(0,T;\dB_{2,1}^{\frac{d}{2}})}^{h;\frac{\beta_0}{\varepsilon}}
    +
    \frac{1}{\varepsilon}
    \| a^{\varepsilon} \|_{L^1(0,T;\dB_{2,1}^{\frac{d}{2}})}^{h;\frac{\beta_0}{\varepsilon}}\\
    &
    +
    \| a^{\varepsilon} \|_{
    {L^{\infty}}(0,T;\dB_{2,1}^{\frac{d}{2}-1})
    \cap 
    L^1(0,T;\dB_{2,1}^{\frac{d}{2}+1})}^{\ell;\frac{\beta_0}{\varepsilon}}\\
    &
    +
    \| u^{\varepsilon} \|_{
    {L^{\infty}}(0,T;\dB_{2,1}^{\frac{d}{2}-1})
    \cap
    L^1(0,T;\dB_{2,1}^{\frac{d}{2}+1})}.
\end{align}
In order to close the a priori estimate of the $X^{\varepsilon}(0,T_1)$-norm of the solution for large initial data, we introduce an auxiliary quantity $A_{q,r}^{\varepsilon,\alpha}[a,u](0,T_1)$ (defined in Section \ref{sec:a-propri}).
This quantity is bounded by $\delta_0$ for sufficiently small $\varepsilon$ by virtue of the choice of suitable $\alpha$, the energy estimates, the Strichartz estimates of the acoustic waves and \eqref{w-small}.
Then, the a priori estimate
\begin{align}
    \| (a^{\varepsilon},u^{\varepsilon}) \|_{X^{\varepsilon}(0,T)} 
    \leqslant{}& 
    C \| (a_0,u_0) \|_{(\dB_{2,1}^{\frac{d}{2}-1} \cap \dB_{2,1}^{\frac{d}{2}}) \times \dB_{2,1}^{\frac{d}{2}-1}}\\
    &
    + 
    C
    A_{q,r}^{\varepsilon,\alpha}[a^{\varepsilon},u^{\varepsilon}](0,T)\| (a^{\varepsilon},u^{\varepsilon})\|_{X^{\varepsilon}(0,T)}
\end{align}
and the continuation argument enable us to extend the local solution constructed in Lemma \ref{lemm:LWP} below to the time interval $[0,T_1]$.
Then, repeating $N_0$-th times of the similar procedures, we may extend the solution to the time intervals $[T_1,T_2]$,...,$[T_{N_0-1},\infty)$ and obtain the global large solution.
Next, we consider the low Mach number limit.
For the compressible part, following the ideas of \cites{Dan-02-R,Bah-Che-Dan-11,Dan-He-16}, we have 
\begin{align}
    \| (a^{\varepsilon},\mathbb{Q}u^{\varepsilon}) \|_{{L^r}(0,\infty;\dB_{q,1}^{\frac{d}{q}-1+\frac{1}{r}})}
    =O(\varepsilon^{\frac{1}{r}})
\end{align}
as $\varepsilon \to +0$ by the energy estimates for the high frequency part and the Strichartz estimates for the low frequency part.
The difficulty to obtain \eqref{main:incomp} arises from the incompressible part.
Let $v^{\varepsilon}:=u^{\varepsilon}-w$.
Then, the incompressible perturbation $\mathbb{P}v^{\varepsilon}=\mathbb{P}u^{\varepsilon}-w$ satisfies
\begin{align}\label{intro:eq:v}
    \partial_t \mathbb{P}v^{\varepsilon} 
    - 
    \mu \Delta \mathbb{P}v^{\varepsilon} 
    =
    -
    \mathbb{P}(w \cdot \nabla) v^{\varepsilon}
    -
    \mathbb{P}(v^{\varepsilon} \cdot \nabla) u^{\varepsilon}
    -
    \mathbb{P}(\mathcal{J}(\varepsilon a^{\varepsilon}) \mathcal{L}u^{\varepsilon}).
\end{align}
The term $\mathbb{P}(w \cdot \nabla) v^{\varepsilon}$ is the most harmful to obtain the appropriate estimate for $v^{\varepsilon}$.
To overcome this, we apply the energy method to \eqref{intro:eq:v} and use the commutator estimate for the harm term. (See Lemmas \ref{lemm:lin-P-2} and \ref{lemm:lim} below.)
This idea enables us to obtain the incompressible limit \eqref{main:incomp} by the same argument for $d=2$, $d=3$ and $d \geqslant 4$, differently from other previous results which needed specific arguments depending on $d$.
The other result \eqref{main:incomp-critical} of the incompressible limit in Theorem \ref{thm:2} is obtained by the a priori estimate for the solution of \eqref{intro:eq:v} and the estimates for $A_{q,r}^{\varepsilon,\alpha}[a^{\varepsilon},\mathbb{Q}u^{\varepsilon}](0,\infty)$.


This paper is organized as follows.
In Section \ref{sec:pre}, we prepare some notation and basic lemmas used in our analysis.
We mention linear and nonlinear estimates in Section \ref{sec:lin} and \ref{sec:nonlin}, respectively.
In Section \ref{sec:a-propri}, we establish some global a priori estimates of the solution.
Finally, we prove Theorem \ref{thm:2} in Section \ref{sec:pf}.


Throughout this paper, we denote by $C$ the constant, which may differ in each line. In particular, $C=C(a_1,...,a_n)$ means that $C$ depends only on $a_1,...,a_n$. We define a commutator for two operators $A$ and $B$ as $[A,B]=AB-BA$.
For two Banach spaces $X$ and $Y$ with $X \cap Y \neq \varnothing$, 
we write $\| \cdot \|_{X \cap Y} := \| \cdot \|_X + \| \cdot \|_Y$.
For a vector field $f=(f_1,...,f_d) \in X^d$, we set $\| f \|_{X}:= \| f_1\|_X + ... + \| f_d \|_X$.

\section{Preliminaries}\label{sec:pre}
In this section, we mention the definitions of function spaces and their basic properties related to our analysis. 

Let $\mathscr{S}(\mathbb{R}^d)$ be the set of all Schwartz functions on $\mathbb{R}^d$ and let $\mathscr{S}'(\mathbb{R}^d)$ be the set of all tempered distributions on $\mathbb{R}^d$.
We call the sequence $\{\phi_j\}_{j \in \mathbb{Z}} \subset \mathscr{S}(\mathbb{R}^d)$ as the Littlewood-Paley decomposition if it satisfies
\begin{align}
    0 \leqslant \widehat{\phi_0} \leqslant 1,
    \qquad
    \supp \widehat{\phi_0} \subset \{ \xi \in \mathbb{R}^d\ ;\ 2^{-1} \leqslant |\xi| \leqslant 2 \},
    \qquad
    \phi_j(x)=2^{dj}\phi_0(2^j x)
\end{align}
and 
\begin{align}
    \sum_{j \in \mathbb{Z}} \widehat{\phi_j}(\xi) = 1, \qquad \xi \in \mathbb{R}^d \setminus \{0 \}.
\end{align}
For each $j \in \mathbb{Z}$ and $f \in \mathscr{S}'(\mathbb{R}^d)$, we write 
\begin{align}
    \dot{\Delta}_j f := \phi_j * f,\qquad
    \dot{S}_j f := \sum_{j' \leqslant j} \dot{\Delta}_{j'}f.
\end{align}
Now we recall the definition of Besov spaces.
For $1\leqslant p,\sigma \leqslant \infty$ and $s \in \mathbb{R}$, we define the homogeneous Besov space $\dB_{p,\sigma}^s(\mathbb{R}^d)$ on $\mathbb{R}^d$ as 
\begin{align}
    \dB_{p,\sigma}^s(\mathbb{R}^d) 
    := &
    \left\{
    f \in \mathscr{S}'(\mathbb{R}^d)/\mathscr{P}(\mathbb{R}^d)
    \ ;\ 
    \| f \|_{\dB_{p,\sigma}^{s}}
    < 
    \infty
    \right\},\\
    \| f \|_{\dB_{p,\sigma}^s}
    := & 
    \left\| \{2^{sj} \| \dot{\Delta}_j f\|_{L^p} \}_{j \in \mathbb{Z}} \right\|_{\ell^{\sigma}},
\end{align}
where $\mathscr{P}(\mathbb{R}^d)$ denotes the set of all polynomials on $\mathbb{R}^d$.
It is well-known that if $s<d/p$ or $(s,\sigma)=(d/p,1)$, then $\dB_{p,\sigma}^s(\mathbb{R}^d)$ is identified  as 
\begin{align}
    \dB_{p,\sigma}^s(\mathbb{R}^d) 
    \sim 
    \left\{ 
    f \in \mathscr{S}'(\mathbb{R}^d) 
    \ ; \ 
    f = \sum_{j \in \mathbb{Z}}\dot{\Delta}_j f
    {\rm\ in\ }
    \mathscr{S}'(\mathbb{R}^d)
    {\rm\ and\ }
    \| f \|_{\dB_{p,\sigma}^s}
    < 
    \infty
    \right\}.
\end{align}
See \cite{Sa-18}*{Theorem 2.31} for the proof.
For $1 \leqslant p,q,r \leqslant \infty$, $s \in \mathbb{R}$ and an interval $I \subset \mathbb{R}$,
we also define the Chemin--Lerner space $\widetilde{L^r}(I;\dB_{p,\sigma}^s(\mathbb{R}^d))$ as
\begin{align}
    \widetilde{L^r}(I;\dB_{p,\sigma}^s(\mathbb{R}^d))
    := &
    \left\{
    F : I \to \mathscr{S}'(\mathbb{R}^d) / \mathscr{P}(\mathbb{R}^d)
    \ ;\ 
    \| F \|_{\widetilde{L^r}(I;\dB_{p,\sigma}^s)}
    <\infty
    \right\},\\
    \| F \|_{\widetilde{L^r}(I;\dB_{p,\sigma}^s)}
    := &
    \left\| \{2^{sj} \| \dot{\Delta}_j F\|_{L^r(I;L^p)} \}_{j \in \mathbb{Z}} \right\|_{\ell^{\sigma}}.
\end{align}
This space was first introduced by \cite{Che-Ler-95}.
We note that the Minkowski inequality yields
\begin{align}
    \widetilde{L^r}(I;\dB_{p,\sigma}^s(\mathbb{R}^d))
    \hookrightarrow
    L^r(I;\dB_{p,\sigma}^s(\mathbb{R}^d)),\qquad
    \sigma \leqslant r.
\end{align}
For $0<\alpha < \beta$, the truncated Besov semi-norms on the high, middle and low frequency parts are defined as
\begin{align}
    \| f \|_{\dB_{p,\sigma}^s}^{h;\beta}
    &:=
    \left\|
    \left\{ 
    2^{sj} 
    \| \dot{\Delta}_j f \|_{L^p}
    \right\}_{\beta \leqslant 2^j}
    \right\|_{\ell^{\sigma}},\\ 
    \| f \|_{\dB_{p,\sigma}^s}^{m;\alpha,\beta}
    &:=
    \left\|
    \left\{ 
    2^{sj} 
    \| \dot{\Delta}_j f \|_{L^p}
    \right\}_{\alpha \leqslant{} 2^j < \beta}
    \right\|_{\ell^{\sigma}},\\
    \| f \|_{\dB_{p,\sigma}^s}^{\ell;\alpha} 
    &:=
    \left\|
    \left\{ 
    2^{sj} 
    \| \dot{\Delta}_j f \|_{L^p}
    \right\}_{2^j < \alpha}
    \right\|_{\ell^{\sigma}}.
\end{align}
Similarly, we also define
\begin{align}
    \| F \|_{\widetilde{L^r}(I;\dB_{p,\sigma}^s)}^{h;\beta}
    :={}&
    \left\| \| F \|_{\dB_{p,\sigma}^s}^{h;\beta} \right\|_{\widetilde{L^r}(I)},\\
    \| F \|_{\widetilde{L^r}(I;\dB_{p,\sigma}^s)}^{m;\alpha,\beta}
    :={}&
    \left\| \| F \|_{\dB_{p,\sigma}^s}^{m;\alpha,\beta} \right\|_{\widetilde{L^r}(I)},\\
    \| F \|_{\widetilde{L^r}(I;\dB_{p,\sigma}^s)}^{\ell;\alpha} 
    :={}&
    \left\| \| F \|_{\dB_{p,\sigma}^s}^{\ell;\alpha}  \right\|_{\widetilde{L^r}(I)}
\end{align}
and
\begin{align}
    \| F \|_{\widetilde{L^r}(I;\dB_{p,\sigma}^s)}^{h;\beta}
    &:=
    \left\|
    \left\{ 
    2^{sj} 
    \| \dot{\Delta}_j F \|_{L^r(I;L^p)}
    \right\}_{\beta \leqslant 2^j}
    \right\|_{\ell^{\sigma}},\\ 
    \| F \|_{\widetilde{L^r}(I;\dB_{p,\sigma}^s)}^{m;\alpha,\beta}
    &:=
    \left\|
    \left\{ 
    2^{sj} 
    \| \dot{\Delta}_j F \|_{L^r(I;L^p)}
    \right\}_{\alpha \leqslant{} 2^j < \beta}
    \right\|_{\ell^{\sigma}},\\
    \| F \|_{\widetilde{L^r}(I;\dB_{p,\sigma}^s)}^{\ell;\alpha} 
    &:=
    \left\|
    \left\{ 
    2^{sj} 
    \| \dot{\Delta}_j F \|_{L^r(I;L^p)}
    \right\}_{2^j < \alpha}
    \right\|_{\ell^{\sigma}}.
\end{align}
We recall the maximal regularity estimates for the heat equation.
\begin{lemm}\cite{Bah-Che-Dan-11}*{Corollary 2.5}\label{lemm:lin-P-1}
Let $1 \leqslant p,r,\sigma \leqslant \infty$, $\mu>0$ and $s \in \mathbb{R}$
and consider the heat equation
\begin{align}
    \begin{dcases}
    \partial_t v -\Delta v = f, & \qquad t>0,x \in \mathbb{R}^d,\\
    v(0,x)=v_0(x), &\qquad x \in \mathbb{R}^d
    \end{dcases}
\end{align}
with
$v_0 \in \dB_{p,\sigma}^s(\mathbb{R}^d)$ and $f \in \widetilde{L^1}(0,T;\dB_{p,\sigma}^s(\mathbb{R}^d))$.
Then, there exists a positive constant $C=C(d,\mu)$ such that 
\begin{align}
    \| v \|_{\widetilde{L^r}(0,T;\dB_{p,\sigma}^{s+\frac{2}{r}})}
    &\leqslant
    C \| v_0 \|_{\dB_{p,\sigma}^s}
    +
    C \| f \|_{\widetilde{L^1}(0,T;\dB_{p,\sigma}^s)}.
\end{align}
\end{lemm}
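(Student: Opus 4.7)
The plan is to dyadically decompose the problem and exploit the spectral localization of each block, which turns the heat equation into a family of essentially decoupled ODEs whose kernels have sharply quantifiable $L^r_t$ norms.

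Concretely, I would first apply $\dot\Delta_j$ to the equation and write Duhamel's formula
\begin{equation}
\dot\Delta_j v(t) \;=\; e^{t\mu\Delta}\dot\Delta_j v_0 \;+\; \int_0^t e^{(t-\tau)\mu\Delta}\dot\Delta_j f(\tau)\,d\tau .
\end{equation}
The central ingredient is the frequency-localized heat smoothing estimate: for every $g\in\mathscr{S}'(\mathbb{R}^d)$ with $\supp\widehat{g}\subset\{|\xi|\sim 2^j\}$ one has
\begin{equation}
\| e^{t\mu\Delta} g \|_{L^p} \;\leqslant\; C e^{-c\mu 2^{2j} t} \| g \|_{L^p}
\end{equation}
for some $C,c>0$ depending only on $d$ and on the annular support of $\widehat{\phi_0}$. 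This is proved in Bahouri--Chemin--Danchin by writing $e^{t\mu\Delta}\dot\Delta_j g = (\check{\chi}_j\ast g)$ with $\chi_j(\xi)=\widetilde\phi_0(2^{-j}\xi)e^{-t\mu|\xi|^2}$, where $\widetilde\phi_0\equiv 1$ on $\supp\widehat{\phi_0}$, and then uniformly bounding $\|\check\chi_j\|_{L^1}$ after the rescaling $\xi\mapsto 2^j\xi$.

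Given this, I would take the $L^r_t$ norm of the Duhamel identity. The homogeneous part yields
\begin{equation}
\| e^{t\mu\Delta}\dot\Delta_j v_0\|_{L^r(0,T;L^p)} \;\leqslant\; C\|e^{-c\mu 2^{2j} t}\|_{L^r(0,T)}\|\dot\Delta_j v_0\|_{L^p} \;\leqslant\; C 2^{-\frac{2j}{r}}\|\dot\Delta_j v_0\|_{L^p},
\end{equation}
while for the forcing term Young's convolution inequality in the time variable (with the exponents $1\ast L^1_t\hookrightarrow L^r_t$ trick rewritten as $L^r_t\ast L^1_t\hookrightarrow L^r_t$) gives
\begin{equation}
\Big\|\int_0^t e^{(t-\tau)\mu\Delta}\dot\Delta_j f\,d\tau\Big\|_{L^r(0,T;L^p)} \;\leqslant\; C 2^{-\frac{2j}{r}}\|\dot\Delta_j f\|_{L^1(0,T;L^p)} .
\end{equation}

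Finally, multiplying the two bounds by $2^{j(s+2/r)}$, summing in $\ell^\sigma_j$ and using the triangle inequality immediately produces the stated Chemin--Lerner estimate. The only subtle point is the sharp dyadic smoothing bound in the displayed heat kernel inequality; once that is secured, the rest is Young's inequality in time followed by the definition of $\widetilde{L^r}(0,T;\dot B^{s+2/r}_{p,\sigma})$, and no non-trivial interpolation or duality is needed. Because the problem is linear and each dyadic block is handled separately, the argument is uniform in $r$ and in $\sigma$, which is precisely why the Chemin--Lerner norm (rather than the Bochner norm) appears naturally on the left-hand side.
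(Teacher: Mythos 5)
Your proof is correct, and since the paper simply cites Bahouri--Chemin--Danchin (Corollary 2.5) without reproducing the argument, there is no in-paper proof to compare against; your argument is essentially the standard one from the cited reference. The dyadic Duhamel decomposition, the exponentially decaying smoothing bound $\| e^{t\mu\Delta}\dot\Delta_j g\|_{L^p}\leqslant Ce^{-c\mu 2^{2j}t}\|\dot\Delta_j g\|_{L^p}$, the gain of $2^{-2j/r}$ from $\|e^{-c\mu 2^{2j}\cdot}\|_{L^r(0,T)}$, Young's inequality $L^r_t * L^1_t \hookrightarrow L^r_t$ for the Duhamel term, and the final $\ell^\sigma$-summation after weighting by $2^{j(s+2/r)}$ are exactly the right steps and are uniform in $r\in[1,\infty]$ and $\sigma\in[1,\infty]$. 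Two tiny cosmetic points: the phrasing ``$1\ast L^1_t\hookrightarrow L^r_t$ trick rewritten as $L^r_t\ast L^1_t\hookrightarrow L^r_t$'' is garbled (only the second formulation is meaningful), and the paper's displayed equation has $\Delta$ rather than $\mu\Delta$ while the constant is said to depend on $\mu$ — you correctly inferred that $\mu\Delta$ was intended.
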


Next, we consider product estimates of two functions in the Besov spaces.
To begin with, we introduce the following Bony para-product decomposition:
\begin{align}
    fg 
    = 
    \dot{T}_fg
    +\dot{R}(f,g)
    +\dot{T}_gf,
\end{align}
where
\begin{align}
    &
    \dot{T}_fg:=\sum_{j \in \mathbb{Z}}\dot{S}_{j-3}f\dot{\Delta}_jg
    =\sum_{j \in \mathbb{Z}}\left( \sum_{j' \leqslant j-3} \dot{\Delta}_{j'}f \right)\dot{\Delta}_jg,\\
    &
    \dot{R}(f,g):=\sum_{|j-j'| \leqslant 2}\dot{\Delta}_jf\dot{\Delta}_{j'}g.
\end{align}
It is well-known that the following estimates hold:
\begin{lemm}[\cite{Bah-Che-Dan-11}*{Theorems 2.47 and 2.52}]\label{lemm:para-0}
The following two propositions hold:
\begin{itemize}
\item [(1)]
Let $1 \leqslant{} p, p_1,p_2,\sigma \leqslant{} \infty$ and $s, s_1,s_2 \in \mathbb{R}$ satisfy
\begin{align}
    \frac{1}{p} = \frac{1}{p_1} + \frac{1}{p_2},\qquad
    s = s_1 + s_2,\qquad
    s_1 \leqslant{} 0.
\end{align}
Then, there exists a positive constant $C=C(d,s_1,s_2)$ such that
\begin{align}
     \| \dot{T}_fg \|_{\dB_{p,\sigma}^{s}}
     \leqslant{}
     C 
     \| f \|_{\dB_{p_1,1}^{s_1}}
     \| g \|_{\dB_{p_2,\sigma}^{s_2}}.
\end{align}
for all $f \in \dB_{p_1,1}^{s_1}(\mathbb{R}^d)$ and $g \in \dB_{p_2,\sigma}^{s_2} (\mathbb{R}^d)$.
\item [(2)]
Let $1 \leqslant{} p, p_1,p_2, \sigma,\sigma_1,\sigma_2 \leqslant{} \infty$ and $s, s_1,s_2 \in \mathbb{R}$ satisfy
\begin{gather}
    \frac{1}{p} = \frac{1}{p_1} + \frac{1}{p_2},\qquad
    \frac{1}{\sigma} \leqslant{} \frac{1}{\sigma_1} + \frac{1}{\sigma_2},\qquad
    s = s_1 + s_2 > 0.
\end{gather}
Then, there exists a positive constant $C=C(d,s_1,s_2)$ such that
\begin{align}
    \| \dot{R}(f,g) \|_{\dB_{p,\sigma}^s}
    \leqslant{}
    C
    \| f \|_{\dB_{p_1,\sigma_1}^{s_1}}
    \| g \|_{\dB_{p_2,\sigma_2}^{s_2}}
\end{align}
for all $f \in \dB_{p_1,\sigma_1}^{s_1}(\mathbb{R}^d)$ and $g \in \dB_{p_2,\sigma_2}^{s_2} (\mathbb{R}^d)$.
\end{itemize}
\end{lemm}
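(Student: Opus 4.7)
The plan is to prove both estimates with the standard Littlewood--Paley toolkit: spectral localization of the product of two dyadic blocks, H\"older's inequality in the physical variable, and discrete Young / Minkowski inequalities for the resulting almost-convolutions in frequency. Throughout, let $N_0$ be a fixed integer large enough to accommodate the spectral widths coming from the $\dot{S}_{j-3}$ and $\dot{\Delta}_j$ cutoffs.

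For part (1), the key geometric fact is that $\supp \widehat{\dot{S}_{j-3}f \, \dot{\Delta}_j g}$ lies in an annulus of radius $\sim 2^j$, since $\supp \widehat{\dot{S}_{j-3} f}$ is contained in a ball of radius $\leqslant 2^{j-2}$ while $\supp \widehat{\dot{\Delta}_j g}$ sits in an annulus of radius $\sim 2^j$. Consequently
\[
    \dot{\Delta}_{j'}(\dot{T}_f g) = \sum_{|j-j'| \leqslant N_0} \dot{\Delta}_{j'}\bigl(\dot{S}_{j-3} f \, \dot{\Delta}_j g\bigr),
\]
and by $L^p$-boundedness of $\dot{\Delta}_{j'}$ together with H\"older at $1/p = 1/p_1 + 1/p_2$ the task reduces to bounding $\|\dot{S}_{j-3}f\|_{L^{p_1}}\,\|\dot{\Delta}_j g\|_{L^{p_2}}$. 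The assumption $s_1 \leqslant 0$ is precisely what yields $\|\dot{S}_{j-3}f\|_{L^{p_1}} \leqslant C 2^{-s_1 j}\|f\|_{\dB_{p_1,1}^{s_1}}$: for $s_1 < 0$ the weights $2^{-s_1 k}$ ($k \leqslant j-3$) peak at $k = j-3$, while the marginal case $s_1 = 0$ is covered by the $\ell^1$ summation built into $\dB_{p_1,1}^{0}$. Multiplying by $2^{s j'}$, using $s = s_1 + s_2$ and $|j-j'|\leqslant N_0$, and then taking $\ell^\sigma_{j'}$ via a finite-support discrete Young inequality closes the estimate.

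Part (2) rests on the opposite geometric fact: when $|j-j'|\leqslant 2$ the spectrum of $\dot{\Delta}_j f \, \dot{\Delta}_{j'} g$ is contained in a \emph{ball} of radius $\sim 2^j$ (not an annulus, since the two frequency shells may cancel near the origin). Consequently $\dot{\Delta}_k \dot{R}(f,g)$ only collects terms with $j \geqslant k - N_0$. Setting $c_j := 2^{s_1 j}\|\dot{\Delta}_j f\|_{L^{p_1}}$ and $\widetilde{c}_j := 2^{s_2 j}\|\dot{\Delta}_j g\|_{L^{p_2}}$, and using $|j-j'|\leqslant 2$ to convert $s_2 j'$ to $s_2 j$ up to a bounded factor, H\"older yields
\[
    2^{sk}\|\dot{\Delta}_k \dot{R}(f,g)\|_{L^p}
    \leqslant
    C \sum_{j \geqslant k-N_0} 2^{s(k-j)} c_j \widetilde c_j.
\]
The assumption $s > 0$ is essential here: it makes the kernel $m \mapsto 2^{-sm}$, supported on $m \geqslant -N_0$, lie in $\ell^1$. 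Taking $\ell^\sigma_k$ and invoking discrete Young's inequality reduces matters to bounding $\|c \widetilde c\|_{\ell^\sigma}$, which by H\"older satisfies $\|c \widetilde c\|_{\ell^{\sigma'}} \leqslant \|c\|_{\ell^{\sigma_1}}\|\widetilde c\|_{\ell^{\sigma_2}}$ with $1/\sigma' = 1/\sigma_1 + 1/\sigma_2$; since $\sigma \geqslant \sigma'$ by hypothesis, the embedding $\ell^{\sigma'}\hookrightarrow\ell^\sigma$ finishes the job.

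The main conceptual point---and essentially the only obstacle---is the dichotomy between annulus- and ball-supported products: in (1) the product localizes near scale $2^j$, so the exponent $s$ may be arbitrary provided $s_1 \leqslant 0$ controls the low-pass $\dot{S}_{j-3}$; in (2) cancellations open the spectrum down to the origin, forcing $s > 0$ so that $\sum_{j \geqslant k} 2^{s(k-j)}$ converges. Once this dichotomy is noted, the rest reduces to Bernstein, H\"older, and discrete Young.
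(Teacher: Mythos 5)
Your argument is correct and is exactly the standard proof of these paraproduct and remainder estimates: the paper itself does not prove this lemma but cites it from Bahouri--Chemin--Danchin (Theorems 2.47 and 2.52), and your proof reproduces the argument given there (spectral localization of $\dot{S}_{j-3}f\,\dot{\Delta}_jg$ in an annulus versus $\dot{\Delta}_jf\,\dot{\Delta}_{j'}g$ in a ball, H\"older, and discrete Young), including the correct roles of the hypotheses $s_1\leqslant 0$ and $s>0$. Nothing to add.
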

From Lemma \ref{lemm:para-0} and the continuous embedding $\dB_{p_k,1}^{\frac{d}{p_k}-\alpha_k}(\mathbb{R}^d) \hookrightarrow \dB_{\infty,1}^{-\alpha_k}(\mathbb{R}^d)$ ($k=1,2$), we immediately obtain the following product estimate:
\begin{lemm}\label{lemm:prod-1}
Let 
$1 \leqslant p, p_1, p_2, \sigma \leqslant \infty$,
$\alpha_1,\alpha_2 \geqslant 0$
and $s \in \mathbb{R}$ satisfy 
\begin{align}
    s + \frac{d}{p_1} > 0,\qquad
    \frac{1}{p} + \frac{1}{p_1} \leqslant 1.
\end{align}
Then, there exists a positive constant $C = C(d,p,p_1,p_2,\alpha_1,\alpha_2,\sigma)$
such that
\begin{align}
    \| fg \|_{\dB_{p,\sigma}^s}
    \leqslant
    C
    \bigg(
    \| f \|_{\dB_{p_1,1}^{\frac{d}{p_1}-\alpha_1}}
    \| g \|_{\dB_{p,\sigma}^{s+\alpha_1}}
    +
    \| f \|_{\dB_{p,\sigma}^{s+\alpha_2}}
    \| g \|_{\dB_{p_2,1}^{\frac{d}{p_2}-\alpha_2}}
    \bigg)
\end{align}
for all 
$f \in \dB_{p_1,1}^{\frac{d}{p_1}-\alpha_1}(\mathbb{R}^d) \cap \dB_{p,\sigma}^{s+\alpha_2}(\mathbb{R}^d)$ 
and 
$g \in \dB_{p,\sigma}^{s+\alpha_1}(\mathbb{R}^d) \cap \dB_{p_2,1}^{\frac{d}{p_2}-\alpha_2}(\mathbb{R}^d)$.
\end{lemm}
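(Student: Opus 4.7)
The plan is to apply Bony's paraproduct decomposition
\begin{align}
fg = \dot{T}_f g + \dot{R}(f,g) + \dot{T}_g f
\end{align}
and to bound each piece by combining Lemma~\ref{lemm:para-0} with the Bernstein embedding $\dB_{p_k,1}^{d/p_k - \alpha_k}(\mathbb{R}^d) \hookrightarrow \dB_{\infty,1}^{-\alpha_k}(\mathbb{R}^d)$ for $k = 1, 2$. The two paraproduct terms will produce respectively the first and second summands on the right-hand side, while the remainder will be absorbed into the first summand.

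For $\dot{T}_f g$, the idea is to first embed $f$ into $\dB_{\infty,1}^{-\alpha_1}$ and then apply part~(1) of Lemma~\ref{lemm:para-0} with indices $s_1 = -\alpha_1$, $s_2 = s + \alpha_1$, Lebesgue exponents $\infty$ and $p$, and summability exponent $\sigma$ on $g$; the hypothesis $\alpha_1 \geq 0$ secures $s_1 \leq 0$, while $s_1 + s_2 = s$ matches the target regularity. This yields $\|\dot{T}_f g\|_{\dB_{p,\sigma}^s} \lesssim \|f\|_{\dB_{p_1,1}^{d/p_1 - \alpha_1}} \|g\|_{\dB_{p,\sigma}^{s + \alpha_1}}$. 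The term $\dot{T}_g f$ is treated symmetrically, using $\alpha_2$ and the embedding of $g$ into $\dB_{\infty,1}^{-\alpha_2}$, yielding the second summand on the right-hand side.

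For the remainder $\dot{R}(f, g)$ I would invoke part~(2) of Lemma~\ref{lemm:para-0} \emph{without} pre-embedding $f$, choosing $s_1 = d/p_1 - \alpha_1$, $s_2 = s + \alpha_1$, $\sigma_1 = 1$, $\sigma_2 = \sigma$, and Lebesgue exponents $p_1$ and $p$ on the two factors. The positivity requirement $s_1 + s_2 > 0$ then reads $s + d/p_1 > 0$, matching exactly one of the hypotheses, and the condition $1/p + 1/p_1 \leq 1$ ensures that the exponent $p_\ast$ defined by $1/p_\ast = 1/p_1 + 1/p$ lies in $[1, \infty]$ so that the output space is well defined. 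A standard Bernstein-type embedding then gives $\dB_{p_\ast, \sigma}^{s + d/p_1} \hookrightarrow \dB_{p, \sigma}^s$ (the regularity gap $d/p_1$ is exactly compensated by the Lebesgue gap $1/p_\ast - 1/p = 1/p_1$), so that $\dot{R}(f,g)$ is dominated by the first summand. Summing the three estimates completes the proof.

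The main subtlety is the treatment of the remainder: the instinctive move of pre-embedding $f$ into $\dB_{\infty,1}^{-\alpha_1}$ before invoking part~(2) would require the strictly stronger condition $s > 0$, so one must instead keep $f$ in its native $p_1$-based space and recover the target integrability through Bernstein in a separate step. This also explains the asymmetry of the hypothesis in $\alpha_1$ versus $\alpha_2$: since only $s + d/p_1 > 0$ is assumed, the remainder is estimated through the first summand; were $s + d/p_2 > 0$ also assumed, the same argument with indices swapped would instead route it through the second summand.
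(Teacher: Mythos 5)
Your proof is correct and is exactly the argument the paper intends: the paper states that Lemma \ref{lemm:prod-1} follows ``immediately'' from Lemma \ref{lemm:para-0} together with the embedding $\dB_{p_k,1}^{\frac{d}{p_k}-\alpha_k}(\mathbb{R}^d) \hookrightarrow \dB_{\infty,1}^{-\alpha_k}(\mathbb{R}^d)$, and your decomposition-by-decomposition verification (paraproducts via the embedding into $\dB_{\infty,1}^{-\alpha_k}$, remainder kept in the $p_1$-based space with $s_1+s_2=s+d/p_1>0$ and then Bernstein back to $\dB_{p,\sigma}^s$) is precisely the omitted computation, including the correct explanation of why only $s+d/p_1>0$ is needed.
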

In the next two lemmas, we establish the para-product estimates for truncated Besov norms.
\begin{lemm}\label{lemm:para-1}
Let $1 \leqslant{} p, p_1,p_2,\sigma \leqslant{} \infty$ and $s, s_1,s_2 \in \mathbb{R}$ satisfy
\begin{align}
    \frac{1}{p} = \frac{1}{p_1} + \frac{1}{p_2},\qquad
    s = s_1 + s_2,\qquad
    s_1 \leqslant{} 0.
\end{align}
Then, there exists a positive constant $C=C(d,s_1,s_2)$ such that
\begin{align}\label{para-low}
     \| \dot{T}_fg \|_{\dB_{p,\sigma}^{s}}^{\ell;\beta}
     \leqslant{}
     C 
     \| f \|_{\dB_{p_1,1}^{s_1}}^{\ell;\beta}
     \| g \|_{\dB_{p_2,\sigma}^{s_2}}^{\ell; 4\beta}
\end{align}
for all $\beta > 0$ and all $f,g$ provided that the right hand side is finite.
\end{lemm}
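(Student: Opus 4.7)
The plan is to follow the proof of the classical paraproduct estimate (Lemma~\ref{lemm:para-0}(1)), but to track carefully which Littlewood--Paley blocks of $f$ and $g$ actually contribute to the low--frequency part of $\dot{T}_f g$. By the support properties of the dyadic blocks, the Fourier transform of $\dot{S}_{j'-3} f \, \dot{\Delta}_{j'} g$ is localized in an annulus of size $\sim 2^{j'}$, so there is a universal constant $N_0$ (determined by the Littlewood--Paley convention; $N_0 = 2$ suffices) such that
\begin{align}
\dot{\Delta}_j \dot{T}_f g
=
\sum_{|j' - j| \leqslant N_0}
\dot{\Delta}_j \bigl( \dot{S}_{j'-3} f \, \dot{\Delta}_{j'} g \bigr).
\end{align}
The key observation is then: if $2^j < \beta$, every index $j'$ appearing in this sum satisfies $2^{j'} \leqslant 2^{N_0} 2^j < 4\beta$, and every block $\dot{\Delta}_k f$ that enters $\dot{S}_{j'-3} f$ (i.e.\ $k \leqslant j' - 4$) satisfies $2^k < \beta$. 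This is the source of the two truncations in \eqref{para-low}.

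I would then apply H\"older's inequality at the level of dyadic blocks, and crucially use the assumption $s_1 \leqslant 0$: since $k \mapsto 2^{-s_1 k}$ is nondecreasing,
\begin{align}
\| \dot{S}_{j'-3} f \|_{L^{p_1}}
\leqslant
\sum_{k \leqslant j' - 4} 2^{-s_1 k} \bigl( 2^{s_1 k} \| \dot{\Delta}_k f \|_{L^{p_1}} \bigr)
\leqslant
C \, 2^{-s_1 j'} \| f \|_{\dB_{p_1,1}^{s_1}}^{\ell;\beta},
\end{align}
the truncation at $\beta$ being legitimate by the observation above. Inserting this into the dyadic decomposition and using $s = s_1 + s_2$ together with $|j - j'| \leqslant N_0$ yields
\begin{align}
2^{sj} \| \dot{\Delta}_j \dot{T}_f g \|_{L^p}
\leqslant
C \| f \|_{\dB_{p_1,1}^{s_1}}^{\ell;\beta}
\sum_{|j' - j| \leqslant N_0}
2^{s_2 j'} \| \dot{\Delta}_{j'} g \|_{L^{p_2}}.
\end{align}
Taking the $\ell^\sigma$ norm over $\{j \in \mathbb{Z}: 2^j < \beta\}$ and applying discrete Young's inequality to the finite--range convolution on the right-hand side produces exactly $\|g\|_{\dB_{p_2,\sigma}^{s_2}}^{\ell;4\beta}$, yielding \eqref{para-low}.

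There is no serious obstacle in this argument; the only subtle step is the estimate for $\dot{S}_{j'-3} f$, where the hypothesis $s_1 \leqslant 0$ is essential to control the partial sum by its top mode (without it, low-frequency contributions could dominate and destroy the geometric decay in $j'$). The factor $4$ in the threshold $4\beta$ simply records the universal support-overlap constant $N_0 = 2$; a different Littlewood--Paley convention would only change this factor to another universal constant.
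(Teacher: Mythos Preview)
Your proof is correct and follows essentially the same approach as the paper's: exploit the frequency localization $\dot{\Delta}_j\dot{T}_fg=\sum_{|j'-j|\leqslant 2}\dot{\Delta}_j(\dot{S}_{j'-3}f\,\dot{\Delta}_{j'}g)$, use $s_1\leqslant 0$ to bound $\|\dot{S}_{j'-3}f\|_{L^{p_1}}$ by $C\,2^{-s_1 j'}\|f\|_{\dB_{p_1,1}^{s_1}}^{\ell;\beta}$ (legitimate since all contributing blocks satisfy $2^k<\beta$), and then take the $\ell^\sigma$-norm over $\{2^j<\beta\}$ with the finite-range convolution producing the $4\beta$ threshold on $g$. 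One trivial slip: with the paper's convention $\dot{S}_j=\sum_{j'\leqslant j}\dot{\Delta}_{j'}$, the sum defining $\dot{S}_{j'-3}f$ runs over $k\leqslant j'-3$, not $k\leqslant j'-4$, but this does not affect the argument.
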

\begin{proof}
Since it holds
\begin{align}
    \dot{\Delta}_j \dot{T}_fg = \sum_{|j'-j|\leqslant 2} \dot{\Delta}_j(\dot{S}_{j'-3}f\dot{\Delta}_{j'}g),
\end{align}
we have 
\begin{align}\label{para-pf-1}
    \| \dot{\Delta}_j \dot{T}_fg \|_{L^p}
    \leqslant{} & 
    C\sum_{|j'-j|\leqslant 2} \|\dot{S}_{j'-3}f\|_{L^{p_1}}\|\dot{\Delta}_{j'}g\|_{L^{p_2}}\\ 
    \leqslant{} &
    C\sum_{|j'-j|\leqslant 2} 
    \sum_{j'' \leqslant j'-3}
    \|\dot{\Delta}_{j''}f\|_{L^{p_1}}
    \|\dot{\Delta}_{j'}g\|_{L^{p_2}}\\
    \leqslant{} &
    C
    \sum_{j'' \leqslant j-1}
    2^{s_1j''}
    \|\dot{\Delta}_{j''}f\|_{L^{p_1}}
    \sum_{|j'-j|\leqslant 2} 
    2^{-s_1j'}
    \|\dot{\Delta}_{j'}g\|_{L^{p_2}}.
\end{align}
Thus, 
we see that
\begin{align}
    \| \dot{T}_fg \|_{\dB_{p,\sigma}^{s}}^{\ell;\beta}
    ={} & 
    \left\{
    \sum_{2^j < \beta}
    \left(
    2^{sj}
    \| \dot{\Delta}_j \dot{T}_fg \|_{L^p}
    \right)^{\sigma}
    \right\}^{\frac{1}{\sigma}}\\
    \leqslant{} &
    C
    \| f \|_{\dB_{p_1,1}^{s_1}}^{\ell;\frac{\beta}{2}}
    \left\{
    \sum_{2^j < \beta}\sum_{|j'-j|\leqslant 2} 
    (2^{s_2j'}
    \|\dot{\Delta}_{j'}g\|_{L^{p_2}})^{\sigma}
    \right\}^{\frac{1}{\sigma}}\\
    \leqslant{} & 
    \| f \|_{\dB_{p_1,1}^{s_1}}^{\ell;\beta}
    \| g \|_{\dB_{p_2,\sigma}^{s_2}}^{\ell; 4\beta},
\end{align}
which completes the proof.
\end{proof}

\begin{lemm}\label{lemm:para-2}
Let $1 \leqslant{} p, p_1,p_2,p_3,p_4, \sigma,\sigma_1,\sigma_2,\sigma_3,\sigma_4 \leqslant{} \infty$ and $s, s_1,s_2,s_3,s_4 \in \mathbb{R}$ satisfy
\begin{gather}
    \frac{1}{p} = \frac{1}{p_1} + \frac{1}{p_2} = \frac{1}{p_3} + \frac{1}{p_4},\qquad
    \frac{1}{\sigma} \leqslant{} \min \left\{ 
    \frac{1}{\sigma_1} + \frac{1}{\sigma_2},
    \frac{1}{\sigma_3} + \frac{1}{\sigma_4}\right\},\\
    s = s_1 + s_2 = s_3 + s_4 > 0.
\end{gather}
Then, there exists a positive constant $C=C(d,s_1,s_2,s_3,s_4)$ such that
\begin{align}
    \| \dot{R}(f,g) \|_{\dB_{p,\sigma}^s}
    \leqslant{}
    C\left(
    \| f \|_{\dB_{p_1,\sigma_1}^{s_1}}^{h;\frac{\beta}{4}}
    \| g \|_{\dB_{p_2,\sigma_2}^{s_2}}^{h;\beta}
    +
    \| f \|_{\dB_{p_3,\sigma_3}^{s_3}}^{\ell;4\beta}
    \| g \|_{\dB_{p_4,\sigma_4}^{s_4}}^{\ell;\beta}
    \right)
\end{align}
for all $\beta>0$, $f$ and $g$ provided that the right hand side is finite.
\end{lemm}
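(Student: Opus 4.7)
The plan is to decompose the remainder according to the spectral location of $g$'s dyadic blocks relative to the threshold $\beta$, use the tight coupling $|j-j'|\leqslant 2$ to transfer the frequency information automatically to $f$'s blocks, and then invoke Lemma \ref{lemm:para-0}(2) on each of the two pieces separately. Writing
\begin{align}
\dot{R}(f,g)
= \sum_{\substack{|j-j'|\leqslant 2 \\ 2^{j'}\geqslant \beta}} \dot{\Delta}_j f\, \dot{\Delta}_{j'} g
+ \sum_{\substack{|j-j'|\leqslant 2 \\ 2^{j'} < \beta}} \dot{\Delta}_j f\, \dot{\Delta}_{j'} g,
\end{align}
in the first sum the constraint $2^{j'}\geqslant \beta$ together with $|j-j'|\leqslant 2$ forces $2^{j}\geqslant \beta/4$, while in the second the constraint $2^{j'}<\beta$ forces $2^{j}<4\beta$. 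Consequently the two sums equal $\dot{R}(f^{h;\beta/4}, g^{h;\beta})$ and $\dot{R}(f^{\ell;4\beta}, g^{\ell;\beta})$ respectively, where $f^{h;\cdot}$ and $f^{\ell;\cdot}$ denote the natural frequency truncations whose full Besov norms coincide with the truncated semi-norms $\|\cdot\|^{h;\cdot}$ and $\|\cdot\|^{\ell;\cdot}$ introduced in Section \ref{sec:pre}.

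Next, I would apply Lemma \ref{lemm:para-0}(2) to each of these two (now untruncated) remainders, using the index tuple $(p_1,p_2,\sigma_1,\sigma_2,s_1,s_2)$ on the high piece and $(p_3,p_4,\sigma_3,\sigma_4,s_3,s_4)$ on the low piece. The common hypothesis $s_1+s_2 = s_3+s_4 = s > 0$ is exactly what that lemma requires, and the index conditions $\tfrac1p=\tfrac1{p_1}+\tfrac1{p_2}=\tfrac1{p_3}+\tfrac1{p_4}$ and $\tfrac1{\sigma}\leqslant\min\{\tfrac1{\sigma_1}+\tfrac1{\sigma_2},\tfrac1{\sigma_3}+\tfrac1{\sigma_4}\}$ are precisely assumed in the statement. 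Summing the two resulting estimates yields the target inequality.

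There is no genuine obstacle here: the argument is essentially bookkeeping on the frequency indices, and the asymmetric constants $\beta/4$ and $4\beta$ are forced purely by the width-two window $|j-j'|\leqslant 2$ built into $\dot{R}$, not by any analytic subtlety. All the analytic content is already packaged inside Lemma \ref{lemm:para-0}(2), and the truncated Besov structure is respected because each dyadic block in either the high or the low sum is isolated to the corresponding half of the Littlewood--Paley decomposition of $f$ and $g$ via the above frequency bookkeeping.
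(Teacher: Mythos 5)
Your plan correctly identifies the key frequency bookkeeping: in the remainder $\dot{R}(f,g)$ the window $|j-j'|\leqslant 2$ transfers a threshold on $g$'s index to one on $f$'s index, with loss of a factor $4$. That split is indeed the heart of the argument. However, there is a genuine gap in the middle step. The two restricted sums
\begin{align}
\sum_{\substack{|j-j'|\leqslant 2 \\ 2^{j'}\geqslant \beta}} \dot{\Delta}_j f\, \dot{\Delta}_{j'} g,
\qquad
\sum_{\substack{|j-j'|\leqslant 2 \\ 2^{j'} < \beta}} \dot{\Delta}_j f\, \dot{\Delta}_{j'} g
\end{align}
do \emph{not} coincide with $\dot{R}(f^{h;\beta/4},g^{h;\beta})$ and $\dot{R}(f^{\ell;4\beta},g^{\ell;\beta})$, where $f^{h;\cdot}$, $g^{h;\cdot}$, etc.\ denote the partial sums of the Littlewood--Paley decomposition. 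The reason is that $\dot{\Delta}_{j'}$ does not commute with a sharp dyadic truncation: writing $g^{h;\beta}=\sum_{2^{j''}\geqslant\beta}\dot{\Delta}_{j''}g$, one has $\dot{\Delta}_{j'}(g^{h;\beta})=\sum_{|j''-j'|\leqslant 1,\,2^{j''}\geqslant\beta}\dot{\Delta}_{j'}\dot{\Delta}_{j''}g$, and for $j'$ within one or two units of the threshold this is neither $\dot{\Delta}_{j'}g$ nor zero. So near the threshold the blocks of the truncated function and the truncated blocks of the original function differ, and the claimed equality fails. Consequently you cannot invoke Lemma~\ref{lemm:para-0}(2) as a black box on those two pieces, since they are not literally remainders $\dot{R}$ of honest functions.

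The fix (and the route taken in the paper) is to work at the level of the block estimate rather than the function level: one bounds $2^{sk}\|\dot{\Delta}_k\dot{R}(f,g)\|_{L^p}$ as a discrete convolution of the decaying weight $\{2^{-sj}\mathbf{1}_{j\geqslant -4}\}$ with $\{2^{sj'}\sum_{|j'-j''|\leqslant2}\|\dot{\Delta}_{j'}f\dot{\Delta}_{j''}g\|_{L^p}\}$, applies the Hausdorff--Young inequality in $\ell^\sigma$, and then splits the surviving sum over the inner index at the threshold $\beta$, applying H\"older with $(p_1,p_2,\sigma_1,\sigma_2)$ on the high-frequency half and $(p_3,p_4,\sigma_3,\sigma_4)$ on the low-frequency half. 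Since the split is now performed \emph{inside} a sum over the block indices, the truncated Besov semi-norms emerge directly without any claim that the truncation commutes with $\dot{\Delta}_{j'}$. Your frequency bookkeeping (the constraint $2^{j'}\geqslant\beta$ forcing $2^j\geqslant\beta/4$, and dually) is exactly what is used at that point, so the correction to your argument is localized: keep the single remainder $\dot{R}(f,g)$, estimate it block-by-block following the proof of Lemma~\ref{lemm:para-0}(2), and split the inner dyadic sum there rather than splitting the remainder itself.
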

\begin{proof}
Since 
\begin{align}
    \dot{\Delta}_j \dot{R}(f,g) 
    = 
    \dot{\Delta}_j
    \sum_{j' \geqslant j-4}
    \sum_{|j' - j''| \leqslant 2}
    \dot{\Delta}_{j'}f
    \dot{\Delta}_{j''}g,
\end{align}
we see that
\begin{align}
    2^{sj}
    \| \dot{\Delta}_j \dot{R}(f,g) \|_{L^p}
    \leqslant {}&
    C
    \sum_{j' \geqslant j-4}
    2^{s(j-j')}
    \sum_{|j' - j''| \leqslant 2}
    2^{sj'}
    \|\dot{\Delta}_{j'}f\dot{\Delta}_{j''}g\|_{L^p}\\
    ={}&
    C
    \left(
    \left\{2^{-sj'}{\bf 1}_{\{ j' ; j' \geqslant -4 \}}\right\}_{j'}
    *
    \left\{
    \sum_{|j' - j''| \leqslant 2}
    2^{sj'}
    \|\dot{\Delta}_{j'}f\dot{\Delta}_{j''}g\|_{L^p}
    \right\}_{j'}
    \right)(j),
\end{align}
where $*$ stands for the convolution on $\mathbb{Z}$.
Hence, taking the $\ell^{\sigma}(\mathbb{Z})$-norm with respect to $j$ and using the Hausdorff--Young inequality for the discrete convolution, we have
\begin{align}
    \| \dot{R}(f,g) \|_{\dB_{p,\sigma}^s}
    \leqslant{} &
    C
    \left(\sum_{j' \geqslant -4}2^{-sj'}\right)
    \left\{
    \sum_{j' \in \mathbb{Z}}
    \left(
    2^{sj'}
    \sum_{|j' - j''| \leqslant 2}
    \|
    \dot{\Delta}_{j'}f
    \dot{\Delta}_{j''}g
    \|_{L^p}
    \right)^{\sigma}
    \right\}^{\frac{1}{\sigma}}\\
    \leqslant{} &
    C
    \left\{
    \sum_{2^{j'} < \beta}
    \left(
    \sum_{|j' - j''| \leqslant 2}
    2^{s_1j'}
    \|
    \dot{\Delta}_{j'}f
    \|_{L^{p_1}}
    2^{s_2j''}
    \|
    \dot{\Delta}_{j''}g
    \|_{L^{p_2}}
    \right)^{\sigma}
    \right\}^{\frac{1}{\sigma}}\\
    &
    +
    C
    \left\{
    \sum_{\beta \leqslant 2^{j'}}
    \left(
    \sum_{|j' - j''| \leqslant 2}
    2^{s_3j'}
    \|
    \dot{\Delta}_{j'}f
    \|_{L^{p_3}}
    2^{s_4j''}
    \|
    \dot{\Delta}_{j''}g
    \|_{L^{p_4}}
    \right)^{\sigma}
    \right\}^{\frac{1}{\sigma}}\\
    \leqslant{}&
    C\left(
    \| f \|_{\dB_{p_1,\sigma_1}^{s_1}}^{h;\frac{\beta}{4}}
    \| g \|_{\dB_{p_2,\sigma_2}^{s_2}}^{h;\beta}
    +
    \| f \|_{\dB_{p_3,\sigma_3}^{s_3}}^{\ell;4\beta}
    \| g \|_{\dB_{p_4,\sigma_4}^{s_4}}^{\ell;\beta}
    \right).
\end{align}
This completes the proof.
\end{proof}

Using Lemmas \ref{lemm:para-0} and \ref{lemm:para-1}, we obtain the following lemma:

\begin{lemm}\label{lemm:prod-2}
Let
$1 \leqslant q, \sigma \leqslant \infty$
and
$s, s_1, s_2, s_3, s_4 \in \mathbb{R}$
satisfy 
\begin{align}
2 \leqslant q \leqslant 4,\qquad
    s_1, s_4 \leqslant d \left( \frac{2}{q} - \frac{1}{2} \right),\qquad
    s = s_1 +s_2 = s_3 + s_4 > 0.
\end{align}
Then, there exists a positive constant $C = C(d,q,s,s_1,s_2,s_3,s_4,\sigma)$
such that
\begin{align}
    \| fg \|_{\dB_{2,\sigma}^{s - d( \frac{2}{q} - \frac{1}{2})}}^{\ell;\beta}
    \leqslant
    C
    \left(
    \| f \|_{\dB_{q,1}^{s_1}}^{\ell;\beta}
    \| g \|_{\dB_{q,\sigma}^{s_2}}^{\ell;4\beta}
    +
    \| f \|_{\dB_{q,\sigma}^{s_3}}
    \| g \|_{\dB_{q,1}^{s_4}}
    \right)
\end{align}
for all $0 < \beta \leqslant \infty$, $f$ and $g$ provided that the right hand side is finite.
\end{lemm}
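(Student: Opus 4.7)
My plan is to use Bony's paraproduct decomposition $fg = \dot T_f g + \dot T_g f + \dot R(f,g)$, estimating each piece separately, with the abbreviation $s' := s - d(2/q - 1/2)$. A recurring tool will be the block-wise Bernstein embedding $\dB_{q/2,\sigma}^s \hookrightarrow \dB_{2,\sigma}^{s'}$, which is valid because $q/2\leq 2$ under the hypothesis $q\leq 4$.

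The remainder $\dot R(f,g)$ is the easiest case. I would apply Lemma \ref{lemm:para-0}(2) with $p_1=p_2=q$, $p=q/2$, $(\sigma_1,\sigma_2)=(\sigma,1)$ and $(s_1,s_2)=(s_3,s_4)$ (its hypotheses $s_3+s_4=s>0$ and $1/\sigma\leq 1/\sigma+1/1$ being satisfied) to get $\|\dot R(f,g)\|_{\dB_{q/2,\sigma}^s}\leq C\|f\|_{\dB_{q,\sigma}^{s_3}}\|g\|_{\dB_{q,1}^{s_4}}$. Composing with the Bernstein embedding and restricting to low frequencies gives the bound by the second term of the right-hand side.

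The main work lies with $\dot T_f g$. Lemma \ref{lemm:para-1} demands a non-positive index on $f$, whereas $s_1$ may be as large as $d(2/q-1/2)>0$. The idea is to first trade part of $f$'s positive regularity for integrability via a low-frequency Bernstein step, and only then to apply Lemma \ref{lemm:para-1}. Concretely, I would select auxiliary exponents $\tilde q, p_2\in[q,\infty]$ satisfying $1/\tilde q + 1/p_2 = 1/2$ together with $1/\tilde q \leq 1/q - s_1/d$; combined with the requirement $p_2\geq q$ (i.e.\ $1/p_2 \leq 1/q$), these constraints amount to
\[
\tfrac{1}{2}-\tfrac{1}{q}\;\leq\;\tfrac{1}{\tilde q}\;\leq\;\tfrac{1}{q}-\tfrac{s_1}{d},
\]
an interval that is nonempty exactly because $q\leq 4$ and $s_1\leq d(2/q-1/2)$. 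Setting $\tilde s_1 := s_1 - d(1/q - 1/\tilde q)\leq 0$ and $\tilde s_2 := s' - \tilde s_1 = s_2 - d(1/q - 1/p_2)$, the block-wise Bernstein inequalities yield
\[
\|f\|_{\dB_{\tilde q,1}^{\tilde s_1}}^{\ell;\beta}\leq C\|f\|_{\dB_{q,1}^{s_1}}^{\ell;\beta},\qquad
\|g\|_{\dB_{p_2,\sigma}^{\tilde s_2}}^{\ell;4\beta}\leq C\|g\|_{\dB_{q,\sigma}^{s_2}}^{\ell;4\beta}.
\]
Applying Lemma \ref{lemm:para-1} with $(p,p_1,p_2,s_1,s_2)\mapsto(2,\tilde q,p_2,\tilde s_1,\tilde s_2)$ then yields the first term of the right-hand side. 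The remaining piece $\dot T_g f$ is treated by the symmetric argument, using $s_4\leq d(2/q-1/2)$ in place of $s_1$ and $s_3$ in place of $s_2$; the output $C\|g\|_{\dB_{q,1}^{s_4}}^{\ell;\beta}\|f\|_{\dB_{q,\sigma}^{s_3}}^{\ell;4\beta}$ is absorbed into the second term of the right-hand side after dropping the truncations.

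The essential difficulty is exactly the positive upper bound $d(2/q-1/2)$ permitted for $s_1$ and $s_4$: this quantity -- which is also the regularity loss built into the target norm $\dB_{2,\sigma}^{s-d(2/q-1/2)}$ on the left -- is precisely what makes the Bernstein trade on $f$ (or $g$) compatible with the reverse Bernstein trade on $g$ (or $f$) while keeping both auxiliary integrability exponents no smaller than $q$; without this calibration the argument would collapse to the untwisted Lemma \ref{lemm:para-1} and cover only the case $s_1\leq 0$, $s_4\leq 0$.
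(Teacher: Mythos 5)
Your proof is correct and follows essentially the same route as the paper: Bony decomposition, Lemma~\ref{lemm:para-0}(2) composed with the embedding $\dB_{q/2,\sigma}^s \hookrightarrow \dB_{2,\sigma}^{s-d(2/q-1/2)}$ for the remainder, and Lemma~\ref{lemm:para-1} preceded by a Bernstein trade on $f$ (resp.\ $g$) to bring its index below zero for the paraproducts. The only presentational difference is that you allow a one-parameter family of auxiliary exponents $(\tilde q, p_2)$ with $1/\tilde q + 1/p_2 = 1/2$, whereas the paper fixes the extremal choice $\tilde q = q^*$ with $1/q^* = 1/2 - 1/q$ and $p_2 = q$ — which is the only admissible choice when $s_1 = d(2/q - 1/2)$, so the two arguments coincide where it matters.
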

\begin{rem}
    In the case of $\beta=\infty$, we see that
    \begin{align}
    \| fg \|_{\dB_{2,\sigma}^{s - d( \frac{2}{q} - \frac{1}{2})}}
    \leqslant
    C
    \left(
    \| f \|_{\dB_{q,1}^{s_1}}
    \| g \|_{\dB_{q,\sigma}^{s_2}}
    +
    \| f \|_{\dB_{q,\sigma}^{s_3}}
    \| g \|_{\dB_{q,1}^{s_4}}
    \right).
    \end{align}
    This estimate is used in the proof of Lemma \ref{lemm:nonlin-3}.
\end{rem}
\begin{proof}
Let $q^*$ satisfy $1/2 = 1/q + 1/q^*$.
Then, by Lemma \ref{lemm:para-1} and the embedding $\dB_{q,1}^{s_1}(\mathbb{R}^d) \hookrightarrow \dB_{q^*,1}^{s_1-d(\frac{2}{q}-\frac{1}{2})}(\mathbb{R}^d)$, we have
\begin{align}\label{pf:prod-1}
    \| \dot{T}_fg \|_{\dB_{2,\sigma}^{s-d(\frac{2}{q}-\frac{1}{2})}}^{\ell;\beta} 
    \leqslant 
    C
    \| f \|_{\dB_{q^*,1}^{s_1 - d(\frac{2}{q}-\frac{1}{2})}}^{\ell;\beta}
    \| g \|_{\dB_{q,\sigma}^{s_2}}^{\ell;4\beta}
    \leqslant
    C
    \| f \|_{\dB_{q,1}^{s_1}}^{\ell;\beta}
    \| g \|_{\dB_{q,\sigma}^{s_2}}^{\ell;4\beta}.
\end{align}
Similarly, we have
\begin{align}\label{pf:prod-2}
    \| \dot{T}_gf \|_{\dB_{2,\sigma}^{s-d(\frac{2}{q}-\frac{1}{2})}}^{\ell;\beta} 
    \leqslant
    C
    \| g \|_{\dB_{q,1}^{s_4}}^{\ell;\beta}
    \| f \|_{\dB_{q,\sigma}^{s_3}}^{\ell;4\beta}
    \leqslant
    C
    \| g \|_{\dB_{q,1}^{s_4}}
    \| f \|_{\dB_{q,\sigma}^{s_3}}.
\end{align}
It follows from Lemma \ref{lemm:para-0} and the embedding $\dB_{\frac{q}{2},\sigma}^s(\mathbb{R}^d) \hookrightarrow \dB_{2,\sigma}^{s-d(\frac{2}{q}-\frac{1}{2})}(\mathbb{R}^d)$ that 
\begin{align}\label{pf:prod-3}
    \| \dot{R}(f,g) \|_{\dB_{2,\sigma}^{s-d(\frac{2}{q}-\frac{1}{2})}}^{\ell;\beta}
    \leqslant
    C
    \| \dot{R}(f,g) \|_{\dB_{\frac{q}{2},\sigma}^{s}}
    \leqslant
    C
    \| f \|_{\dB_{q,\sigma}^{s_3}}
    \| g \|_{\dB_{q,1}^{s_4}}.
\end{align}
Combining \eqref{pf:prod-1}, \eqref{pf:prod-2} and \eqref{pf:prod-3}, we complete the proof.
\end{proof}

\begin{lemm}\label{lemm:comm}
Let 
$\eta = \eta (\xi)$ be a zero-th order homogeneous smooth function on $\mathbb{R}^d \setminus \{ 0 \}$.
Let
$1 \leqslant p, p_1, p_2, \sigma \leqslant \infty$,
$\alpha_1,\alpha_2 \geqslant 0$
and $s \in \mathbb{R}$ satisfy 
\begin{align}
    s + \frac{d}{p_1} > 0,\qquad
    \frac{1}{p} + \frac{1}{p_1} \leqslant 1.
\end{align}
Then, there exists a positive constant $C = C(d,\eta,p,p_1,p_2,\alpha_1,\alpha_2,\sigma,s)$
such that
\begin{align}
    &
    \left\| 
    \left\{
    2^{sj}
    \left\|[u\cdot \nabla, \eta(D)\dot{\Delta}_j]a\right\|_{L^p}
    \right\}_{j \in \mathbb{Z}}
    \right\|_{\ell^{\sigma}(\mathbb{Z})}\\
    &\qquad 
    \leqslant
    C
    \bigg(
    \| u \|_{\dB_{p_1,1}^{\frac{d}{p_1}+1-\alpha_1}}
    \| a \|_{\dB_{p,\sigma}^{s+\alpha_1}}
    +
    \| u \|_{\dB_{p,\sigma}^{s+1+\alpha_2}}
    \| a \|_{\dB_{p_2,1}^{\frac{d}{p_2}-\alpha_2}}
    \bigg)
\end{align}
for all 
$u \in \dB_{p_1,1}^{\frac{d}{p_1}+1-\alpha_1}(\mathbb{R}^d)^d \cap \dB_{p,\sigma}^{s+1+\alpha_2}(\mathbb{R}^d)^d$ 
and 
$a \in \dB_{p,\sigma}^{s+\alpha_1}(\mathbb{R}^d) \cap \dB_{p_2,1}^{\frac{d}{p_2}-\alpha_2}(\mathbb{R}^d)$.
Here, $\eta(D)$ denotes the Fourier-multiplier associated to $\eta=\eta(\xi)$.
\end{lemm}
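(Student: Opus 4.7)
The strategy is to combine the Bony decomposition with a first-order Taylor expansion, extracting the crucial one-derivative cancellation only from the genuine paraproduct commutator and treating the remaining pieces by the product estimates of Lemma~\ref{lemm:para-0}. Writing the Bony decomposition for both $u\cdot\nabla a$ and $u\cdot\nabla(\eta(D)\dot{\Delta}_j a)$ and subtracting, the commutator splits as
\begin{align}
[u\cdot\nabla,\eta(D)\dot{\Delta}_j]a
={}& [\dot{T}_u\cdot\nabla,\eta(D)\dot{\Delta}_j]a\\
&+\Bigl(\dot{T}_{\nabla \eta(D)\dot{\Delta}_j a}\cdot u-\eta(D)\dot{\Delta}_j \bigl(\dot{T}_{\nabla a}\cdot u\bigr)\Bigr)\\
&+\Bigl(\dot{R}(u,\nabla \eta(D)\dot{\Delta}_j a)-\eta(D)\dot{\Delta}_j \dot{R}(u,\nabla a)\Bigr).
\end{align}

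For the main commutator, spectral localization reduces it to a finite sum over $|j-j'|\leqslant 2$ of $[\dot{S}_{j'-3}u\cdot\nabla,\eta(D)\dot{\Delta}_j]\dot{\Delta}_{j'}a$. Since $\eta$ is zero-th order homogeneous, the convolution kernel of $\eta(D)\dot{\Delta}_j$ is $h_j(z)=2^{dj}h(2^jz)$ for a fixed Schwartz function $h$ independent of $j$. Integrating by parts in $y$ and then applying a first-order Taylor expansion to $\dot{S}_{j'-3}u(x)-\dot{S}_{j'-3}u(y)$ give the representation
\begin{align}
[\dot{S}_{j'-3}u\cdot\nabla,\eta(D)\dot{\Delta}_j]\dot{\Delta}_{j'}a(x)
={}&\int_0^1\int_{\mathbb{R}^d}K_j(z)\nabla\dot{S}_{j'-3}u(x-(1-\tau)z)\dot{\Delta}_{j'}a(x-z)\,dzd\tau\\
&+\int_{\mathbb{R}^d}h_j(z)(\div\dot{S}_{j'-3}u)(x-z)\dot{\Delta}_{j'}a(x-z)\,dz,
\end{align}
where the matrix kernel $K_j(z)=z\otimes\nabla h_j(z)$ satisfies $\|K_j\|_{L^1(\mathbb{R}^d)}\leqslant C$ uniformly in $j$ thanks to the scaling of $h_j$. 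Minkowski's and H\"older's inequalities then furnish a bound in terms of $\|\nabla\dot{S}_{j'-3}u\|_{L^\infty}\|\dot{\Delta}_{j'}a\|_{L^p}$; combining with the embedding $\dB_{p_1,1}^{\frac{d}{p_1}+1-\alpha_1}(\mathbb{R}^d)\hookrightarrow \dB_{\infty,1}^{1-\alpha_1}(\mathbb{R}^d)$ and Bernstein's inequality, one transfers the $2^{\alpha_1 j'}$ loss in $\|\nabla\dot{S}_{j'-3}u\|_{L^\infty}$ to a $2^{-\alpha_1 j'}$ gain on $\dot{\Delta}_{j'}a$, producing the first term in the stated bound after $\ell^\sigma$-summation in $j$.

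The remaining two non-commutator pieces are estimated directly. Lemma~\ref{lemm:para-0}(1) applied with exponents exchanging the roles of $u$ and $a$, together with the uniform $L^p$-boundedness of $\eta(D)\dot{\Delta}_j$ (which follows immediately since $\eta$ is a Mikhlin-type symbol composed with a Littlewood--Paley projector), yields the second term of the conclusion from the paraproduct difference; Lemma~\ref{lemm:para-0}(2) handles the remainder difference analogously, and the hypothesis $s+d/p_1>0$ is precisely the positivity condition required by that remainder estimate. The principal difficulty is bookkeeping rather than any individual estimate: one must allocate the losses $\alpha_1$ and $\alpha_2$ across the paraproduct pieces so that exactly the two terms on the right-hand side appear, with the constraint $1/p+1/p_1\leqslant 1$ ensuring H\"older's inequality closes at the intermediate step. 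Once this is arranged, the final $\ell^\sigma$-summation is routine via discrete Young's inequality, exploiting that $|j-j'|$ is bounded uniformly across the paraproduct terms.
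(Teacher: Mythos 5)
The paper omits the proof of this lemma, deferring to the argument in Bahouri--Chemin--Danchin and Danchin--He, and your strategy---split via Bony into the genuine paraproduct commutator, the paraproduct-difference term, and the remainder-difference term; handle the first by integration by parts and a first-order Taylor expansion on the convolution kernel of $\eta(D)\dot\Delta_j$; handle the other two by the paraproduct/remainder estimates---is precisely that standard argument. The kernel computation is correct: $\eta(D)\dot\Delta_j$ has kernel $h_j(z)=2^{dj}h(2^jz)$ with $h=\mathcal F^{-1}[\eta\widehat{\phi_0}]\in\mathscr S$ (since $\eta\widehat{\phi_0}$ is smooth and compactly supported away from the origin), and the uniform $L^1$ bound on the matrix kernel $z\otimes\nabla h_j$ follows by scaling. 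The passage from $\|\nabla\dot S_{j'-3}u\|_{L^\infty}$ to the $2^{\alpha_1 j'}$ factor and then to $\|u\|_{\dB_{\infty,1}^{1-\alpha_1}}\hookleftarrow\dB_{p_1,1}^{d/p_1+1-\alpha_1}$ is the right way to produce the first term of the bound from the genuine commutator.

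One point deserves to be made explicit rather than folded into ``bookkeeping.'' In the paraproduct-difference term, the piece $\dot T_{\nabla\eta(D)\dot\Delta_j a}\cdot u$ is not a fixed paraproduct to which Lemma~\ref{lemm:para-0}(1) applies directly: it involves a fully $j$-localized $a$ factor multiplied against $\sum_{j'\gtrsim j}\dot\Delta_{j'}u$, so its estimate is a discrete convolution over $j'-j\gtrsim 1$ that converges only under an additional size constraint. Concretely, placing $u$ in $L^\infty$ one needs $\alpha_1<1$ to sum $\sum_{j'\geqslant j}2^{-(1-\alpha_1)j'}$, and this routes the contribution to the \emph{first} term of the conclusion; alternatively, placing $a$ in $L^\infty$ one needs $s+1+\alpha_2>0$ and the contribution goes to the second term. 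Neither condition is listed in the hypotheses (nor does $s+d/p_1>0$ force either of them), so it should be stated where it is used; in every application of Lemma~\ref{lemm:comm} in the paper one has $\alpha_1\in\{0,2/r\}\subset[0,1)$ so the constraint is harmless, but a reader reconstructing the proof would need it. Aside from this, the allocation of the remainder-difference term to the positivity hypothesis $s+d/p_1>0$ (together with $1/p+1/p_1\leqslant 1$ to land back in $L^p$ via embedding) is the right accounting.
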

We omit the proof of Lemma \ref{lemm:comm} since it is obtained by the quite similar argument as in \cites{Bah-Che-Dan-11,Dan-He-16}.
Next, we introduce a composition lemma.
\begin{lemm}\label{lemm:composition}
Let $F$ be an smooth function on an open interval $I \subset \mathbb{R}$ satisfying $(-R,R) \subset I$ with some $R>0$
and assume $F(0)=0$.
Then, for any $1 \leqslant{} p \leqslant{} \infty$, $1 \leqslant p_1 < \infty$ and $s \in \mathbb{R}$ satisfying
\begin{align}
    s + \frac{d}{p_1} > 0,\qquad
    \frac{1}{p} + \frac{1}{p_1} \leqslant 1,
\end{align}
there exists a positive constant $C=C(d,p,p_1,s,F,R)$ such that
\begin{align}\label{composition}
    \| F( \psi ) \|_{\dB_{p,1}^{s}} \leqslant{} C \bigg(1 + \| \psi \|_{\dB_{p_1,1}^{\frac{d}{p_1}}}\bigg) \| \psi \|_{\dB_{p,1}^s}
\end{align}
for all $\psi \in \dB_{p,1}^s(\mathbb{R}^d) \cap \dB_{p_1,1}^{\frac{d}{p_1}}(\mathbb{R}^d)$ satisfying $\| \psi \|_{L^{\infty}} < R$.  
\end{lemm}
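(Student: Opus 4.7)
The plan is to follow the classical Meyer-type telescoping approach for composition estimates in the homogeneous Besov framework (as in \cite{Bah-Che-Dan-11}, Theorem 2.61). The starting point is the identity, valid because $F(0)=0$ and because $\|\psi\|_{L^{\infty}}<R$ together with the uniform $L^{\infty}$-boundedness of the Littlewood--Paley partial sums force $\dot S_j \psi \to 0$ as $j \to -\infty$ and $\dot S_j\psi\to \psi$ as $j \to +\infty$ in $L^{\infty}$:
\begin{align}
    F(\psi)
    =
    \sum_{j\in\mathbb{Z}}\bigl[F(\dot S_{j+1}\psi) - F(\dot S_j\psi)\bigr]
    =
    \sum_{j\in\mathbb{Z}} m_j\,\dot \Delta_j\psi,
    \qquad
    m_j := \int_0^1 F'\!\bigl(\dot S_j\psi + \tau \dot \Delta_j\psi\bigr)\,d\tau.
\end{align}
Since the kernels of $\dot S_j$ and $\dot \Delta_j$ have uniformly bounded $L^1$-norms, the arguments of $F'$ lie in a fixed compact subset of $I$, which yields $\sup_j\|m_j\|_{L^{\infty}}\leqslant M(F,R)$.

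The key step is to exploit the frequency localization of the multipliers. By construction $m_j$ is essentially a smooth function of $\dot S_{j+1}\psi$, so morally its spectrum is concentrated at scales $\leqslant 2^{j+1}$. Quantitatively, I would split, for a fixed integer $N_0$,
\begin{align}
    m_j\,\dot\Delta_j\psi
    =
    \dot S_{j-N_0} m_j\cdot\dot \Delta_j\psi
    +
    \bigl(m_j - \dot S_{j-N_0} m_j\bigr)\dot\Delta_j\psi.
\end{align}
The first piece has Fourier support in a fixed dyadic annulus at scale $2^j$, so after applying $\dot\Delta_{j'}$ it survives only for $|j-j'|\leqslant N_1$; Bernstein's inequality then bounds it in $L^p$ by $M(F,R)\|\dot\Delta_j\psi\|_{L^p}$. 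For the second piece, writing $m_j - \dot S_{j-N_0} m_j = \sum_{k\geqslant -N_0}\dot\Delta_{j+k}m_j$ and expanding $F$ by Taylor's formula around $\dot S_j\psi$ produces derivatives of $F'(\dot S_j\psi)$ multiplied by $\dot\Delta_j\psi$, whose norms are controlled via the chain rule in $\dB_{p_1,1}^{\frac{d}{p_1}}$ and the embedding $\dB_{p_1,1}^{\frac{d}{p_1}}\hookrightarrow L^{\infty}$ by $C(F,R)\|\psi\|_{\dB_{p_1,1}^{\frac{d}{p_1}}}$, up to a negative power of $2^{j-j'}$.

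Assembling these bounds and taking the weighted $\ell^1_{j'}$-norm of $\|\dot\Delta_{j'}F(\psi)\|_{L^p}$, Hölder's inequality (requiring $\tfrac{1}{p}+\tfrac{1}{p_1}\leqslant 1$) together with Young's inequality for discrete convolutions on $\mathbb{Z}$ produce an almost-diagonal estimate yielding \eqref{composition}. The hypothesis $s+\tfrac{d}{p_1}>0$ enters precisely as the summability condition for the off-diagonal tail produced by the low-frequency factor $\dot S_{j-N_0} m_j$. The main obstacle I anticipate is making the chain-rule bound on $m_j$ and on $\dot S_{j-N_0} m_j$ precise uniformly in $j$: this requires a second-order Taylor expansion of $F$ and an inductive application of the first-order case of the same lemma, which in turn can be obtained from the paraproduct estimate of Lemma \ref{lemm:prod-1} after treating the low-regularity piece separately.
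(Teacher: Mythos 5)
Your telescoping approach is a legitimate alternative route, but it is genuinely different from and substantially harder than what the paper actually does. The paper avoids the Meyer machinery entirely: it observes that since $F$ is smooth and $F(0)=0$, one can factor $F(\psi)=F'(0)\psi + G(\psi)\psi$ with $G$ smooth and $G(0)=0$ (first-order Taylor with remainder). The linear term is trivial, and for $G(\psi)\psi$ one invokes a product estimate of the form $\|G(\psi)\psi\|_{\dB_{p,1}^s}\leqslant C\|G(\psi)\|_{\dB_{p_1,1}^{d/p_1}}\|\psi\|_{\dB_{p,1}^s}$ (from \cite{Dan-Xu-17}), after which the only composition estimate needed is for $G(\psi)$ in the \emph{equal-exponent} critical space $\dB_{p_1,1}^{d/p_1}$, which is exactly the classical \cite{Bah-Che-Dan-11}*{Theorem 2.61}. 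All of the spectral-localization analysis, the splitting $m_j=\dot S_{j-N_0}m_j+(m_j-\dot S_{j-N_0}m_j)$, and the Taylor/chain-rule control of the multipliers that you identify as "the main obstacle" are thereby eliminated: the mixed-exponent difficulty is pushed entirely into the product estimate, where it is already resolved, and the composition part stays in the standard diagonal setting.

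Concretely, the step you flag as delicate is indeed the sticking point of the direct approach: making uniform-in-$j$ bounds on $\dot\Delta_{j'}m_j$ in mixed Lebesgue exponents requires a Faà di Bruno–type expansion and careful bookkeeping, and the "inductive application of the first-order case of the same lemma" you propose is not a clean induction (replacing $F$ by $F'$ does not reduce the order of the problem). The paper's factorization sidesteps this circularity: it never estimates a composition in $\dB_{p,1}^s$ directly. What your approach would buy, if carried out in full, is a self-contained proof not relying on the black-box product and composition lemmas from \cite{Dan-Xu-17} and \cite{Bah-Che-Dan-11}; what the paper's approach buys is a three-line proof. For the purposes of this paper, the reduction is the right move.
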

\begin{proof}
Although the case of $p=p_1$ is proved in \cite{Dan-Xu-17}*{Proposition A.3} and the strategy of the proof of Lemma \ref{lemm:composition} is analogous,
we give the proof for the readers' convenience.
Since $F(0)=0$ and $F$ is smooth, the Taylor theorem implies that there exists a smooth function $G$ such that $G(0)=0$ and
\begin{align}
    F(\psi) = F'(0)\psi + G(\psi)\psi.
\end{align}
Hence, by the second estimate of Lemma \cite{Dan-Xu-17}*{the second assertion of Proposition A.1} with $p_2=q=p$, $\sigma_1=d/p_1$ and $\sigma_2=s$, we have
\begin{align}
    \| F( \psi ) \|_{\dB_{p,1}^{s}} 
    \leqslant{}& 
    |F'(0)|\cdot\| \psi \|_{\dB_{p,1}^{s}} 
    +
    C\| G(\psi) \|_{\dB_{p_1,1}^{\frac{d}{p_1}}}\| \psi \|_{\dB_{p,1}^{s}}.
\end{align}
Here, using the standard composition lemma (see \cite{Bah-Che-Dan-11}*{Theorem 2.61}), we see that
\begin{align}
    \| G(\psi) \|_{\dB_{p_1,1}^{\frac{d}{p_1}}}
    \leqslant 
    C
    \| \psi \|_{\dB_{p_1,1}^{\frac{d}{p_1}}}.
\end{align}
Thus, we complete the proof.
\end{proof}
Finally, we introduce the following lemma.
\begin{lemm}\label{lemm:time}
Let $1 \leqslant p < \infty$ and let $X$ be a Banach space.
Then, for every $f \in L^p(0,\infty;X) \setminus \{0\}$ and $0 < \delta < \| f \|_{L^p(0,\infty;X)}$, 
there exists a positive integer $N_{\delta}=N_{\delta}(p,X,f)$ and a time sequence $\{ T_n=T_n(p,X,f)\}_{n=0}^{N_{\delta}}$ such that
\begin{gather}
    0 = T_0 < T_1 < ... < T_{N_{\delta}-1} < T_{N_{\delta}} = \infty,\\
    \| f \|_{L^p(T_{n-1},T_n;X)} \leqslant \delta \qquad n=1,...,N_{\delta}.
\end{gather}
\end{lemm}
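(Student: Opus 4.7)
The plan is to reduce the lemma to a one-dimensional application of the intermediate value theorem applied to the distribution function of the integrable map $t \mapsto \|f(t)\|_X^p$. Set $M := \|f\|_{L^p(0,\infty;X)}^p$ and define
\begin{align}
    g(t) := \int_0^t \|f(s)\|_X^p \, ds, \qquad t \in [0,\infty).
\end{align}
Since $\|f(\cdot)\|_X^p \in L^1(0,\infty)$, the function $g$ is continuous and non-decreasing on $[0,\infty)$ with $g(0) = 0$ and $\lim_{t \to \infty} g(t) = M$. The hypothesis $\delta < \|f\|_{L^p(0,\infty;X)}$ gives $\delta^p < M$, so that $M/\delta^p > 1$.

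Next, I would set $N_\delta := \lceil M/\delta^p \rceil$, which is a positive integer depending only on $\delta, p, X, f$ and which satisfies $M/N_\delta \leq \delta^p$. For $n = 1, \ldots, N_\delta - 1$, define
\begin{align}
    T_n := \inf \bigl\{ t \geq 0 : g(t) \geq nM/N_\delta \bigr\},
\end{align}
and put $T_0 := 0$, $T_{N_\delta} := \infty$. By continuity of $g$ and the intermediate value theorem, each $T_n$ with $1 \leq n \leq N_\delta - 1$ is finite and satisfies $g(T_n) = nM/N_\delta$. Strict monotonicity of the target values $nM/N_\delta$ in $n$, together with monotonicity of $g$, forces $T_0 < T_1 < \cdots < T_{N_\delta - 1} < T_{N_\delta}$.

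The required norm estimates then follow by a direct computation of telescoping increments of $g$: for $1 \leq n \leq N_\delta - 1$ one has
\begin{align}
    \|f\|_{L^p(T_{n-1}, T_n; X)}^p = g(T_n) - g(T_{n-1}) = M/N_\delta \leq \delta^p,
\end{align}
while for the final interval,
\begin{align}
    \|f\|_{L^p(T_{N_\delta - 1}, \infty; X)}^p = M - g(T_{N_\delta - 1}) = M/N_\delta \leq \delta^p.
\end{align}
Taking $p$-th roots yields the desired bound on every interval. I do not anticipate a substantive obstacle: the lemma is essentially a restatement of absolute continuity of the Lebesgue integral combined with the intermediate value theorem, and the only care required is to verify that $g$ is genuinely continuous (which follows from dominated convergence applied to $\|f(\cdot)\|_X^p \mathbf{1}_{[0,t]}$) and that $N_\delta$ and the $T_n$'s depend only on the quantities indicated in the statement, both of which are evident from the explicit construction.
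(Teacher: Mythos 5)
Your proof is correct, but it takes a genuinely different route from the paper. The paper constructs the partition greedily and inductively: it sets $T_n := \sup\{T \in (T_{n-1},\infty) \ ;\ \|f\|_{L^p(T_{n-1},T;X)} \leqslant \delta\}$, then argues by contradiction (using the minimality of $N_\delta$) that none of $T_1,\dots,T_{N_\delta-1}$ can be infinite, and finally checks directly that the tail interval has small norm, forcing $T_{N_\delta}=\infty$. You instead equipartition the range of the cumulative integral $g(t)=\int_0^t\|f(s)\|_X^p\,ds$ into $N_\delta$ pieces of equal mass $M/N_\delta$ and pull the breakpoints back through $g$ via the intermediate value theorem. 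Both proofs choose the same $N_\delta = \lceil M/\delta^p\rceil$ and both are fully rigorous (you correctly use continuity of $g$ to guarantee that each $T_n$ is finite and attains $g(T_n)=nM/N_\delta$, and strict monotonicity of the target values to force $T_{n-1}<T_n$). Your construction is arguably cleaner: the breakpoints are defined explicitly in one shot rather than inductively, no contradiction argument is needed, and every increment equals exactly $M/N_\delta\leqslant\delta^p$. The paper's greedy construction has the slight advantage that the first $N_\delta-1$ intervals saturate the bound $\|f\|_{L^p(T_{n-1},T_n;X)}=\delta$, which shows that the greedy decomposition uses no more pieces than necessary, but that optimality is not needed for the lemma.
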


\begin{proof}
Let 
\begin{align}
    N_{\delta}:=
    \min \left\{ N \in \mathbb{N}\ ;\  N \geqslant \frac{\| f \|_{L^p(0,\infty;X)}^p}{\delta^p} \right\}
\end{align}
and determine the time sequence
inductively as follows:
\begin{align}
    \begin{dcases}
    T_0 := 0,\\
    T_n := \sup
    \bigg\{
    T \in (T_{n-1},\infty)\ ;\ 
    \| f \|_{L^p(T_{n-1},T;X)} \leqslant \delta
    \bigg\}, &\qquad
    n=1,...,N_{\delta}.
    \end{dcases}
\end{align}
Here, if $T_{n_0}=\infty$ for some $n_0 \leqslant N_{\delta}-1$, then we see that
\begin{align}
    \| f \|_{L^p(0,\infty;X)}^p
    ={}
    \sum_{n=1}^{n_0}
    \| f \|_{L^p(T_{n-1},T_n;X)}^p
    \leqslant{}
    n_0\delta^p,
\end{align}
which contradicts the definition of $N_{\delta}$ and thus we have $T_{n}<\infty$ for all $n=1,...,N_{\delta}-1$.
Then, we easily see that $0=T_0 <T_1 <...<T_{N_{\delta}-1}<\infty$ and
\begin{align}
    \| f \|_{L^p(T_{n-1},T_n;X)} = \delta, \qquad n=1,...,N_{\delta}-1.
\end{align}
On the other hand, 
for any $T \geqslant T_{N_{\delta}-1}$, we have
\begin{align}
    N_{\delta}\delta^p
    \geqslant{}&
    \| f \|_{L^p(0,\infty;X)}^p\\
    \geqslant{}&
    \| f \|_{L^p(0,T;X)}^p\\
    ={}&
    \sum_{n=1}^{N_{\delta}-1}
    \| f \|_{L^p(T_{n-1},T_n;X)}^p
    +
    \| f \|_{L^p(T_{N_{\delta}-1},T;X)}^p\\
    ={}&
    (N_{\delta}-1)\delta^p
    +
    \| f \|_{L^p(T_{N_{\delta}-1},T;X)}^p,
\end{align}
which implies $\| f \|_{L^p(T_{N_{\delta}-1},T;X)} \leqslant \delta$. 
As $T \geqslant T_{N_{\delta}-1}$ may be taken arbitrarily large,
we obtain $T_{N_{\delta}}=\infty$.
This completes the proof.
\end{proof}

\section{Linear analysis}\label{sec:lin}
In this section, we consider the linearized system:
\begin{align}\label{eq:lin-1}
    \begin{dcases}
    \partial_t a + \frac{1}{\varepsilon} \div u = f, & \qquad t>0,x \in\mathbb{R}^d,\\
    \partial_t u - \mathcal{L}u  + \frac{1}{\varepsilon}\nabla a = g, & \qquad t>0,x \in\mathbb{R}^d,\\
    a(0,x)=a_0(x),\quad u(0,x)=u_0(x), & \qquad x \in \mathbb{R}^d.
    \end{dcases}
\end{align}
Here, $f$ and $g$ are given external forces.
First, we give an estimate for the incompressible part of the solution to \eqref{eq:lin-1}.
\begin{lemm}\label{lemm:lin-P-2}
There exists a positive constant $C = C(d,\mu)$ such that 
for any 
$2 \leqslant p < \infty$, 
$s \in \mathbb{R}$, 
an interval $I \subset [0,\infty)$ with the infimum $t_0$
and
the vector field $w$ satisfying $\div w =0$, 
the incompressible part $\mathbb{P}u$ of the solution to \eqref{eq:lin-1} satisfies
\begin{align}
    &
    \| \mathbb{P}u \|_{\widetilde{L^{\infty}}(I;\dB_{p,1}^{s})\cap L^1(I;\dB_{p,1}^{s+2})}\\
    &
    \quad
    \leqslant{}
    C
    \| \mathbb{P}u(t_0) \|_{\dB_{p,1}^{s}}
    + 
    C
    \sum_{j \in \mathbb{Z}}
    2^{sj}
    \| \dot{\Delta}_j \mathbb{P} g + ( w \cdot \nabla) \dot{\Delta}_j \mathbb{P} u \|_{L^1(I;L^p)},
\end{align}
provided that the right hand side is finite.
\end{lemm}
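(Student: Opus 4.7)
The plan is to exploit the fact that $\mathbb{P}$ annihilates gradients together with the divergence-free condition on $w$. Applying $\mathbb{P}$ to the momentum equation in \eqref{eq:lin-1} kills $\frac{1}{\varepsilon}\nabla a$ and collapses the Lam\'e operator, since $\mathbb{P}\nabla\div u = 0$, giving the heat equation
\begin{equation}
\partial_t \mathbb{P}u - \mu \Delta \mathbb{P}u = \mathbb{P} g.
\end{equation}
The naive route (apply Lemma \ref{lemm:lin-P-1} directly) would produce $\|\mathbb{P}g\|_{\widetilde{L^1}(I;\dB_{p,1}^s)}$ on the right, which is not what the statement asks for. The reformulation is to add and subtract the transport term: apply $\dot{\Delta}_j$ to the above and rewrite as
\begin{equation}
\partial_t \dot{\Delta}_j \mathbb{P}u + (w\cdot \nabla)\dot{\Delta}_j \mathbb{P}u - \mu \Delta \dot{\Delta}_j \mathbb{P}u = F_j, \quad F_j := \dot{\Delta}_j \mathbb{P}g + (w\cdot\nabla)\dot{\Delta}_j \mathbb{P}u.
\end{equation}

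Next I would perform an $L^p$ energy estimate block-by-block: multiply by $|\dot{\Delta}_j \mathbb{P}u|^{p-2}\dot{\Delta}_j \mathbb{P}u$ and integrate over $\mathbb{R}^d$. The transport term contributes $\frac{1}{p}\int w\cdot\nabla|\dot{\Delta}_j \mathbb{P}u|^p\,dx$, which vanishes after integration by parts precisely because $\div w = 0$. For the dissipative term, I would invoke the standard lower bound (see \cite{Bah-Che-Dan-11}*{Lemma 2.100}), valid for $2 \leqslant p < \infty$: there exists $c = c(d,p) > 0$ such that
\begin{equation}
-\int_{\mathbb{R}^d} \Delta \dot{\Delta}_j \mathbb{P}u \cdot |\dot{\Delta}_j \mathbb{P}u|^{p-2} \dot{\Delta}_j \mathbb{P}u \,dx \geqslant c\, 2^{2j} \| \dot{\Delta}_j \mathbb{P}u\|_{L^p}^p.
\end{equation}
Combining these two facts and dividing by $\| \dot{\Delta}_j \mathbb{P}u\|_{L^p}^{p-1}$ (a standard regularization argument handles vanishing) yields the pointwise-in-$t$ differential inequality
\begin{equation}
\frac{d}{dt} \| \dot{\Delta}_j \mathbb{P}u\|_{L^p} + c\mu\, 2^{2j} \| \dot{\Delta}_j \mathbb{P}u\|_{L^p} \leqslant \| F_j \|_{L^p}.
\end{equation}

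Finally, I would integrate this ODE on $[t_0, t]$ for $t \in I$, obtaining
\begin{equation}
\| \dot{\Delta}_j \mathbb{P}u(t)\|_{L^p} + c\mu\, 2^{2j} \int_{t_0}^t \| \dot{\Delta}_j \mathbb{P}u\|_{L^p}\,d\tau \leqslant \| \dot{\Delta}_j \mathbb{P}u(t_0)\|_{L^p} + \| F_j \|_{L^1(I;L^p)}.
\end{equation}
Multiplying by $2^{sj}$, taking the supremum in $t \in I$ for the first term on the left (which yields the $\widetilde{L^\infty}(I;\dB_{p,1}^s)$ norm after summing in $j$), and summing in $j$ for the second term (which yields the $L^1(I;\dB_{p,1}^{s+2})$ norm by Fubini, using $\sigma = 1$) delivers exactly the claimed inequality.

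The only nontrivial step is the cancellation of the transport term; everything else is the standard parabolic block-wise argument. The main thing to be careful about is the regularization needed to differentiate $\|\dot{\Delta}_j\mathbb{P}u\|_{L^p}$ when it vanishes, but this is handled exactly as in \cite{Bah-Che-Dan-11} by replacing $|v|^p$ with $(|v|^2+\eta)^{p/2}$ and letting $\eta \downarrow 0$.
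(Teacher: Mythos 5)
Your proposal matches the paper's proof essentially verbatim: both apply $\dot{\Delta}_j\mathbb{P}$ to the momentum equation, rewrite it as a heat equation with the transport term $(w\cdot\nabla)\dot{\Delta}_j\mathbb{P}u$ artificially added and subtracted, perform the $L^p$ energy estimate pairing against $|\dot{\Delta}_j\mathbb{P}u|^{p-2}\dot{\Delta}_j\mathbb{P}u$, cancel the transport contribution using $\div w=0$, invoke the generalized Bernstein lower bound $-\int\Delta\dot{\Delta}_j\mathbb{P}u\cdot|\dot{\Delta}_j\mathbb{P}u|^{p-2}\dot{\Delta}_j\mathbb{P}u\,dx\gtrsim 2^{2j}\|\dot{\Delta}_j\mathbb{P}u\|_{L^p}^p$, divide by $\|\dot{\Delta}_j\mathbb{P}u\|_{L^p}^{p-1}$, and integrate in time before summing in $j$. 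The only cosmetic difference is the bibliographic source cited for the dissipative lower bound and your explicit mention of the regularization needed to justify the division step, which the paper leaves implicit.
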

\begin{proof}
Applying $\dot{\Delta}_j \mathbb{P}$ to the second equation of \eqref{eq:lin-1}, we have 
\begin{align}
    \partial_t \dot{\Delta}_j \mathbb{P}u  - \mu \Delta \dot{\Delta}_j \mathbb{P}u + (w \cdot \nabla) \dot{\Delta}_j \mathbb{P} u = \dot{\Delta}_j \mathbb{P}g + (w \cdot \nabla) \dot{\Delta}_j\mathbb{P} u. 
\end{align}
Taking inner product of this with $|\dot{\Delta}_j \mathbb{P} u |^{p-2}\dot{\Delta}_j \mathbb{P} u$ and integrating over $\mathbb{R}^d$, 
we have 
\begin{align}\label{pf:lemm:lin-P-2}
    \frac{1}{p}\frac{d}{dt}\| \dot{\Delta}_j \mathbb{P}u \|_{L^p}^p
    + 
    c2^{2j}\| \dot{\Delta}_j \mathbb{P}u \|_{L^p}^p
    \leqslant
    \| \dot{\Delta}_j g + (w \cdot \nabla) \dot{\Delta}_j u \|_{L^p}\| \dot{\Delta}_j \mathbb{P}u \|_{L^p}^{p-1},
\end{align}
where, we have used the following well-known fact:
\begin{align}
    &
    \int_{\mathbb{R}^d} (w \cdot \nabla)\dot{\Delta}_j \mathbb{P} u \cdot (|\dot{\Delta}_j \mathbb{P}u|^{p-2}\dot{\Delta}_j \mathbb{P}u)dx = 0, \label{div-free}\\
    &
    \int_{\mathbb{R}^d} (-\mu\Delta\dot{\Delta}_j \mathbb{P} u) \cdot (|\dot{\Delta}_j \mathbb{P}u|^{p-2}\dot{\Delta}_j \mathbb{P}u)dx \geqslant c 2^{2j} \| \dot{\Delta}_j \mathbb{P}u \|_{L^p}^p.\label{dissp}
\end{align}
Here, we note that \eqref{div-free} is ensured by $\div w =0$ and integration by parts. \eqref{dissp} is proved in \cites{Can-Pla-02,Che-Mia-Zha-07,Pla-00}.
Dividing \eqref{pf:lemm:lin-P-2} by $\| \dot{\Delta}_j \mathbb{P}u \|_{L^p}^{p-1}$ and then integrating it with respect to the time variable, 
we complete the proof. 
\end{proof}
Next, we focus on the estimates for the compressible part of the solution to \eqref{eq:lin-1}.
In the following lemma, we consider the estimate for the high frequency part.
\begin{lemm}\label{lemm:lin-Q-1}
There exist positive constants
$\beta_0=\beta_0(d)$ and $C = C(d)$ such that 
for any 
$s \in \mathbb{R}$, 
$\varepsilon > 0$, 
$1 \leqslant{} p \leqslant{} \infty$, 
an interval $I \subset \mathbb{R}$ with the infimum $t_0$
and 
given space-time vector field $v=v(t,x)$ on $I \times \mathbb{R}^d$,
the compressible part $(a,\mathbb{Q}u)$ of the solution to \eqref{eq:lin-1} satisfies
\begin{align}
    \begin{split}
    &\varepsilon \| a \|_{\widetilde{L^{\infty}}(I;\dB_{p,1}^{s+1})}^{h;\frac{\beta_0}{\varepsilon}}
    +
    \frac{1}{\varepsilon}
    \| a \|_{L^1(I;\dB_{p,1}^{s+1})}^{h;\frac{\beta_0}{\varepsilon}}
    +
    \| \mathbb{Q}u \|_{\widetilde{L^{\infty}}(I;\dB_{p,1}^{s})\cap L^1(I;\dB_{p,1}^{s+2})}^{h;\frac{\beta_0}{\varepsilon}}\\
    &\quad 
    \leqslant{}
    C
    \bigg(\varepsilon\|a(t_0)\|_{\dB_{p,1}^{s+1}}^{h;\frac{\beta_0}{\varepsilon}}
    +
    \| \mathbb{Q}u(t_0) \|_{\dB_{p,1}^{s}}^{h;\frac{\beta_0}{\varepsilon}}
    \bigg)
    + 
    C\|(f,\mathbb{Q}g)\|_{L^1(I;\dB_{p,1}^{s})}^{h;\frac{\beta_0}{\varepsilon}}\\
    &\qquad
    +
    C\varepsilon\sum_{2^j \geqslant \frac{\beta_0}{\varepsilon}}2^{(s+1)j}
    \| \dot{\Delta}_j f +v\cdot \nabla \dot{\Delta}_j a\|_{L^1(I;L^p)}
    +
    \frac{C\varepsilon}{p}
    \left\|
        \| \div v \|_{L^{\infty}}
        \| a \|_{\dB_{p,1}^{s+1}}
    \right\|_{L^1(I)},
    \end{split}
\end{align}
provided that the right hand side is finite.
\end{lemm}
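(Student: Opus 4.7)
The proof will follow the effective-velocity high-frequency $L^p$ method of Haspot and Charve--Danchin, adapted to accommodate the artificial advection by $v$ inserted in the density equation. After applying $\dot{\Delta}_j$ and $\mathbb{Q}$ to \eqref{eq:lin-1}, using $\mathcal{L}\mathbb{Q}u=\nu\Delta\mathbb{Q}u=\Delta\mathbb{Q}u$ from the normalization $\nu=1$, one rewrites the density equation tautologically as
\begin{equation*}
\partial_t\dot{\Delta}_j a + (v\cdot\nabla)\dot{\Delta}_j a + \frac{1}{\varepsilon}\div\dot{\Delta}_j\mathbb{Q}u = \dot{\Delta}_j f + (v\cdot\nabla)\dot{\Delta}_j a,
\end{equation*}
so that the convection is moved to the source on the right-hand side. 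I then introduce the effective velocity
$\mathcal{W}_j := \dot{\Delta}_j\mathbb{Q}u - \varepsilon\nabla(-\Delta)^{-1}\dot{\Delta}_j a$,
which diagonalizes the compressible mode at leading order: a short calculation combining the two equations shows that $\mathcal{W}_j$ satisfies a heat equation whose source is linear in $\dot{\Delta}_j a$, $\mathcal{W}_j$, and the given data, while $\dot{\Delta}_j a$ inherits a damped transport equation whose dissipative rate is of order $\varepsilon^{-2}$ on the sector $2^j\geqslant \beta_0/\varepsilon$.

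Next, I perform $L^p$ energy estimates on each equation. For $\mathcal{W}_j$, multiplying by $|\mathcal{W}_j|^{p-2}\mathcal{W}_j$ and integrating in $x$, the Planchon-type pointwise lower bound used in Lemma \ref{lemm:lin-P-2} yields the parabolic gain $c\,2^{2j}\|\mathcal{W}_j\|_{L^p}$. For $\dot{\Delta}_j a$, testing against $|\dot{\Delta}_j a|^{p-2}\dot{\Delta}_j a$ and using the divergence-theorem identity
\begin{equation*}
\int_{\mathbb{R}^d}(v\cdot\nabla)\dot{\Delta}_j a\,\bigl|\dot{\Delta}_j a\bigr|^{p-2}\dot{\Delta}_j a\,dx = -\frac{1}{p}\int_{\mathbb{R}^d}(\div v)\bigl|\dot{\Delta}_j a\bigr|^p\,dx
\end{equation*}
accounts precisely for the $\tfrac{1}{p}\|\div v\|_{L^\infty}\|\dot{\Delta}_j a\|_{L^p}$ error term and produces a damping at rate $c\,\varepsilon^{-2}$ on the high-frequency sector. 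Dividing both inequalities by $\|\cdot\|_{L^p}^{p-1}$, integrating in time, using the Bernstein estimate $\|\varepsilon\nabla(-\Delta)^{-1}\dot{\Delta}_j a\|_{L^p}\lesssim \beta_0^{-1}\|\dot{\Delta}_j a\|_{L^p}$ (valid for $2^j\geqslant \beta_0/\varepsilon$) to invert $\dot{\Delta}_j\mathbb{Q}u \leftrightarrow \mathcal{W}_j$, then multiplying by $2^{sj}$ and $2^{(s+1)j}$ and summing over $2^j\geqslant \beta_0/\varepsilon$, yields the claimed truncated-Besov bound.

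The main obstacle is the absorption step: the effective-velocity substitution generates cross terms between $\dot{\Delta}_j a$ and $\mathcal{W}_j$ that must be swallowed by the dissipation $c\,2^{2j}\|\mathcal{W}_j\|_{L^p}$ and the damping $c\,\varepsilon^{-2}\|\dot{\Delta}_j a\|_{L^p}$, and this forces the threshold $\beta_0=\beta_0(d)$ to be chosen sufficiently small, uniformly in $\varepsilon$ and in the data. The insertion of the convection by $v$ does not interfere with this absorption, since by construction $(v\cdot\nabla)\dot{\Delta}_j a$ has been shifted onto the source of the density equation, contributing only to the norm $\|\dot{\Delta}_j f + v\cdot\nabla\dot{\Delta}_j a\|_{L^1(I;L^p)}$ together with the $\tfrac{1}{p}\|\div v\|_{L^\infty}\|\dot{\Delta}_j a\|_{L^p}$ term produced by integration by parts.
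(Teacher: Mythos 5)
Your overall strategy — effective velocity plus $L^p$ energy estimates, with the artificial advection $v\cdot\nabla$ shifted to the right-hand side and the $\frac{1}{p}\|\div v\|_{L^\infty}$ error produced by the divergence identity — is indeed what the paper does. But your effective velocity is wrong in both sign and scaling, and this is not a cosmetic slip: it destroys the mechanism you then invoke.

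You set $\mathcal{W}_j := \dot{\Delta}_j\mathbb{Q}u - \varepsilon\nabla(-\Delta)^{-1}\dot{\Delta}_j a$, i.e.\ $w=\mathbb{Q}u+\kappa\nabla(-\Delta)^{-1}a$ with $\kappa=-\varepsilon$. Since $\div\nabla(-\Delta)^{-1}a=-a$, one gets $\div\mathbb{Q}u=\div w+\kappa a$, so the density equation becomes $\partial_t a+\tfrac{\kappa}{\varepsilon}a=f-\tfrac{1}{\varepsilon}\div w+\dots$. With your $\kappa=-\varepsilon$ this reads $\partial_t a - a = \dots$, an \emph{anti}-damped equation with rate $-1$, not a damping of rate $\varepsilon^{-2}$ as you assert. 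Moreover your choice fails to produce the essential cancellation in the $w$-equation: one needs $\Delta\mathbb{Q}u-\Delta w=\tfrac{1}{\varepsilon}\nabla a$ to cancel the $-\tfrac{1}{\varepsilon}\nabla a$ coming from the momentum equation, and this forces $\kappa=1/\varepsilon$. With $\kappa=-\varepsilon$ the source of the heat equation for $\mathcal{W}_j$ still contains $-(\tfrac{1}{\varepsilon}+\varepsilon)\nabla\dot{\Delta}_j a$, which on $2^j\geqslant\beta_0/\varepsilon$ is of size $\gtrsim\beta_0/\varepsilon^2\cdot\|\dot{\Delta}_j a\|_{L^p}$ and therefore cannot be absorbed by a damping that in your version is merely $O(1)$ (and of the wrong sign). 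The paper's effective velocity is $w=\mathbb{Q}u+\varepsilon^{-1}\nabla(-\Delta)^{-1}a$; with $\kappa=1/\varepsilon$ the density equation becomes $\varepsilon\partial_t a+\tfrac{1}{\varepsilon}a=\varepsilon f-\div w$, giving the genuine $\varepsilon^{-2}$ damping, and the $w$-equation source is $\tfrac{1}{\varepsilon^2}w-\tfrac{1}{\varepsilon^3}\nabla(-\Delta)^{-1}a+\dots$, every piece of which is subordinate to the heat dissipation $2^{2j}$ and the $a$-damping $\varepsilon^{-2}$ on the high-frequency sector.

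A secondary point: the threshold $\beta_0$ must be taken \emph{large}, not small. The absorption conditions are of the form $C_{**}/(2^j\varepsilon)\leqslant\delta/3$ and $C_{**}/(2^j\varepsilon)^2\leqslant\delta/3$, i.e.\ $2^j\varepsilon\geqslant R$ for some large $R(d)$; this fixes $\beta_0\geqslant R$. Your Bernstein-type inversion bound is also off: with the correct $\varepsilon^{-1}$ scaling one has $\|\varepsilon^{-1}\nabla(-\Delta)^{-1}\dot{\Delta}_j a\|_{L^p}\lesssim\varepsilon^{-1}2^{-j}\|\dot{\Delta}_j a\|_{L^p}\leqslant\beta_0^{-1}\|\dot{\Delta}_j a\|_{L^p}$ on $2^j\geqslant\beta_0/\varepsilon$, which is what makes $\mathcal{W}_j\leftrightarrow\dot{\Delta}_j\mathbb{Q}u$ an equivalence at high frequency after taking $\beta_0$ large.
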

\begin{proof}
        The proof is based on the standard effective velocity argument proposed in \cites{Cha-Dan-10, Has-11-2}
        and introduce the effective velocity $w:=\mathbb{Q}u+\varepsilon^{-1}\nabla(-\Delta)^{-1}a$.
        Then, there holds 
        \begin{align}\label{eq:eff-ve}
            \begin{dcases}
            \varepsilon \partial_t a + \frac{1}{\varepsilon} a 
            = 
            \varepsilon f - \div w \\
            \begin{aligned}
            \partial_t w - \Delta w 
            ={}& 
            \frac{1}{\varepsilon}\nabla (-\Delta)^{-1}f + \mathbb{Q}g  
            \\
            &
            +
            \frac{1}{\varepsilon^2}w
            -
            \frac{1}{\varepsilon^3}\nabla(-\Delta)^{-1}a,
            \end{aligned}\\
            a(0,x) = a_0(x), \quad w(0,x)=w_0(x):=\mathbb{Q}u_0+\varepsilon^{-1}\nabla(-\Delta)^{-1}a_0,
            \end{dcases}
        \end{align}
        Applying $\dot{\Delta}_j$ to \eqref{eq:eff-ve} we see that 
        \begin{align}\label{eq:eff-ve-j}
            \varepsilon \partial_t \dot{\Delta}_j a + \frac{1}{\varepsilon} \dot{\Delta}_j a
            +
            \varepsilon 
            u \cdot \nabla \dot{\Delta}_j a
            = 
            \varepsilon (\dot{\Delta}_j f + u \cdot \nabla \dot{\Delta}_j a) 
            - 
            \div \dot{\Delta}_j w
        \end{align}
        Multiplying \eqref{eq:eff-ve-j} by $|\dot{\Delta}_j a|^{p-2}\dot{\Delta}_ja$ and integrating over $\mathbb{R}^d$, we obtain 
        \begin{align}
            \frac{\varepsilon}{p}
            \frac{d}{dt}
            \| \dot{\Delta}_j a \|_{L^p}^p
            +
            \frac{1}{\varepsilon}\| \dot{\Delta}_j a \|_{L^p}^p
            \leqslant{}&
            \left(
            \varepsilon\| \dot{\Delta}_j f + v \cdot \nabla \dot{\Delta}_j a \|_{L^p}
            +
            \| \div \dot{\Delta}_j w \|_{L^p}
            \right)
            \| \dot{\Delta}_j a \|_{L^p}^{p-1}\\
            &
            +
            \frac{1}{p}
            \int_{\mathbb R^d} | \div v | |\dot{\Delta}_j a |^p dx.
        \end{align}
        Here, we have used 
        \begin{align}
            \int_{\mathbb R^d} ( v \cdot \nabla \dot{\Delta}_j a )\cdot|\dot{\Delta}_ja|^{p-2}\dot{\Delta}_ja dx
            =
            -
            \frac{1}{p}
            \int_{\mathbb R^d} (\div v)
            |\dot{\Delta}_ja|^p dx,
        \end{align}
        which is obtained by the integration by parts.
        Thus, we obtain 
        \begin{align}\label{eff-a}
            \begin{split}
            &\varepsilon
            \| \dot{\Delta}_j a \|_{L^{\infty}(I;L^p)}
            +
            \frac{1}{\varepsilon}
            \| \dot{\Delta}_j a \|_{L^1(I;L^p)}
            \leqslant
            \varepsilon 
            \| \dot{\Delta}_j a(t_0) \|_{L^p}
            +
            C_*2^j\| \dot{\Delta}_j w \|_{L^1(I;L^p)}\\
            &\quad
            +
            \varepsilon
            \| \dot{\Delta}_j f + v \cdot \nabla \dot{\Delta}_j a\|_{L^1(I;L^p)}
            +
            \frac{\varepsilon}{p}
            \left\|
            \| \div v \|_{L^{\infty}}
            \| \dot{\Delta}_ja \|_{L^p}
            \right\|_{L^1(I)}
            \end{split}
        \end{align}
        for some positive constant $C_*=C_*(d)$.
        It follows from Lemma \ref{lemm:lin-P-1} that
        \begin{align}\label{eff-w}
            \begin{split}
            &
            \| \dot{\Delta}_j w \|_{L^{\infty}(I;L^p)}
            +
            2^{2j}
            \| \dot{\Delta}_j w \|_{L^1(I;L^p)}\\
            &\quad
            \leqslant{}
            \| \dot{\Delta}_j w(t_0) \|_{L^p}
            +
            \frac{C_{**}}{\varepsilon2^j}\| \dot{\Delta}_j f \|_{L^1(I;L^p)}
            +
            C\| \dot{\Delta}_j \mathbb{Q}g \|_{L^1(I;L^p)}\\
            &\qquad
            +
            \frac{C_{**}}{(\varepsilon2^j)^2}
            2^{2j}
            \| \dot{\Delta}_j w \|_{L^1( I; L^p )}
            +
            \frac{C_{**}}{(\varepsilon2^j)^2}
            \cdot
            \frac{2^{j}}{\varepsilon}
            \| \dot{\Delta}_j a \|_{L^1( I; L^p )}
            \end{split}
        \end{align}
        for some positive constant $C_{**}=C_{**}(d)$.
        Let $\delta$ be a positive constant to be determined later.
        Then, combining \eqref{eff-a} and \eqref{eff-w}, we have
        \begin{align}\label{est-eff-1}
            \begin{split}
            &\delta 2^j
            \varepsilon
            \| \dot{\Delta}_j a \|_{L^{\infty}(I;L^p)}
            +
            \frac{\delta2^j}{\varepsilon}
            \| \dot{\Delta}_j a \|_{L^1(I;L^p)}
            +
            \| \dot{\Delta}_j w \|_{L^{\infty}(I;L^p)}
            +
            2^{2j}
            \| \dot{\Delta}_j w \|_{L^1(I;L^p)}\\
            &\quad
            \leqslant
            \delta 2^j
            \varepsilon 
            \| \dot{\Delta}_j a(t_0) \|_{L^p}
            +
            \| \dot{\Delta}_j w(t_0) \|_{L^p}
            +
            \delta 
            C_*2^{2j}\| \dot{\Delta}_j w \|_{L^1(I;L^p)}\\
            &\qquad
            +
            \delta 2^j 
            \varepsilon
            \| \dot{\Delta}_j f + v \cdot \nabla \dot{\Delta}_j a\|_{L^1(I;L^p)}
            +
            \frac{\delta 2^j \varepsilon}{p}
            \left\|
            \| \div v \|_{L^{\infty}}
            \|\dot{\Delta}_j a \|_{L^p}
            \right\|_{L^1(I)}\\
            &\qquad 
            +
            \frac{C_{**}}{2^j\varepsilon}\| \dot{\Delta}_j f \|_{L^1(I;L^p)}
            +
            C\| \dot{\Delta}_j \mathbb{Q}g \|_{L^1(I;L^p)}\\
            &\qquad
            +
            \frac{C_{**}}{(\varepsilon2^j)^2}
            2^{2j}
            \| \dot{\Delta}_j w \|_{L^1( I; L^p )}
            +
            \frac{C_{**}}{(\varepsilon2^j)^2}
            \cdot
            \frac{2^{j}}{\varepsilon}
            \| \dot{\Delta}_j a \|_{L^1( I; L^p )}.
            \end{split}
        \end{align}
        Let us choose $\delta$ so that $1-C_*\delta \geqslant \delta$ and assume that $j$ satisfies $C_{**}/(2^j\varepsilon) \leqslant \delta/3$ and $C_{**}/(2^j\varepsilon)^2 \leqslant \delta/3$.
        Then, we see that 
        \begin{align}
            &
            2^j
            \varepsilon
            \| \dot{\Delta}_j a \|_{L^{\infty}(I;L^p)}
            +
            \frac{2^j}{\varepsilon}
            \| \dot{\Delta}_j a \|_{L^1(I;L^p)}
            +
            \| \dot{\Delta}_j w \|_{L^{\infty}(I;L^p)}
            +
            2^{2j}
            \| \dot{\Delta}_j w \|_{L^1(I;L^p)}\\
            &\quad
            \leqslant
            2^j
            \varepsilon 
            \| \dot{\Delta}_j a(t_0) \|_{L^p}
            +
            \| \dot{\Delta}_j w(t_0) \|_{L^p}
            +
            C\| \dot{\Delta}_j (f,\mathbb{Q}g) \|_{L^1(I;L^p)}\\
            &\qquad
            +
            C
            2^j 
            \varepsilon
            \| \dot{\Delta}_j f + v \cdot \nabla \dot{\Delta}_j a\|_{L^1(I;L^p)}
            +
            \frac{C2^j \varepsilon}{p}
            \left\|
            \| \div v \|_{L^{\infty}}
            \|\dot{\Delta}_j a \|_{L^p}
            \right\|_{L^1(I)}.
        \end{align}
        Here, since we see that $2^j
            \varepsilon 
            \| \dot{\Delta}_j a(t_0) \|_{L^p}
            +
            \| \dot{\Delta}_j w(t_0) \|_{L^p} 
            \sim 
            2^j
            \varepsilon 
            \| \dot{\Delta}_j a(t_0) \|_{L^p}
            +
            \| \dot{\Delta}_j \mathbb{Q}u(t_0) \|_{L^p}$
        and
        \begin{gather}
            2^j
            \varepsilon
            \| \dot{\Delta}_j a \|_{L^{\infty}(I;L^p)}
            +
            \| \dot{\Delta}_j w \|_{L^{\infty}(I;L^p)}
            \sim 
            2^j
            \varepsilon
            \| \dot{\Delta}_j a \|_{L^{\infty}(I;L^p)}
            +
            \| \dot{\Delta}_j \mathbb{Q}u\|_{L^{\infty}(I;L^p)},\\
            \frac{2^j}{\varepsilon}
            \| \dot{\Delta}_j a \|_{L^1(I;L^p)}
            +
            2^{2j}
            \| \dot{\Delta}_j w \|_{L^1(I;L^p)}
            \sim
            \frac{2^j}{\varepsilon}
            \| \dot{\Delta}_j a \|_{L^1(I;L^p)}
            +
            2^{2j}
            \| \dot{\Delta}_j \mathbb{Q}u \|_{L^1(I;L^p)}
        \end{gather}
        provided that $2^j\varepsilon \geqslant R$ for some sufficiently large constant $R=R(d)>1$, we complete the proof.
    \end{proof}

The next lemma provides an estimate for the low frequency part of the compressible part of the solution to \eqref{eq:lin-1}.
\begin{lemm}\label{lemm:lin-Q-2}
There exists a positive constant $C = C(d)$ such that 
for any 
$\varepsilon>0$, 
$s \in \mathbb{R}$, 
$\alpha, \beta\geqslant0$ with $\alpha <\beta/\varepsilon$
and 
an interval $I \subset [0,\infty)$ with the infimum $t_0$,
the compressible part $(a,\mathbb{Q}u)$ of the solution to \eqref{eq:lin-1} satisfies
\begin{align}\label{pt:ene-F}
    \begin{split}
    &
    \| (a,\mathbb{Q}u) \|_{\widetilde{L^{\infty}}(I;\dB_{2,1}^s)}^{m;\alpha,\frac{\beta}{\varepsilon}}
    +
    \frac{1}{(1+\beta)^2}
    \| (a,\mathbb{Q}u) \|_{L^1(I;\dB_{2,1}^{s+2})}^{m;\alpha,\frac{\beta}{\varepsilon}}\\
    &\quad 
    \leqslant{}
    C\left(1+\beta \right)
    \bigg(
    \| (a,\mathbb{Q}u)(t_0) \|_{\dB_{2,1}^s}^{m;\alpha,\frac{\beta}{\varepsilon}}
    +
    \|(f,\mathbb{Q}g)\|_{L^1(I;\dB_{2,1}^s)}^{m;\alpha,\frac{\beta}{\varepsilon}}
    \bigg),
    \end{split}
\end{align}
provided that the right hand side is finite.
\end{lemm}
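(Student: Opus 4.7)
My approach is a Lyapunov functional strategy for the compressible acoustic-heat block, restricted to middle frequencies $\alpha\leq 2^j<\beta/\varepsilon$. First, applying $\mathbb{Q}$ to the velocity equation and introducing the scalar unknown $d:=\Lambda^{-1}\!\div u$ with $\Lambda:=(-\Delta)^{1/2}$ gives $\|\dot{\Delta}_j d\|_{L^2}=\|\dot{\Delta}_j\mathbb{Q}u\|_{L^2}$ and the reduced system
\begin{align}
\partial_t a+\tfrac{1}{\varepsilon}\Lambda d=f,\qquad \partial_t d-\Delta d-\tfrac{1}{\varepsilon}\Lambda a=\widetilde{g},\qquad \widetilde{g}:=\Lambda^{-1}\!\div g.
\end{align}
Writing $A_j:=\|\dot{\Delta}_j a\|_{L^2}$, $D_j:=\|\dot{\Delta}_j d\|_{L^2}$, $F_j:=\|\dot{\Delta}_j f\|_{L^2}$ and $\widetilde{G}_j:=\|\dot{\Delta}_j\widetilde{g}\|_{L^2}$, the standard $L^2$ energy identity at frequency $j$ reads
\begin{align}
\tfrac{1}{2}\tfrac{d}{dt}(A_j^2+D_j^2)+\|\Lambda\dot{\Delta}_j d\|_{L^2}^2=\int\bigl(\dot{\Delta}_j f\cdot\dot{\Delta}_j a+\dot{\Delta}_j\widetilde{g}\cdot\dot{\Delta}_j d\bigr)dx,
\end{align}
the $\tfrac{1}{\varepsilon}$-couplings cancelling by self-adjointness of $\Lambda$; this alone delivers parabolic dissipation only for $d$.

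To recover dissipation on $a$ I would compute the time derivative of the cross quantity $\int\Lambda\dot{\Delta}_j a\cdot\dot{\Delta}_j d\,dx$; using both equations this equals $\tfrac{1}{\varepsilon}\|\nabla\dot{\Delta}_j a\|_{L^2}^2-\tfrac{1}{\varepsilon}\|\nabla\dot{\Delta}_j d\|_{L^2}^2+R_j$, where $R_j$ collects the higher-order remainder $\int\Lambda\dot{\Delta}_j a\cdot\Delta\dot{\Delta}_j d\,dx$ together with the two source pairings $\int\Lambda\dot{\Delta}_j f\cdot\dot{\Delta}_j d$ and $\int\Lambda\dot{\Delta}_j a\cdot\dot{\Delta}_j\widetilde{g}$. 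Then I would introduce the Lyapunov functional
\begin{align}
\mathcal{L}_j^2:=A_j^2+D_j^2-2\kappa\varepsilon\int\Lambda\dot{\Delta}_j a\cdot\dot{\Delta}_j d\,dx,
\end{align}
with $\kappa=\kappa(\beta)>0$ to be selected. The middle-frequency bound $\varepsilon 2^j<\beta$ supplies the two key estimates $|2\kappa\varepsilon\int\Lambda\dot{\Delta}_j a\cdot\dot{\Delta}_j d|\lesssim\kappa\beta(A_j^2+D_j^2)$, ensuring $\mathcal{L}_j^2\sim A_j^2+D_j^2$ once $\kappa\beta$ is small, and the crucial remainder bound $|\kappa\varepsilon\int\Lambda\dot{\Delta}_j a\cdot\Delta\dot{\Delta}_j d|\lesssim\kappa\beta\cdot 2^{2j}A_jD_j$.

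Adding $(-\kappa\varepsilon)$ times the cross-term identity to the energy identity produces the dissipative contribution $+\kappa\|\nabla\dot{\Delta}_j a\|_{L^2}^2\sim\kappa\cdot 2^{2j}A_j^2$ on the left, while $\|\Lambda\dot{\Delta}_j d\|_{L^2}^2\gtrsim 2^{2j}D_j^2$ already provides dissipation for $d$. Young's inequality on the harmful off-diagonal term yields $\tfrac{1}{2}\kappa\cdot 2^{2j}A_j^2+\tfrac{1}{2}\kappa\beta^2\cdot 2^{2j}D_j^2$, and the second piece is absorbed into the $d$-dissipation provided $\kappa\beta^2$ is small. This calibration forces $\kappa\sim(1+\beta)^{-2}$, after which
\begin{align}
\tfrac{1}{2}\tfrac{d}{dt}\mathcal{L}_j^2+\frac{c_*\cdot 2^{2j}}{(1+\beta)^2}(A_j^2+D_j^2)\leq C(F_j+\widetilde{G}_j)\sqrt{A_j^2+D_j^2}.
\end{align}
Dividing by $\mathcal{L}_j\sim\sqrt{A_j^2+D_j^2}$ and integrating over $I=(t_0,t)$ produces the dyadic estimate
\begin{align}
\|\mathcal{L}_j\|_{L^\infty(I)}+\frac{2^{2j}}{(1+\beta)^2}\|\mathcal{L}_j\|_{L^1(I)}\leq C\bigl(\mathcal{L}_j(t_0)+\|F_j+\widetilde{G}_j\|_{L^1(I)}\bigr).
\end{align}

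Finally, multiplying by $2^{sj}$ and summing over $\alpha\leq 2^j<\beta/\varepsilon$, using $\|\widetilde{G}_j\|_{L^2}\leq\|\dot{\Delta}_j\mathbb{Q}g\|_{L^2}$ (boundedness of $\Lambda^{-1}\!\div$ on $\mathbb{Q}L^2$) together with the equivalence $\mathcal{L}_j\sim A_j+D_j$, gives the stated middle-frequency estimate, the $(1+\beta)$ prefactor on the right-hand side harmlessly absorbing the equivalence constants between $\mathcal{L}_j$ and $A_j+D_j$ as well as the $\kappa\beta$-source contributions. The main obstacle is the coupled absorption step: the off-diagonal remainder produced by the cross-term trick must be split simultaneously between the dissipation of $a_j$ and that of $d_j$ under the single bound $\varepsilon 2^j\leq\beta$, and it is this requirement that forces $\kappa\sim(1+\beta)^{-2}$ and propagates into the $(1+\beta)^{-2}$-weight multiplying the $L^1(\dot{B}_{2,1}^{s+2})$-norm on the left.
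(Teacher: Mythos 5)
Your proposal is correct and follows essentially the route the paper indicates: the paper delegates to Danchin's pointwise Fourier-side energy calculation, and your cross-term Lyapunov functional $\mathcal{L}_j^2=A_j^2+D_j^2-2\kappa\varepsilon\int\Lambda\dot{\Delta}_ja\cdot\dot{\Delta}_jd$ with $\kappa\sim(1+\beta)^{-2}$ is exactly that device, carried out at the Littlewood--Paley block level in $L^2$ rather than mode-by-mode in $\xi$ (the two presentations are equivalent here since everything is $L^2$-based). The calibration argument — using $\varepsilon2^j<\beta$ to control both the equivalence $\mathcal{L}_j\sim(A_j^2+D_j^2)^{1/2}$ and the off-diagonal remainder $\kappa\varepsilon\int\Lambda a\cdot\Delta d$, then Young's inequality forcing $\kappa(1+\beta^2)\lesssim1$ — correctly reproduces the $(1+\beta)^{-2}$ weight on the $L^1(\dot{B}^{s+2}_{2,1})$ dissipation and in fact yields a slightly cleaner right-hand side than stated, with the $(1+\beta)$ prefactor there being unneeded slack.
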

\begin{rem}
In the estimate \eqref{pt:ene-F}, the case $\alpha=0$ implies 
\begin{align}
    &
    \| (a,\mathbb{Q}u) \|_{\widetilde{L^{\infty}}(I;\dB_{2,1}^s)}^{\ell;\frac{\beta}{\varepsilon}}
    +
    \frac{1}{(1+\beta)^2}
    \| (a,\mathbb{Q}u) \|_{L^1(I;\dB_{2,1}^{s+2})}^{\ell;\frac{\beta}{\varepsilon}}\\
    &\quad 
    \leqslant{}
    C\left(1+\beta \right)
    \bigg(
    \| (a,\mathbb{Q}u)(t_0) \|_{\dB_{2,1}^s}^{\ell;\frac{\beta}{\varepsilon}}
    +
    \|(f,\mathbb{Q}g)\|_{L^1(I;\dB_{2,1}^s)}^{\ell;\frac{\beta}{\varepsilon}}
    \bigg).
\end{align}
\end{rem}
We may prove Lemma \ref{lemm:lin-Q-2} by the pointwise energy calculation in the Fourier-side.
We refer to \cites{Dan-00-G,Dan-16} for the detail proof.

Finally, we consider the Strichartz estimates for the solution to \eqref{eq:lin-1}.
\begin{lemm}\label{lemm:lin-Q-3}
Let $2\leqslant{} q,r \leqslant{} \infty$ satisfy 
\begin{align}
    \frac{1}{r} \leqslant{} \frac{d-1}{2}\left( \frac{1}{2} - \frac{1}{q} \right),\quad
    (q,r,d) \neq (\infty,2,3).
\end{align}
Let an interval $I \subset [0,\infty)$ with the infimum $t_0$.
Then, there exists a positive constant $C=C(d,q,r)$ such that
\begin{align}
    \| (a, \mathbb{Q}u) \|_{\widetilde{L^r}(I;\dB_{q,1}^{\frac{d}{q}+\frac{1}{r}+\zeta})}^{\ell;\alpha}
    \leqslant{}
    C\varepsilon^{\frac{1}{r}}(1+\varepsilon \alpha)^3
    \bigg(
    \| (a,\mathbb{Q}u)(t_0) \|_{\dB_{2,1}^{\frac{d}{2}+\zeta}}^{\ell;\alpha}
    +
    \| (f,\mathbb{Q}g) \|_{L^1(I;\dB_{2,1}^{\frac{d}{2}+\zeta})}^{\ell;\alpha}
    \bigg)
\end{align}
for all $\varepsilon>0$, $\alpha > 0$ and $\zeta\in \mathbb{R}$.
\end{lemm}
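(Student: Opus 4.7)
The plan is to reduce the compressible part of the linearized system to a scalar wave-with-damping system, apply the classical Strichartz inequality for the half-wave propagator after rescaling time, and sum the resulting frequency-localized bounds over the low-frequency range $2^j<\alpha$.

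To set up the reduction, set $\Lambda:=(-\Delta)^{1/2}$ and introduce the scalar unknown $b:=\Lambda^{-1}\div u$. Since Riesz transforms are bounded on each Littlewood-Paley block in $L^q$ for $1<q<\infty$, the $\widetilde{L^r}(I;\dB_{q,1}^{s})$ norm of $\mathbb{Q}u$ is equivalent to that of $b$ on the range $2\leqslant q<\infty$ of interest. On curl-free vector fields $\mathcal{L}\mathbb{Q}u=\nu\Delta\mathbb{Q}u=\Delta\mathbb{Q}u$ under the normalization $\nu=1$, so the compressible part of \eqref{eq:lin-1} becomes
\begin{align}
\partial_t a + \tfrac{1}{\varepsilon}\Lambda b = f,\qquad \partial_t b - \Delta b - \tfrac{1}{\varepsilon}\Lambda a = \Lambda^{-1}\div\mathbb{Q}g,
\end{align}
whose Fourier symbol is a $2\times 2$ matrix with eigenvalues $\lambda_{\pm}(\xi)=-|\xi|^2/2 \pm i(|\xi|/\varepsilon)\sqrt{1-\varepsilon^2|\xi|^2/4}$ in the regime $\varepsilon|\xi|<2$. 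Each dyadic block thus reduces to a parabolic damping factor multiplying a combination of the half-wave propagators $e^{\pm it|\xi|/\varepsilon}$. The rescaling $t\mapsto\varepsilon t$ converts $e^{\pm it\Lambda/\varepsilon}$ to the unit-speed half-wave group, for which the standard dyadic Strichartz inequality, valid on wave-admissible pairs $(q,r)$ with $1/r\leqslant(d-1)(1/2-1/q)/2$ excluding the forbidden endpoint $(\infty,2,3)$, yields for every block with $2^j<\alpha$
\begin{align}
\|e^{\pm it\Lambda/\varepsilon}\dot{\Delta}_j\phi\|_{L^r(I;L^q)} \leqslant C\varepsilon^{1/r}\,2^{j(d/2-d/q-1/r)}\,\|\dot{\Delta}_j\phi\|_{L^2};
\end{align}
the factor $\varepsilon^{1/r}$ is the gain from the time dilation. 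Expressing $(a,\mathbb{Q}u)$ via Duhamel against the diagonalized semigroup, applying this estimate to both the initial-data and the source contributions, multiplying by $2^{j(d/q+1/r+\zeta)}$ and summing in $\ell^1(\{2^j<\alpha\})$ produces the claimed bound, with $\dB^{d/2+\zeta}_{2,1}$ appearing on the right since $2^{j(d/2-d/q-1/r)}\cdot 2^{j(d/q+1/r+\zeta)}=2^{j(d/2+\zeta)}$.

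The main obstacle, and the source of the prefactor $(1+\varepsilon\alpha)^3$, is that the diagonalizer of the symbol matrix degenerates as $\varepsilon|\xi|\to 2$: its eigenprojectors contain inverse powers of $\sqrt{1-\varepsilon^2|\xi|^2/4}$, producing a polynomial loss in $\varepsilon|\xi|\leqslant\varepsilon\alpha$. In the complementary range $\varepsilon|\xi|\geqslant 2$, which is empty unless $\varepsilon\alpha\gtrsim 1$, the eigenvalues are real and strongly dissipative; there I would abandon the dispersive mechanism entirely and invoke the parabolic bound of Lemma \ref{lemm:lin-Q-2} on that sub-band, paying another polynomial factor of $\varepsilon\alpha$ through Bernstein and H\"older inequalities in time to pass from $L^{\infty}\cap L^{1}$-in-time control into $L^r$. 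Carefully tracking uniformity of the constants across the matching frequency $\varepsilon|\xi|\sim 2$ and collecting the contributions from both regimes then yields the clean polynomial prefactor $(1+\varepsilon\alpha)^3$.
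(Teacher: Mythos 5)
Your reduction to the scalar unknown $b=\Lambda^{-1}\div u$ is exactly the paper's choice of $v:=|\nabla|^{-1}\div\mathbb{Q}u$, and the admissibility window for $(q,r)$ and the $\varepsilon^{1/r}$ gain by time rescaling are likewise as in the paper. But the core of your argument diverges in a way that creates a genuine gap.

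You diagonalize the $2\times2$ symbol and assert that each dyadic block ``reduces to a parabolic damping factor multiplying a combination of the half-wave propagators $e^{\pm it|\xi|/\varepsilon}$.'' That is not what the eigenvalues you wrote down give: the oscillatory part of the propagator is $e^{\pm i (t|\xi|/\varepsilon)\sqrt{1-\varepsilon^2|\xi|^2/4}}$, a frequency-dependent modification of the half-wave whose dispersion relation flattens as $\varepsilon|\xi|\to2$. The standard dyadic wave Strichartz estimate you invoke is for the genuine half-wave $e^{\pm it\Lambda}$ and cannot be applied as-is to this modified flow, particularly on the blocks with $\varepsilon 2^{j}$ close to $2$. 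Independently, the eigenprojectors carry a factor $\bigl(1-\varepsilon^{2}|\xi|^{2}/4\bigr)^{-1/2}$; over the summation range $2^j<\alpha$ this gives a loss on the order of $\bigl(1-\varepsilon^{2}\alpha^{2}/4\bigr)^{-1/2}$, which diverges as $\varepsilon\alpha\uparrow 2$ rather than staying below the polynomial bound $(1+\varepsilon\alpha)^{3}$ claimed in the lemma. Your sentence about ``carefully tracking uniformity of the constants across the matching frequency'' is precisely where the work would have to happen, and it is not carried out; as written, the argument would not produce the stated constant for all $\varepsilon>0,\ \alpha>0$.

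The paper sidesteps both obstructions: it does \emph{not} diagonalize. It writes $(a,v)$ by Duhamel against the undamped, unmodified rotation propagator $U\bigl((t-t_0)/\varepsilon\bigr)$ and moves the entire viscosity term $\Delta v$ into the forcing. Then the genuine wave Strichartz estimate (Propositions 8.15 and 10.30 of Bahouri--Chemin--Danchin) applies cleanly and gives the $\varepsilon^{1/r}$ gain without any degenerate symbol. The remaining term $\varepsilon^{1/r}\|\mathbb{Q}u\|_{L^1(I;\dB_{2,1}^{d/2+2+\zeta})}^{\ell;\alpha}$ is absorbed by Lemma~\ref{lemm:lin-Q-2} (the Fourier-side energy estimate), which is the actual source of the factor $(1+\varepsilon\alpha)^{3}$ --- it is a dissipative bound with $\beta=\varepsilon\alpha$, not a diagonalizer loss. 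You do mention Lemma~\ref{lemm:lin-Q-2}, but only as a fallback for $\varepsilon|\xi|\geqslant2$; in the paper it is the engine for the whole low-frequency band and the factor you are trying to explain comes from it directly.
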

\begin{proof}
Although the proof is based on the argument in \cites{Dan-02-R,Dan-He-16}, 
we give the outline of the proof for the readers' convinience.
Let $v:= |\nabla|^{-1}\div \mathbb{Q}u$.
Then, it is easy to see that $(a,v)$ satisfies 
\begin{align}
    \begin{dcases}
    \partial_t a + \frac{1}{\varepsilon}|\nabla| v = f,\\
    \partial_t v - \frac{1}{\varepsilon}|\nabla| a = \Delta v + |\nabla|^{-1}\div \mathbb{Q}g,\\
    a(0,x)=a_0(x),\quad v(0,x)=v_0(x):=|\nabla|^{-1}\div \mathbb{Q}u_0(x).
    \end{dcases}
\end{align}
Thus, we see that
\begin{align}
    \begin{pmatrix}
    a(t)\\
    v(t)
    \end{pmatrix}
    ={}&
    U\left(\frac{t-t_0}{\varepsilon}\right)
    \begin{pmatrix}
    a(t_0)\\
    v(t_0)
    \end{pmatrix}\\
    &
    +
    \int_{t_0}^t
    U\left(\frac{t-\tau}{\varepsilon}\right)
    \begin{pmatrix}
    f\\
    \Delta v + |\nabla|^{-1}\div \mathbb{Q}g
    \end{pmatrix}
    (\tau)d\tau,
\end{align}
where 
\begin{align}
    U\left( t \right)
    :={}&
    \begin{pmatrix}
    \cos |\nabla|t & -\sin |\nabla| t \\
    \sin |\nabla|t & \cos|\nabla|t
    \end{pmatrix}. 
\end{align}
Then, from the Strichartz estimates for the wave equation on $\mathbb{R}^d$ (see \cite{Bah-Che-Dan-11}*{Propositions 8.15 and 10.30}), we obtain
\begin{align}\label{wave-1}
    \| (a, v) \|_{\widetilde{L^r}(I;\dB_{q,1}^{\frac{d}{q}+\frac{1}{r}+\zeta})}^{\ell;\alpha}
    \leqslant{}&
    C\varepsilon^{\frac{1}{r}}
    \bigg(
    \| (a,\mathbb{Q}u)(t_0)\|_{\dB_{2,1}^{\frac{d}{2}+\zeta}}^{\ell;\alpha}
    +
    \| (f,\mathbb{Q}g) \|_{L^1(I;\dB_{2,1}^{\frac{d}{2}+\zeta})}^{\ell;\alpha}
    \bigg)\\
    &
    +
    C\varepsilon^{\frac{1}{r}}
    \| \mathbb{Q}u \|_{L^1(I;\dB_{2,1}^{\frac{d}{2}+2+\zeta})}^{\ell;\alpha}.
\end{align}
From Lemma \ref{lemm:lin-Q-2}, we have 
\begin{align}\label{wave-2}
    \| \mathbb{Q}u \|_{L^1(I;\dB_{2,1}^{\frac{d}{2}+2+\zeta})}^{\ell;\alpha}
    \leqslant
    C(1+\varepsilon \alpha)^3
    \bigg(
    \| (a,\mathbb{Q}u)(t_0) \|_{\dB_{2,1}^{\frac{d}{2}+\zeta}}^{\ell;\alpha}
    +
    \| (f,\mathbb{Q}g) \|_{L^1(I;\dB_{2,1}^{\frac{d}{2}+\zeta})}^{\ell;\alpha}
    \bigg).
\end{align}
Combining \eqref{wave-1}, \eqref{wave-2} and
\begin{align}
    \| \mathbb{Q}u \|_{\widetilde{L^r}(I;\dB_{q,1}^{\frac{d}{q}+\frac{1}{r}+\zeta})}^{\ell;\alpha}
    =
    \| \nabla|\nabla|^{-1}v \|_{\widetilde{L^r}(I;\dB_{q,1}^{\frac{d}{q}+\frac{1}{r}+\zeta})}^{\ell;\alpha}
    \leqslant
    C
    \| v \|_{\widetilde{L^r}(I;\dB_{q,1}^{\frac{d}{q}+\frac{1}{r}+\zeta})}^{\ell;\alpha},
\end{align}
we complete the proof.
\end{proof}

\section{Nonlinear estimates}\label{sec:nonlin}
In this section, we establish several nonlinear estimates by making use of para-product estimates prepared in Section \ref{sec:pre}.
In the following of this section, $I$ denotes an interval of $\mathbb{R}$.
\begin{lemm}\label{lemm:nonlin-2}
Let $\varepsilon, \beta>0$ and $1 \leqslant p,q \leqslant \infty$ satisfy
\begin{align}
    0 < \frac{1}{p} + \frac{1}{q} \leqslant 1.
\end{align}
Then, there exists a positive constant $C=C(d,p,q)$ such that
\begin{align}
    &
    \varepsilon \| a \div \mathbb{Q}u \|_{L^1(I;\dB_{p,1}^{\frac{d}{p}})}
    +
    \varepsilon
    \sum_{j \in \mathbb{Z}}
    2^{\frac{d}{p}j}
    \| [u\cdot \nabla, \dot{\Delta}_j]a \|_{L^1(I;L^p)}
    +
    \varepsilon
    \left\|
        \| \div u \|_{L^{\infty}}
        \| a \|_{\dB_{p,1}^{\frac{d}{p}}}
    \right\|_{L^1(I)}\\
    &\quad
    \leqslant
    C 
    \varepsilon
    \| a \|_{L^{\infty}(I;\dB_{q,1}^{\frac{d}{q}})}
    \| u \|_{L^1(I;\dB_{p,1}^{\frac{d}{p}+1})}^{h;\frac{\beta}{\varepsilon}}
    +
    C 
    \varepsilon
    \| a \|_{L^{\infty}(I;\dB_{p,1}^{\frac{d}{p}})}
    \| u \|_{L^1(I;\dB_{q,1}^{\frac{d}{q}+1})}^{h;\frac{\beta}{\varepsilon}}\\
    &\qquad
    +
    C
    \beta
    \| a \|_{L^2(I;\dB_{q,1}^{\frac{d}{q}})}
    \| u \|_{L^2(I;\dB_{p,1}^{\frac{d}{p}})}
    +
    C
    \beta
    \| a \|_{L^2(I;\dB_{p,1}^{\frac{d}{p}})}
    \| u \|_{L^2(I;\dB_{q,1}^{\frac{d}{q}})},
\end{align}
provided that the right hand side is finite.
\end{lemm}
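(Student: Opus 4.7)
The plan is to decompose $u$ at the frequency threshold $\beta/\varepsilon$, writing $u = u^h + u^\ell$ with $u^h := \sum_{2^j \geqslant \beta/\varepsilon} \dot{\Delta}_j u$ and $u^\ell := u - u^h$. For each of the three quantities on the left-hand side we estimate the $u^h$-contribution and the $u^\ell$-contribution separately: the $u^h$-piece is designed to produce the first two terms of the right-hand side (those carrying the high-frequency truncated norms), while the $u^\ell$-piece will produce the last two terms (those carrying the factor $\beta$).

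For the high-frequency piece, I apply Lemma \ref{lemm:prod-1} with $\alpha_1=\alpha_2=0$ and $p_1=p_2=q$ (the hypotheses $s+d/p_1=d/p+d/q>0$ and $1/p+1/q\leqslant 1$ are satisfied) to obtain
\[
\| a\,(\div u)^h \|_{\dB_{p,1}^{d/p}} \leqslant C\| a\|_{\dB_{q,1}^{d/q}}\| u^h\|_{\dB_{p,1}^{d/p+1}} + C\| a\|_{\dB_{p,1}^{d/p}}\| u^h\|_{\dB_{q,1}^{d/q+1}},
\]
integrate in time by pairing $L^\infty\cdot L^1$ and multiply by $\varepsilon$. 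The commutator sum is treated identically via Lemma \ref{lemm:comm} with $\eta\equiv 1$ and the same parameters, and the $\div u$-term uses the embedding $\dB_{q,1}^{d/q}\hookrightarrow L^\infty$ to write $\|\div u^h\|_{L^\infty}\leqslant C\| u^h\|_{\dB_{q,1}^{d/q+1}}$; each of these matches the first two terms on the right-hand side.

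For the low-frequency piece, the key observation is that for $u^\ell$ supported in frequencies $2^j<\beta/\varepsilon$, Bernstein's inequality applied blockwise gives
\[
\| u^\ell\|_{\dB_{p,1}^{d/p+1}}\leqslant (\beta/\varepsilon)\,\| u^\ell\|_{\dB_{p,1}^{d/p}},\qquad \| u^\ell\|_{\dB_{q,1}^{d/q+1}}\leqslant (\beta/\varepsilon)\,\| u^\ell\|_{\dB_{q,1}^{d/q}}.
\]
Applying the same product estimate, commutator estimate, and $L^\infty$-embedding as above but now pairing in time via Hölder's inequality $L^1\supset L^2\cdot L^2$, each low-frequency contribution is bounded by $\varepsilon\cdot(\beta/\varepsilon)=\beta$ times $\| a\|_{L^2(I;\dB_{q,1}^{d/q})}\| u^\ell\|_{L^2(I;\dB_{p,1}^{d/p})}$ plus the symmetric term; finally $\| u^\ell\|_{L^2(I;X)}\leqslant \| u\|_{L^2(I;X)}$ closes the estimate.

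The only delicate point is the choice of time splitting for $u^\ell$: one must use $L^2\cdot L^2=L^1$ (not $L^\infty\cdot L^1$), so that the $\varepsilon$ prefactor in front of the three left-hand quantities cancels against precisely one power of $\beta/\varepsilon$ from Bernstein, leaving the clean factor $\beta$ demanded by the right-hand side. Everything else is a direct application of Lemmas \ref{lemm:prod-1} and \ref{lemm:comm} already established in Section \ref{sec:pre}.
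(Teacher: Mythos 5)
Your proof is correct and follows essentially the same route as the paper: both rely on Lemmas \ref{lemm:prod-1} and \ref{lemm:comm} for the spatial bilinear/commutator bounds, a Bernstein-type estimate at the threshold $\beta/\varepsilon$ to convert a derivative into a factor $\beta/\varepsilon$ on the low frequencies, and the $L^\infty\times L^1$ versus $L^2\times L^2$ time pairing for the high- and low-frequency contributions respectively. The only cosmetic difference is that you decompose $u=u^h+u^\ell$ as functions before applying the bilinear estimate (which introduces a harmless overlap at the cutoff, absorbable into constants or the low-frequency piece), whereas the paper applies the bilinear estimate once and then splits the resulting Besov norm of $u$ into high/low parts.
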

\begin{proof}
By Lemmas \ref{lemm:prod-1} and \ref{lemm:comm}, we have
\begin{align}
    &
    \varepsilon \| a \div \mathbb{Q}u \|_{L^1(I;\dB_{p,1}^{\frac{d}{p}})}
    +
    \varepsilon
    \sum_{j \in \mathbb{Z}}
    2^{\frac{d}{p}j}
    \| [u\cdot \nabla, \mathbb{P}\dot{\Delta}_j]a \|_{L^1(I;L^p)}
    +
    \varepsilon
    \left\|
        \| \div u \|_{L^{\infty}}
        \| a \|_{\dB_{p,1}^{\frac{d}{p}}}
    \right\|_{L^1(I)}\\
    &\quad
    \leqslant
    C 
    \varepsilon
    \left\|
    \| a \|_{\dB_{q,1}^{\frac{d}{q}}}
    \| u \|_{\dB_{p,1}^{\frac{d}{p}+1}}
    \right\|_{L^1(I)}
    +
    C\varepsilon
    \left\|
    \| a \|_{\dB_{p,1}^{\frac{d}{p}}}
    \| u \|_{\dB_{q,1}^{\frac{d}{q}+1}}
    \right\|_{L^1(I)}.
\end{align}
Here, it holds
\begin{align}\label{est:au-1}
    \begin{split}
    \varepsilon
    \left\|
    \| a \|_{\dB_{q,1}^{\frac{d}{q}}}
    \| u \|_{\dB_{p,1}^{\frac{d}{p}+1}}
    \right\|_{L^1(I)}
    \leqslant{}&
    \varepsilon
    \left\|
    \| a \|_{\dB_{q,1}^{\frac{d}{q}}}
    \| u \|_{\dB_{p,1}^{\frac{d}{p}+1}}^{h;\frac{\beta}{\varepsilon}}
    \right\|_{L^1(I)}
    +
    \beta
    \left\|
    \| a \|_{\dB_{q,1}^{\frac{d}{q}}}
    \| u \|_{\dB_{p,1}^{\frac{d}{p}}}^{\ell;\frac{\beta}{\varepsilon}}
    \right\|_{L^1(I)}\\
    \leqslant{}&
    \varepsilon
    \| a \|_{L^{\infty}(I;\dB_{q,1}^{\frac{d}{q}})}
    \| u \|_{L^1(I;\dB_{p,1}^{\frac{d}{p}+1})}^{h;\frac{\beta}{\varepsilon}}\\
    &
    +
    \beta
    \| a \|_{L^2(I;\dB_{q,1}^{\frac{d}{q}})}
    \| u \|_{L^2(I;\dB_{p,1}^{\frac{d}{p}})}.
    \end{split}
\end{align}
Similarly, we have
\begin{align}\label{est:au-2}
    \varepsilon
    \left\|
    \| a \|_{\dB_{p,1}^{\frac{d}{p}}}
    \| u \|_{\dB_{q,1}^{\frac{d}{q}+1}}
    \right\|_{L^1(I)}
    \leqslant{}
    &
    \varepsilon
    \| a \|_{L^{\infty}(I;\dB_{p,1}^{\frac{d}{p}})}
    \| u \|_{L^1(I;\dB_{q,1}^{\frac{d}{q}+1})}^{h;\frac{\beta}{\varepsilon}}\\
    &
    +
    \beta
    \| a \|_{L^2(I;\dB_{p,1}^{\frac{d}{p}})}
    \| u \|_{L^2(I;\dB_{q,1}^{\frac{d}{q}})}.
\end{align}
Thus, we complete the proof.
\end{proof}

\begin{lemm}\label{lemm:nonlin-1}
Let $\varepsilon>0$.
Then, the following two properties hold:
\begin{itemize}
\item [(1)]
Let $1 \leqslant p,q \leqslant \infty$ satisfy
\begin{align}
    0 < \frac{1}{p} + \frac{1}{q} \leqslant 1.
\end{align}
Then, there exists a positive constant $C=C(d,p,q)$ such that 
\begin{align}
    \| au \|_{L^1(I;\dB_{p,1}^{\frac{d}{p}})}
    \leqslant{}&
    C
    \| a \|_{L^2(I;\dB_{q,1}^{\frac{d}{q}})}
    \| u \|_{L^2(I;\dB_{p,1}^{\frac{d}{p}})}
    +
    C
    \| a \|_{L^2(I;\dB_{p,1}^{\frac{d}{p}})}
    \| u \|_{L^2(I;\dB_{q,1}^{\frac{d}{q}})},
\end{align}
provided that the right hand side is finite.
\item[(2)]
Let $\beta > 0$ and let $2 \leqslant q \leqslant 4$ and $2 \leqslant r \leqslant \infty$ satisfy
\begin{align}
    \frac{1}{r} \leqslant \frac{1}{2} - \frac{d}{2}\left( \frac{1}{2} - \frac{1}{q} \right).
\end{align}
Then, there exists a positive constant $C=C(d,q,r)$ such that 
\begin{align}
    \| au \|_{L^1(I;\dB_{2,1}^{\frac{d}{2}})}^{\ell;\frac{\beta}{\varepsilon}}
    \leqslant{}&
    C
    \| a \|_{L^r(I;\dB_{q,1}^{\frac{d}{q}-1+\frac{2}{r}})}
    \| u \|_{L^{r'}(I;\dB_{q,1}^{\frac{d}{q}-1+\frac{2}{r'}})}\\
    &
    +
    C
    \| u \|_{L^r(I;\dB_{q,1}^{\frac{d}{q}-1+\frac{2}{r}})}^{\ell;\frac{\beta}{\varepsilon}}
    \| a \|_{L^{r'}(I;\dB_{q,1}^{\frac{d}{q}-1+\frac{2}{r'}})}
    ^{\ell;\frac{4\beta}{\varepsilon}},
\end{align}
provided that the right hand side is finite.
\end{itemize}
\end{lemm}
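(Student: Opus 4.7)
The plan is to obtain both statements by applying a product estimate pointwise in time and then closing with Hölder's inequality in the time variable, using the paraproduct lemmas already proved in Section~\ref{sec:pre}.

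For part~(1), I would invoke Lemma~\ref{lemm:prod-1} with $s = d/p$, $p_1 = p_2 = q$, and $\alpha_1 = \alpha_2 = 0$. The structural hypotheses $s + d/p_1 > 0$ and $1/p + 1/p_1 \leqslant 1$ reduce under this choice to $d/p + d/q > 0$ and $1/p + 1/q \leqslant 1$, both of which hold by assumption. This yields the pointwise bound
\begin{align}
\| a(t) u(t) \|_{\dB_{p,1}^{d/p}}
\leqslant
C \bigl( \| a(t) \|_{\dB_{q,1}^{d/q}} \| u(t) \|_{\dB_{p,1}^{d/p}} + \| a(t) \|_{\dB_{p,1}^{d/p}} \| u(t) \|_{\dB_{q,1}^{d/q}} \bigr).
\end{align}
Integrating over $t \in I$ and applying the Cauchy--Schwarz inequality in time (i.e.\ Hölder with $L^1 = L^2 \cdot L^2$) delivers the desired estimate immediately.

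For part~(2), I would apply Lemma~\ref{lemm:prod-2} pointwise in time with threshold $\beta/\varepsilon$, with the assignment $f = u$, $g = a$, and the index choice
\begin{align}
s_1 = s_4 = \tfrac{d}{q} - 1 + \tfrac{2}{r}, \qquad s_2 = s_3 = \tfrac{d}{q} - 1 + \tfrac{2}{r'},
\end{align}
where $r'$ is the Hölder conjugate of $r$. This choice is made so that $s := s_1 + s_2 = s_3 + s_4 = 2d/q > 0$, and the resulting output regularity $s - d(2/q - 1/2) = d/2$ matches the target. The constraint $s_1, s_4 \leqslant d(2/q - 1/2)$ translates into $1/r \leqslant 1/2 - (d/2)(1/2 - 1/q)$, which is exactly the hypothesis on $r$. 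Lemma~\ref{lemm:prod-2} then supplies the pointwise estimate
\begin{align}
\| a(t) u(t) \|_{\dB_{2,1}^{d/2}}^{\ell;\beta/\varepsilon}
\leqslant{}&
C \| u(t) \|_{\dB_{q,1}^{d/q-1+2/r}}^{\ell;\beta/\varepsilon} \| a(t) \|_{\dB_{q,1}^{d/q-1+2/r'}}^{\ell;4\beta/\varepsilon} \\
&+ C \| u(t) \|_{\dB_{q,1}^{d/q-1+2/r'}} \| a(t) \|_{\dB_{q,1}^{d/q-1+2/r}},
\end{align}
after which I integrate in $t \in I$ and apply Hölder's inequality with the pair $(r, r')$ to conclude.

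There is no genuine obstacle; the only non-routine ingredient is the bookkeeping of the Besov indices, so that the output regularity $d/2$, the threshold constraint coming from Lemma~\ref{lemm:prod-2}, and the Hölder pairing in time all line up simultaneously. Once these indices are pinned down as above, both parts follow directly from the paraproduct estimates established in Section~\ref{sec:pre}.
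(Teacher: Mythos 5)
Your proposal is correct and follows exactly the same route as the paper: part (1) is Lemma \ref{lemm:prod-1} with $s=d/p$, $p_1=p_2=q$, $\alpha_1=\alpha_2=0$ followed by Cauchy--Schwarz in time, and part (2) is Lemma \ref{lemm:prod-2} (with threshold $\beta/\varepsilon$) with the index choice $s_1=s_4=d/q-1+2/r$, $s_2=s_3=d/q-1+2/r'$ followed by Hölder with the pair $(r,r')$. The index verification you carried out matches the constraints of Lemma \ref{lemm:prod-2}, so the argument is complete.
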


\begin{proof}
From Lemma \ref{lemm:prod-1}, we obtain (1).
By Lemma \ref{lemm:prod-2}, we have (2)
and complete the proof.
\end{proof}

\begin{lemm}\label{lemm:nonlin-3}
The following three properties hold:
\begin{itemize}
\item[(1)]
Let $1 \leqslant p,q \leqslant \infty$ satisfy
\begin{align}
    \frac{1}{d} < \frac{1}{p} + \frac{1}{q} \leqslant 1.
\end{align}
Then, there exists a positive constant $C=C(p,q)$ such that 
\begin{align}
    \| (u \cdot \nabla)v \|_{L^1(I;\dB_{p,1}^{\frac{d}{p}-1})}
    \leqslant
    C
    \| u \|_{L^2(I;\dB_{p,1}^{\frac{d}{p}})}
    \| v \|_{L^2(I;\dB_{q,1}^{\frac{d}{q}})}
    +
    C
    \| u \|_{L^2(I;\dB_{q,1}^{\frac{d}{q}})}
    \| v \|_{L^2(I;\dB_{p,1}^{\frac{d}{p}})},
\end{align}
provided that the right hand side is finite.
\item[(2)]
Let $2 \leqslant q \leqslant 4$ and $2 \leqslant r \leqslant \infty$ satisfy
\begin{align}
    \frac{1}{r} \leqslant \frac{1}{2} - \frac{d}{2}\left( \frac{1}{2} - \frac{1}{q} \right),\qquad
    \frac{2d}{q} - 1 >0.
\end{align}
Then, there exists a positive constant $C=C(d,q,r)$ such that 
\begin{align}
    \| (v \cdot \nabla)u \|_{L^1(I;\dB_{2,1}^{\frac{d}{2}-1})}
    \leqslant{}&
    C
    \| u \|_{L^r(I;\dB_{q,1}^{\frac{d}{q}-1+\frac{2}{r}})}
    \| v \|_{L^{r'}(I;\dB_{q,1}^{\frac{d}{q}-1+\frac{2}{r'}})}\\
    &
    +
    C
    \| u \|_{L^{r'}(I;\dB_{q,1}^{\frac{d}{q}-1+\frac{2}{r'}})}
    \| v \|_{L^r(I;\dB_{q,1}^{\frac{d}{q}-1+\frac{2}{r}})},
\end{align}
provided that the right hand side is finite.
\item[(3)]
Let $m=0,1$, $1 \leqslant q <2d$ and $2 \leqslant r \leqslant \infty$ satisfy 
\begin{align}
    \frac{2d}{q} -1 -\frac{m}{r} >0.
\end{align}
Then, there exists a positive constant $C=C(d,q,r)$ such that
\begin{align}
    &
    \| (v \cdot \nabla)u \|_{L^1(I;\dB_{q,1}^{\frac{d}{q}-1-\frac{m}{r}})}
    \leqslant
    C
    \| u \|_{L^{r'}(I;\dB_{q,1}^{\frac{d}{q}-1+\frac{2}{r'}})}
    \| v \|_{L^{r}(I;\dB_{q,1}^{\frac{d}{q}-1+\frac{2-m}{r}})},\\
    &
    \sum_{j \in \mathbb{Z}}
    2^{(\frac{d}{q}-1-\frac{m}{r})j}
    \| [w \cdot \nabla, \mathbb{P}\dot{\Delta}_j]v \|_{L^q}
    \leqslant
    C
    \| w \|_{L^{r'}(I;\dB_{q,1}^{\frac{d}{q}-1+\frac{2}{r'}})}
    \| v \|_{L^{r}(I;\dB_{q,1}^{\frac{d}{q}-1+\frac{2-m}{r}})},
\end{align}
provided that the right hand side is finite.
\end{itemize}
\end{lemm}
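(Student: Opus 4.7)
The plan is to derive each of the three estimates pointwise in time in the target Besov space and then close in time with H\"older's inequality: $L^2\cdot L^2\hookrightarrow L^1$ for part~(1), and $L^r\cdot L^{r'}\hookrightarrow L^1$ with $1/r+1/r'=1$ for parts~(2) and~(3). Parts~(1) and~(3) will reduce to direct applications of the product and commutator lemmas from Section~\ref{sec:pre}. For part~(1), I would apply Lemma~\ref{lemm:prod-1} to $u\cdot\nabla v$ in $\dot B_{p,1}^{d/p-1}$ with $p_1=p_2=q$, $\alpha_1=0$ and $\alpha_2=1$; the structural condition $s+d/p_1>0$ reduces exactly to the hypothesis $1/p+1/q>1/d$. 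For the first inequality of part~(3), Lemma~\ref{lemm:prod-1} with $p=p_1=p_2=q$, $s=d/q-1-m/r$ and $\alpha_1=1-(2-m)/r$ delivers the bound, the required $s+d/p_1>0$ being precisely the assumption $2d/q-1-m/r>0$. The commutator estimate in~(3) is then an instance of Lemma~\ref{lemm:comm} applied to the zeroth-order multiplier $\mathbb{P}$ with the analogous choice $\alpha_1=2/r$.

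The main obstacle is part~(2), since the output space $\dot B_{2,1}^{d/2-1}$ has $L^2$-integrability while the inputs live in $\dot B_{q,1}$ with $q>2$. My plan is to mirror the proof of Lemma~\ref{lemm:prod-2} in the full-norm setting. Starting from the Bony decomposition
\[
v\cdot\nabla u=\dot{T}_v\nabla u+\dot{T}_{\nabla u}v+\dot{R}(v,\nabla u),
\]
I would bound the two paraproducts by Lemma~\ref{lemm:para-0}(1) combined with the embedding $\dot B_{q,1}^{s}\hookrightarrow \dot B_{q^*,1}^{s-d(2/q-1/2)}$, where $1/q^*=1/2-1/q$, and the remainder by Lemma~\ref{lemm:para-0}(2) composed with $\dot B_{q/2,1}^{s}\hookrightarrow \dot B_{2,1}^{s-d(2/q-1/2)}$, which is valid thanks to $q\leq 4$. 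Setting $\sigma_r:=d/q-1+2/r$ and $\sigma_{r'}:=d/q-1+2/r'$, one has $\sigma_r+(\sigma_{r'}-1)=2d/q-1$, matching the total regularity available after embedding.

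The truly delicate step is the index splitting. For $\dot{T}_v\nabla u$, placing $s_1=\sigma_r$ on the low factor $v$ and $s_2=\sigma_{r'}-1$ on the high factor $\nabla u$ transforms the constraint $s_1\leq d(2/q-1/2)$ demanded by Lemma~\ref{lemm:para-0}(1) into exactly the hypothesis $1/r\leq 1/2-(d/2)(1/2-1/q)$, and produces the bound $\|v\|_{\dot B_{q,1}^{\sigma_r}}\|u\|_{\dot B_{q,1}^{\sigma_{r'}}}$. The symmetric paraproduct $\dot{T}_{\nabla u}v$ is handled by the reverse assignment $s_1=\sigma_{r'}$, $s_2=\sigma_r-1$; the analogous constraint now reads $\sigma_r-1\leq d(2/q-1/2)$, which is automatically weaker than the hypothesis, and this yields $\|u\|_{\dot B_{q,1}^{\sigma_r}}\|v\|_{\dot B_{q,1}^{\sigma_{r'}}}$. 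The remainder $\dot{R}(v,\nabla u)$ only needs $s_1+s_2>0$, i.e.\ the assumed $2d/q-1>0$, so either splitting works. Applying H\"older in time with exponents $(r,r')$ and $(r',r)$ respectively to the two pointwise bounds then produces the two terms appearing in the conclusion.
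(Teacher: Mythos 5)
Your proposal is correct and follows essentially the same route as the paper: parts (1) and (3) are direct applications of Lemma~\ref{lemm:prod-1} and Lemma~\ref{lemm:comm} with exactly the parameter choices you identify, followed by H\"older in time. The only difference is in part (2), where the paper simply invokes Lemma~\ref{lemm:prod-2} with $\beta=\infty$, whereas you re-derive that estimate from scratch via the Bony decomposition, the embeddings into $\dB_{q^*,1}$ (with $1/q^*=1/2-1/q$) and $\dB_{2,\sigma}$ from $\dB_{q/2,\sigma}$, and the index splitting $s_1=\sigma_r$, $s_2=\sigma_{r'}-1$ for $\dot T_v\nabla u$ and the reverse for $\dot T_{\nabla u}v$; this is precisely the content of the proof of Lemma~\ref{lemm:prod-2}, so the arguments coincide.
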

\begin{proof}
From Lemma \ref{lemm:prod-1}, we immediately obtain (1).
For the proof of (2),
it follows from Lemma \ref{lemm:prod-2} with $\beta=\infty$ that
\begin{align}
    \| (v \cdot \nabla)u \|_{L^1(I;\dB_{2,1}^{\frac{d}{2}-1})}
    \leqslant{}
    &
    C
    \| v \|_{L^r(I;\dB_{q,1}^{\frac{d}{q}-1+\frac{2}{r}})}
    \| \nabla u \|_{L^{r'}(I;\dB_{q,1}^{\frac{d}{q}-2+\frac{2}{r'}})}\\
    &
    +
    C
    \| v \|_{L^{r'}(I;\dB_{q,1}^{\frac{d}{q}-1+\frac{2}{r'}})}
    \| \nabla u \|_{L^r(I;\dB_{q,1}^{\frac{d}{q}-2+\frac{2}{r}})}\\
    \leqslant{}
    &
    C
    \| u \|_{L^r(I;\dB_{q,1}^{\frac{d}{q}-1+\frac{2}{r}})}
    \| v \|_{L^{r'}(I;\dB_{q,1}^{\frac{d}{q}-1+\frac{2}{r'}})}\\
    &
    +
    C
    \| u \|_{L^{r'}(I;\dB_{q,1}^{\frac{d}{q}-1+\frac{2}{r'}})}
    \| v \|_{L^r(I;\dB_{q,1}^{\frac{d}{q}-1+\frac{2}{r}})}.
\end{align}
Next, we focus on (3).
By Lemma \ref{lemm:prod-1}, it holds
\begin{align}
    \| (v \cdot \nabla)u \|_{L^1(I;\dB_{q,1}^{\frac{d}{q}-1-\frac{m}{r}})}
    \leqslant{}&
    C
    \| v \|_{L^{r}(I;\dB_{q,1}^{\frac{d}{q}-1+\frac{2-m}{r}})}
    \| \nabla u \|_{L^{r'}(I;\dB_{q,1}^{\frac{d}{q}-2+\frac{2}{r'}})}\\
    \leqslant{}
    &
    C
    \| v \|_{L^{r}(I;\dB_{q,1}^{\frac{d}{q}-1+\frac{2-m}{r}})}
    \| u \|_{L^{r'}(I;\dB_{q,1}^{\frac{d}{q}-1+\frac{2}{r'}})},
\end{align}
which proves the first assertion.
Lemma \ref{lemm:comm} yields the second assertion.
Thus, we complete the proof.
\end{proof}

\begin{lemm}\label{lemm:nonlin-4}
Let $\varepsilon, \beta>0$.
Then, the following three properties hold:
\begin{itemize}
\item [(1)]
Let $1 \leqslant p,q < \infty$ satisfy
\begin{align}
    \frac{1}{d} < \frac{1}{p} + \frac{1}{q} \leqslant 1.
\end{align}
Then, there exists a positive constant $C=C(d,\mu,p,q)$ such that 
\begin{align}
    \| \mathcal{J}(\varepsilon a) \mathcal{L}u \|_{L^1(I;\dB_{p,1}^{\frac{d}{p}-1})}
    \leqslant{}
    &
    C 
    \varepsilon
    \| a \|_{L^{\infty}(I;\dB_{q,1}^{\frac{d}{q}})}
    \| u \|_{L^1(I;\dB_{p,1}^{\frac{d}{p}+1})}^{h;\frac{\beta}{\varepsilon}}\\
    &+
    C 
    \varepsilon
    \| a \|_{L^{\infty}(I;\dB_{p,1}^{\frac{d}{p}})}
    \| u \|_{L^1(I;\dB_{q,1}^{\frac{d}{q}+1})}^{h;\frac{\beta}{\varepsilon}}\\
    &+
    C
    \beta
    \| a \|_{L^2(I;\dB_{q,1}^{\frac{d}{q}})}
    \| u \|_{L^2(I;\dB_{p,1}^{\frac{d}{p}})}\\
    &+
    C
    \beta
    \| a \|_{L^2(I;\dB_{p,1}^{\frac{d}{p}})}
    \| u \|_{L^2(I;\dB_{q,1}^{\frac{d}{q}})},
\end{align}
provided that the right hand side is finite and $\varepsilon  \| a\|_{L^{\infty}(I;L^{\infty}\cap\dB_{q,1}^{\frac{d}{q}})} \leqslant 1/2$.
\item[(2)]
Let $2 \leqslant q \leqslant 4$ and $2 \leqslant r \leqslant \infty$ satisfy
\begin{align}
    \frac{1}{r} \leqslant \frac{1}{2} - \frac{d}{2}\left( \frac{1}{2} - \frac{1}{q} \right),\qquad
    \frac{2d}{q} - 1 >0.
\end{align}
Then, there exists a positive constant $C=C(d,\mu,q,r)$ such that 
\begin{align}
    \| \mathcal{J}(\varepsilon a) \mathcal{L}u \|_{L^1(I;\dB_{2,1}^{\frac{d}{2}-1})}^{\ell;\frac{\beta}{\varepsilon}}
    \leqslant{}
    &
    C 
    \varepsilon
    \| a \|_{L^{\infty}(I;\dB_{q,1}^{\frac{d}{q}})}
    \| u \|_{L^1(I;\dB_{q,1}^{\frac{d}{q}+1})}^{h;\frac{\beta}{\varepsilon}}\\
    &+
    C
    \beta
    \| a \|_{L^2(I;\dB_{q,1}^{\frac{d}{q}})}
    \| u \|_{L^2(I;\dB_{q,1}^{\frac{d}{q}})}\\
    &+
    C
    \beta
    C
    \| a \|_{L^r(I;\dB_{q,1}^{\frac{d}{q}-1+\frac{2}{r}})}
    \| u \|_{L^{r'}(I;\dB_{q,1}^{\frac{d}{q}-1+\frac{2}{r'}})}\\
    &
    +
    C
    \beta
    \| u \|_{L^r(I;\dB_{q,1}^{\frac{d}{q}-1+\frac{2}{r}})}^{\ell;\frac{\beta}{\varepsilon}}
    \| a \|_{L^{r'}(I;\dB_{q,1}^{\frac{d}{q}-1+\frac{2}{r'}})}
    ^{\ell;\frac{4\beta}{\varepsilon}},
\end{align}
provided that the right hand side is finite and $\varepsilon  \| a\|_{L^{\infty}(I;L^{\infty}\cap\dB_{q,1}^{\frac{d}{q}})} \leqslant 1/2$.
\item[(3)]
For $1 \leqslant{}  q < 2d$ and $2 \leqslant{} r \leqslant{} \infty$ satisfying
\begin{align}
    \frac{1}{r} < \frac{2d}{q} - 1,
\end{align}
there exists a positive constant $C=C(d,\mu,q,r)$ such that 
\begin{align}\label{nonlin-3-3}
    \begin{split}
    \| \mathcal{J}(\varepsilon a ) \mathcal{L} u \|_{L^1(I;\dB_{q,1}^{\frac{d}{q}-1-\frac{1}{r}})}
    \leqslant{}
    &
    C 
    \beta
    \| a \|_{L^r(I;\dB_{q,1}^{\frac{d}{q}-1+\frac{1}{r}})}
    \| u \|_{L^{r'}(I;\dB_{q,1}^{\frac{d}{q}-1+\frac{2}{r'}})}\\
    &
    +
    C
    \beta^{-\frac{1}{r}}\varepsilon^{\frac{1}{r}+1}
    \| a \|_{L^{\infty}(I;\dB_{q,1}^{\frac{d}{q}})}
    \| u \|_{L^1(I;\dB_{q,1}^{\frac{d}{q}+1})}^{h;\frac{\beta}{\varepsilon}}\\
    &
    +
    C
    \beta^{-\frac{1}{r}+1}
    \varepsilon^{\frac{1}{r}}
    \| a \|_{L^2(I;\dB_{q,1}^{\frac{d}{q}})}\| u \|_{L^2(I;\dB_{q,1}^{\frac{d}{q}})},
    \end{split}
\end{align}
provided that the right hand side is finite and $\varepsilon  \| a\|_{L^{\infty}(I;L^{\infty}\cap\dB_{q,1}^{\frac{d}{q}})} \leqslant 1/2$.
\end{itemize}
\end{lemm}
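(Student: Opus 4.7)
\medskip

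The plan is to treat all three items in parallel, since they share the same architecture. Since $\mathcal{J}(z)=z/(1+z)$ is smooth and vanishes at $0$, the composition Lemma \ref{lemm:composition} together with the smallness assumption $\varepsilon\|a\|_{L^{\infty}(I;L^{\infty}\cap\dB_{q,1}^{\frac{d}{q}})}\leqslant 1/2$ gives
\begin{align}
    \| \mathcal{J}(\varepsilon a) \|_{\dB_{p,1}^{s}} \leqslant C\varepsilon \| a \|_{\dB_{p,1}^{s}}
\end{align}
for all $s$ and $p$ appearing below. This reduces each item to bounding the bilinear expression $\varepsilon a\cdot \mathcal{L}u$ in the target space, and since $\mathcal{L}$ is a constant-coefficient second-order operator one has $\|\mathcal{L}u\|_{\dB_{p,\sigma}^{s}}\leqslant C\|u\|_{\dB_{p,\sigma}^{s+2}}$.

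For (1), I would apply Lemma \ref{lemm:prod-1} in $\dB_{p,1}^{\frac{d}{p}-1}$ with the roles of $p_1,p_2$ chosen so that two derivatives act on $u$, obtaining
\begin{align}
    \| \mathcal{J}(\varepsilon a) \mathcal{L}u \|_{\dB_{p,1}^{\frac{d}{p}-1}}
    \leqslant C\varepsilon\| a \|_{\dB_{q,1}^{\frac{d}{q}}}\| u \|_{\dB_{p,1}^{\frac{d}{p}+1}}
    + C\varepsilon\| a \|_{\dB_{p,1}^{\frac{d}{p}}}\| u \|_{\dB_{q,1}^{\frac{d}{q}+1}}.
\end{align}
Then, exactly as in the proof of Lemma \ref{lemm:nonlin-2} (see \eqref{est:au-1}--\eqref{est:au-2}), I split $\|u\|_{\dB^{\frac{d}{p}+1}_{p,1}}$ and $\|u\|_{\dB^{\frac{d}{q}+1}_{q,1}}$ at the threshold $\beta/\varepsilon$: on the high-frequency side I keep the factor $\varepsilon$ and close with $L^{\infty}_t\cdot L^{1}_t$ Hölder; on the low-frequency side I use $\varepsilon 2^{j}\leqslant \beta$ to replace $\varepsilon\|u\|_{\dB^{\cdot+1}}^{\ell;\beta/\varepsilon}$ by $\beta\|u\|_{\dB^{\cdot}}^{\ell;\beta/\varepsilon}$ and conclude with $L^{2}_t\cdot L^{2}_t$ Hölder. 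This yields exactly the four summands in the statement.

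Item (2) is obtained the same way for the $L^{\infty}_t\cdot L^{1}_t$ and $L^{2}_t\cdot L^{2}_t$ terms, with the extra observation that for the low-frequency part of the product one may apply Lemma \ref{lemm:prod-2} (i.e. the low-frequency product estimate that passes from $\dB_{q,1}$ on each factor to $\dB_{2,1}$ on the product). This produces the Strichartz-type cross terms $\|a\|_{L^r(\dB^{\frac{d}{q}-1+\frac{2}{r}}_{q,1})}\|u\|_{L^{r'}(\dB^{\frac{d}{q}-1+\frac{2}{r'}}_{q,1})}$ and its low-frequency-in-$u$ variant, giving the two additional summands. The hypothesis $2d/q-1>0$ is just the positivity of $s+d/p_1$ needed for Lemma \ref{lemm:prod-1}/\ref{lemm:prod-2}.

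The most delicate point, which I would treat last, is item (3). Here I split $\mathcal{L}u$ into its high- and low-frequency parts at threshold $\beta/\varepsilon$, and in each piece I exploit the elementary estimates
\begin{align}
    2^{-j/r}\leqslant (\varepsilon/\beta)^{1/r}\ \text{on}\ \{2^{j}\geqslant \beta/\varepsilon\},\qquad
    2^{(1-1/r)j}\leqslant (\beta/\varepsilon)^{1-1/r}\ \text{on}\ \{2^{j}<\beta/\varepsilon\}.
\end{align}
On the high-frequency piece, Lemma \ref{lemm:prod-1} with $\mathcal{J}(\varepsilon a)$ placed at $\dB_{q,1}^{\frac{d}{q}}$ combined with the first inequality above and $L^{\infty}_t\cdot L^{1}_t$ Hölder delivers the coefficient $\beta^{-1/r}\varepsilon^{1/r+1}$. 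On the low-frequency piece, I use Lemma \ref{lemm:prod-1} in two ways: placing $a$ at $\dB_{q,1}^{\frac{d}{q}-1+\frac{1}{r}}$ and $\mathcal{L}u$ at $\dB_{q,1}^{\frac{d}{q}-\frac{2}{r}}$ together with the second inequality above to reduce $\|u\|_{\dB^{\frac{d}{q}+2-\frac{2}{r}}}^{\ell}$ to $(\beta/\varepsilon)\|u\|_{\dB^{\frac{d}{q}-1+\frac{2}{r'}}}^{\ell}$, then $L^r_t\cdot L^{r'}_t$ Hölder yields the $\beta\|a\|_{L^r}\|u\|_{L^{r'}}$ term; placing $a$ at $\dB_{q,1}^{\frac{d}{q}}$ instead, together with the second inequality and $L^2_t\cdot L^2_t$ Hölder, yields the $\beta^{1-1/r}\varepsilon^{1/r}\|a\|_{L^2}\|u\|_{L^2}$ term. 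The hypothesis $1/r<2d/q-1$ is exactly the positivity condition $s+d/p_1>0$ for Lemma \ref{lemm:prod-1} in $\dB_{q,1}^{\frac{d}{q}-1-\frac{1}{r}}$.

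The main obstacle is bookkeeping: one must select parameters $(\alpha_1,\alpha_2)$ in Lemma \ref{lemm:prod-1} and allocate the derivative budget between the frequency-threshold trick and the time-integration exponents so that the powers of $\varepsilon$ and $\beta$ agree precisely with the statement; once this arithmetic is settled, the rest is a direct application of the product and composition lemmas established in Section \ref{sec:pre}.
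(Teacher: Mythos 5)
Your treatment of items (1) and (2) coincides with the paper's: reduce via the composition lemma to a bilinear bound, apply Lemma \ref{lemm:prod-1} (resp.\ Lemma \ref{lemm:prod-2}) and then split the $u$-factor at $\beta/\varepsilon$ exactly as in \eqref{est:au-1}--\eqref{est:au-2}. For item (3), however, you take a genuinely different route from the paper, and it works. The paper splits the \emph{output} norm $\|\mathcal{J}(\varepsilon a)\mathcal{L}u\|_{\dB_{q,1}^{\frac{d}{q}-1-\frac1r}}$ into its high- and low-frequency parts at $\beta/\varepsilon$: on the high part it applies Bernstein $2^{-j/r}\leqslant(\varepsilon/\beta)^{1/r}$ and invokes item (1); on the low part it runs the full Bony decomposition $\dot T_{\mathcal J}\mathcal Lu+\dot T_{\mathcal Lu}\mathcal J+\dot R(\mathcal J,\mathcal Lu)$ and estimates each piece with the truncated para-product Lemmas \ref{lemm:para-1} and \ref{lemm:para-2}. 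You instead split the \emph{factor} $\mathcal Lu$ at $\beta/\varepsilon$ and estimate $\|\mathcal J(\varepsilon a)(\mathcal Lu)^{h}\|$ and $\|\mathcal J(\varepsilon a)(\mathcal Lu)^{\ell}\|$ directly by Lemma \ref{lemm:prod-1}, choosing the asymmetric parameters $(\alpha_1,\alpha_2)=(1-1/r,\,1+1/r)$ on the low-frequency piece so that its two output terms carry $\mathcal J(\varepsilon a)$ at $\dB_{q,1}^{\frac{d}{q}-1+\frac1r}$ (yielding the $\beta\,L^r\!\cdot\!L^{r'}$ term) and at $\dB_{q,1}^{\frac{d}{q}}$ (yielding the $\beta^{1-\frac1r}\varepsilon^{\frac1r}\,L^2\!\cdot\!L^2$ term). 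This avoids the Bony decomposition and the Lemma \ref{lemm:para-1}/\ref{lemm:para-2} machinery altogether, and has the merit of treating all three items through a single product lemma; the paper's route, by contrast, reuses item (1) for the high part and is thus slightly shorter there, at the cost of needing the para-product lemmas for the low part. A small caveat, shared by both approaches: the product and composition lemmas invoked require $\tfrac{1}{q}+\tfrac{1}{q}\leqslant 1$ (Lemma \ref{lemm:prod-1}) and $q/2\geqslant 1$ (Lemma \ref{lemm:para-2}), so the argument implicitly needs $q\geqslant 2$ rather than the stated $q\geqslant 1$; this is harmless for the rest of the paper since only $q>2$ is ever used.
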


\begin{proof}
We first show (1). 
It follows from Lemmas \ref{lemm:prod-1}, \ref{lemm:composition} and \eqref{est:au-1}, \eqref{est:au-2} that
\begin{align}
    \| \mathcal{J}(\varepsilon a) \mathcal{L}u \|_{L^1(I;\dB_{p,1}^{\frac{d}{p}-1})}
    \leqslant{}&
    C 
    \varepsilon
    \left\|
    \| a \|_{\dB_{q,1}^{\frac{d}{q}}}
    \| u \|_{\dB_{p,1}^{\frac{d}{p}+1}}
    \right\|_{L^1(I)}
    +
    C 
    \varepsilon
    \left\|
    \| a \|_{\dB_{p,1}^{\frac{d}{p}}}
    \| u \|_{\dB_{q,1}^{\frac{d}{q}+1}}
    \right\|_{L^1(I)}\\
    \leqslant{}
    &
    C 
    \varepsilon
    \| a \|_{L^{\infty}(I;\dB_{q,1}^{\frac{d}{q}})}
    \| u \|_{L^1(I;\dB_{p,1}^{\frac{d}{p}+1})}^{h;\frac{\beta}{\varepsilon}}\\
    &+
    C 
    \varepsilon
    \| a \|_{L^{\infty}(I;\dB_{p,1}^{\frac{d}{p}})}
    \| u \|_{L^1(I;\dB_{q,1}^{\frac{d}{q}+1})}^{h;\frac{\beta}{\varepsilon}}\\
    &+
    C
    \beta
    \| a \|_{L^2(I;\dB_{q,1}^{\frac{d}{q}})}
    \| u \|_{L^2(I;\dB_{p,1}^{\frac{d}{p}})}\\
    &+
    C
    \beta
    \| a \|_{L^2(I;\dB_{p,1}^{\frac{d}{p}})}
    \| u \|_{L^2(I;\dB_{q,1}^{\frac{d}{q}})}.
\end{align}

Next, we prove (2).
By Lemmas \ref{lemm:prod-2} and \ref{lemm:composition}, we see that
\begin{align}
    \| \mathcal{J}(\varepsilon a)\mathcal{L}u \|_{L^1(I;\dB_{2,1}^{\frac{d}{2}-1})}^{\ell;\frac{\beta}{\varepsilon}}
    \leqslant{}
    &
    C
    \left\|
    \| \mathcal{L}u \|_{\dB_{q,1}^{\frac{d}{q}-1}}
    \| \mathcal{J}(\varepsilon a) \|_{\dB_{q,1}^{\frac{d}{q}}}
    \right\|_{L^1(I)}\\
    &
    +
    C\| \mathcal{J}(\varepsilon a) \|_{L^r(I;\dB_{q,1}^{\frac{d}{q}-1+\frac{2}{r}})}
    \| \mathcal{L}u \|_{L^{r'}(I;\dB_{q,1}^{\frac{d}{q}-2+\frac{2}{r'}})}^{\ell;\frac{\beta}{4\varepsilon}}\\
    \leqslant{}
    &
    C 
    \varepsilon
    \| a \|_{L^{\infty}(I;\dB_{q,1}^{\frac{d}{q}})}
    \| u \|_{L^1(I;\dB_{q,1}^{\frac{d}{q}+1})}^{h;\frac{\beta}{\varepsilon}}\\
    &+
    C
    \beta
    \| a \|_{L^2(I;\dB_{q,1}^{\frac{d}{q}})}
    \| u \|_{L^2(I;\dB_{q,1}^{\frac{d}{q}})}\\
    &+
    C
    \beta
    \| a \|_{L^r(I;\dB_{q,1}^{\frac{d}{q}-1+\frac{2}{r}})}
    \| u \|_{L^{r'}(I;\dB_{q,1}^{\frac{d}{q}-1+\frac{2}{r'}})}.
\end{align}

Finally, we show (3).
By the Bernstein inequality and (1), we have
\begin{align}
    \| \mathcal{J}(\varepsilon a) \mathcal{L}u \|_{L^1(I;\dB_{q,1}^{\frac{d}{q}-1-\frac{1}{r}})}^{h;\frac{\beta}{\varepsilon}}
    \leqslant{}
    &
    \beta^{-\frac{1}{r}}
    \varepsilon^{\frac{1}{r}}
    \| \mathcal{J}(\varepsilon a) \mathcal{L}u \|_{L^1(I;\dB_{q,1}^{\frac{d}{q}-1})}\\
    \leqslant{}
    &
    C 
    \beta^{-\frac{1}{r}}
    \varepsilon^{\frac{1}{r}+1}
    \| a \|_{L^{\infty}(I;\dB_{q,1}^{\frac{d}{q}})}
    \| u \|_{L^1(I;\dB_{q,1}^{\frac{d}{q}+1})}^{h;\frac{\beta}{\varepsilon}}\\
    &
    +
    C
    \beta^{-\frac{1}{r}+1}
    \varepsilon^{\frac{1}{r}}
    \| a \|_{L^2(I;\dB_{q,1}^{\frac{d}{q}})}
    \| u \|_{L^2(I;\dB_{q,1}^{\frac{d}{q}})}.
\end{align}
For the low frequency estimate, by the Bony decomposition and the Bernstein inequality, we see that
\begin{align}
    \| \mathcal{J}(\varepsilon a)\mathcal{L}u \|_{L^1(I;\dB_{q,1}^{\frac{d}{q}-1-\frac{1}{r}})}^{\ell;\frac{\beta}{\varepsilon}}
    \leqslant{}
    &
    \| \dot{T}_{\mathcal{J}(\varepsilon a)}\mathcal{L}u \|_{L^1(I;\dB_{q,1}^{\frac{d}{q}-1-\frac{1}{r}})}^{\ell;\frac{\beta}{\varepsilon}}
    +
    \| \dot{T}_{\mathcal{L}u}\mathcal{J}(\varepsilon a) \|_{L^1(I;\dB_{q,1}^{\frac{d}{q}-1-\frac{1}{r}})}^{\ell;\frac{\beta}{\varepsilon}}\\
    &
    +
    C
    \| \dot{R}(\mathcal{J}(\varepsilon a),\mathcal{L}u) \|_{L^1(I;\dB_{\frac{q}{2},1}^{\frac{2d}{q}-1-\frac{1}{r}})}.
\end{align}
Lemmas \ref{lemm:para-1} and \ref{lemm:composition} yield
\begin{align}
    \| \dot{T}_{\mathcal{J}(\varepsilon a)}\mathcal{L}u \|_{L^1(I;\dB_{q,1}^{\frac{d}{q}-1-\frac{1}{r}})}^{\ell;\frac{\beta}{\varepsilon}}
    \leqslant{}
    &
    C 
    \| \mathcal{J}(\varepsilon a ) \|_{L^r(I;\dB_{\infty,1}^{-1+\frac{1}{r}})}
    \| \mathcal{L}u \|_{L^{r'}(I;\dB_{q,1}^{\frac{d}{q}-\frac{2}{r}})}^{\ell;\frac{4\beta}{\varepsilon}}\\
    \leqslant{}
    &
    C
    \beta
    \|  a  \|_{L^r(I;\dB_{q,1}^{\frac{d}{q}-1+\frac{1}{r}})}
    \| u \|_{L^{r'}(I;\dB_{q,1}^{\frac{d}{q}-1+\frac{2}{r'}})}^{\ell;\frac{4\beta}{\varepsilon}}
\end{align}
and
\begin{align}
    \| \dot{T}_{\mathcal{L}u}\mathcal{J}(\varepsilon a) \|_{L^1(I;\dB_{q,1}^{\frac{d}{q}-1-\frac{1}{r}})}^{\ell;\frac{\beta}{\varepsilon}}
    \leqslant{}
    &
    C
    \| \mathcal{L} u \|_{L^{r'}(I;\dB_{\infty,1}^{-\frac{2}{r}})}^{\ell;\frac{\beta}{\varepsilon}}
    \| \mathcal{J}(\varepsilon a) \|_{L^r(I;\dB_{q,1}^{\frac{d}{q}-1+\frac{1}{r}})}\\
    \leqslant{}
    &
    C
    \beta
    \|  a  \|_{L^r(I;\dB_{q,1}^{\frac{d}{q}-1+\frac{1}{r}})}
    \| u \|_{L^{r'}(I;\dB_{q,1}^{\frac{d}{q}-1+\frac{2}{r'}})}^{\ell;\frac{4\beta}{\varepsilon}}.
\end{align}
By Lemmas \ref{lemm:para-2} and \ref{lemm:composition}, we see
\begin{align}
    \| \dot{R}(\mathcal{J}(\varepsilon a),\mathcal{L}u) \|_{L^1(I;\dB_{\frac{q}{2},1}^{\frac{2d}{q}-1-\frac{1}{r}})}
    \leqslant{}
    &
    C 
    \left\|
    \| \mathcal{J}(\varepsilon a) \|_{\dB_{q,1}^{\frac{d}{q}}}
    \| \mathcal{L} u \|_{\dB_{q,1}^{\frac{d}{q}-1-\frac{1}{r}}}^{h;\frac{\beta}{\varepsilon}}\right.\\
    &\qquad
    +
    \left.
    \| \mathcal{J}(\varepsilon a) \|_{\dB_{q,1}^{\frac{d}{q}-1+\frac{1}{r}}}
    \| \mathcal{L} u \|_{\dB_{q,1}^{\frac{d}{q}-\frac{2}{r}}}^{\ell;\frac{\beta}{\varepsilon}}
    \right\|_{L^1(I)}\\
    \leqslant{}
    &
    C 
    \beta^{-\frac{1}{r}}
    \varepsilon^{\frac{1}{r}+1}
    \| a \|_{L^{\infty}(I;\dB_{q,1}^{\frac{d}{q}})}
    \| u \|_{L^1(I;\dB_{q,1}^{\frac{d}{q}+1})}^{h;\frac{\beta}{\varepsilon}}\\
    &
    +
    C
    \beta
    \| a \|_{L^r(I;\dB_{q,1}^{\frac{d}{q}-1+\frac{1}{r}})}
    \| u \|_{L^{r'}(I;\dB_{q,1}^{\frac{d}{q}-1+\frac{2}{r'}})}.
\end{align}
This completes the proof.
\end{proof}

\begin{lemm}\label{lemm:nonlin-5}
Let $\varepsilon, \beta>0$.
Then, the following two properties hold:
\begin{itemize}
\item [(1)]
Let $1 \leqslant p,q < \infty$ satisfy
\begin{align}
    \frac{1}{d} < \frac{1}{p} + \frac{1}{q} \leqslant 1.
\end{align}
Then, there exists a positive constant $C=C(d,p,q,P)$ such that 
\begin{align}
    \frac{1}{\varepsilon}\| \mathcal{K}(\varepsilon a) \nabla a \|_{L^1(I;\dB_{p,1}^{\frac{d}{p}-1})}
    \leqslant{}
    C
    \| a \|_{L^2(I;\dB_{q,1}^{\frac{d}{q}})}
    \| a \|_{L^2(I;\dB_{p,1}^{\frac{d}{p}})},
\end{align}
provided that the right hand side is finite and $\varepsilon  \| a\|_{L^{\infty}(I;L^{\infty}\cap\dB_{q,1}^{\frac{d}{q}})} \leqslant 1/2$.
\item[(2)]
Let $2 \leqslant q \leqslant 4$ and $2 \leqslant r \leqslant \infty$ satisfy
\begin{align}
    \frac{1}{r} \leqslant \frac{1}{2} - \frac{d}{2}\left( \frac{1}{2} - \frac{1}{q} \right),\qquad
    \frac{2d}{q} - 1 >0.
\end{align}
Then, there exists a positive constant $C=C(d,q,r,P)$ such that 
\begin{align}
    \frac{1}{\varepsilon}
    \| \mathcal{K}(\varepsilon a) \nabla a \|_{L^1(I;\dB_{2,1}^{\frac{d}{2}-1})}^{\ell;\frac{\beta}{\varepsilon}}
    \leqslant{}
    &
    C
    \| a \|_{L^2(I;\dB_{q,1}^{\frac{d}{q}})}^2\\
    &+
    C
    \| a \|_{L^r(I;\dB_{q,1}^{\frac{d}{q}-1+\frac{2}{r}})}
    \| a \|_{L^{r'}(I;\dB_{q,1}^{\frac{d}{q}-1+\frac{2}{r'}})}^{\ell;\frac{4\beta}{\varepsilon}},
\end{align}
provided that the right hand side is finite and $\varepsilon  \| a\|_{L^{\infty}(I;L^{\infty}\cap\dB_{q,1}^{\frac{d}{q}})} \leqslant 1/2$.
\end{itemize}
\end{lemm}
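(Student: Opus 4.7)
The key trick is to eliminate the singular $1/\varepsilon$ factor via a primitive. Since $\mathcal{K}(0)=P'(1)-1=0$, the antiderivative $\widetilde{\mathcal{K}}(z):=\int_0^z\mathcal{K}(s)\,ds$ satisfies $\widetilde{\mathcal{K}}(0)=\widetilde{\mathcal{K}}'(0)=0$, so that $\widetilde{\mathcal{K}}(z)=z^2 H(z)$ for some smooth function $H$. The chain rule then yields the identity
\begin{align}
\frac{1}{\varepsilon}\mathcal{K}(\varepsilon a)\nabla a=\frac{1}{\varepsilon^2}\nabla\bigl[\widetilde{\mathcal{K}}(\varepsilon a)\bigr]=\nabla\bigl[a^2 H(\varepsilon a)\bigr],
\end{align}
which reduces both assertions to bounding $a^2 H(\varepsilon a)$ in a Besov space one derivative higher. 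Setting $\widetilde{H}(z):=zH(z)$, so that $\widetilde{H}(0)=0$, Lemma \ref{lemm:composition} combined with the smallness assumption $\varepsilon\|a\|_{L^\infty(I;L^\infty\cap\dB_{q,1}^{d/q})}\leqslant 1/2$ delivers the composition bound $\|aH(\varepsilon a)\|_{\dB_{p,1}^{s}}=\varepsilon^{-1}\|\widetilde{H}(\varepsilon a)\|_{\dB_{p,1}^{s}}\leqslant C\|a\|_{\dB_{p,1}^{s}}$ for every relevant exponent $s$. Part (1) then follows at once: writing $a^2 H(\varepsilon a)=a\cdot\bigl(aH(\varepsilon a)\bigr)$, applying the product estimate of Lemma \ref{lemm:prod-1} with $s=d/p$ and $p_1=p_2=q$, invoking the composition bound, and concluding with Cauchy--Schwarz in time.

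For part (2), the challenge is to preserve the low-frequency truncation $\ell;\beta/\varepsilon$ on the right-hand side. I would apply Bony's decomposition
\begin{align}
a\cdot\bigl(aH(\varepsilon a)\bigr)=\dot{T}_a\bigl(aH(\varepsilon a)\bigr)+\dot{T}_{aH(\varepsilon a)}\,a+\dot{R}\bigl(a,aH(\varepsilon a)\bigr).
\end{align}
The remainder $\dot{R}$ is controlled by Lemma \ref{lemm:para-0}(2), the embedding $\dB_{q/2,1}^{2d/q}\hookrightarrow\dB_{2,1}^{d/2}$, and the composition bound; integrating in time directly yields the quadratic term $\|a\|_{L^2(I;\dB_{q,1}^{d/q})}^{2}$ of the statement. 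The paraproduct $\dot{T}_{aH(\varepsilon a)}\,a$ is handled by Lemma \ref{lemm:para-1} combined with the embedding $\dB_{q,1}^{\cdot+d(2/q-1/2)}\hookrightarrow\dB_{q^*,1}^{\cdot}$, where $1/q^*=1/2-1/q$; the choice $s_2'=d/q-1+2/r'$ keeps the truncation $\ell;4\beta/\varepsilon$ on the $a$ factor, while the composition factor $aH(\varepsilon a)$ produces an untruncated $\|a\|_{L^r(I;\dB_{q,1}^{d/q-1+2/r})}$.

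The main obstacle is the remaining paraproduct $\dot{T}_a\bigl(aH(\varepsilon a)\bigr)$, because Lemma \ref{lemm:para-1} here places the truncation $\ell;4\beta/\varepsilon$ on the composition $aH(\varepsilon a)$, and the composition lemma does not respect frequency localization. To bypass this I would extract the leading constant by writing
\begin{align}
aH(\varepsilon a)=H(0)\,a+\varepsilon\,a^{2}G(\varepsilon a),\qquad G(z):=\frac{H(z)-H(0)}{z},
\end{align}
so that the principal contribution $H(0)\,\dot{T}_a a$ becomes a paraproduct symmetric in $a$ for which Lemma \ref{lemm:para-1} genuinely preserves the low-frequency truncation on both copies of $a$, exactly matching the second term of the target right-hand side. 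The cubic remainder $\varepsilon\,\dot{T}_a\bigl(a^2 G(\varepsilon a)\bigr)$ carries an explicit factor $\varepsilon$ and, under the smallness hypothesis, is absorbed into the quadratic form $\|a\|_{L^2(I;\dB_{q,1}^{d/q})}^{2}$ via Lemmas \ref{lemm:prod-2} and \ref{lemm:composition}.
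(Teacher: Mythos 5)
The antiderivative identity $\frac{1}{\varepsilon}\mathcal{K}(\varepsilon a)\nabla a=\nabla\bigl[a^2H(\varepsilon a)\bigr]$ is correct and does handle part (1) cleanly, but it is a genuine detour from the paper's argument, and for part (2) the detour creates a problem that your proposed fix does not close. The paper simply applies Lemma~\ref{lemm:prod-2} (for (2)) and Lemma~\ref{lemm:prod-1} (for (1)) \emph{directly} to the product $\mathcal{K}(\varepsilon a)\cdot\nabla a$, with $f=\mathcal{K}(\varepsilon a)$ and $g=\nabla a$, and then feeds the composition bound $\|\mathcal{K}(\varepsilon a)\|\lesssim\varepsilon\|a\|$ into the $f$ slot. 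Two things make that work: (i) in Lemma~\ref{lemm:prod-2} the truncation $\ell;4\beta/\varepsilon$ sits on $g=\nabla a$, which is \emph{not} a composition, so it survives; the $\ell;\beta/\varepsilon$ truncation on $f=\mathcal{K}(\varepsilon a)$ is simply dropped. (ii) Because $g=\nabla a$ carries one derivative, its Besov exponent $s_4=d/q-1$ in the untruncated term automatically satisfies the side constraint $s_4\leqslant d(2/q-1/2)$ of Lemma~\ref{lemm:prod-2} under the hypothesis $q<2d/(d-2)$ (implicit in $1/r\leqslant 1/2-\tfrac d2(1/2-1/q)$).

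Your route destroys both features. Once you pass to $a^2H(\varepsilon a)=a\cdot\bigl(aH(\varepsilon a)\bigr)$ and Bony-decompose, the paraproduct $\dot T_a\bigl(aH(\varepsilon a)\bigr)$ has the composition in the $g$ slot, where the constraint $s_1\leqslant 0$ of Lemma~\ref{lemm:para-1} forces $s_2=d/2-s_1\geqslant d/2$; the only admissible choice compatible with the regularity of $a$ is $s_2=d/q-1+2/r'$ (the choices $s_2=d/q$ or $s_2=d/q-1+2/r$ violate $s_1\leqslant 0$ as soon as $q>2$). Dropping the now-useless truncation on $aH(\varepsilon a)$ and applying the composition lemma then produces the \emph{untruncated} factor $\|a\|_{L^{r'}(I;\dB_{q,1}^{d/q-1+2/r'})}$. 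That quantity is not controlled by the solution norms: for $r'<2$ the exponent $d/q-1+2/r'$ exceeds $d/q$, and the Bernstein weight $2^{j(2/r'-1)}$ grows at high frequencies, which is precisely why Lemma~\ref{lemm:A} carries the $\ell;4\beta_0/\varepsilon$ cutoff on its $L^{r'}$ norm and not on its $L^r$ norm. Your fix $aH(\varepsilon a)=H(0)a+\varepsilon a^2G(\varepsilon a)$ does repair $H(0)\dot T_aa$, but the cubic remainder $\varepsilon\dot T_a\bigl(a^2G(\varepsilon a)\bigr)$ runs into the same wall: the composition factor $a^2G(\varepsilon a)$ is still forced into the $g$ slot with exponent $d/q-1+2/r'$, the prefactor $\varepsilon$ only pays for one factor $\|a\|_{L^\infty(I;\dB_{q,1}^{d/q})}$ via the smallness hypothesis, and the untruncated $\|a\|_{L^{r'}(I;\dB_{q,1}^{d/q-1+2/r'})}$ remains. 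The claim that this is ``absorbed into the quadratic form $\|a\|_{L^2}^2$ via Lemmas~\ref{lemm:prod-2} and \ref{lemm:composition}'' is therefore not substantiated; as written, part (2) has a gap that the paper's direct application of Lemma~\ref{lemm:prod-2} to $\mathcal{K}(\varepsilon a)\nabla a$ avoids entirely.
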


\begin{proof}
For the proof of (1), 
Lemmas \ref{lemm:prod-1} and \ref{lemm:composition} yield
\begin{align}
    &\frac{1}{\varepsilon}\| \mathcal{K}(\varepsilon a) \nabla a \|_{L^1(I;\dB_{p,1}^{\frac{d}{p}-1})}\\
    &\quad \leqslant{}
    \frac{C}{\varepsilon}
    \| \mathcal{K}(\varepsilon a) \|_{L^2(I;\dB_{q,1}^{\frac{d}{q}})}
    \| \nabla a \|_{L^2(I;\dB_{p,1}^{\frac{d}{p}-1})}
    +
    \frac{C}{\varepsilon}
    \| \mathcal{K}(\varepsilon a) \|_{L^2(I;\dB_{p,1}^{\frac{d}{p}})}
    \| \nabla a \|_{L^2(I;\dB_{q,1}^{\frac{d}{q}-1})}\\
    &\quad \leqslant{}
    C
    \| a \|_{L^2(I;\dB_{q,1}^{\frac{d}{q}})}
    \| a \|_{L^2(I;\dB_{p,1}^{\frac{d}{p}})}.
\end{align}

Next, we show (2).
By Lemmas \ref{lemm:prod-2} and \ref{lemm:composition}, we have
\begin{align}
    &
    \frac{1}{\varepsilon}
    \| \mathcal{K}(\varepsilon a) \nabla a \|_{L^1(I;\dB_{2,1}^{\frac{d}{2}-1})}^{\ell;\frac{\beta}{\varepsilon}}\\
    &\quad
    \leqslant{}
    \frac{C}{\varepsilon}
    \| \mathcal{K}(\varepsilon a) \|_{L^2(I;\dB_{q,1}^{\frac{d}{q}})}
    \| \nabla a \|_{L^2(I;\dB_{q,1}^{\frac{d}{q}-1})}
    +
    \frac{C}{\varepsilon}
    \| \mathcal{K}(\varepsilon a) \|_{L^r(I;\dB_{q,1}^{\frac{d}{q}-1+\frac{2}{r}})}
    \| \nabla a \|_{L^{r'}(I;\dB_{q,1}^{\frac{d}{q}-2+\frac{2}{r'}})}^{\ell;\frac{4\beta}{\varepsilon}}\\
    &\quad
    \leqslant{}
    C
    \| a \|_{L^2(I;\dB_{q,1}^{\frac{d}{q}})}^2
    +
    C
    \| a \|_{L^r(I;\dB_{q,1}^{\frac{d}{q}-1+\frac{2}{r}})}
    \| a \|_{L^{r'}(I;\dB_{q,1}^{\frac{d}{q}-1+\frac{2}{r'}})}^{\ell;\frac{4\beta}{\varepsilon}}.
\end{align}
This completes the proof.
\end{proof}

\section{A priori estimates}\label{sec:a-propri}
In this section, we establish several a priori estimates for the solution to \eqref{eq:re_comp-1}.
Before we calculate the global a priori estimates, we prepare definitions and the basic properties of some norms of the initial data and the solutions.
\begin{df}\label{df1}
Let $\beta_0$ be the positive constant appearing in Lemma \ref{lemm:lin-Q-1}.
For $2 \leqslant q \leqslant \infty$, $\alpha>0$ and $0<\varepsilon \leqslant \beta_0/\alpha$ we define
\begin{align}
    \| (a_0,u_0)\|_{D_q^{\varepsilon}}
    :={}&
    \| (a_0,\mathbb{Q}u_0)\|_{D_q^{\varepsilon}}^{h;\alpha}
    +
    \| (a_0,\mathbb{Q}u_0)\|_{D^{\varepsilon}}^{\ell;\alpha}
    +
    \| \mathbb{P}u_0 \|_{\dB_{2,1}^{\frac{d}{2}-1}},
\end{align}
where
\begin{align}
    \| (a_0,\mathbb{Q}u_0)\|_{D_q^{\varepsilon}}^{h;\alpha}
    :={}&
    \varepsilon
    \| a_0 \|_{\dB_{q,1}^{\frac{d}{q}}}^{h;\frac{\beta_0}{\varepsilon}}
    +
    \| \mathbb{Q}u_0 \|_{\dB_{q,1}^{\frac{d}{q}-1}}^{h;\frac{\beta_0}{\varepsilon}}
    +
    \| (a_0,\mathbb{Q}u_0) \|_{\dB_{2,1}^{\frac{d}{2}-1}}^{m;\alpha,\frac{\beta_0}{\varepsilon}},\\
    \| (a_0,\mathbb{Q}u_0)\|_{D^{\varepsilon}}^{\ell;\alpha}
    :={}&
    \| (a_0, \mathbb{Q}u_0) \|_{\dB_{2,1}^{\frac{d}{2}-1}}^{\ell;\alpha}
\end{align}
for all 
$(a_0,u_0) \in (\dB_{2,1}^{\frac{d}{2}-1}(\mathbb{R}^d) \cap \dB_{2,1}^{\frac{d}{2}}(\mathbb{R}^d)) \times \dB_{2,1}^{\frac{d}{2}-1}(\mathbb{R}^d)^d$.
Moreover, we use the abbreviation $\| (a_0,u_0)\|_{D^{\varepsilon}}:=\| (a_0,u_0)\|_{D_2^{\varepsilon}}$ for $q=2$.
We note that it holds
\begin{align}
    \| (a_0,u_0)\|_{D^{\varepsilon}}
    \sim{}&
    \varepsilon \| a_0 \|_{\dB_{2,1}^{\frac{d}{2}}}^{h;\frac{\beta_0}{\varepsilon}}
    +
    \| a_0 \|_{\dB_{2,1}^{\frac{d}{2}-1}}^{\ell;\frac{\beta_0}{\varepsilon}}
    +
    \| u_0 \|_{\dB_{2,1}^{\frac{d}{2}-1}}.
\end{align}
\end{df}
\begin{df}\label{df2}
Let $\beta_0$ be the positive constant appearing in Lemma \ref{lemm:lin-Q-1}.
Let $2 \leqslant q,r \leqslant \infty$, $\alpha>0$, $0 < \varepsilon \leqslant \beta_0/\alpha$ and let $I \subset \mathbb{R}$ be an interval.
\begin{itemize}
\item[(1)]
Let $X^{\varepsilon}(I)$ be the set of all space-time distributions $(a,u)$ on $I \times \mathbb{R}^d$ with the following norm finite:
\begin{align}
    \| (a,u)\|_{X^{\varepsilon}(I)}
    :={}
    &
    \varepsilon \| a \|_{{L^{\infty}}(I;\dB_{2,1}^{\frac{d}{2}})}^{h;\frac{\beta_0}{\varepsilon}}
    +
    \frac{1}{\varepsilon}
    \| a \|_{L^1(I;\dB_{2,1}^{\frac{d}{2}})}^{h;\frac{\beta_0}{\varepsilon}}\\
    &
    +
    \| a \|_{
    {L^{\infty}}(I;\dB_{2,1}^{\frac{d}{2}-1})
    \cap 
    L^1(I;\dB_{2,1}^{\frac{d}{2}+1})}^{\ell;\frac{\beta_0}{\varepsilon}}\\
    &
    +
    \| u \|_{
    {L^{\infty}}(I;\dB_{2,1}^{\frac{d}{2}-1})
    \cap
    L^1(I;\dB_{2,1}^{\frac{d}{2}+1})}.
\end{align}
\item [(2)]
We define $Y^{\varepsilon,\alpha}_{q,r}(I)$ by the set of all space-time distributions $(a,u)$ on $I \times \mathbb{R}^d$ with the following norm finite:
\begin{align}
    \| (a,u)\|_{Y_{q,r}^{\varepsilon,\alpha}(I)}
    :={}
    \| (a,\mathbb{Q}u)\|_{Y_{q,r}^{\varepsilon,\alpha}(I)}^{h;\alpha}
    +
    \| (a,\mathbb{Q}u)\|_{Y_{q,r}^{\varepsilon,\alpha}(I)}^{\ell;\alpha}
    +
    \| \mathbb{P}u \|_{Y_{q,r}^{\varepsilon,\alpha}(I)},
\end{align}
where
\begin{align}
    \| (a,\mathbb{Q}u)\|_{Y_{q,r}^{\varepsilon,\alpha}(I)}^{h;\alpha}
    :={}
    &
    \varepsilon \| a \|_{{L^{\infty}}(I;\dB_{q,1}^{\frac{d}{q}})}^{h;\frac{\beta_0}{\varepsilon}}
    +
    \frac{1}{\varepsilon}
    \| a \|_{L^1(I;\dB_{q,1}^{\frac{d}{q}})}^{h;\frac{\beta_0}{\varepsilon}}\\
    &
    +
    \| \mathbb{Q}u \|_{{L^{\infty}}(I;\dB_{q,1}^{\frac{d}{q}-1})
    \cap 
    L^1(I;\dB_{q,1}^{\frac{d}{q}+1})}^{h;\frac{\beta_0}{\varepsilon}}\\
    &
    +
    \| (a,\mathbb{Q}u) \|_{{L^{\infty}}(I;\dB_{2,1}^{\frac{d}{2}-1})
    \cap 
    L^1(I;\dB_{2,1}^{\frac{d}{2}+1})}^{m;\alpha,\frac{\beta_0}{\varepsilon}},\\
    \| (a,\mathbb{Q}u)\|_{Y_{q,r}^{\varepsilon,\alpha}(I)}^{\ell;\alpha}
    :={}
    &
    \| (a, \mathbb{Q}u) \|_{{L^r}(I;\dB_{q,1}^{\frac{d}{q}-1+\frac{2}{r}})}^{\ell;\alpha},\\
    \| \mathbb{P}u \|_{Y_{q,r}^{\varepsilon,\alpha}(I)}
    :={}
    &
    \| \mathbb{P}u \|_{
    {L^r}(I;\dB_{q,1}^{\frac{d}{q}-1+\frac{2}{r}})
    \cap L^1(I;\dB_{q,1}^{\frac{d}{q}+1})}.
\end{align}
\item[(3)]
We set
\begin{align}
    A_{q,r}^{\varepsilon,\alpha}[a,u](I)
    :={}
    &
    \alpha \varepsilon
    \| (a, \mathbb{Q}u)\|_{X^{\varepsilon}(I)}
    +
    \| (a,u)\|_{Y_{q,r}^{\varepsilon,\alpha}(I)}\\
    &
    +
    \left(
    \| (a, \mathbb{Q}u)\|_{Y_{q,r}^{\varepsilon,\alpha}(I)}^{\ell;\alpha}
    \right)^{\frac{1}{r-1}}
    \| (a, \mathbb{Q}u)\|_{X^{\varepsilon}(I)}^{\frac{r-2}{r-1}}
\end{align}
for all $(a,u) \in X^{\varepsilon}(I)$.
\end{itemize}
\end{df}
\begin{rem}
Let us explain how the quantities defined in Definition \ref{df2} work in the proof of Theorem \ref{thm:2}.
$\| (a,u) \|_{X^{\varepsilon}(I)}$ represents the energy norm of the solution, which may be large if the initial data is large.
The quantity $A_{q,r}^{\varepsilon,\alpha}[a,u](I)$ gets arbitrarily small for suitable $\alpha$, provided that the Mach number $\varepsilon$ and the time integral of the incompressible solution $w$ of \eqref{main:eq-incomp} on some time interval $I$ are sufficiently small, even if the initial data is not necessarily small.
\end{rem}
By the Bernstein inequality and the interpolation, 
we immediately obtain the following estimates:
\begin{lemm}\label{lemm:emb}
Let $I \subset \mathbb{R}$ be an interval.
For $2 \leqslant q,r \leqslant \infty$,
there exists a positive constant $C=C(d,q,r)$ such that
\begin{align}
    \| (a,u) \|_{Y_{q,r}^{\varepsilon,\alpha}(I)}
    \leqslant
    A_{q,r}^{\varepsilon,\alpha}[a,u](I)
    \leqslant
    C
    \| (a,u) \|_{X^{\varepsilon}(I)}
\end{align}
for all $\alpha>0$, $0 < \varepsilon < \beta_0/\alpha$ and $(a,u) \in X^{\varepsilon}(I)$.
\end{lemm}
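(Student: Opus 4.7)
The first inequality is immediate from Definition \ref{df2}\,(3): the quantity $A_{q,r}^{\varepsilon,\alpha}[a,u](I)$ differs from $\|(a,u)\|_{Y_{q,r}^{\varepsilon,\alpha}(I)}$ only by the addition of the nonnegative terms $\alpha\varepsilon\|(a,\mathbb{Q}u)\|_{X^\varepsilon(I)}$ and $(\|(a,\mathbb{Q}u)\|_{Y_{q,r}^{\varepsilon,\alpha}(I)}^{\ell;\alpha})^{1/(r-1)}\|(a,\mathbb{Q}u)\|_{X^\varepsilon(I)}^{(r-2)/(r-1)}$. For the second inequality, my plan is to estimate separately the three summands in the definition of $A$ by $\|(a,u)\|_{X^\varepsilon(I)}$, using only Bernstein's inequality, Chemin--Lerner--style time interpolation, and the fact that $\mathbb{P}$ and $\mathbb{Q}$ are bounded Fourier multipliers on every $\dB^s_{p,1}(\mathbb{R}^d)$.

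For the first summand, the assumption $\alpha\varepsilon\leqslant\beta_0$ together with the boundedness of $\mathbb{Q}$ yields $\alpha\varepsilon\|(a,\mathbb{Q}u)\|_{X^\varepsilon(I)}\leqslant \beta_0\|(a,u)\|_{X^\varepsilon(I)}$. For the $Y$-part, I handle each sub-norm individually. The high-frequency block $\|(a,\mathbb{Q}u)\|_{Y_{q,r}^{\varepsilon,\alpha}(I)}^{h;\alpha}$ is controlled by $\|(a,u)\|_{X^\varepsilon(I)}$ via Bernstein's inequality $\|\dot{\Delta}_j f\|_{L^q}\leqslant C2^{dj(1/2-1/q)}\|\dot{\Delta}_j f\|_{L^2}$ (valid since $q\geqslant 2$), which converts each high-frequency $L^q$-type norm into the corresponding $L^2$-norm with exponent exactly matching $X^\varepsilon$; the middle-frequency block, being indexed by $\alpha\leqslant 2^j<\beta_0/\varepsilon$, is directly dominated by the low-frequency ($\ell;\beta_0/\varepsilon$) part of $X^\varepsilon$. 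The $\mathbb{P}u$ piece is bounded similarly: Bernstein reduces $\dB_{q,1}^{d/q\pm 1}$ to $\dB_{2,1}^{d/2\pm 1}$ on the entire line, and boundedness of $\mathbb{P}$ on Besov spaces absorbs the projection.

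The main (and only non-trivial) point is the estimate for the low-frequency Strichartz-type block $\|(a,\mathbb{Q}u)\|_{L^r(I;\dB_{q,1}^{d/q-1+2/r})}^{\ell;\alpha}$ and its analogue inside $\|\mathbb{P}u\|_{Y}$. Here I plan to use Bernstein first to reduce from $\dB^{d/q-1+2/r}_{q,1}$ to $\dB^{d/2-1+2/r}_{2,1}$, then use the pointwise-in-frequency interpolation
\begin{align*}
2^{(d/2-1+2/r)j}\|\dot{\Delta}_j f(t)\|_{L^2}=\bigl(2^{(d/2-1)j}\|\dot{\Delta}_j f(t)\|_{L^2}\bigr)^{1-1/r}\bigl(2^{(d/2+1)j}\|\dot{\Delta}_j f(t)\|_{L^2}\bigr)^{1/r},
\end{align*}
followed by Hölder's inequality in $j$ (with exponents $r/(r-1),r$) to obtain
\begin{align*}
\|f(t)\|_{\dB_{2,1}^{d/2-1+2/r}}^{\ell;\alpha}\leqslant \bigl(\|f(t)\|_{\dB_{2,1}^{d/2-1}}^{\ell;\alpha}\bigr)^{1-1/r}\bigl(\|f(t)\|_{\dB_{2,1}^{d/2+1}}^{\ell;\alpha}\bigr)^{1/r},
\end{align*}
and then Hölder in $t$ (with exponents $\infty,r$) to conclude
\begin{align*}
\|f\|_{L^r(I;\dB_{2,1}^{d/2-1+2/r})}^{\ell;\alpha}\leqslant \bigl(\|f\|_{L^\infty(I;\dB_{2,1}^{d/2-1})}^{\ell;\alpha}\bigr)^{1-1/r}\bigl(\|f\|_{L^1(I;\dB_{2,1}^{d/2+1})}^{\ell;\alpha}\bigr)^{1/r}.
\end{align*}
Young's inequality $x^{1-1/r}y^{1/r}\leqslant x+y$ then yields the desired bound in terms of $\|(a,u)\|_{X^\varepsilon(I)}$. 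This is the only place where one must be slightly careful, because the $X^\varepsilon$-norm is stated with the plain $L^\infty_t$ rather than the Chemin--Lerner $\widetilde{L^\infty}_t$; the above derivation avoids that issue by working pointwise in $t$ before taking the $L^r_t$ norm.

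Finally, the third summand in $A$ is handled by combining the two previous bounds with the elementary inequality $x^{1/(r-1)}y^{(r-2)/(r-1)}\leqslant x+y$ for $x,y\geqslant 0$ (valid since the exponents are nonnegative and sum to $1$), which gives
\begin{align*}
\bigl(\|(a,\mathbb{Q}u)\|_{Y_{q,r}^{\varepsilon,\alpha}(I)}^{\ell;\alpha}\bigr)^{\frac{1}{r-1}}\|(a,\mathbb{Q}u)\|_{X^\varepsilon(I)}^{\frac{r-2}{r-1}}\leqslant \|(a,\mathbb{Q}u)\|_{Y_{q,r}^{\varepsilon,\alpha}(I)}^{\ell;\alpha}+\|(a,\mathbb{Q}u)\|_{X^\varepsilon(I)}\lesssim \|(a,u)\|_{X^\varepsilon(I)},
\end{align*}
and the proof is complete.
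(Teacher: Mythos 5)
Your proof is correct and fills in the details of exactly what the paper asserts is ``immediate from the Bernstein inequality and interpolation'': Bernstein to pass from $L^q$ to $L^2$-based Besov norms, the elementary pointwise-in-$t$ interpolation $\|f(t)\|_{\dB_{2,1}^{s_\theta}}^{\ell;\alpha}\leqslant(\|f(t)\|_{\dB_{2,1}^{s_0}}^{\ell;\alpha})^{1-\theta}(\|f(t)\|_{\dB_{2,1}^{s_1}}^{\ell;\alpha})^{\theta}$ followed by H\"older in $t$ to bound the $L^r$-Strichartz block, and Young-type inequalities to absorb the cross terms. Your remark about avoiding the Chemin--Lerner $\widetilde{L^\infty}_t$ issue by interpolating at each fixed $t$ before applying H\"older in time is exactly the right observation, and for the $\mathbb{Q}u$ components one should just note that $X^\varepsilon$ controls the full frequency range of $u$, not only the low part.
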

We investigate the relationship between $A_{q,r}^{\varepsilon,\alpha}[a,u](I)$ and the various space time norms of $(a,u)$.
\begin{lemm}\label{lemm:A}
Let $I \subset \mathbb{R}$ be an interval.
For $2 \leqslant q,r \leqslant \infty$,
there exists a positive constant $C_1=C_1(d,q,r)$ such that
\begin{align}\label{A:a}
    \begin{split}
    \varepsilon
    \| a \|_{L^{\infty}(I;\dB_{q,1}^{\frac{d}{q}})}
    &
    +
    \| a \|_{L^r(I;\dB_{q,1}^{\frac{d}{q}-1+\frac{2}{r}})}\\
    &
    +
    \| a \|_{L^2(I;\dB_{q,1}^{\frac{d}{q}})}
    +
    \| a \|_{L^{r'}(I;\dB_{q,1}^{\frac{d}{q}-1+\frac{2}{r'}})}^{\ell;\frac{4\beta_0}{\varepsilon}}
    \leqslant{}
    C_1
    A_{q,r}^{\varepsilon,\alpha}[a,\mathbb{Q}u](I),
    \end{split}
\end{align} 
and 
\begin{align}\label{A:u}
    \begin{split}
    \| u \|_{L^1(I;\dB_{q,1}^{\frac{d}{q}+1})}^{h;\frac{\beta_0}{\varepsilon}}
    &
    +
    \| u \|_{L^r(I;\dB_{q,1}^{\frac{d}{q}-1+\frac{2}{r}})}\\
    &
    +
    \| u \|_{L^2(I;\dB_{q,1}^{\frac{d}{q}})}
    +
    \| u \|_{L^{r'}(I;\dB_{q,1}^{\frac{d}{q}-1+\frac{2}{r'}})}
    \leqslant{}
    C_1
    A_{q,r}^{\varepsilon,\alpha}[a,u](I)
    \end{split}
\end{align}
for all $\alpha>0$, $\varepsilon \in (0,\beta_0/\alpha)$ and $(a,u) \in X^{\varepsilon}(I)$.
\end{lemm}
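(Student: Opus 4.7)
The plan is to decompose each norm on the left-hand sides of \eqref{A:a}--\eqref{A:u} along the Littlewood--Paley partition into three dyadic regimes — low ($2^j<\alpha$), middle ($\alpha\leqslant 2^j<\beta_0/\varepsilon$) and high ($2^j\geqslant\beta_0/\varepsilon$) — that already structure the definitions of $X^{\varepsilon}(I)$, $Y_{q,r}^{\varepsilon,\alpha}(I)$ and $A_{q,r}^{\varepsilon,\alpha}[a,u](I)$, and to match each piece to one of the three summands of $A$. The two tools at play are Bernstein's inequality $\|\dot\Delta_j f\|_{L^q}\lesssim 2^{dj(1/2-1/q)}\|\dot\Delta_j f\|_{L^2}$, which moves the $L^q$-based Besov norms on the left into the $L^2$-based norms of $X^{\varepsilon}(I)$, and the log-convex time interpolations
\begin{equation}
\|f\|_{L^r}\leqslant\|f\|_{L^\infty}^{1-1/r}\|f\|_{L^1}^{1/r},\quad
\|f\|_{L^{r'}}\leqslant\|f\|_{L^1}^{(r-2)/(r-1)}\|f\|_{L^r}^{1/(r-1)},\quad
\|f\|_{L^2}\leqslant\|f\|_{L^1}^{(r-2)/(2(r-1))}\|f\|_{L^r}^{r/(2(r-1))},
\end{equation}
summed dyadically through Hölder's inequality in $j$.

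At high frequency, the $L^r$- and $L^2$-type $L^q$-Besov norms for $a$ and for $\mathbb{Q}u$ follow from the first interpolation identity applied to the pair $(\varepsilon\|a\|^{h;\beta_0/\varepsilon}_{L^\infty\dB_{q,1}^{d/q}},\ \varepsilon^{-1}\|a\|^{h;\beta_0/\varepsilon}_{L^1\dB_{q,1}^{d/q}})$ sitting inside $\|(a,\mathbb{Q}u)\|_{Y_{q,r}^{\varepsilon,\alpha}(I)}^{h;\alpha}$ (and the analogous $\mathbb{Q}u$-pair at Besov index $d/q\mp 1$); the powers of $\varepsilon$ recombine so that what is missing in the Besov weight is supplied by the condition $\varepsilon 2^j\geqslant\beta_0$. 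At middle frequency Bernstein turns each $L^q$-index into an $L^2$-index, and the extra dyadic factor $2^j$ so produced is absorbed by $\varepsilon 2^j<\beta_0$, reducing everything to the middle piece $\|(a,\mathbb{Q}u)\|^{m;\alpha,\beta_0/\varepsilon}_{L^\infty\dB_{2,1}^{d/2-1}\cap L^1\dB_{2,1}^{d/2+1}}$ of $Y_{q,r}^{\varepsilon,\alpha}(I)$. The $\mathbb{P}u$-contributions in \eqref{A:u} are read off directly from $\|\mathbb{P}u\|_{Y_{q,r}^{\varepsilon,\alpha}(I)}$, using the second and third interpolation identities to derive the $L^{r'}$- and $L^2$-pieces from the $L^r$- and $L^1$-components both encoded in that definition.

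At low frequency two cases are immediate: the $L^r$-norms are exactly $\|(a,\mathbb{Q}u)\|_{Y_{q,r}^{\varepsilon,\alpha}(I)}^{\ell;\alpha}$, and the $L^\infty$-based terms (such as the first terms of \eqref{A:a}--\eqref{A:u}) reduce by Bernstein and $2^j<\alpha$ to $\alpha\varepsilon\|(a,\mathbb{Q}u)\|_{X^{\varepsilon}(I)}$, the first summand of $A$. The delicate cases are $\|a\|^{\ell;\alpha}_{L^2(\dB_{q,1}^{d/q})}$ and the fourth term $\|a\|^{\ell;4\beta_0/\varepsilon}_{L^{r'}(\dB_{q,1}^{d/q-1+2/r'})}$: for each I apply the third (resp.\ the second) interpolation identity block by block, Bernstein on the $L^1(L^2)$-factor already available in $X^{\varepsilon}(I)$, and Hölder in $j$ with exponents $2(r-1)/(r-2),\ 2(r-1)/r$ (resp.\ $(r-1)/(r-2),\ r-1$). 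A short exponent accounting shows that the dyadic weights match exactly those of $\|\cdot\|_X$ and $\|\cdot\|_Y^{\ell;\alpha}$, so that summation gives $\|\cdot\|_X^{(r-2)/(2(r-1))}(\|\cdot\|_Y^{\ell;\alpha})^{r/(2(r-1))}$ and $\|\cdot\|_X^{(r-2)/(r-1)}(\|\cdot\|_Y^{\ell;\alpha})^{1/(r-1)}$ respectively; the latter is precisely the third summand of $A$, and the former, rewritten as $[\|\cdot\|_Y^{\ell;\alpha}\cdot\|\cdot\|_X^{(r-2)/(r-1)}(\|\cdot\|_Y^{\ell;\alpha})^{1/(r-1)}]^{1/2}$, is dominated by $A$ via the arithmetic--geometric mean inequality. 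The frequencies in the gap window $\beta_0/\varepsilon\leqslant 2^j<4\beta_0/\varepsilon$ (only $O(1)$-many) appearing in the fourth term are handled by the same $L^{r'}$-interpolation identity but kept in $L^q$-scale, using the pair $(\varepsilon\|a\|^{h}_{L^\infty\dB_{q,1}^{d/q}},\ \varepsilon^{-1}\|a\|^{h}_{L^1\dB_{q,1}^{d/q}})$ directly from $Y$; the extra factor $(\varepsilon 2^j)^{1-2/r}$ that appears is bounded by $(4\beta_0)^{1-2/r}$ there.

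The only genuine obstacle is the bookkeeping of the dyadic exponents in the delicate low-frequency step: one must verify that after Bernstein and time interpolation the exponent of $2^j$ cancels exactly, so that Hölder in $j$ reassembles the Besov norms already present in $X^{\varepsilon}(I)$ and $Y_{q,r}^{\varepsilon,\alpha}(I)^{\ell;\alpha}$. Once this matching is carried out in one case — for instance for the $L^{r'}$ term, where it relies on the identity $d(1/2-1/q)(r-2)/(r-1)+d/q-1+2/r'=(d/2+1)(r-2)/(r-1)+(d/q-1+2/r)/(r-1)$ — the remaining verifications are algebraic parallels, and the proof concludes by summing the three regime bounds over all eight norms on the left of \eqref{A:a}--\eqref{A:u}.
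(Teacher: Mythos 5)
Your proposal follows the same route as the paper's own proof: a three-zone dyadic decomposition (low $2^j<\alpha$, middle $\alpha\leqslant 2^j<\beta_0/\varepsilon$ plus the $O(1)$-wide gap $\beta_0/\varepsilon\leqslant 2^j<4\beta_0/\varepsilon$, high $2^j\geqslant\beta_0/\varepsilon$), Bernstein's inequality to pass from the $L^q$-scale to the $L^2$-scale norms stored in $X^{\varepsilon}$ and $Y^{m;\alpha,\beta_0/\varepsilon}$, block-wise time interpolation $\|\cdot\|_{L^r}\leqslant\|\cdot\|_{L^\infty}^{1-1/r}\|\cdot\|_{L^1}^{1/r}$, $\|\cdot\|_{L^{r'}}\leqslant\|\cdot\|_{L^1}^{(r-2)/(r-1)}\|\cdot\|_{L^r}^{1/(r-1)}$, $\|\cdot\|_{L^2}\leqslant\|\cdot\|_{L^1}^{(r-2)/(2(r-1))}\|\cdot\|_{L^r}^{r/(2(r-1))}$ followed by H\"older in $j$, and an AM--GM step to fold the resulting products $\|\cdot\|_X^{\theta}(\|\cdot\|_{Y}^{\ell;\alpha})^{1-\theta}$ into the three summands of $A_{q,r}^{\varepsilon,\alpha}$. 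The exponent identity you isolate does check out, and it is exactly the bookkeeping the paper performs implicitly; the only cosmetic difference is that you phrase the $L^2$ case via an explicit geometric-mean split, whereas the paper quotes the elementary bound $A^{r/(2(r-1))}B^{(r-2)/(2(r-1))}\leqslant A+A^{1/(r-1)}B^{(r-2)/(r-1)}$ --- same inequality, different dressing.
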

\begin{proof}
We first show \eqref{A:a}.
We see that
\begin{align}
    \varepsilon
    \| a \|_{L^{\infty}(I;\dB_{q,1}^{\frac{d}{q}})}
    \leqslant{}&
    \varepsilon
    \| a \|_{L^{\infty}(I;\dB_{q,1}^{\frac{d}{q}})}^{h;\frac{\beta_0}{\varepsilon}}
    +
    C
    \| a \|_{L^{\infty}(I;\dB_{2,1}^{\frac{d}{2}-1})}^{m;\alpha,\frac{\beta_0}{\varepsilon}}
    +
    C
    \alpha
    \varepsilon 
    \| a \|_{L^{\infty}(I;\dB_{2,1}^{\frac{d}{2}-1})}^{\ell;\alpha}\\
    \leqslant{}&
    C
    \| (a,\mathbb{Q}u) \|_{Y^{\varepsilon,\alpha}_{q,r}(I)}
    +
    C
    \varepsilon \alpha
    \| (a,\mathbb{Q}u) \|_{X^{\varepsilon}(I)}\\
    \leqslant{}&
    C
    A_{q,r}^{\varepsilon,\alpha}[a,\mathbb{Q}u](I)
\end{align}
and 
\begin{align}
    \| a \|_{L^r(I;\dB_{q,1}^{\frac{d}{q}-1+\frac{2}{r}})}
    \leqslant{}&
    C\left( \frac{\beta_0}{\varepsilon} \right)^{-1+\frac{2}{r}}
    \| a \|_{L^r(I;\dB_{q,1}^{\frac{d}{q}})}^{h;\frac{\beta_0}{\varepsilon}}
    +
    C\| a \|_{L^r(I;\dB_{2,1}^{\frac{d}{2}-1+\frac{2}{r}})}^{m;\alpha;\frac{\beta_0}{\varepsilon}}
    +
    \| a \|_{L^r(I;\dB_{q,1}^{\frac{d}{q}-1+\frac{2}{r}})}^{\ell;\alpha}\\
    \leqslant{}&
    C\varepsilon\| a \|_{L^{\infty}(I;\dB_{q,1}^{\frac{d}{q}})}^{h;\frac{\beta_0}{\varepsilon}}
    +
    \frac{C}{\varepsilon}
    \| a \|_{L^{\infty}(I;\dB_{q,1}^{\frac{d}{q}})}^{h;\frac{\beta_0}{\varepsilon}}\\
    &+
    C\| a \|_{L^{\infty}(I;\dB_{2,1}^{\frac{d}{2}-1}) \cap L^1(I;\dB_{2,1}^{\frac{d}{2}+1})}^{m;\alpha;\frac{\beta_0}{\varepsilon}}
    +
    \| a \|_{L^r(I;\dB_{q,1}^{\frac{d}{q}-1+\frac{2}{r}})}^{\ell;\alpha}\\
    \leqslant{}&
    C\| (a,\mathbb{Q}u) \|_{Y_{q,r}^{\varepsilon,\alpha}(I)}
    \leqslant
    C
    A_{q,r}^{\varepsilon,\alpha}[a,\mathbb{Q}u](I).
\end{align}
From the interpolation and the Bernstein inequlities, we see that
\begin{align}
    \| a \|_{L^2(I;\dB_{q,1}^{\frac{d}{q}})}
    \leqslant{}&
    \| a \|_{L^2(I;\dB_{q,1}^{\frac{d}{q}})}^{h;\frac{\beta_0}{\varepsilon}}
    +
    C\| a \|_{L^2(I;\dB_{2,1}^{\frac{d}{2}})}^{m;\alpha;\frac{\beta_0}{\varepsilon}}
    +
    \| a \|_{L^2(I;\dB_{q,1}^{\frac{d}{q}})}^{\ell;\alpha}\\
    \leqslant{}&
    \varepsilon\| a \|_{L^{\infty}(I;\dB_{q,1}^{\frac{d}{q}})}^{h;\frac{\beta_0}{\varepsilon}}
    +
    \frac{1}{\varepsilon}
    \| a \|_{L^{\infty}(I;\dB_{2,1}^{\frac{d}{2}})}^{h;\frac{\beta_0}{\varepsilon}}\\
    &+
    C\| a \|_{L^{\infty}(I;\dB_{2,1}^{\frac{d}{2}-1}) \cap L^1(I;\dB_{2,1}^{\frac{d}{2}+1})}^{m;\alpha;\frac{\beta_0}{\varepsilon}}\\
    &+
    \left( \| a \|_{L^r(I;\dB_{q,1}^{\frac{d}{q}-1+\frac{2}{r}})}^{\ell;\alpha}  \right)^{\frac{r}{2(r-1)}}
    \left( \| a \|_{L^1(I;\dB_{q,1}^{\frac{d}{q}+1})}^{\ell;\alpha}  \right)^{\frac{r-2}{2(r-1)}}\\
    \leqslant{}&
    \| (a,\mathbb{Q}u)\|_{Y_{q,r}^{\varepsilon,\alpha}(I)}
    +
    C
    \left(
    \| (a,\mathbb{Q}u)\|_{Y_{q,r}^{\varepsilon,\alpha}(I)}^{\ell;\alpha}
    \right)^{\frac{r}{2(r-1)}}
    \left(
    \| (a,\mathbb{Q}u)\|_{X^{\varepsilon}(I)}^{\ell;\alpha}
    \right)^{\frac{r-2}{2(r-1)}}\\
    \leqslant{}&
    C
    \| (a,\mathbb{Q}u)\|_{Y_{q,r}^{\varepsilon,\alpha}(I)}
    +
    C
    \left(
    \| (a,\mathbb{Q}u)\|_{Y_{q,r}^{\varepsilon,\alpha}(I)}^{\ell;\alpha}
    \right)^{\frac{1}{r-1}}
    \left(
    \| (a,\mathbb{Q}u)\|_{X^{\varepsilon}(I)}^{\ell;\alpha}
    \right)^{\frac{r-2}{r-1}}\\
    \leqslant{}&
    CA_{q,r}^{\varepsilon,\alpha}[a,\mathbb{Q}u](I)
\end{align}
Here, we have used the following equality:
\begin{align}
    A^{\frac{r}{2(r-1)}}B^{\frac{r-2}{2(r-1)}}
    \leqslant
    A + A^{\frac{1}{r-1}}B^{\frac{r-2}{r-1}},
    \qquad
    A,B \geqslant 0.
\end{align}
Similarly, we have
\begin{align}
    &\| a \|_{L^{r'}(I;\dB_{q,1}^{\frac{d}{q}-1+\frac{2}{r'}})}^{\ell;\frac{4\beta_0}{\varepsilon}}\\
    &\quad 
    \leqslant{}
    C\left( \frac{4\beta_0}{\varepsilon} \right)^{-1+\frac{2}{r'}}
    \| a \|_{L^{r'}(I;\dB_{q,1}^{\frac{d}{q}})}^{m;\frac{\beta_0}{\varepsilon},\frac{4\beta_0}{\varepsilon}}
    +
    C\| a \|_{L^{r'}(I;\dB_{2,1}^{\frac{d}{2}-1+\frac{2}{r'}})}^{m;\alpha;\frac{\beta_0}{\varepsilon}}
    +
    \| a \|_{L^{r'}(I;\dB_{q,1}^{\frac{d}{q}-1+\frac{2}{r'}})}^{\ell;\alpha}\\
    &\quad\leqslant{}
    C\varepsilon
    \| a \|_{L^{\infty}(I;\dB_{q,1}^{\frac{d}{q}})}^{h;\frac{\beta_0}{\varepsilon}}
    +
    \frac{C}{\varepsilon}
    \| a \|_{L^{\infty}(I;\dB_{q,1}^{\frac{d}{q}})}^{h;\frac{\beta_0}{\varepsilon}}\\
    &\qquad
    +
    C
    \| a \|_{L^{\infty}(I;\dB_{2,1}^{\frac{d}{2}-1}) \cap L^1(I;\dB_{2,1}^{\frac{d}{2}+1})}^{m;\alpha;\frac{\beta_0}{\varepsilon}}\\
    &\qquad+
    \left( \| a \|_{L^r(I;\dB_{q,1}^{\frac{d}{q}-1+\frac{2}{r}})}^{\ell;\alpha}  \right)^{\frac{1}{r-1}}
    \left( \| a \|_{L^1(I;\dB_{q,1}^{\frac{d}{q}+1})}^{\ell;\alpha}  \right)^{\frac{r-2}{r-1}}\\
    &\quad\leqslant{}
    C
    \| (a,\mathbb{Q}u)\|_{Y_{q,r}^{\varepsilon,\alpha}(I)}
    +
    C
    \left(
    \| (a,\mathbb{Q}u)\|_{Y_{q,r}^{\varepsilon,\alpha}(I)}^{\ell;\alpha}
    \right)^{\frac{1}{r-1}}
    \left(
    \| (a,\mathbb{Q}u)\|_{X^{\varepsilon}(I)}^{\ell;\alpha}
    \right)^{\frac{r-2}{r-1}}\\
    &\quad\leqslant{}
    CA_{q,r}^{\varepsilon,\alpha}[a,\mathbb{Q}u](I).
\end{align}
Thus, we obtain \eqref{A:a}.

Next, we prove \eqref{A:u}.
By $u=\mathbb{P}u+\mathbb{Q}u$, we have
\begin{align}
    &
    \| u \|_{L^1(I;\dB_{q,1}^{\frac{d}{q}+1})}^{h;\frac{\beta_0}{\varepsilon}}
    +
    \| u \|_{L^r(I;\dB_{q,1}^{\frac{d}{q}-1+\frac{2}{r}})}
    +
    \| u \|_{L^2(I;\dB_{q,1}^{\frac{d}{q}})}
    +
    \| u \|_{L^{r'}(I;\dB_{q,1}^{\frac{d}{q}-1+\frac{2}{r'}})}\\
    &\quad\leqslant
    \| \mathbb{Q}u \|_{L^1(I;\dB_{q,1}^{\frac{d}{q}+1})}^{h;\frac{\beta_0}{\varepsilon}}
    +
    \| \mathbb{Q}u \|_{L^r(I;\dB_{q,1}^{\frac{d}{q}-1+\frac{2}{r}})}
    +
    \| \mathbb{Q}u \|_{L^2(I;\dB_{q,1}^{\frac{d}{q}})}
    +
    \| \mathbb{Q}u \|_{L^{r'}(I;\dB_{q,1}^{\frac{d}{q}-1+\frac{2}{r'}})}\\
    &\qquad
    +
    C
    \| \mathbb{P}u \|_{L^r(I;\dB_{q,1}^{\frac{d}{q}-1+\frac{2}{r}}) \cap L^1(I;\dB_{q,1}^{\frac{d}{q}+1})}\\
    &\quad\leqslant
    \| \mathbb{Q}u \|_{L^r(I;\dB_{q,1}^{\frac{d}{q}-1+\frac{2}{r}})}
    +
    \| \mathbb{Q}u \|_{L^2(I;\dB_{q,1}^{\frac{d}{q}})}
    +
    \| \mathbb{Q}u \|_{L^{r'}(I;\dB_{q,1}^{\frac{d}{q}-1+\frac{2}{r'}})}
    +
    C\|(a,u) \|_{Y_{q,r}^{\varepsilon,\alpha}(I)}.
\end{align}
By the similar calculations as in the proof \eqref{A:a}, we have
\begin{align}
    \| \mathbb{Q}u \|_{L^r(I;\dB_{q,1}^{\frac{d}{q}-1+\frac{2}{r}})}
    \leqslant{}&
    C\| \mathbb{Q}u \|_{L^{\infty}(I;\dB_{q,1}^{\frac{d}{q}-1}) \cap L^1(I;\dB_{q,1}^{\frac{d}{q}+1})}^{h;\frac{\beta_0}{\varepsilon}}\\
    &
    +
    C\| \mathbb{Q}u \|_{L^{\infty}(I;\dB_{2,1}^{\frac{d}{2}-1}) \cap L^1(I;\dB_{2,1}^{\frac{d}{2}+1})}^{m;\alpha;\frac{\beta_0}{\varepsilon}}
    +
    \| \mathbb{Q}u \|_{L^r(I;\dB_{q,1}^{\frac{d}{q}-1+\frac{2}{r}})}^{\ell;\alpha}\\
    \leqslant{}&
    C\| (a,\mathbb{Q}u) \|_{Y_{q,r}^{\varepsilon,\alpha}(I)}
    \leqslant
    C
    A_{q,r}^{\varepsilon,\alpha}[a,\mathbb{Q}u](I).
\end{align}
We also see that
\begin{align}
    &\| \mathbb{Q}u \|_{L^2(I;\dB_{q,1}^{\frac{d}{q}})}\\
    &\quad\leqslant{}
    C\| \mathbb{Q}u \|_{L^{\infty}(I;\dB_{q,1}^{\frac{d}{q}-1}) \cap L^1(I;\dB_{q,1}^{\frac{d}{q}+1})}^{h;\frac{\beta_0}{\varepsilon}}
    +
    C\| \mathbb{Q}u \|_{L^{\infty}(I;\dB_{2,1}^{\frac{d}{2}-1}) \cap L^1(I;\dB_{2,1}^{\frac{d}{2}+1})}^{m;\alpha;\frac{\beta_0}{\varepsilon}}\\
    &\qquad+
    \left( \| \mathbb{Q}u \|_{L^r(I;\dB_{q,1}^{\frac{d}{q}-1+\frac{2}{r}})}^{\ell;\alpha}  \right)^{\frac{r}{2(r-1)}}
    \left( \| \mathbb{Q}u \|_{L^1(I;\dB_{q,1}^{\frac{d}{q}+1})}^{\ell;\alpha}  \right)^{\frac{r-2}{2(r-1)}}\\
    &\quad 
    \leqslant{}
    \| (a,\mathbb{Q}u)\|_{Y_{q,r}^{\varepsilon,\alpha}(I)}
    +
    C
    \left(
    \| (a,\mathbb{Q}u)\|_{Y_{q,r}^{\varepsilon,\alpha}(I)}^{\ell;\alpha}
    \right)^{\frac{r}{2(r-1)}}
    \left(
    \| (a,\mathbb{Q}u)\|_{X^{\varepsilon}(I)}^{\ell;\alpha}
    \right)^{\frac{r-2}{2(r-1)}}\\
    &\quad\leqslant{}
    C
    \| (a,\mathbb{Q}u)\|_{Y_{q,r}^{\varepsilon,\alpha}(I)}
    +
    C
    \left(
    \| (a,\mathbb{Q}u)\|_{Y_{q,r}^{\varepsilon,\alpha}(I)}^{\ell;\alpha}
    \right)^{\frac{1}{r-1}}
    \left(
    \| (a,\mathbb{Q}u)\|_{X^{\varepsilon}(I)}^{\ell;\alpha}
    \right)^{\frac{r-2}{r-1}}\\
    &\quad\leqslant{}
    CA_{q,r}^{\varepsilon,\alpha}[a,\mathbb{Q}u](I).
\end{align}
Similarily, we have
\begin{align}
    &\| \mathbb{Q}u \|_{L^{r'}(I;\dB_{q,1}^{\frac{d}{q}-1+\frac{2}{r'}})}\\
    &\quad\leqslant{}
    C\| \mathbb{Q}u \|_{L^{\infty}(I;\dB_{q,1}^{\frac{d}{q}-1}) \cap L^1(I;\dB_{q,1}^{\frac{d}{q}+1})}^{h;\frac{\beta_0}{\varepsilon}}
    +
    C\| \mathbb{Q}u \|_{L^{\infty}(I;\dB_{2,1}^{\frac{d}{2}-1}) \cap L^1(I;\dB_{2,1}^{\frac{d}{2}+1})}^{m;\alpha;\frac{\beta_0}{\varepsilon}}\\
    &\qquad+
    \left( \| \mathbb{Q}u \|_{L^r(I;\dB_{q,1}^{\frac{d}{q}-1+\frac{2}{r}})}^{\ell;\alpha}  \right)^{\frac{1}{r-1}}
    \left( \| \mathbb{Q}u \|_{L^1(I;\dB_{q,1}^{\frac{d}{q}+1})}^{\ell;\alpha}  \right)^{\frac{r-2}{r-1}}\\
    &\quad 
    \leqslant{}
    C
    \| (a,\mathbb{Q}u)\|_{Y_{q,r}^{\varepsilon,\alpha}(I)}
    +
    C
    \left(
    \| (a,\mathbb{Q}u)\|_{Y_{q,r}^{\varepsilon,\alpha}(I)}^{\ell;\alpha}
    \right)^{\frac{1}{r-1}}
    \left(
    \| (a,\mathbb{Q}u)\|_{X^{\varepsilon}(I)}^{\ell;\alpha}
    \right)^{\frac{r-2}{r-1}}\\
    &\quad\leqslant{}
    CA_{q,r}^{\varepsilon,\alpha}[a,\mathbb{Q}u](I).
\end{align}
Hence, we complete the proof.
\end{proof}

Now, we consider the global a priori estimates for the solution to \eqref{eq:re_comp-1}.
First of all, 
we recall the local well-posedness of  \eqref{eq:re_comp-1}.
\begin{lemm}[\cites{Che-Mia-Zha-10,Dan-14}]\label{lemm:LWP}
Let $\varepsilon>0$ and
let
$(a_0,u_0) \in  \dB_{2,1}^{\frac{d}{2}}(\mathbb{R}^d) \times \dB_{2,1}^{\frac{d}{2}-1}(\mathbb{R}^d)^d$
satisfy $\inf_{x \in \mathbb{R}^d}(1+\varepsilon a_0(x))>0$.
Then, there exists a positive time $\widetilde{T_0^{\varepsilon}}=\widetilde{T_0^{\varepsilon}}(d,a_0,u_0)$ such that \eqref{eq:re_comp-1} possesses a unique solution $(a,u)$ satisfying 
\begin{align}
    a^{\varepsilon} \in C([0,\widetilde{T_0^{\varepsilon}}];\dB_{2,1}^{\frac{d}{2}}(\mathbb{R}^d)),\quad
    u^{\varepsilon} \in C([0,\widetilde{T_0^{\varepsilon}}];\dB_{2,1}^{\frac{d}{2}-1}(\mathbb{R}^d))^d \cap L^1(0,\widetilde{T_0^{\varepsilon}};\dB_{2,1}^{\frac{d}{2}+1}(\mathbb{R}^d))^d
\end{align}
with $\rho^{\varepsilon}(t,x) = 1 + \varepsilon a^{\varepsilon}(t,x) > 0$.
Moreover, if in addition $a_0 \in \dB_{2,1}^{\frac{d}{2}-1}(\mathbb{R}^d)$, then $(a^{\varepsilon},u^{\varepsilon}) \in X^{\varepsilon}(0,T)$ for all $0<T<T_{\rm max}^{\varepsilon}$, where $T_{\rm max}^{\varepsilon}$ denotes the maximal existence time.
\end{lemm}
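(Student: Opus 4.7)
The plan is to construct the solution by a standard fixed-point argument, adapted to the coupled hyperbolic-parabolic structure of \eqref{eq:re_comp-1}. I would define, for constants $R>0$ and $T>0$ to be chosen, the closed ball $\mathcal{B}_{R,T}$ in the space $E_T := C([0,T];\dB_{2,1}^{\frac{d}{2}}) \times \bigl(C([0,T];\dB_{2,1}^{\frac{d}{2}-1}) \cap L^1(0,T;\dB_{2,1}^{\frac{d}{2}+1})\bigr)^d$ centered at the free solution associated to $(a_0,u_0)$ with radius $R$, and set up the map $\Phi$ that sends $(\widetilde a,\widetilde u)$ to the solution $(a,u)$ of the linear system obtained from \eqref{eq:re_comp-1} by replacing the nonlinearities and transport terms with their values at $(\widetilde a,\widetilde u)$. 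The goal is to show that for $R$ large enough (depending on the free solution) and $T$ small enough, $\Phi$ maps $\mathcal{B}_{R,T}$ into itself and is a contraction in a slightly weaker norm.

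First, I would estimate the free linear solution. For the velocity, Lemma \ref{lemm:lin-P-1} handles the parabolic part directly. For the density, the linearized equation $\partial_t a + \varepsilon^{-1}\div u = 0$ does not smooth $a$, but the mixed hyperbolic-parabolic estimate Lemma \ref{lemm:lin-Q-1} controls the compressible high frequencies in $\dB_{2,1}^{\frac{d}{2}}$ and, combined with Lemma \ref{lemm:lin-Q-2}, the low frequencies in $\dB_{2,1}^{\frac{d}{2}-1}$ whenever $a_0 \in \dB_{2,1}^{\frac{d}{2}-1}$. Next, I would bound the nonlinear source terms $\div(\widetilde a\,\widetilde u)$, $(\widetilde u\cdot\nabla)\widetilde u$, $\mathcal{J}(\varepsilon\widetilde a)\mathcal{L}\widetilde u$ and $\varepsilon^{-1}\mathcal{K}(\varepsilon\widetilde a)\nabla\widetilde a$ in $L^1(0,T;\dB_{2,1}^{\frac{d}{2}-1})$ via Lemma \ref{lemm:prod-1} and the composition Lemma \ref{lemm:composition}, the composition lemma being applicable because the hypothesis $\inf_x(1+\varepsilon a_0)>0$ together with continuity in time guarantees $\|\varepsilon \widetilde a\|_{L^\infty}<1$ on a small time interval. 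Each nonlinear estimate carries at least one factor of $T^{\theta}$ with $\theta>0$, coming either from the Hölder inequality in time for the $L^2_t L^\infty_x \times L^2_t L^\infty_x$ type products or from the smallness of the transport term over short times, which is exactly what lets us absorb the nonlinear contribution and close the contraction.

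The main obstacle is the treatment of the density equation, since it lacks parabolic smoothing and the product $\div(a u)$ costs one derivative on $a$ that cannot be recovered from the equation alone. I would handle this by exploiting the effective-velocity reformulation already used in the proof of Lemma \ref{lemm:lin-Q-1} to gain damping on the high-frequency part of $a$, and by applying a transport-type estimate with the commutator control from Lemma \ref{lemm:nonlin-2} (or the standard transport estimate of \cite{Bah-Che-Dan-11}) on the middle and low frequencies, picking up an $\exp(C\|\nabla u\|_{L^1_T\dB_{2,1}^{d/2}})$ factor that is harmless for small $T$. Uniqueness in the same class, and the blow-up/continuation criterion implicit in the definition of $T^{\varepsilon}_{\max}$, follow by applying the same product and linear estimates to the difference of two solutions in a regularity one derivative below, which is the standard critical-space stability argument of \cite{Dan-01-L}.

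For the second assertion, once the solution is obtained in the class above, the extra regularity $a_0 \in \dB_{2,1}^{\frac{d}{2}-1}$ is propagated by re-running the linear estimates: the low-frequency estimate Lemma \ref{lemm:lin-Q-2} applied to $(a,\mathbb{Q}u)$ and Lemma \ref{lemm:lin-P-1} applied to $\mathbb{P}u$ together give the missing norms entering $\|\cdot\|_{X^{\varepsilon}(0,T)}$, with the nonlinear source controlled on any $[0,T]\subset[0,T^{\varepsilon}_{\max})$ by the already-bounded quantities via Lemmas \ref{lemm:nonlin-2}, \ref{lemm:nonlin-1}, \ref{lemm:nonlin-3}, \ref{lemm:nonlin-4} and \ref{lemm:nonlin-5}. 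This produces $(a^{\varepsilon},u^{\varepsilon})\in X^{\varepsilon}(0,T)$ for every $T<T^{\varepsilon}_{\max}$, completing the proof.
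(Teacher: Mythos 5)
The paper does not prove this lemma; it is cited verbatim from \cite{Che-Mia-Zha-10} and \cite{Dan-14}, and Section~\ref{sec:a-propri} then treats the local solution as given. Your sketch is therefore measured against those references, and two points deserve attention. First, the route is genuinely different from one of the cited works: \cite{Dan-14} passes to Lagrangian coordinates precisely because the Eulerian contraction for the transport equation of $a$ loses one derivative, whereas your proposal stays Eulerian, closer in spirit to \cite{Che-Mia-Zha-10} and Danchin's earlier local theory. That is a legitimate alternative, but the derivative loss in the difference estimate is the central obstruction and your mention of it (``contraction in a slightly weaker norm'', ``regularity one derivative below'') needs to carry the whole argument, not appear as an afterthought. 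Second, the claim that ``each nonlinear estimate carries at least one factor of $T^{\theta}$'' is not correct in the critical setting and would lead the self-map/contraction estimate astray if taken literally. For the convection term one only gets
\begin{align}
\|(u\cdot\nabla)u\|_{L^1(0,T;\dB_{2,1}^{\frac d2-1})}
\lesssim \|u\|_{L^2(0,T;\dB_{2,1}^{\frac d2})}^2
\lesssim \|u\|_{L^\infty(0,T;\dB_{2,1}^{\frac d2-1})}\,\|u\|_{L^1(0,T;\dB_{2,1}^{\frac d2+1})},
\end{align}
with no explicit power of $T$; the $L^\infty$ factor is $O(1)$ and the smallness for short times comes only from $\|e^{t\mu\Delta}\mathbb{P}u_0\|_{L^1(0,T;\dB_{2,1}^{d/2+1})}\to 0$ as $T\downarrow 0$, combined with a low/high frequency split of the data so that the high-frequency part is small and the low-frequency part gains the vanishing $L^1$-in-time norm. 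Your second mechanism (``smallness of the transport term over short times'') is what actually does the work, and it has to be the stated basis of the estimate rather than an optional alternative. Terms involving $a$ in $L^\infty_T$ paired with an $L^1_T$ velocity norm, and the term $\varepsilon^{-1}\mathcal{K}(\varepsilon a)\nabla a$ estimated via $\|a\|_{L^2_T\dB_{q,1}^{d/q}}^2$, do pick up such vanishing-in-$T$ factors, but a blanket $T^\theta$ claim is false and obscures where the actual gain comes from. The remaining ingredients you list---the effective-velocity damping from Lemma~\ref{lemm:lin-Q-1}, the transport estimate with a Gronwall factor, the composition Lemma~\ref{lemm:composition} under $\|\varepsilon a\|_{L^\infty}<1$, and propagating $a_0\in \dB_{2,1}^{d/2-1}$ through Lemmas~\ref{lemm:lin-Q-2} and~\ref{lemm:lin-P-1} to land in $X^{\varepsilon}(0,T)$---are all consistent with how the cited local theory is actually carried out.
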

In the following three lemmas,  
applying the linear estimates in Section \ref{sec:lin} to \eqref{eq:re_comp-1} with the nonlinear estimates in Section \ref{sec:nonlin}, 
we establish global a priori estimates for the local solution constructed in Lemma \ref{lemm:LWP}.
\begin{lemm}\label{lemm:X}
Let $\alpha > 0$, $0< \varepsilon < \beta_0/\alpha$ and let $q$ and $r$ satisfy
\begin{align}\label{ene:p-q-r-1}
    2\leqslant{}  q \leqslant 4,\qquad
    0 \leqslant \frac{1}{r} \leqslant \frac{1}{2} - \frac{d}{2}\left( \frac{1}{2} - \frac{1}{q} \right),\qquad
    \frac{2d}{q} - 1 >0.
\end{align}
Let $(a^{\varepsilon},u^{\varepsilon})$ be the local solution to \eqref{eq:re_comp-1} on $[0,T_{\rm max}^{\varepsilon})$ with the initial data $(a_0,u_0) \in (\dB_{2,1}^{\frac{d}{2}-1}(\mathbb{R}^d) \cap \dB_{2,1}^{\frac{d}{2}}(\mathbb{R}^d)) \times \dB_{2,1}^{\frac{d}{2}-1}(\mathbb{R}^d)^d $,
where $T_{\rm max}^{\varepsilon}$ denotes the maximal existence time.
Then, 
there exists a positive constant $C_2=C_2(d,q,r,\mu,P)$ such that  
for any interval $I \subset [0,T^{\varepsilon}_{\rm max})$ with the infimum $t_0$,
\begin{align}\label{est:X}
    \| (a^{\varepsilon},u^{\varepsilon}) \|_{X^{\varepsilon}(I)}
    \leqslant{}
    &
    C_2
    \| (a^{\varepsilon},u^{\varepsilon})(t_0) \|_{D^{\varepsilon}}
    +
    C_2
    A_{q,r}^{\varepsilon,\alpha}[a^{\varepsilon},u^{\varepsilon}](I)
    \| (a^{\varepsilon}, u^{\varepsilon})\|_{X^{\varepsilon}(I)}
\end{align}
provided that $\varepsilon\| a^{\varepsilon} \|_{L^{\infty}(I;L^{\infty}\cap\dB_{q,1}^{\frac{d}{q}})} \leqslant 1/2$.
\end{lemm}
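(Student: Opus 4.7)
The plan is to decompose the $X^{\varepsilon}(I)$ norm according to its four components and to apply the corresponding linear estimates from Section \ref{sec:lin} together with the nonlinear estimates from Section \ref{sec:nonlin}, expressing the right hand side as a product of one factor controlled by $A_{q,r}^{\varepsilon,\alpha}[a^{\varepsilon},u^{\varepsilon}](I)$ (via Lemma \ref{lemm:A}) and one factor controlled by $\|(a^{\varepsilon},u^{\varepsilon})\|_{X^{\varepsilon}(I)}$.

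First, I apply Lemma \ref{lemm:lin-Q-1} to the linearized system \eqref{eq:re_comp-1} (viewed as \eqref{eq:lin-1} with $f = -\div(a^{\varepsilon}u^{\varepsilon})$, $g = -\mathcal{N}^{\varepsilon}[a^{\varepsilon},u^{\varepsilon}]$), choosing $p = 2$, $s = \tfrac{d}{2}-1$, and the auxiliary vector field $v = u^{\varepsilon}$. The identity
\begin{align}
\dot{\Delta}_j f + u^{\varepsilon} \cdot \nabla \dot{\Delta}_j a^{\varepsilon} = [u^{\varepsilon}\cdot \nabla, \dot{\Delta}_j]a^{\varepsilon} - \dot{\Delta}_j (a^{\varepsilon} \div u^{\varepsilon})
\end{align}
matches the structure of Lemma \ref{lemm:nonlin-2}, which together with Lemmas \ref{lemm:nonlin-4}(1), \ref{lemm:nonlin-5}(1) applied to $\mathbb{Q}g$ controls the high-frequency part
\begin{align}
\varepsilon \| a^{\varepsilon} \|_{L^{\infty}(I;\dB_{2,1}^{\frac{d}{2}})}^{h;\frac{\beta_0}{\varepsilon}} + \frac{1}{\varepsilon}\| a^{\varepsilon} \|_{L^1(I;\dB_{2,1}^{\frac{d}{2}})}^{h;\frac{\beta_0}{\varepsilon}} + \|\mathbb{Q}u^{\varepsilon}\|_{L^{\infty}(I;\dB_{2,1}^{\frac{d}{2}-1})\cap L^1(I;\dB_{2,1}^{\frac{d}{2}+1})}^{h;\frac{\beta_0}{\varepsilon}}.
\end{align}

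Next, for the low-frequency part of $(a^{\varepsilon},\mathbb{Q}u^{\varepsilon})$ in $X^{\varepsilon}(I)$, I invoke Lemma \ref{lemm:lin-Q-2} with $\alpha = 0$ and $\beta = \beta_0$ at regularity $s = \tfrac{d}{2}-1$. The nonlinear forcing at low frequency is controlled using the $\ell;\tfrac{\beta_0}{\varepsilon}$-truncated estimates of Lemma \ref{lemm:nonlin-1}(2) for $\div(a^{\varepsilon}u^{\varepsilon})$, Lemma \ref{lemm:nonlin-4}(2) for $\mathcal{J}(\varepsilon a^{\varepsilon})\mathcal{L}u^{\varepsilon}$, and Lemma \ref{lemm:nonlin-5}(2) for $\varepsilon^{-1}\mathcal{K}(\varepsilon a^{\varepsilon})\nabla a^{\varepsilon}$. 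Finally, for the incompressible part $\mathbb{P}u^{\varepsilon}$, I use the heat maximal regularity of Lemma \ref{lemm:lin-P-1} (or Lemma \ref{lemm:lin-P-2} with $w=0$), with the forcing $\mathbb{P}g$ bounded by Lemmas \ref{lemm:nonlin-3}(1), \ref{lemm:nonlin-4}(1) at $p = q = 2$.

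In each case, the resulting bilinear (and quasilinear) products have one factor of the shape $\beta_0 \| a^{\varepsilon}\|_{L^2(I;\dB_{q,1}^{\frac{d}{q}})}$, $\| u^{\varepsilon} \|_{L^r(I;\dB_{q,1}^{\frac{d}{q}-1+\frac{2}{r}})}^{\ell;\frac{\beta_0}{\varepsilon}}$, $\varepsilon \| a^{\varepsilon} \|_{L^{\infty}(I;\dB_{q,1}^{\frac{d}{q}})}$, etc., which are all bounded by $C A_{q,r}^{\varepsilon,\alpha}[a^{\varepsilon},u^{\varepsilon}](I)$ via Lemma \ref{lemm:A}; the remaining factor is of the form $\|u^{\varepsilon}\|_{L^1(I;\dB_{2,1}^{\frac{d}{2}+1})}^{h;\frac{\beta_0}{\varepsilon}}$, $\varepsilon\| a^{\varepsilon}\|_{L^{\infty}(I;\dB_{2,1}^{\frac{d}{2}})}^{h;\frac{\beta_0}{\varepsilon}}$, or an interpolation norm $\|u^{\varepsilon}\|_{L^2(I;\dB_{2,1}^{\frac{d}{2}})} \leqslant \|u^{\varepsilon}\|_{L^{\infty}(I;\dB_{2,1}^{\frac{d}{2}-1})}^{1/2}\|u^{\varepsilon}\|_{L^1(I;\dB_{2,1}^{\frac{d}{2}+1})}^{1/2}$, each controlled by $\|(a^{\varepsilon},u^{\varepsilon})\|_{X^{\varepsilon}(I)}$ (after multiplying by $\varepsilon^{-1}$ where necessary for the $\dB_{2,1}^{\frac{d}{2}}$-norm of $a^{\varepsilon}$, using Bernstein for the low frequencies). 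The smallness hypothesis $\varepsilon\|a^{\varepsilon}\|_{L^{\infty}(L^{\infty}\cap \dB_{q,1}^{\frac{d}{q}})}\leqslant 1/2$ is precisely what Lemmas \ref{lemm:nonlin-4} and \ref{lemm:nonlin-5} require in order to invoke the composition Lemma \ref{lemm:composition} on $\mathcal{J}$ and $\mathcal{K}$.

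The main technical obstacle is the careful bookkeeping when pairing factors: for each of the many terms coming out of Lemmas \ref{lemm:nonlin-1}--\ref{lemm:nonlin-5}, one must identify which factor should be absorbed into $A_{q,r}^{\varepsilon,\alpha}$ (the ``small'' factor with $q$-regularity and/or space–time integrability off-index $2$) and which factor provides the linear $\|(a^{\varepsilon},u^{\varepsilon})\|_{X^{\varepsilon}(I)}$ dependence. In particular the singular $\varepsilon^{-1}\mathcal{K}(\varepsilon a^{\varepsilon})\nabla a^{\varepsilon}$ term must be handled jointly by Lemma \ref{lemm:nonlin-5}(1) (for high frequencies) and Lemma \ref{lemm:nonlin-5}(2) (for low frequencies), both of which only produce products of the form $\|a^{\varepsilon}\|_{L^2(\dB^{d/q})}$ against $\|a^{\varepsilon}\|_{L^2(\dB^{d/p})}$, so no $\varepsilon^{-1}$ loss actually occurs. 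Summing all the estimates and writing $u^{\varepsilon} = \mathbb{P}u^{\varepsilon}+\mathbb{Q}u^{\varepsilon}$ yields \eqref{est:X} with a constant $C_2$ depending only on $d$, $q$, $r$, $\mu$, $P$.
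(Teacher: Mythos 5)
Your proof takes essentially the same route as the paper: decompose $\|(a^{\varepsilon},u^{\varepsilon})\|_{X^{\varepsilon}(I)}$ into the high-frequency compressible part (via Lemma \ref{lemm:lin-Q-1} with $p=2$ and $v=u^{\varepsilon}$, combined with Lemmas \ref{lemm:nonlin-2}, \ref{lemm:nonlin-3}, \ref{lemm:nonlin-4}, \ref{lemm:nonlin-5}), the low-frequency compressible part (Lemma \ref{lemm:lin-Q-2} with $s=\tfrac d2-1$, plus the $\ell;\beta_0/\varepsilon$-truncated nonlinear estimates), and the incompressible part (Lemma \ref{lemm:lin-P-1}), then absorb one factor in each product into $A_{q,r}^{\varepsilon,\alpha}$ via Lemma \ref{lemm:A}. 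One slip to flag: for the incompressible forcing you write "at $p=q=2$", but if both exponents are $2$ the resulting bound $\|u^{\varepsilon}\|_{L^2(I;\dB_{2,1}^{d/2})}^2$ has no factor controllable by $A_{q,r}^{\varepsilon,\alpha}$ (which only sees the $q$-regularity norms with $q>2$); you need $p=2$ and $q$ as in the hypothesis \eqref{ene:p-q-r-1}, exactly as your own concluding paragraph indicates.
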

\begin{proof}
We first consider the compressible part.
Applying Lemma \ref{lemm:lin-Q-1} to \eqref{eq:re_comp-1} and using Lemmas \ref{lemm:nonlin-2}, \ref{lemm:nonlin-1}, \ref{lemm:nonlin-3}, \ref{lemm:nonlin-4}, \ref{lemm:nonlin-5} and \ref{lemm:A}, we have 
\begin{align}\label{X-1}
    \begin{split}
    &\varepsilon \| a^{\varepsilon} \|_{{L^{\infty}}(I;\dB_{2,1}^{\frac{d}{2}})}^{h;\frac{\beta_0}{\varepsilon}}
    +
    \frac{1}{\varepsilon}
    \| a^{\varepsilon} \|_{L^1(I;\dB_{2,1}^{\frac{d}{2}})}^{h;\frac{\beta_0}{\varepsilon}}
    +
    \| \mathbb{Q}u^{\varepsilon} \|_{{L^{\infty}}(I;\dB_{2,1}^{\frac{d}{2}-1})\cap L^1(I;\dB_{2,1}^{\frac{d}{2}+1})}^{h;\frac{\beta_0}{\varepsilon}}\\
    &\quad 
    \leqslant{}
    C
    \bigg(\varepsilon\|a^{\varepsilon}(t_0)\|_{\dB_{2,1}^{\frac{d}{2}}}^{h;\frac{\beta_0}{\varepsilon}}
    +
    \| \mathbb{Q}u^{\varepsilon}(t_0) \|_{\dB_{2,1}^{\frac{d}{2}-1}}^{h;\frac{\beta_0}{\varepsilon}}
    \bigg)\\
    &\qquad
    + 
    C\|a^{\varepsilon}u^{\varepsilon}\|_{L^1(I;\dB_{2,1}^{\frac{d}{2}})}^{h;\frac{\beta_0}{\varepsilon}}
    + 
    C\|\mathcal{N}^{\varepsilon}[a^{\varepsilon},u^{\varepsilon}]\|_{L^1(I;\dB_{2,1}^{\frac{d}{2}-1})}^{h;\frac{\beta_0}{\varepsilon}}
    +
    C\varepsilon 
    \| a^{\varepsilon} \div \mathbb{Q}u^{\varepsilon} \|_{L^1(I;\dB_{2,1}^{\frac{d}{2}})}^{h;\frac{\beta_0}{\varepsilon}}\\
    &\qquad
    +
    C\varepsilon\sum_{2^j \geqslant \frac{\beta_0}{\varepsilon}}2^{\frac{d}{2}j}
    \| [u^{\varepsilon}\cdot \nabla, \dot{\Delta}_j] a^{\varepsilon}\|_{L^1(I;L^2)}
    +
    C\varepsilon
    \left\|
        \| \div u^{\varepsilon} \|_{L^{\infty}}
        \| a^{\varepsilon} \|_{\dB_{2,1}^{\frac{d}{2}}}
    \right\|_{L^1(0,t)}\\
    &\quad 
    \leqslant{}
    C\| (a^{\varepsilon},\mathbb{Q}u^{\varepsilon})(t_0) \|_{D^{\varepsilon}}^{h;\frac{\beta_0}{\varepsilon}}
    +
    C 
    A_{q,r}^{\varepsilon,\alpha}[a^{\varepsilon},u^{\varepsilon}](I)
    \| (a^{\varepsilon}, u^{\varepsilon})\|_{X^{\varepsilon}(I)}.
    \end{split}
\end{align}
From Lemma \ref{lemm:lin-Q-2} and Lemmas \ref{lemm:nonlin-1}, \ref{lemm:nonlin-3}, \ref{lemm:nonlin-4}, \ref{lemm:nonlin-5} and \ref{lemm:A}, it follows that
\begin{align}\label{X-2}
    \begin{split}
    &
    \| (a^{\varepsilon},\mathbb{Q}u^{\varepsilon}) \|_{{L^{\infty}}(I;\dB_{2,1}^{\frac{d}{2}-1})\cap L^1(I;\dB_{2,1}^{\frac{d}{2}+1})}^{\ell;\frac{\beta_0}{\varepsilon}}\\
    &\quad 
    \leqslant{}
    C
    \| (a^{\varepsilon},\mathbb{Q}u^{\varepsilon})(t_0) \|_{\dB_{2,1}^{\frac{d}{2}-1}}^{\ell;\frac{\beta_0}{\varepsilon}}
    +
    \| a^{\varepsilon}u^{\varepsilon} \|_{L^1(I;\dB_{2,1}^{\frac{d}{2}})}^{\ell;\frac{\beta_0}{\varepsilon}}
    +
    C
    \|\mathcal{N}^{\varepsilon}[a^{\varepsilon},u^{\varepsilon}]\|_{L^1(I;\dB_{2,1}^{\frac{d}{2}-1})}^{\ell;\frac{\beta_0}{\varepsilon}}\\ 
    &\quad 
    \leqslant{}
    C\| (a^{\varepsilon},\mathbb{Q}u^{\varepsilon})(t_0) \|_{D^{\varepsilon}}^{\ell;\frac{\beta_0}{\varepsilon}}
    +
    C 
    A_{q,r}^{\varepsilon,\alpha}[a^{\varepsilon},u^{\varepsilon}](I)
    \| (a^{\varepsilon}, u^{\varepsilon})\|_{X^{\varepsilon}(I)}.
    \end{split}
\end{align}

Next, we consider the incompressible part.
Applying $\mathbb{P}$ to the second equation of \eqref{eq:re_comp-1}, we see that
\begin{align}
    \begin{dcases}
        \partial_t \mathbb{P}u^{\varepsilon} - \mu \Delta \mathbb{P}u^{\varepsilon} = - \mathbb{P}\mathcal{N}^{\varepsilon}[a^{\varepsilon},u^{\varepsilon}],&\qquad t > 0, x \in \mathbb{R}^d,\\
        \mathbb{P}u^{\varepsilon}(0,x) = \mathbb{P}u_0(x),&\qquad x \in \mathbb{R}^d.
    \end{dcases}
\end{align}
Then, by Lemma \ref{lemm:lin-P-1} and Lemmas \ref{lemm:nonlin-3}, \ref{lemm:nonlin-4}, \ref{lemm:nonlin-5} and \ref{lemm:A}, it holds
\begin{align}\label{X-3}
    \| \mathbb{P}u^{\varepsilon} \|_{{L^{\infty}}(I;\dB_{2,1}^{\frac{d}{2}-1})\cap L^1(I;\dB_{2,1}^{\frac{d}{2}+1})}
    \leqslant{}&
    C\| \mathbb{P}u^{\varepsilon}(t_0) \|_{\dB_{2,1}^{\frac{d}{2}-1}}
    +
    C\| \mathcal{N}^{\varepsilon}[a^{\varepsilon},u^{\varepsilon}] \|_{L^1(I;\dB_{2,1}^{\frac{d}{2}-1})}\\
    \leqslant{}&
    C\| \mathbb{P}u^{\varepsilon}(t_0) \|_{\dB_{2,1}^{\frac{d}{2}-1}}
    +
    C
    A_{q,r}^{\varepsilon,\alpha}[a^{\varepsilon},u^{\varepsilon}](I)
    \| (a^{\varepsilon}, u^{\varepsilon})\|_{X^{\varepsilon}(I)}.
\end{align}
Hence, combining \eqref{X-1}, \eqref{X-2} and \eqref{X-3}, 
we complete the proof.
\end{proof}

\begin{lemm}\label{lemm:Y}
Let $\alpha>0$ and $0< \varepsilon < \beta_0/\alpha$. 
Let  $2 \leqslant q \leqslant 4$ and $2 \leqslant r \leqslant \infty$ satisfy
\begin{gather}\label{Y-p_q_r}
    \frac{1}{r} \leqslant \min \left\{ \frac{1}{2} - \frac{d}{2}\left( \frac{1}{2} - \frac{1}{q} \right),\frac{d-1}{2}\left( \frac{1}{2} - \frac{1}{q} \right)\right\},\quad
    \frac{2d}{q} - 1 > 0.
\end{gather}
Let $(a^{\varepsilon},u^{\varepsilon})$ be the local solution to \eqref{eq:re_comp-1} on $[0,T_{\rm max}^{\varepsilon})$ with the initial data $(a_0,u_0) \in (\dB_{2,1}^{\frac{d}{2}-1}(\mathbb{R}^d) \cap \dB_{2,1}^{\frac{d}{2}}(\mathbb{R}^d)) \times \dB_{2,1}^{\frac{d}{2}-1}(\mathbb{R}^d)^d $,
where $T_{\rm max}^{\varepsilon}$ denotes the maximal existence time.
Let $w$ be a global solution to the incompressible Navier--Stokes equation \eqref{main:eq-incomp}.
Then, there exists a positive constant $C_3=C_3(d,q,r,\mu,P)$ such that
for any interval $I \subset [0,T_{\rm max}^{\varepsilon})$ with the infimum $t_0$, it holds
\begin{align}
&
\begin{aligned}\label{est:Y-comp-high}
    \| (a^{\varepsilon},\mathbb{Q}u^{\varepsilon}) \|_{Y_{q,r}^{\varepsilon,\alpha}(I)}^{h;\alpha}
    \leqslant{}&
    C_3 
    \left( 
    \| (a^{\varepsilon},\mathbb{Q}u^{\varepsilon})(t_0) \|_{D_q^{\varepsilon}}^{h;\alpha}
    +
    \| (w \cdot \nabla)w  \|_{L^1(I;\dB_{2,1}^{\frac{d}{2}-1})}^{h;\alpha}
    \right)\\
    &
    +
    C_3
    \widetilde{A_{q,r}^{\varepsilon,\alpha}}[a^{\varepsilon},u^{\varepsilon};w](I)
    A_{q,r}^{\varepsilon,\alpha}[a^{\varepsilon},u^{\varepsilon}-w](I),
\end{aligned}\\
&
\begin{aligned}\label{est:Y-comp-low}
    \| (a^{\varepsilon},\mathbb{Q}u^{\varepsilon}) \|_{Y_{q,r}^{\varepsilon,\alpha}(I)}^{\ell;\alpha}
    \leqslant{}&
    C_3 
    (\alpha \varepsilon)^{\frac{1}{r}}
    \| (a^{\varepsilon},\mathbb{Q}u^{\varepsilon})(t_0) \|_{D^{\varepsilon}}^{\ell;\alpha}\\
    &
    +
    C_3(\alpha \varepsilon)^{\frac{1}{r}}
    A_{q,r}^{\varepsilon,\alpha}[a^{\varepsilon},u^{\varepsilon}](I)
    \| (a^{\varepsilon},u^{\varepsilon}) \|_{X^{\varepsilon}(I)},
\end{aligned}
\end{align}
and
\begin{align}\label{est:Y-incomp}
    \begin{split}
    \| \mathbb{P}u^{\varepsilon}-w \|_{
    {L^{\infty}}(I;\dB_{q,1}^{\frac{d}{q}-1})
    \cap
    L^1(I;\dB_{q,1}^{\frac{d}{q}+1})}
    \leqslant{}&
    C_3
    \| (\mathbb{P}u^{\varepsilon} - w)(t_0) \|_{\dB_{q,1}^{\frac{d}{q}-1}}\\
    &
    +
    C_3
    \widetilde{A_{q,r}^{\varepsilon,\alpha}}[a^{\varepsilon},u^{\varepsilon};w](I)
    A_{q,r}^{\varepsilon,\alpha}[a^{\varepsilon},u^{\varepsilon}-w](I),
    \end{split}
\end{align}
provided that $\varepsilon\| a^{\varepsilon} \|_{L^{\infty}(I;L^{\infty}\cap\dB_{q,1}^{\frac{d}{q}})} \leqslant 1/2$.
Here, we have put 
\begin{align}\label{At}
    \widetilde{A_{q,r}^{\varepsilon,\alpha}}[a^{\varepsilon},u^{\varepsilon};w](I)
    :=
    \| w \|_{L^{r}(I;\dB_{q,1}^{\frac{d}{q}-1+\frac{2}{r}}) \cap L^1(I;\dB_{q,1}^{\frac{d}{q}+1})} 
    +  
    A_{q,r}^{\varepsilon,\alpha}[a^{\varepsilon},u^{\varepsilon}](I).
\end{align}
\end{lemm}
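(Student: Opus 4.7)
The plan is to apply the linear estimates of Section \ref{sec:lin} to \eqref{eq:re_comp-1} and to the perturbation equation for $\mathbb{P}v^\varepsilon := \mathbb{P}u^\varepsilon - w$, then to control each forcing term through the nonlinear estimates of Section \ref{sec:nonlin} together with the embedding Lemma \ref{lemm:A}. Throughout the proof I will systematically decompose $u^\varepsilon = w + v^\varepsilon$ in every bilinear expression, assigning one factor to $\widetilde{A_{q,r}^{\varepsilon,\alpha}}[a^\varepsilon,u^\varepsilon;w]$ and the other---which contains $v^\varepsilon$ or $a^\varepsilon$---to $A_{q,r}^{\varepsilon,\alpha}[a^\varepsilon,u^\varepsilon-w]$, so that the convective self-interaction $(w\cdot\nabla)w$ is the only forcing that cannot be bifactored in this manner.

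For \eqref{est:Y-comp-high} I apply Lemma \ref{lemm:lin-Q-1} with $p=q$ (which produces the $\dot B^{d/q}_{q,1}$ norm of $a^\varepsilon$ and the $\dot B^{d/q\pm 1}_{q,1}$ norms of $\mathbb{Q}u^\varepsilon$ at high frequencies) and Lemma \ref{lemm:lin-Q-2} with $p=2$, $\beta=\beta_0$ (for the middle-frequency $\dot B^{d/2-1}_{2,1}\cap L^1\dot B^{d/2+1}_{2,1}$ contribution). The forcings $-\div(a^\varepsilon u^\varepsilon)$ and $-\mathcal N^\varepsilon[a^\varepsilon,u^\varepsilon]$, together with the commutator and $\|\div u^\varepsilon\|_{L^\infty}\|a^\varepsilon\|_{\dot B^{d/q}_{q,1}}$ contributions coming from Lemma \ref{lemm:lin-Q-1}, are estimated by Lemmas \ref{lemm:nonlin-1}, \ref{lemm:nonlin-2}, \ref{lemm:nonlin-3}, \ref{lemm:nonlin-4} and \ref{lemm:nonlin-5}; after substituting $u^\varepsilon = w + v^\varepsilon$ and splitting each product as above, only $(w\cdot\nabla)w$ survives as a pure $w$ contribution. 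Estimate \eqref{est:Y-comp-low} then follows from the Strichartz estimate of Lemma \ref{lemm:lin-Q-3} with $\zeta = -1 + 1/r$, which produces the output norm $L^r\dot B^{d/q-1+2/r}_{q,1}$; combined with the Bernstein inequality on the low-frequency cutoff, one obtains the prefactor $\alpha^{1/r}\varepsilon^{1/r}(1+\varepsilon\alpha)^3 \le C(\alpha\varepsilon)^{1/r}$ (using $\varepsilon\alpha \le \beta_0$), and the nonlinear forcing is absorbed into $A_{q,r}^{\varepsilon,\alpha}[a^\varepsilon,u^\varepsilon](I)\cdot\|(a^\varepsilon,u^\varepsilon)\|_{X^\varepsilon(I)}$ via the same nonlinear estimates and Lemma \ref{lemm:A}.

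For \eqref{est:Y-incomp} I subtract \eqref{main:eq-incomp} from the $\mathbb{P}$-projection of the velocity equation of \eqref{eq:re_comp-1} and use $(u^\varepsilon\cdot\nabla)u^\varepsilon - (w\cdot\nabla)w = (v^\varepsilon\cdot\nabla)u^\varepsilon + (w\cdot\nabla)v^\varepsilon$ to obtain
\[
\partial_t\mathbb{P}v^\varepsilon - \mu\Delta\mathbb{P}v^\varepsilon = -\mathbb{P}(v^\varepsilon\cdot\nabla)u^\varepsilon - \mathbb{P}(w\cdot\nabla)v^\varepsilon - \mathbb{P}\bigl(\mathcal J(\varepsilon a^\varepsilon)\mathcal L u^\varepsilon\bigr).
\]
Since $\div w = 0$, Lemma \ref{lemm:lin-P-2} applied at $s = d/q - 1$ and $p = q$ with this $w$ bounds the left-hand side of \eqref{est:Y-incomp} by $\|(\mathbb{P}u^\varepsilon-w)(t_0)\|_{\dot B^{d/q-1}_{q,1}}$ plus $\sum_j 2^{(d/q-1)j}\|\dot\Delta_j\mathbb{P}g + (w\cdot\nabla)\dot\Delta_j\mathbb{P}v^\varepsilon\|_{L^1(I;L^q)}$. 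Writing $v^\varepsilon = \mathbb{P}v^\varepsilon + \mathbb{Q}u^\varepsilon$, the problematic term $-\dot\Delta_j\mathbb{P}(w\cdot\nabla)v^\varepsilon$ combines with the built-in $(w\cdot\nabla)\dot\Delta_j\mathbb{P}v^\varepsilon$ to produce exactly $[w\cdot\nabla,\dot\Delta_j\mathbb{P}]\mathbb{P}v^\varepsilon - \dot\Delta_j\mathbb{P}(w\cdot\nabla)\mathbb{Q}u^\varepsilon$; the commutator is controlled by the second assertion of Lemma \ref{lemm:nonlin-3}\,(3) with $m = 0$, while the remaining products $(v^\varepsilon\cdot\nabla)u^\varepsilon$, $(w\cdot\nabla)\mathbb{Q}u^\varepsilon$ and $\mathcal J(\varepsilon a^\varepsilon)\mathcal L u^\varepsilon$ are handled by Lemmas \ref{lemm:nonlin-3}\,(3) and \ref{lemm:nonlin-4}\,(3), each time placing the factor carrying $v^\varepsilon$ or $a^\varepsilon$ into $A_{q,r}^{\varepsilon,\alpha}[a^\varepsilon,u^\varepsilon-w]$ and the other into $\widetilde{A_{q,r}^{\varepsilon,\alpha}}[a^\varepsilon,u^\varepsilon;w]$ through Lemma \ref{lemm:A}.

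The main obstacle is precisely the treatment of $\mathbb{P}(w\cdot\nabla)v^\varepsilon$: a direct product estimate would require $w$ to be time-integrable in $\dot B^{d/q}_{q,1}$, which is not among the hypotheses \eqref{main:incomp-reg} on the global incompressible solution. The commutator structure built into Lemma \ref{lemm:lin-P-2}, combined with the sharp commutator bound of Lemma \ref{lemm:nonlin-3}\,(3), circumvents this loss and only demands $w \in L^{r'}\dot B^{d/q-1+2/r'}_{q,1}$, a norm that is indeed controlled by $\widetilde{A_{q,r}^{\varepsilon,\alpha}}[a^\varepsilon,u^\varepsilon;w]$ via Lemma \ref{lemm:A}.
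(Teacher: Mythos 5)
Your proposal follows essentially the same route as the paper: Lemma \ref{lemm:lin-Q-1} (at $p=q$) plus Lemma \ref{lemm:lin-Q-2} for the high/middle frequency compressible estimate with $(w\cdot\nabla)w$ isolated as the unique pure-$w$ forcing, Lemma \ref{lemm:lin-Q-3} with $\zeta=-1+1/r$ and a Bernstein gain of $\alpha^{1/r}$ for the low-frequency Strichartz part, and Lemma \ref{lemm:lin-P-2} together with the commutator bound of Lemma \ref{lemm:nonlin-3}\,(3) for the perturbation equation of $\mathbb{P}v^\varepsilon$. Two small remarks. First, in the commutator step the paper keeps $v^\varepsilon$ intact and estimates $[w\cdot\nabla,\mathbb{P}\dot\Delta_j]v^\varepsilon$ directly (the right-hand side of Lemma \ref{lemm:nonlin-3}\,(3) contains $\|v^\varepsilon\|_{L^r(I;\dot B^{d/q-1+2/r}_{q,1})}$, which is controlled by $A_{q,r}^{\varepsilon,\alpha}[a^\varepsilon,u^\varepsilon-w]$ via Lemma \ref{lemm:A}); your further split $v^\varepsilon=\mathbb{P}v^\varepsilon+\mathbb{Q}u^\varepsilon$, which converts the term into $[w\cdot\nabla,\dot\Delta_j\mathbb{P}]\mathbb{P}v^\varepsilon-\dot\Delta_j\mathbb{P}(w\cdot\nabla)\mathbb{Q}u^\varepsilon$, is algebraically correct but not needed, and creates one extra product term to estimate. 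Second, for the term $\mathcal{J}(\varepsilon a^\varepsilon)\mathcal{L}u^\varepsilon$ in \eqref{est:Y-incomp} you cite Lemma \ref{lemm:nonlin-4}\,(3); that part produces the weaker regularity $\dot B^{d/q-1-1/r}_{q,1}$ and is the tool for Lemma \ref{lemm:lim}, whereas here the target space is $L^1(I;\dot B^{d/q-1}_{q,1})$ so the right tool is Lemma \ref{lemm:nonlin-4}\,(1) with $p=q$. Neither point affects the correctness of the overall strategy.
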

\begin{proof}
Let $v^{\varepsilon}:=u^{\varepsilon}-w$.
We first show \eqref{est:Y-comp-high}.
It follows from Lemma \ref{lemm:lin-Q-1} that 
\begin{align}\label{Y-h-1}
    \begin{split}
    &\varepsilon \| a^{\varepsilon} \|_{{L^{\infty}}(I;\dB_{q,1}^{\frac{d}{q}})}^{h;\frac{\beta_0}{\varepsilon}}
    +
    \frac{1}{\varepsilon}
    \| a^{\varepsilon} \|_{L^1(I;\dB_{q,1}^{\frac{d}{q}})}^{h;\frac{\beta_0}{\varepsilon}}
    +
    \| \mathbb{Q}u^{\varepsilon} \|_{{L^{\infty}}(I;\dB_{q,1}^{\frac{d}{q}-1})\cap L^1(I;\dB_{q,1}^{\frac{d}{2}+1})}^{h;\frac{\beta_0}{\varepsilon}}\\
    &\quad 
    \leqslant{}
    C
    \bigg(\varepsilon\|a^{\varepsilon}(t_0)\|_{\dB_{q,1}^{\frac{d}{q}}}^{h;\frac{\beta_0}{\varepsilon}}
    +
    \| \mathbb{Q}u^{\varepsilon}(t_0) \|_{\dB_{q,1}^{\frac{d}{q}-1}}^{h;\frac{\beta_0}{\varepsilon}}
    \bigg)\\
    &\qquad
    + 
    C\|a^{\varepsilon}u^{\varepsilon}\|_{L^1(I;\dB_{q,1}^{\frac{d}{q}})}^{h;\frac{\beta_0}{\varepsilon}}
    + 
    C\|\mathcal{N}^{\varepsilon}[a^{\varepsilon},u^{\varepsilon}]\|_{L^1(I;\dB_{q,1}^{\frac{d}{q}-1})}^{h;\frac{\beta_0}{\varepsilon}}
    +
    C\varepsilon 
    \| a^{\varepsilon} \div \mathbb{Q}u^{\varepsilon} \|_{L^1(I;\dB_{q,1}^{\frac{d}{q}})}^{h;\frac{\beta_0}{\varepsilon}}\\
    &\qquad
    +
    C\varepsilon\sum_{2^j \geqslant \frac{\beta_0}{\varepsilon}}2^{\frac{d}{q}j}
    \| [u^{\varepsilon}\cdot \nabla, \dot{\Delta}_j] a^{\varepsilon}\|_{L^1(I;L^q)}
    +
    C\varepsilon
    \left\|
        \| \div u^{\varepsilon} \|_{L^{\infty}}
        \| a^{\varepsilon} \|_{\dB_{q,1}^{\frac{d}{q}}}
    \right\|_{L^1(0,t)}.
    \end{split}
\end{align}
Using Lemmas \ref{lemm:nonlin-2}, \ref{lemm:nonlin-1} and \ref{lemm:A}, we have 
\begin{align}\label{Y-h-2}
    \begin{split}
    &
    \varepsilon\sum_{2^j \geqslant \frac{\beta_0}{\varepsilon}}2^{\frac{d}{q}j}
    \| [u^{\varepsilon}\cdot \nabla, \dot{\Delta}_j] a^{\varepsilon}\|_{L^1(I;L^q)}
    +
    \varepsilon 
    \| a^{\varepsilon} \div \mathbb{Q}u^{\varepsilon} \|_{L^1(I;\dB_{q,1}^{\frac{d}{q}})}^{h;\frac{\beta_0}{\varepsilon}}\\
    &\quad
    +
    \varepsilon
    \left\|
        \| \div u^{\varepsilon} \|_{L^{\infty}}
        \| a^{\varepsilon} \|_{\dB_{q,1}^{\frac{d}{q}}}
    \right\|_{L^1(0,t)}
    +
    \|a^{\varepsilon}u^{\varepsilon}\|_{L^1(I;\dB_{q,1}^{\frac{d}{q}})}^{h;\frac{\beta_0}{\varepsilon}}\\
    &\qquad 
    \leqslant{}
    C
    \varepsilon\| a \|_{L^{\infty}(I;\dB_{q,1}^{\frac{d}{q}})}
    \| u^{\varepsilon} \|_{L^1( I; \dB_{q,1}^{\frac{d}{q}+1} )}^{h;\frac{\beta_0}{\varepsilon}}
    +
    C
    \| a^{\varepsilon} \|_{L^2( I ; \dB_{q,1}^{\frac{d}{q}} )}
    \| u^{\varepsilon} \|_{L^2( I ; \dB_{q,1}^{\frac{d}{q}} )}\\
    &\qquad 
    \leqslant{}
    C
    A_{q,r}^{\varepsilon,\alpha}[a^{\varepsilon},\mathbb{Q}u^{\varepsilon}](I)
    A_{q,r}^{\varepsilon,\alpha}[a^{\varepsilon},u^{\varepsilon}](I)\\
    &\qquad 
    \leqslant{}
    C
    A_{q,r}^{\varepsilon,\alpha}[a^{\varepsilon},v^{\varepsilon}](I)
    A_{q,r}^{\varepsilon,\alpha}[a^{\varepsilon},u^{\varepsilon}](I).
    \end{split}
\end{align}
For the estimates of $\mathcal{N}^{\varepsilon}[a^{\varepsilon},u^{\varepsilon}]$, we decompose it as follows.
\begin{align}\label{Y-h-3}
    \begin{split}
    \|\mathcal{N}^{\varepsilon}[a^{\varepsilon},u^{\varepsilon}]\|_{L^1(I;\dB_{q,1}^{\frac{d}{q}-1})}^{h;\frac{\beta_0}{\varepsilon}}
    \leqslant{}&
    \|(w \cdot \nabla)w \|_{L^1(I;\dB_{q,1}^{\frac{d}{q}-1})}^{h;\frac{\beta_0}{\varepsilon}}\\
    &
    +
    \|\mathcal{N}^{\varepsilon}[a^{\varepsilon},u^{\varepsilon}]-(w \cdot \nabla)w \|_{L^1(I;\dB_{q,1}^{\frac{d}{q}-1})}^{h;\frac{\beta_0}{\varepsilon}}.
    \end{split}
\end{align}
Since 
\begin{align}\label{Y-h-4}
    \begin{split}
    \mathcal{N}^{\varepsilon}[a^{\varepsilon},u^{\varepsilon}]-(w \cdot \nabla)w
    ={}&
    (v^{\varepsilon} \cdot \nabla)u^{\varepsilon} + (w \cdot \nabla)v^{\varepsilon} \\
    &
    + 
    \mathcal{J}(\varepsilon a^{\varepsilon})\mathcal{L}u^{\varepsilon}
    +
    \frac{1}{\varepsilon}\mathcal{K}(\varepsilon a^{\varepsilon})\nabla a^{\varepsilon},
    \end{split}
\end{align}
we see by Lemmas \ref{lemm:nonlin-3}, \ref{lemm:nonlin-4} and \ref{lemm:nonlin-5} that
\begin{align}\label{Y-h-5}
    \begin{split}
    &\|\mathcal{N}^{\varepsilon}[a^{\varepsilon},u^{\varepsilon}]-(w \cdot \nabla)w \|_{L^1(I;\dB_{q,1}^{\frac{d}{q}-1})}^{h;\frac{\beta_0}{\varepsilon}}\\
    &\quad
    \leqslant
    C
    \| (w,u^{\varepsilon}) \|_{L^{r}(I;\dB_{q,1}^{\frac{d}{q}-1+\frac{2}{r}}) \cap L^{r'}(I;\dB_{q,1}^{\frac{d}{q}-1+\frac{2}{r'}})}
    \| v^{\varepsilon} \|_{L^{r}(I;\dB_{q,1}^{\frac{d}{q}-1+\frac{2}{r}}) \cap L^{r'}(I;\dB_{q,1}^{\frac{d}{q}-1+\frac{2}{r'}})}\\
    &\qquad
    +
    C 
    \varepsilon
    \| a^{\varepsilon} \|_{L^{\infty}(I;\dB_{q,1}^{\frac{d}{q}})}
    \| u^{\varepsilon} \|_{L^1(I;\dB_{q,1}^{\frac{d}{q}+1})}^{h;\frac{\beta_0}{\varepsilon}}
    +
    C
    \| a^{\varepsilon} \|_{L^2(I;\dB_{q,1}^{\frac{d}{q}})}
    \| u^{\varepsilon} \|_{L^2(I;\dB_{q,1}^{\frac{d}{q}})}
    +
    C
    \| a \|_{L^2(I;\dB_{q,1}^{\frac{d}{q}})}^2\\
    &\quad 
    \leqslant{}
    C
    \widetilde{A_{q,r}^{\varepsilon,\alpha}}[a^{\varepsilon},u^{\varepsilon};w](I)
    A_{q,r}^{\varepsilon,\alpha}[a^{\varepsilon},v^{\varepsilon}](I).
    \end{split}
\end{align}
Hence, combining \eqref{Y-h-1}, \eqref{Y-h-2}, \eqref{Y-h-3}, and \eqref{Y-h-5}, we obtain 
\begin{align}\label{Y-h-6}
    &\varepsilon \| a^{\varepsilon} \|_{{L^{\infty}}(I;\dB_{q,1}^{\frac{d}{q}})}^{h;\frac{\beta_0}{\varepsilon}}
    +
    \frac{1}{\varepsilon}
    \| a^{\varepsilon} \|_{L^1(I;\dB_{q,1}^{\frac{d}{q}})}^{h;\frac{\beta_0}{\varepsilon}}
    +
    \| \mathbb{Q}u^{\varepsilon} \|_{{L^{\infty}}(I;\dB_{q,1}^{\frac{d}{q}-1})\cap L^1(I;\dB_{q,1}^{\frac{d}{2}+1})}^{h;\frac{\beta_0}{\varepsilon}}\\
    &\quad 
    \leqslant{}
    C
    \|(a^{\varepsilon},\mathbb{Q}u^{\varepsilon})(t_0)\|_{D_q^{\varepsilon}}
    +
    C
    \|(w \cdot \nabla)w \|_{L^1(I;\dB_{q,1}^{\frac{d}{q}-1})}^{h;\frac{\beta_0}{\varepsilon}}\\
    &
    \qquad
    +
    C
    \widetilde{A_{q,r}^{\varepsilon,\alpha}}[a^{\varepsilon},u^{\varepsilon};w](I)
    A_{q,r}^{\varepsilon,\alpha}[a^{\varepsilon},v^{\varepsilon}](I).
\end{align}
On the other hand, from Lemma \ref{lemm:lin-Q-2} it follows that 
\begin{align}\label{Y-h-7}
    \begin{split}
    &
    \| (a^{\varepsilon},\mathbb{Q}u^{\varepsilon}) \|_{{L^{\infty}}(I;\dB_{2,1}^{\frac{d}{2}-1})\cap L^1(I;\dB_{2,1}^{\frac{d}{2}+1})}^{m;\alpha,\frac{\beta_0}{\varepsilon}}\\
    &\quad 
    \leqslant{}
    C
    \| (a^{\varepsilon},\mathbb{Q}u^{\varepsilon})(t_0) \|_{\dB_{2,1}^{\frac{d}{2}-1}}^{m;\alpha,\frac{\beta_0}{\varepsilon}}
    +
    C
    \| (\div(a^{\varepsilon}u^{\varepsilon}), \mathcal{N}^{\varepsilon}[a^{\varepsilon},u^{\varepsilon}]) \|_{L^1(I;\dB_{2,1}^{\frac{d}{2}-1})}^{m;\alpha,\frac{\beta_0}{\varepsilon}}\\
    &\quad 
    \leqslant{}
    C
    \| (a^{\varepsilon},\mathbb{Q}u^{\varepsilon})(t_0) \|_{D_q^{\varepsilon}}^{h;\alpha}
    +
    C
    \| (w \cdot \nabla)w  \|_{L^1(I;\dB_{2,1}^{\frac{d}{2}-1})}^{m;\alpha,\frac{\beta_0}{\varepsilon}}\\
    &\qquad
    +
    C
    \| a^{\varepsilon}u^{\varepsilon}\|_{L^1(I;\dB_{2,1}^{\frac{d}{2}})}^{\ell;\frac{\beta_0}{\varepsilon}} 
    +
    C
    \|\mathcal{N}^{\varepsilon}[a^{\varepsilon},u^{\varepsilon}] - (w \cdot \nabla)w  \|_{L^1(I;\dB_{2,1}^{\frac{d}{2}-1})}^{\ell;\frac{\beta_0}{\varepsilon}}.
    \end{split}
\end{align}
Here, by virtue of \eqref{Y-h-4} and 
Lemmas \ref{lemm:nonlin-1}, \ref{lemm:nonlin-3}, \ref{lemm:nonlin-4}, \ref{lemm:nonlin-5} and \ref{lemm:A}, there holds
\begin{align}\label{Y-h-8}
    \begin{split}
    &
    \| a^{\varepsilon}u^{\varepsilon}\|_{L^1(I;\dB_{2,1}^{\frac{d}{2}})}^{\ell;\frac{\beta_0}{\varepsilon}} 
    +  
    \|\mathcal{N}^{\varepsilon}[a^{\varepsilon},u^{\varepsilon}] - (w \cdot \nabla)w  \|_{L^1(I;\dB_{2,1}^{\frac{d}{2}-1})}^{\ell;\frac{\beta_0}{\varepsilon}}\\
    &\quad
    \leqslant{}
    C
    \| (w,u^{\varepsilon}) \|_{L^{r}(I;\dB_{q,1}^{\frac{d}{q}-1+\frac{2}{r}}) \cap L^{r'}(I;\dB_{q,1}^{\frac{d}{q}-1+\frac{2}{r'}})}
    \| v^{\varepsilon} \|_{L^{r}(I;\dB_{q,1}^{\frac{d}{q}-1+\frac{2}{r}}) \cap L^{r'}(I;\dB_{q,1}^{\frac{d}{q}-1+\frac{2}{r'}})}\\
    &\qquad
    +
    C 
    \varepsilon
    \| a^{\varepsilon} \|_{L^{\infty}(I;\dB_{q,1}^{\frac{d}{q}})}
    \| u^{\varepsilon} \|_{L^1(I;\dB_{q,1}^{\frac{d}{q}+1})}^{h;\frac{\beta_0}{\varepsilon}}
    +
    C
    \| a^{\varepsilon} \|_{L^2(I;\dB_{q,1}^{\frac{d}{q}})}
    \| u^{\varepsilon} \|_{L^2(I;\dB_{q,1}^{\frac{d}{q}})}
    +
    C
    \| a \|_{L^2(I;\dB_{q,1}^{\frac{d}{q}})}^2\\ 
    &\qquad
    +
    C
    \left(
    \| a^{\varepsilon} \|_{L^r(I;\dB_{q,1}^{\frac{d}{q}-1+\frac{2}{r}})}
    +
    \| a^{\varepsilon} \|_{L^{r'}(I;\dB_{q,1}^{\frac{d}{q}-1+\frac{2}{r'}})}^{\ell;\frac{4\beta_0}{\varepsilon}}
    \right)
    \| u^{\varepsilon} \|_{L^r(I;\dB_{q,1}^{\frac{d}{q}-1+\frac{2}{r}}) \cap L^{r'}(I;\dB_{q,1}^{\frac{d}{q}-1+\frac{2}{r'}})}\\
    &\quad 
    \leqslant{}
    C
    \widetilde{A_{q,r}^{\varepsilon,\alpha}}[a^{\varepsilon},u^{\varepsilon};w](I)
    A_{q,r}^{\varepsilon,\alpha}[a^{\varepsilon},v^{\varepsilon}](I).
    \end{split}
\end{align}
Hence, by \eqref{Y-h-7} and \eqref{Y-h-8}, we obtain \eqref{est:Y-comp-high}.

Next, we show \eqref{est:Y-comp-low}.
It follows from Lemma \ref{lemm:lin-Q-3} and Lemmas \ref{lemm:nonlin-1}, \ref{lemm:nonlin-3}, \ref{lemm:nonlin-4}, \ref{lemm:nonlin-5} and \ref{lemm:A} that
\begin{align}
    \begin{split}
    &\| (a^{\varepsilon},\mathbb{Q}u^{\varepsilon}) \|_{Y_{q,r}^{\varepsilon,\alpha}(I)}^{\ell;\alpha}\\
    &\quad 
    \leqslant{}
    C
    \varepsilon^{\frac{1}{r}}
    \| (a^{\varepsilon},\mathbb{Q}u^{\varepsilon})(t_0) \|_{\dB_{2,1}^{\frac{d}{2}-1+\frac{1}{r}}}^{\ell;\alpha}
    +
    C
    \varepsilon^{\frac{1}{r}}
    \| (\div(a^{\varepsilon}u^{\varepsilon}),\mathbb{Q}\mathcal{N}^{\varepsilon}[a^{\varepsilon},u^{\varepsilon}]) \|_{L^1(I;\dB_{2,1}^{\frac{d}{2}-1+\frac{1}{r}})}^{\ell;\alpha}\\
    &\quad 
    \leqslant{}
    C
    (\alpha\varepsilon)^{\frac{1}{r}}
    \| (a^{\varepsilon},\mathbb{Q}u^{\varepsilon})(t_0) \|_{\dB_{2,1}^{\frac{d}{2}-1}}^{\ell;\alpha}
    +
    C
    (\alpha\varepsilon)^{\frac{1}{r}}
    \| (\div(a^{\varepsilon}u^{\varepsilon}),\mathcal{N}^{\varepsilon}[a^{\varepsilon},u^{\varepsilon}]) \|_{L^1(I;\dB_{2,1}^{\frac{d}{2}-1})}^{\ell;\alpha}\\
    &\quad\leqslant{}
    C
    (\alpha \varepsilon)^{\frac{1}{r}}
    \| (a^{\varepsilon},\mathbb{Q}u^{\varepsilon})(t_0) \|_{D^{\varepsilon}}^{\ell;\alpha}
    +
    C
    (\alpha \varepsilon)^{\frac{1}{r}}
    \left( A_{q,r}^{\varepsilon,\alpha}[a^{\varepsilon},u^{\varepsilon}](I)\right)^2\\
    &\quad\leqslant{}
    C
    (\alpha \varepsilon)^{\frac{1}{r}}
    \| (a^{\varepsilon},\mathbb{Q}u^{\varepsilon})(t_0) \|_{D^{\varepsilon}}^{\ell;\alpha}
    +
    C
    (\alpha \varepsilon)^{\frac{1}{r}}
    A_{q,r}^{\varepsilon,\alpha}[a^{\varepsilon},u^{\varepsilon}](I)
    \| (a^{\varepsilon},u^{\varepsilon}) \|_{X^{\varepsilon}(I)}
    \end{split}
\end{align}
By \eqref{Y-h-8}, we obtain \eqref{est:Y-comp-low}.

Finally, we prove \eqref{est:Y-incomp}.
Then, we see that $v^{\varepsilon}=u^{\varepsilon}-w$ satisfies
\begin{align}\label{eq:v}
    \partial_t \mathbb{P}v^{\varepsilon}
    - 
    \mu \Delta \mathbb{P}v^{\varepsilon}
    =
    -
    \mathbb{P}(w \cdot \nabla) v^{\varepsilon}
    -
    \mathbb{P}(v^{\varepsilon} \cdot \nabla) u^{\varepsilon}
    -
    \mathbb{P}(\mathcal{J}(\varepsilon a^{\varepsilon}) \mathcal{L}u^{\varepsilon}).
\end{align}
where we have used $\mathbb{P}(\mathcal{K}(\varepsilon a^{\varepsilon}) \nabla a^{\varepsilon}) = 0$.
The key ingredient of the proof of the estimate for the incompressible part is to use the linear estimate in Lemma \ref{lemm:lin-P-2} and the commutator estimate in Lemma \ref{lemm:nonlin-3}.
Using Lemma \ref{lemm:lin-P-2}, we have
\begin{align}
    &
    \| \mathbb{P}v^{\varepsilon} \|_{{L^{\infty}}(I;\dB_{q,1}^{\frac{d}{q}-1}) \cap L^1(I;\dB_{q,1}^{\frac{d}{q}+1})}
    \\
    &\quad
    \leqslant
    C
    \| \mathbb{P}v^{\varepsilon}(t_0) \|_{\dB_{q,1}^{\frac{d}{q}-1}}
    +
    C
    \sum_{j \in \mathbb{Z}}
    2^{(\frac{d}{q}-1)j}
    \| [w \cdot \nabla,  \mathbb{P}\dot{\Delta}_j]v^{\varepsilon} \|_{L^1(I;L^q)}\\
    &\qquad
    +
    C
    \| (v^{\varepsilon} \cdot \nabla) u^{\varepsilon} \|_{L^1(I;\dB_{q,1}^{\frac{d}{q}-1})}
    +
    C
    \| \mathcal{J}(\varepsilon a^{\varepsilon}) \mathcal{L}u^{\varepsilon} \|_{L^1(I;\dB_{q,1}^{\frac{d}{q}-1})}.
\end{align}
From Lemmas \ref{lemm:nonlin-3}, \ref{lemm:nonlin-4} and \ref{lemm:A}, it follows that
\begin{align}
    &
    \| \mathbb{P}v^{\varepsilon} \|_{{L^{\infty}}(I;\dB_{q,1}^{\frac{d}{q}-1}) \cap L^1(I;\dB_{q,1}^{\frac{d}{q}+1})}
    \\
    &\quad
    \leqslant
    C
    \| \mathbb{P}v^{\varepsilon}(t_0) \|_{\dB_{q,1}^{\frac{d}{q}-1}}
    +
    C
    \| (w,u^{\varepsilon}) \|_{L^{r'}(I;\dB_{q,1}^{\frac{d}{q}-1+\frac{2}{r'}})}
    \| v^{\varepsilon} \|_{L^{r}(I;\dB_{q,1}^{\frac{d}{q}-1+\frac{2}{r}})}\\
    &\qquad
    +
    C 
    \varepsilon
    \| a^{\varepsilon} \|_{L^{\infty}(I;\dB_{q,1}^{\frac{d}{q}})}
    \| u^{\varepsilon} \|_{L^1(I;\dB_{q,1}^{\frac{d}{q}+1})}^{h;\frac{\beta_0}{\varepsilon}}
    +
    C
    \| a^{\varepsilon} \|_{L^2(I;\dB_{q,1}^{\frac{d}{q}})}
    \| u^{\varepsilon} \|_{L^2(I;\dB_{q,1}^{\frac{d}{q}})}\\
    &\quad 
    \leqslant
    C
    \| \mathbb{P}v^{\varepsilon}(t_0) \|_{\dB_{q,1}^{\frac{d}{q}-1}}
    +
    C
    \widetilde{A_{q,r}^{\varepsilon,\alpha}}[a^{\varepsilon},u^{\varepsilon};w](I)
    A_{q,r}^{\varepsilon,\alpha}[a^{\varepsilon},v^{\varepsilon}](I).
\end{align}
This completes the proof.
\end{proof}

\begin{lemm}\label{lemm:lim}
Let us consider the same assumptions as in Lemma \ref{lemm:Y} with
\begin{gather}
    \frac{1}{r} < \frac{2d}{q} - 1.
\end{gather}
Then, there exists a positive constant $C_4=C_4(d,q,r,\mu,P)$ such that
\begin{align}
    &
    \| (a^{\varepsilon},\mathbb{Q}u^{\varepsilon}) \|_{{L^r}(I;\dB_{q,1}^{\frac{d}{q}-1+\frac{1}{r}})}
    +
    \| \mathbb{P}u^{\varepsilon}-w \|_{{L^{\infty}}(I;\dB_{q,1}^{\frac{d}{q}-1-\frac{1}{r}}) \cap L^1(I;\dB_{q,1}^{\frac{d}{q}+1-\frac{1}{r}})}\\
    &\quad
    \leqslant
    C_4 
    \varepsilon^{\frac{1}{r}} 
    \| (a^{\varepsilon},\mathbb{Q}u^{\varepsilon})(t_0) \|_{D^{\varepsilon}}
    +
    C_4 \| (\mathbb{P}u^{\varepsilon} - w)(t_0) \|_{\dB_{q,1}^{\frac{d}{q}-1-\frac{1}{r}}}\\
    &\qquad
    +
    C_4\varepsilon^{\frac{1}{r}}
    A_{q,r}^{\varepsilon,\alpha}[a^{\varepsilon},u^{\varepsilon}](I)
    \| (a^{\varepsilon},u^{\varepsilon}) \|_{X^{\varepsilon}(I)}\\
    &\qquad
    +
    C_4
    \widetilde{A_{q,r}^{\varepsilon,\alpha}}[a^{\varepsilon},u^{\varepsilon};w](I)
    \| (a^{\varepsilon},u^{\varepsilon}-w) \|_{L^r(I;\dB_{q,1}^{\frac{d}{q}-1+\frac{1}{r}})}.
\end{align}
Here, $\widetilde{A_{q,r}^{\varepsilon,\alpha}}[a^{\varepsilon},u^{\varepsilon};w](I)$ is defined in \eqref{At}.
\end{lemm}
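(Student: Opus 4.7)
The plan is to treat the compressible and incompressible parts of the estimate separately. For the compressible norm $\| (a^{\varepsilon},\mathbb{Q}u^{\varepsilon}) \|_{L^r(I;\dB_{q,1}^{d/q-1+1/r})}$, I would split into the low frequency part (truncation $\ell;\beta_0/\varepsilon$) and the high frequency part ($h;\beta_0/\varepsilon$). For the low frequencies I apply the Strichartz estimate of Lemma \ref{lemm:lin-Q-3} to \eqref{eq:re_comp-1} with $\zeta=-1$ and threshold $\alpha=\beta_0/\varepsilon$, so that the index $d/q+1/r+\zeta$ matches $d/q-1+1/r$; this produces a linear term $C\varepsilon^{1/r}\|(a^{\varepsilon},\mathbb{Q}u^{\varepsilon})(t_0)\|_{\dB_{2,1}^{d/2-1}}^{\ell;\beta_0/\varepsilon}$ together with a forcing term $C\varepsilon^{1/r}\|(\mathrm{div}(a^{\varepsilon}u^{\varepsilon}),\mathbb{Q}\mathcal{N}^{\varepsilon}[a^{\varepsilon},u^{\varepsilon}])\|_{L^1(I;\dB_{2,1}^{d/2-1})}^{\ell;\beta_0/\varepsilon}$. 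The latter is absorbed exactly as in the proof of \eqref{est:Y-comp-low}: applying Lemmas \ref{lemm:nonlin-1}, \ref{lemm:nonlin-3}, \ref{lemm:nonlin-4}, \ref{lemm:nonlin-5} and \ref{lemm:A} yields a bound of the form $C\varepsilon^{1/r}\bigl(A_{q,r}^{\varepsilon,\alpha}[a^{\varepsilon},u^{\varepsilon}](I)\bigr)^2\leqslant C\varepsilon^{1/r}A_{q,r}^{\varepsilon,\alpha}[a^{\varepsilon},u^{\varepsilon}](I)\|(a^{\varepsilon},u^{\varepsilon})\|_{X^{\varepsilon}(I)}$.

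For the high frequency part ($2^j\geqslant \beta_0/\varepsilon$) I use the embedding $\dB_{q,1}^{d/q-1+1/r}\hookrightarrow \dB_{2,1}^{d/2-1+1/r}$ (valid for $q\geqslant 2$), then Bernstein at frequencies $\geqslant \beta_0/\varepsilon$ to gain a factor $(\beta_0/\varepsilon)^{-1+1/r}\sim \varepsilon^{1-1/r}$ when shifting from $\dB_{2,1}^{d/2}$ (or $\dB_{2,1}^{d/2-1+2/r}$) down to $\dB_{2,1}^{d/2-1+1/r}$. Time-interpolating the two $X^{\varepsilon}$ constraints $\varepsilon\|a^{\varepsilon}\|_{L^\infty(\dB_{2,1}^{d/2})}^{h;\beta_0/\varepsilon}$ and $\varepsilon^{-1}\|a^{\varepsilon}\|_{L^1(\dB_{2,1}^{d/2})}^{h;\beta_0/\varepsilon}$ via H\"older produces an $L^r$-in-time bound of order $\varepsilon^{2/r-1}$, which combined with the Bernstein factor gives the net $\varepsilon^{1/r}$ times the $X^{\varepsilon}$ norm. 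The analogous interpolation between $\widetilde{L^\infty}(\dB_{2,1}^{d/2-1})$ and $L^1(\dB_{2,1}^{d/2+1})$ handles $\mathbb{Q}u^{\varepsilon}$.

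For the incompressible perturbation $\mathbb{P}v^{\varepsilon}=\mathbb{P}u^{\varepsilon}-w$ I apply Lemma \ref{lemm:lin-P-2} to \eqref{eq:v}, noting that $\mathbb{P}(\mathcal{K}(\varepsilon a^{\varepsilon})\nabla a^{\varepsilon})=0$. This gives
\begin{align*}
    &\| \mathbb{P}v^{\varepsilon} \|_{L^{\infty}(I;\dB_{q,1}^{d/q-1-1/r}) \cap L^1(I;\dB_{q,1}^{d/q+1-1/r})}\\
    &\quad \leqslant C\|\mathbb{P}v^{\varepsilon}(t_0)\|_{\dB_{q,1}^{d/q-1-1/r}} + C\sum_{j\in\mathbb{Z}} 2^{(d/q-1-1/r)j}\|[w\cdot\nabla,\mathbb{P}\dot{\Delta}_j]v^{\varepsilon}\|_{L^1(I;L^q)}\\
    &\qquad + C\|(v^{\varepsilon}\cdot\nabla)u^{\varepsilon}\|_{L^1(I;\dB_{q,1}^{d/q-1-1/r})} + C\|\mathcal{J}(\varepsilon a^{\varepsilon})\mathcal{L}u^{\varepsilon}\|_{L^1(I;\dB_{q,1}^{d/q-1-1/r})}.
\end{align*}
The commutator and the transport term are handled by Lemma \ref{lemm:nonlin-3}(3) with $m=1$, yielding a bound $C\|w\|_{L^{r'}(I;\dB_{q,1}^{d/q-1+2/r'})}\|v^{\varepsilon}\|_{L^r(I;\dB_{q,1}^{d/q-1+1/r})}$ plus a similar contribution with $u^{\varepsilon}$ in place of $w$, which together reassemble into $C\widetilde{A_{q,r}^{\varepsilon,\alpha}}[a^{\varepsilon},u^{\varepsilon};w](I)\|(a^{\varepsilon},v^{\varepsilon})\|_{L^r(I;\dB_{q,1}^{d/q-1+1/r})}$ once Lemma \ref{lemm:A} is invoked on the $u^{\varepsilon}$ factor.

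The main obstacle is the viscosity nonlinearity $\mathcal{J}(\varepsilon a^{\varepsilon})\mathcal{L}u^{\varepsilon}$ in the weakened Besov index $d/q-1-1/r$, which is precisely why the hypothesis $1/r<2d/q-1$ appears in the statement: this is the exact condition required by Lemma \ref{lemm:nonlin-4}(3). Applying that lemma with $\beta=\beta_0$ (a fixed constant), the three output terms contribute respectively $C\|a^{\varepsilon}\|_{L^r(\dB_{q,1}^{d/q-1+1/r})}\|u^{\varepsilon}\|_{L^{r'}(\dB_{q,1}^{d/q-1+2/r'})}$, which is absorbed by the $\widetilde{A_{q,r}^{\varepsilon,\alpha}}\cdot\|(a^{\varepsilon},v^{\varepsilon})\|_{L^r}$ piece, and two terms carrying $\varepsilon^{1/r+1}$ and $\varepsilon^{1/r}$ factors; using $\varepsilon\|a^{\varepsilon}\|_{L^\infty(\dB_{q,1}^{d/q})}\leqslant CA_{q,r}^{\varepsilon,\alpha}[a^{\varepsilon},u^{\varepsilon}](I)$ and $\|u^{\varepsilon}\|_{L^1(\dB_{q,1}^{d/q+1})}^{h;\beta_0/\varepsilon}\leqslant CA_{q,r}^{\varepsilon,\alpha}[a^{\varepsilon},u^{\varepsilon}](I)$ from Lemma \ref{lemm:A} converts these into $C\varepsilon^{1/r}A_{q,r}^{\varepsilon,\alpha}[a^{\varepsilon},u^{\varepsilon}](I)\|(a^{\varepsilon},u^{\varepsilon})\|_{X^{\varepsilon}(I)}$. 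Summing the two halves yields the asserted inequality.
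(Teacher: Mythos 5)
Your handling of the incompressible perturbation $\mathbb{P}v^{\varepsilon}$ (energy estimate via Lemma~\ref{lemm:lin-P-2}, commutator and transport via Lemma~\ref{lemm:nonlin-3}(3) with $m=1$, and Lemma~\ref{lemm:nonlin-4}(3) invoking $1/r<2d/q-1$ for the viscosity nonlinearity) coincides with the paper's, as does your low-frequency Strichartz estimate. Where you genuinely diverge from the paper is the \emph{high-frequency compressible} contribution: the paper re-applies the energy estimate of Lemma~\ref{lemm:lin-Q-1} in $\dot B_{q,1}$, estimates the forcing by the nonlinear lemmas, and arrives at
$C\|(a^{\varepsilon},\mathbb{Q}u^{\varepsilon})(t_0)\|_{D^{\varepsilon}}^{h;\beta_0/\varepsilon}+C\,A_{q,r}^{\varepsilon,\alpha}[a^{\varepsilon},u^{\varepsilon}](I)\|(a^{\varepsilon},u^{\varepsilon})\|_{X^{\varepsilon}(I)}$, which separates the data at $t_0$ from a term carrying the small factor $A_{q,r}^{\varepsilon,\alpha}$. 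Your interpolation-plus-Bernstein argument from the $X^{\varepsilon}$ norm is noticeably simpler — it bypasses the high-frequency nonlinear estimates entirely — but it only yields $C\varepsilon^{1/r}\|(a^{\varepsilon},u^{\varepsilon})\|_{X^{\varepsilon}(I)}$, which is not one of the terms appearing in the stated inequality. As written you have proved a slightly weaker version of the lemma: to convert your bound into the stated one you would need an extra step, namely substituting the a priori bound of Lemma~\ref{lemm:X} ($\|X^{\varepsilon}\| \leqslant C\|(a,u)(t_0)\|_{D^{\varepsilon}} + C\,A\,\|X^{\varepsilon}\|$) into the high-frequency term. This is a minor gap rather than an error of principle, but it would force adjustments to the numerical constants in the continuation argument of Section~\ref{sec:pf}, which relies on the exact shape of the right-hand side. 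Also, a small slip: the embedding you need is $\dot B_{2,1}^{d/2-1+1/r}\hookrightarrow\dot B_{q,1}^{d/q-1+1/r}$ for $q\geqslant 2$; you wrote the arrow in the opposite (incorrect) direction, though the subsequent computation uses it correctly.
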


\begin{proof}
We first consider the compressible part.
We see that
\begin{align}\label{lim-1}
    \begin{split}
    \| (a^{\varepsilon},\mathbb{Q}u^{\varepsilon}) \|_{{L^r}(I;\dB_{q,1}^{\frac{d}{q}-1+\frac{1}{r}})}
    \leqslant{}
    &
    \| (a^{\varepsilon},\mathbb{Q}u^{\varepsilon}) \|_{{L^r}(I;\dB_{q,1}^{\frac{d}{q}-1+\frac{1}{r}})}^{\ell;\frac{\beta_0}{\varepsilon}}
    +
    C
    \varepsilon^{\frac{1}{r}}
    \| (a^{\varepsilon},\mathbb{Q}u^{\varepsilon}) \|_{{L^r}(I;\dB_{q,1}^{\frac{d}{q}-1+\frac{2}{r}})}^{h;\frac{\beta_0}{\varepsilon}}\\
    \leqslant{}
    &
    \| (a^{\varepsilon},\mathbb{Q}u^{\varepsilon}) \|_{{L^r}(I;\dB_{q,1}^{\frac{d}{q}-1+\frac{1}{r}})}^{\ell;\frac{\beta_0}{\varepsilon}}
    +
    C
    \varepsilon^{\frac{1}{r}}
    \| (a^{\varepsilon},\mathbb{Q}u^{\varepsilon}) \|_{Y_{q,r}^{\varepsilon,\alpha}(I)}^{h;\frac{\beta_0}{\varepsilon}}.
    \end{split}
\end{align}
For the first term of the right hand side of \eqref{lim-1}, it holds by Lemma \ref{lemm:lin-Q-3} and Lemmas \ref{lemm:nonlin-1}, \ref{lemm:nonlin-3}, \ref{lemm:nonlin-4}, \ref{lemm:nonlin-5} and \ref{lemm:A} that 
\begin{align}\label{lim-2}
    \begin{split}
    &
    \| (a^{\varepsilon},\mathbb{Q}u^{\varepsilon}) \|_{{L^r}(I;\dB_{q,1}^{\frac{d}{q}-1+\frac{1}{r}})}^{\ell;\frac{\beta_0}{\varepsilon}}\\ 
    &\quad 
    \leqslant{}
    C
    \varepsilon^{\frac{1}{r}}
    \| (a^{\varepsilon},\mathbb{Q}u^{\varepsilon})(t_0) \|_{\dB_{2,1}^{\frac{d}{2}-1}}^{\ell;\frac{\beta_0}{\varepsilon}}
    +
    C
    \varepsilon^{\frac{1}{r}}
    \| ( \div(a^{\varepsilon}u^{\varepsilon}),\mathcal{N}^{\varepsilon}[a^{\varepsilon},u^{\varepsilon}] ) \|_{L^1( I ; \dB_{2,1}^{\frac{d}{2}-1})}^{\ell;\frac{\beta_0}{\varepsilon}}\\
    &\quad 
    \leqslant 
    C\varepsilon^{\frac{1}{r}}\| (a^{\varepsilon},\mathbb{Q}u^{\varepsilon})(t_0) \|_{D^{\varepsilon}}^{\ell;\frac{\beta_0}{\varepsilon}}
    +
    C\varepsilon^{\frac{1}{r}}
    A_{q,r}^{\varepsilon,\alpha}[a^{\varepsilon},u^{\varepsilon}](I)
    \| (a^{\varepsilon},u^{\varepsilon}) \|_{X^{\varepsilon}(I)}.
    \end{split}
\end{align}
For the second term of the right hand side of \eqref{lim-1}, it follows from Lemma \ref{lemm:lin-Q-1} and Lemmas \ref{lemm:nonlin-2}, \ref{lemm:nonlin-1}, \ref{lemm:nonlin-3}, \ref{lemm:nonlin-4}, \ref{lemm:nonlin-5} and \ref{lemm:A} that
\begin{align}\label{lim-3}
    \begin{split}
    &
    \| (a^{\varepsilon},\mathbb{Q}u^{\varepsilon}) \|_{Y_{q,r}^{\varepsilon,\alpha}(I)}^{h;\frac{\beta_0}{\varepsilon}}\\
    &
    \quad 
    \leqslant{}
    C
    \bigg(\varepsilon\|a^{\varepsilon}(t_0)\|_{\dB_{q,1}^{\frac{d}{q}}}^{h;\frac{\beta_0}{\varepsilon}}
    +
    \| \mathbb{Q}u^{\varepsilon}(t_0) \|_{\dB_{q,1}^{\frac{d}{q}-1}}^{h;\frac{\beta_0}{\varepsilon}}
    \bigg)\\
    &\qquad
    + 
    C\|a^{\varepsilon}u^{\varepsilon}\|_{L^1(I;\dB_{q,1}^{\frac{d}{q}})}^{h;\frac{\beta_0}{\varepsilon}}
    + 
    C\|\mathcal{N}^{\varepsilon}[a^{\varepsilon},u^{\varepsilon}]\|_{L^1(I;\dB_{q,1}^{\frac{d}{q}-1})}^{h;\frac{\beta_0}{\varepsilon}}
    +
    C\varepsilon 
    \| a^{\varepsilon} \div \mathbb{Q}u^{\varepsilon} \|_{L^1(I;\dB_{q,1}^{\frac{d}{q}})}^{h;\frac{\beta_0}{\varepsilon}}\\
    &\qquad
    +
    C\varepsilon\sum_{2^j \geqslant \frac{\beta_0}{\varepsilon}}2^{\frac{d}{q}j}
    \| [u^{\varepsilon}\cdot \nabla, \dot{\Delta}_j] a^{\varepsilon}\|_{L^1(I;L^q)}
    +
    C\varepsilon
    \left\|
        \| \div u^{\varepsilon} \|_{L^{\infty}}
        \| a^{\varepsilon} \|_{\dB_{q,1}^{\frac{d}{q}}}
    \right\|_{L^1(0,t)}\\
    & 
    \quad
    \leqslant{}
    C\| (a^{\varepsilon},\mathbb{Q}u^{\varepsilon})(t_0) \|_{D^{\varepsilon}}^{h;\frac{\beta_0}{\varepsilon}} 
    +
    A_{q,r}^{\varepsilon,\alpha}[a^{\varepsilon},u^{\varepsilon}](I)
    \| (a^{\varepsilon},u^{\varepsilon}) \|_{X^{\varepsilon}(I)}.
    \end{split}
\end{align}
Combining \eqref{lim-1}, \eqref{lim-2} and \eqref{lim-3},
we complete the estimate for the compressible part.

Next, we consider the incompressible part. 
The key of the proof of the estimate for the incompressible part is to use the linear estimate in Lemma \ref{lemm:lin-P-2} and the commutator estimate in Lemma \ref{lemm:nonlin-3}.
Let $v^{\varepsilon}:=u^{\varepsilon}-w$.
Then, applying Lemma \ref{lemm:lin-P-2} to the equation \eqref{eq:v}, we have
\begin{align}
    &
    \| \mathbb{P}v^{\varepsilon} \|_{{L^{\infty}}(I;\dB_{q,1}^{\frac{d}{q}-1-\frac{1}{r}}) \cap L^1(I;\dB_{q,1}^{\frac{d}{q}+1-\frac{1}{r}})}
    \\
    &\quad
    \leqslant
    C
    \| \mathbb{P}v^{\varepsilon}(t_0) \|_{\dB_{q,1}^{\frac{d}{q}-1-\frac{1}{r}}}
    +
    C
    \sum_{j \in \mathbb{Z}}
    2^{(\frac{d}{q}-1-\frac{1}{r})j}
    \| [w \cdot \nabla, \dot{\Delta}_j \mathbb{P}]v^{\varepsilon} \|_{L^1(I;L^q)}\\
    &\qquad
    +
    C
    \| (v^{\varepsilon} \cdot \nabla) u^{\varepsilon} \|_{L^1(I;\dB_{q,1}^{\frac{d}{q}-1-\frac{1}{r}})}
    +
    C
    \| \mathcal{J}(\varepsilon a^{\varepsilon}) \mathcal{L}u^{\varepsilon} \|_{L^1(I;\dB_{q,1}^{\frac{d}{q}-1-\frac{1}{r}})}.
\end{align}
We see by Lemmas \ref{lemm:nonlin-3}, \ref{lemm:nonlin-4} and \ref{lemm:A} that
\begin{align}
    &
    \| \mathbb{P}v^{\varepsilon} \|_{{L^{\infty}}(I;\dB_{q,1}^{\frac{d}{q}-1-\frac{1}{r}}) \cap L^1(I;\dB_{q,1}^{\frac{d}{q}+1-\frac{1}{r}})}\\
    &\quad
    \leqslant
    C
    \| \mathbb{P}v^{\varepsilon}(t_0) \|_{\dB_{q,1}^{\frac{d}{q}-1-\frac{1}{r}}}
    +
    C
    \| (w,u^{\varepsilon}) \|_{L^{r'}(I;\dB_{q,1}^{\frac{d}{q}-1+\frac{2}{r'}})}
    \| (a^{\varepsilon},v^{\varepsilon}) \|_{L^r(I;\dB_{q,1}^{\frac{d}{q}-1+\frac{1}{r}})}\\
    &\qquad
    +
    C
    \varepsilon^{\frac{1}{r}+1}
    \| a^{\varepsilon} \|_{L^{\infty}(I;\dB_{q,1}^{\frac{d}{q}})}
    \| u^{\varepsilon} \|_{L^1(I;\dB_{q,1}^{\frac{d}{q}+1})}^{h;\frac{\beta_0}{\varepsilon}}
    +
    C
    \varepsilon^{\frac{1}{r}}
    \| (a^{\varepsilon},u^{\varepsilon}) \|_{L^2(I;\dB_{q,1}^{\frac{d}{q}})}^2
    \\
    &\quad 
    \leqslant
    C\| \mathbb{P}v^{\varepsilon}(t_0) \|_{\dB_{q,1}^{\frac{d}{q}-1-\frac{1}{r}}}
    +
    C
    \varepsilon^{\frac{1}{r}}
    A_{q,r}^{\varepsilon,\alpha}[a^{\varepsilon},u^{\varepsilon}](I)
    \| (a^{\varepsilon},u^{\varepsilon}) \|_{X^{\varepsilon}(I)}
    \\
    &\qquad 
    +
    C
    \bigg(
    \| w \|_{L^{r'}(I;\dB_{q,1}^{\frac{d}{q}-1+\frac{2}{r'}})}
    +
    A_{q,r}^{\varepsilon,\alpha}[a^{\varepsilon},u^{\varepsilon}](I)
    \bigg)
    \| (a^{\varepsilon},v^{\varepsilon}) \|_{L^r(I;\dB_{q,1}^{\frac{d}{q}-1+\frac{1}{r}})}.
\end{align}
Thus, we complete the proof.
\end{proof}

\section{Proof of the main result}\label{sec:pf}
Now, we are in a position to present the proof of our main result.

\begin{proof}[Proof of Theorem \ref{thm:2}]
Let $q$ and $r$ satisfy \eqref{main:q} and \eqref{main:r}.
We note that $q$ and $r$ satisfy all assumptions of Lemmas \ref{lemm:X}, \ref{lemm:Y} and \ref{lemm:lim}.
Let $C_0:= \max\{1,C_1,C_2,C_3,C_4,C_5\}$,
where $C_1,...,C_4$ are the positive constants appearing in the previous section and the positive constant $C_5=C_5(d,q)$ satisfies $\| f \|_{L^{\infty}}\leqslant C_5 \| f \|_{\dB_{q,1}^{\frac{d}{q}}}$ for all $f \in \dB_{q,1}^{\frac{d}{q}}(\mathbb{R}^d)$.

Let $w$ be the solution to \eqref{main:eq-incomp} with the initial data $\mathbb{P}u_0$ in the class \eqref{main:incomp-reg}.
Let 
\begin{align}
        \delta_0
        :=
        \frac{1}{2}
        \min \left\{ \frac{1}{24C_0}, \frac{1}{C_0^2}, \| w \|_{L^{r}(0,\infty;\dB_{q,1}^{\frac{d}{q}-1+\frac{2}{r}}) \cap L^1(0,\infty;\dB_{q,1}^{\frac{d}{q}+1})} \right\}.
\end{align}
Then, by Lemma \ref{lemm:time}
there exists a positive integer $N_0=N_0(\mathbb{P}u_0,q,r)$ and a time sequence $\{ T_n=T_n(\mathbb{P}u_0,q,r) \}_{n=0}^{N_0}$ such that 
\begin{gather}
    0 = T_0 < T_1 < ... < T_{N_0-1} < T_{N_0} = \infty,\\
    \| w \|_{L^{r}(T_{n-1},T_n;\dB_{q,1}^{\frac{d}{q}-1+\frac{2}{r}}) \cap L^1(T_{n-1},T_n;\dB_{q,1}^{\frac{d}{q}+1})} \leqslant \delta_0
    \quad (n=1,...,N_0).\label{w-small-1}
\end{gather}
Here, we see by Lemma \ref{lemm:nonlin-3} that
\begin{align}\label{w}
    \| (w \cdot \nabla) w \|_{L^1(0,\infty;\dB_{2,1}^{\frac{d}{2}-1})}
    \leqslant
    C
    \| w \|_{L^r(0,\infty;\dB_{q,1}^{\frac{d}{q}-1+\frac{2}{r}})}
    \| w \|_{L^{r'}(0,\infty;\dB_{q,1}^{\frac{d}{q}-1+\frac{2}{r'}})}
    <
    \infty.
\end{align}
Since it holds by $(a_0,u_0) \in (\dB_{2,1}^{\frac{d}{2}-1}(\mathbb{R}^d) \cap \dB_{2,1}^{\frac{d}{2}}(\mathbb{R}^d)) \times \dB_{2,1}^{\frac{d}{2}-1}(\mathbb{R}^d)^d$ and \eqref{w} that
\begin{align}
    &
    \lim_{\alpha \to \infty}
    \left(
    \| a_0 \|_{\dB_{2,1}^{\frac{d}{2}}}^{h;\alpha}
    +
    \| (a_0, \mathbb{Q}u_0) \|_{\dB_{2,1}^{\frac{d}{2}-1}}^{h;\alpha}
    \right)
    =0,\\
    &
    \lim_{\alpha \to \infty}
    \| (w \cdot \nabla) w \|_{L^1(0,\infty;\dB_{2,1}^{\frac{d}{2}-1})}^{h;\alpha}
    =
    0,
\end{align}
we may choose a constant $\alpha_{0}=\alpha_{0}(d,q,r,a_0,u_0)>0$ satisfying 
\begin{align}
    \| a_0 \|_{\dB_{2,1}^{\frac{d}{2}}}^{h;\alpha_{0}}
    +
    \| (a_0, \mathbb{Q}u_0) \|_{\dB_{2,1}^{\frac{d}{2}-1}}^{h;\alpha_{0}}
    \leqslant
    \frac{\delta_0}{(4C_0)^{N_0}},\quad 
    \| (w \cdot \nabla) w \|_{L^1(0,\infty;\dB_{2,1}^{\frac{d}{2}-1})}^{h;\alpha_{0}}
    \leqslant
    \frac{\delta_0}{(4C_0)^{N_0}}.
\end{align}
Then, for such $\alpha_{0}$,
we take a constant
$0 < \varepsilon_0=\varepsilon_0(d,q,r,a_0,u_0) <  \min \{ 1/\alpha_{0}, \beta_0/\alpha_{0} \}$ 
so that
\begin{align}
    (\alpha_{0} \varepsilon)^{\frac{1}{r(r-1)}}
    \| (a_0,\mathbb{Q}u_0) \|_{D^{\varepsilon}}
    \leqslant
    \frac{1}{24}
    \cdot
    \frac{\delta_0}{(4C_0)^{N_0}}
\end{align}
for all $ 0 < \varepsilon \leqslant \varepsilon_0$.

Fix a Mach number $0 < \varepsilon  \leqslant \varepsilon_0$ and let $(a^{\varepsilon},u^{\varepsilon})$ be the local solution to \eqref{eq:re_comp-1} on $[0,T_{\rm max}^{\varepsilon})$ constructed in Lemma \ref{lemm:LWP},
where $T_{\rm max}^{\varepsilon}$ denotes the maximal existence time.
Our aim is to prove $T_{\rm max}^{\varepsilon}=\infty$.
To begin with, we prove $T_{\rm max}^{\varepsilon} \geqslant T_1$ by the continuation argument.
We introduce a time 
\begin{gather}
    T_1^*
    :=
    \sup
    \bigg\{
    T \in (0,T_{\rm max}^{\varepsilon})\ ;\ 
    {\rm
    \eqref{T-1},\ 
    \eqref{T-2},\ 
    \eqref{T-3}\ and\ 
    \eqref{T-4}\ hold\ at\ 
    }
    T.
    \bigg\},
\end{gather}
where
\begin{align}
    &
    \begin{aligned}\label{T-1}
    \| (a^{\varepsilon},u^{\varepsilon}) \|_{X^{\varepsilon}(0,T)} 
    \leqslant{} 
    4C_0 \| (a_0,u_0) \|_{D^{\varepsilon}},\quad
    \end{aligned}\\ 
    &\begin{aligned}\label{T-2}
    A_{q,r}^{\varepsilon,\alpha_{0}}[a^{\varepsilon},u^{\varepsilon}-w](0,T)
    \leqslant{}
    \frac{\delta_0}{(4C_0)^{N_0-1}},
    \end{aligned}\\
    &
    \begin{aligned}\label{T-3}
    \| \mathbb{P}u^{\varepsilon} - w \|_{
    {L^{\infty}}(0,T;\dB_{q,1}^{\frac{d}{q}-1})
    \cap
    L^1(0,T;\dB_{q,1}^{\frac{d}{q}+1})}
    \leqslant
    \frac{\delta_0}{(4C_0)^{N_0-1}},
    \end{aligned}\\
    &
    \begin{aligned}\label{T-4}
    &
    \|(a^{\varepsilon},\mathbb{Q}u^{\varepsilon})\|_{{L^r}(0,T;\dB_{q,1}^{\frac{d}{q}-1+\frac{1}{r}})}
    +
    \|\mathbb{P}u^{\varepsilon} - w \|_{{L^{\infty}}(0,T;\dB_{q,1}^{\frac{d}{q}-1-\frac{1}{r}}) \cap L^1(0,T;\dB_{q,1}^{\frac{d}{q}+1-\frac{1}{r}})}\\ 
    &\quad
    \leqslant 
    4C_0 \varepsilon^{\frac{1}{r}} \| (a_0, u_0) \|_{D^{\varepsilon}}.
    \end{aligned}
\end{align}
It is easy to see that $T_1^*>0$.
Suppose by contradiction that $T_1^* < T_1$.
Let $0 < T <T_1^*$.
Here, we note that \eqref{w-small-1} and \eqref{T-2} yield
\begin{align}
    A_{q,r}^{\varepsilon,\alpha_{0}}[a^{\varepsilon},u^{\varepsilon}](0,T)
    \leqslant{}&
    \| w \|_{L^{r}(0,T;\dB_{q,1}^{\frac{d}{q}-1+\frac{2}{r}}) \cap L^1(0,T;\dB_{q,1}^{\frac{d}{q}+1})}
    +
    A_{q,r}^{\varepsilon,\alpha_{0}}[a^{\varepsilon},u^{\varepsilon}-w](0,T)\\
    \leqslant{}&
    \delta_0 + \frac{\delta_0}{(4C_0)^{N_0-1}}\\
    \leqslant{}&
    2\delta_0,\\
    \widetilde{A_{q,r}^{\varepsilon,\alpha_{0}}}[a^{\varepsilon},u^{\varepsilon};w](0,T)
    \leqslant{}&
    \| w \|_{L^{r}(0,T;\dB_{q,1}^{\frac{d}{q}-1+\frac{2}{r}}) \cap L^1(0,T;\dB_{q,1}^{\frac{d}{q}+1})}
    +
    A_{q,r}^{\varepsilon,\alpha_{0}}[a^{\varepsilon},u^{\varepsilon}](0,T)\\
    \leqslant{}&
    3\delta_0.
\end{align}
By Lemma \ref{lemm:A}, we see that  
\begin{align}\label{T_1:ptw}
    &\varepsilon\|a^{\varepsilon}\|_{L^{\infty}(0,T;\dB_{q,1}^{\frac{d}{q}})} 
    \leqslant
    C_0A_{q,r}^{\varepsilon,\alpha_{0}}[a^{\varepsilon},\mathbb{Q}u^{\varepsilon}](0,T)
    \leqslant
    C_0
    \cdot
    \frac{\delta_0}{(4C_0)^{N_0-1}}
    \leqslant
    \frac{1}{2C_0},\\
    &
    \varepsilon\|a^{\varepsilon}\|_{L^{\infty}(0,T;L^{\infty})}
    \leqslant
    C_0 
    \varepsilon
    \|a^{\varepsilon}\|_{L^{\infty}(0,T;\dB_{q,1}^{\frac{d}{q}})} 
    \leqslant
    \frac{1}{2}.
\end{align}
Thus, we see that $\rho^{\varepsilon}(t,x) = 1 +\varepsilon a^{\varepsilon}(t,x) >0$ on $[0,T] \times \mathbb{R}^d$.
Lemma \ref{lemm:X} yields 
\begin{align}\label{cont-1}
    \begin{split}
    \| (a^{\varepsilon},u^{\varepsilon}) \|_{X^{\varepsilon}(0,T)}
    \leqslant{}&
    C_0
    \| (a_0,u_0) \|_{D^{\varepsilon}}
    +
    C_0
    \cdot 
    2\delta_0
    \cdot 
    4C_0
    \| (a_0, u_0)\|_{D^{\varepsilon}}\\
    \leqslant{}&
    2C_0
    \| (a_0,u_0) \|_{D^{\varepsilon}}.
    \end{split}
\end{align}
From Lemma \ref{lemm:Y}, it follows that 
\begin{align}
    &
    \| (a^{\varepsilon},\mathbb{Q}u^{\varepsilon}) \|_{Y_{q,r}^{\varepsilon,\alpha_{0}}(0,T)}^{h;\alpha_{0}}
    \leqslant{}
    C_0
    \frac{2\delta_0}{(4C_0)^{N_0}}
    +
    C_0
    \cdot 3\delta_0 \cdot 
    \frac{\delta_0}{(4C_0)^{N_0-1}}
    \leqslant{}
    \frac{5}{8}
    \cdot
    \frac{\delta_0}{(4C_0)^{N_0-1}},\\
    &
    \begin{aligned}
    \| (a^{\varepsilon},\mathbb{Q}u^{\varepsilon}) \|_{Y_{q,r}^{\varepsilon,\alpha_{0}}(0,T)}^{\ell;\alpha_{0}}
    \leqslant{}
    &
    C_0
    (\alpha_{0} \varepsilon)^{\frac{1}{r}}
    \| (a_0,u_0) \|_{D^{\varepsilon}}
    +
    C_0
    (\alpha_{0} \varepsilon)^{\frac{1}{r}}
    \cdot 
    3\delta_0
    \cdot 
    4C_0\| (a_0,u_0) \|_{D^{\varepsilon}}
    \\
    \leqslant{}
    &
    2C_0(\alpha_{0} \varepsilon)^{\frac{1}{r}}\| (a_0,u_0) \|_{D^{\varepsilon}},
    \end{aligned}
    \\
    &
    \begin{aligned}\label{cont-4}
    \| \mathbb{P}u^{\varepsilon} - w \|_{
    {L^{\infty}}(0,T;\dB_{q,1}^{\frac{d}{q}-1})
    \cap
    L^1(0,T;\dB_{q,1}^{\frac{d}{q}+1})}
    \leqslant{}
    C_0
    \cdot 
    3\delta_0
    \cdot
    \frac{\delta_0}{(4C_0)^{N_0-1}}
    \leqslant
    \frac{1}{8}
    \cdot
    \frac{\delta_0}{(4C_0)^{N_0-1}},
    \end{aligned}
\end{align}
which implies that
\begin{align}
    \| (a^{\varepsilon},u^{\varepsilon}-w) \|_{Y_{q,r}^{\varepsilon,\alpha_{0}}(0,T)}
    &
    \leqslant{}
    \| (a^{\varepsilon},\mathbb{Q}u^{\varepsilon}) \|_{Y_{q,r}^{\varepsilon,\alpha_{0}}(0,T)}^{h;\alpha_{0}}
    +
    \| (a^{\varepsilon},\mathbb{Q}u^{\varepsilon}) \|_{Y_{q,r}^{\varepsilon,\alpha_{0}}(0,T)}^{\ell;\alpha_{0}}\\
    &\quad
    +
    \| \mathbb{P}u^{\varepsilon}-w \|_{
    {L^{\infty}}(0,T;\dB_{q,1}^{\frac{d}{q}-1})
    \cap
    L^1(0,T;\dB_{q,1}^{\frac{d}{q}+1})}\\
    &\leqslant{}
    \frac{3}{4}\cdot
    \frac{\delta_0}{(4C_0)^{N_0-1}}
    +
    2C_0(\alpha_{0} \varepsilon)^{\frac{1}{r}}\| (a_0,u_0) \|_{D^{\varepsilon}}
\end{align}
Therefore, we have
\begin{align}\label{cont-2}
    \begin{split}
    A_{q,r}^{\varepsilon,\alpha_{0}}[a^{\varepsilon},u^{\varepsilon}-w](0,T)
    \leqslant{}
    &
    \alpha_{0} \varepsilon
    \| (a^{\varepsilon},u^{\varepsilon})\|_{X^{\varepsilon}(I)}
    +
    \| (a^{\varepsilon},u^{\varepsilon}-w)\|_{Y_{q,r}^{\varepsilon,\alpha}(I)}\\
    &
    +
    \left(
    \| (a^{\varepsilon},\mathbb{Q}u^{\varepsilon})\|_{Y_{q,r}^{\varepsilon,\alpha}(I)}^{\ell;\alpha}
    \right)^{\frac{1}{r-1}}
    \| (a^{\varepsilon},u^{\varepsilon})\|_{X^{\varepsilon}(I)}^{\frac{r-2}{r-1}}\\
    \leqslant{}
    &
    2C_0\alpha_{0} \varepsilon
    \| (a_0,u_0) \|_{D^{\varepsilon}}\\
    &
    +
    \frac{3}{4}\cdot
    \frac{\delta_0}{(4C_0)^{N_0-1}}
    +
    2C_0(\alpha_{0} \varepsilon)^{\frac{1}{r}}\| (a_0,u_0) \|_{D^{\varepsilon}}\\
    &
    +
    2
    C_0
    (\alpha_{0} \varepsilon)^{\frac{1}{r(r-1)}}
    \| (a_0,u_0) \|_{D^{\varepsilon}}\\
    \leqslant{}&
    \frac{3}{4}\cdot
    \frac{\delta_0}{(4C_0)^{N_0-1}}
    +
    6C_0(\alpha_{0} \varepsilon)^{\frac{1}{r(r-1)}}\| (a_0,u_0) \|_{D^{\varepsilon}}\\
    \leqslant{}&
    \frac{7}{8}\cdot
    \frac{\delta_0}{(4C_0)^{N_0-1}}
    \end{split}
\end{align}
By Lemma \ref{lemm:lim}, we have 
\begin{align}
    \begin{split}
    &
    \| (a^{\varepsilon},\mathbb{Q}u^{\varepsilon}) \|_{{L^r}(0,T;\dB_{q,1}^{\frac{d}{q}-1+\frac{1}{r}})}
    +
    \| \mathbb{P}u^{\varepsilon}-w \|_{{L^{\infty}}(0,T;\dB_{q,1}^{\frac{d}{q}-1-\frac{1}{r}}) \cap L^1(
    0,T;\dB_{q,1}^{\frac{d}{q}+1-\frac{1}{r}})}\\
    &\quad
    \leqslant
    C_0 \varepsilon^{\frac{1}{r}}
    \| (a_0, \mathbb{Q}u_0) \|_{D^{\varepsilon}}
    +
    C_0
    \varepsilon^{\frac{1}{r}}
    \cdot 
    2\delta_0
    \cdot 
    4C_0
    \| (a_0,u_0) \|_{D^{\varepsilon}}\\
    &\qquad
    +
    C_0
    \cdot 
    3\delta_0
    \cdot 
    \| (a^{\varepsilon},u^{\varepsilon}-w) \|_{L^r(0,T;\dB_{q,1}^{\frac{d}{q}-1+\frac{1}{r}})}\\
    &\quad
    \leqslant
    \frac{4}{3}C_0 \varepsilon^{\frac{1}{r}}
    \| (a_0, u_0) \|_{D^{\varepsilon}}\\
    &\qquad
    +
    \frac{1}{4}
    \left(
    \| (a^{\varepsilon},\mathbb{Q}u^{\varepsilon}) \|_{{L^r}(0,T;\dB_{q,1}^{\frac{d}{q}-1+\frac{1}{r}})}
    +
    \| \mathbb{P}u^{\varepsilon}-w \|_{{L^{\infty}}(0,T;\dB_{q,1}^{\frac{d}{q}-1-\frac{1}{r}}) \cap L^1(
    0,T;\dB_{q,1}^{\frac{d}{q}+1-\frac{1}{r}})}
    \right),
    \end{split}
\end{align}
which gives 
\begin{align}\label{cont-3}
    \begin{split}
    &
    \| (a^{\varepsilon},\mathbb{Q}u^{\varepsilon}) \|_{{L^r}(0,T;\dB_{q,1}^{\frac{d}{q}-1+\frac{1}{r}})}
    +
    \| \mathbb{P}u^{\varepsilon}-w \|_{{L^{\infty}}(0,T;\dB_{q,1}^{\frac{d}{q}-1-\frac{1}{r}}) \cap L^1(
    0,T;\dB_{q,1}^{\frac{d}{q}+1-\frac{1}{r}})}\\
    &\quad
    \leqslant
    \frac{16}{9}
    C_0 \varepsilon^{\frac{1}{r}}
    \| (a_0, u_0) \|_{D^{\varepsilon}}.
    \end{split}
\end{align}
Hence, \eqref{cont-1}, \eqref{cont-4}, \eqref{cont-2} and \eqref{cont-3} contradict the definition of $T_1^*$ and we have $T_{\rm \max}^{\varepsilon} \geqslant T_1^*\geqslant T_1$.
Next, we show $T_{\rm max}^{\varepsilon} \geqslant T_2$ by the continuation argument.
Let
\begin{gather}
    T_2^*
    :=
    \sup
    \bigg\{
    T \in [T_1,T_{\rm max}^{\varepsilon})\ ;\ 
    {\rm
    \eqref{T-1-2},\ 
    \eqref{T-2-2},\ 
    \eqref{T-3-2}\ and\ 
    \eqref{T-4-2}\ hold\ at\ 
    }
    T.
    \bigg\},
\end{gather}
where
\begin{align}
    &
    \begin{aligned}\label{T-1-2}
    \| (a^{\varepsilon},u^{\varepsilon}) \|_{X^{\varepsilon}(T_1,T)} 
    \leqslant{} 
    (4C_0)^2 \| (a_0,u_0) \|_{D^{\varepsilon}},\quad
    \end{aligned}\\ 
    &\begin{aligned}\label{T-2-2}
    A_{q,r}^{\varepsilon,\alpha_{0}}[a^{\varepsilon},u^{\varepsilon}-w](T_1,T)
    \leqslant{}
    \frac{\delta_0}{(4C_0)^{N_0-2}},
    \end{aligned}\\
    &
    \begin{aligned}\label{T-3-2}
    \| \mathbb{P}u^{\varepsilon} - w \|_{
    {L^{\infty}}(T_1,T;\dB_{q,1}^{\frac{d}{q}-1})
    \cap
    L^1(T_1,T;\dB_{q,1}^{\frac{d}{q}+1})}
    \leqslant
    \frac{\delta_0}{(4C_0)^{N_0-2}},
    \end{aligned}\\
    &
    \begin{aligned}\label{T-4-2}
    &
    \|(a^{\varepsilon},\mathbb{Q}u^{\varepsilon})\|_{{L^r}(T_1,T;\dB_{q,1}^{\frac{d}{q}-1+\frac{1}{r}})}
    +
    \|\mathbb{P}u^{\varepsilon} - w \|_{{L^{\infty}}(T_1,T;\dB_{q,1}^{\frac{d}{q}-1-\frac{1}{r}}) \cap L^1(T_1,T;\dB_{q,1}^{\frac{d}{q}+1-\frac{1}{r}})}\\ 
    &\quad
    \leqslant (4C_0)^2 \varepsilon^{\frac{1}{r}} \| (a_0,u_0) \|_{D^{\varepsilon}}.
    \end{aligned}
\end{align}
Suppose by contradiction that $T_2^* < T_2$.
Let $T_1 < T <T_2^*$.
Here, we note that \eqref{w-small-1} and \eqref{T-2-2} yield
\begin{align}
    A_{q,r}^{\varepsilon,\alpha_{0}}[a^{\varepsilon},u^{\varepsilon}](T_1,T)
    \leqslant{}
    2\delta_0,\qquad
    \widetilde{A_{q,r}^{\varepsilon,\alpha_{0}}}[a^{\varepsilon},u^{\varepsilon};w](T_1,T)
    \leqslant{}
    3\delta_0
\end{align}
and we see that
\begin{align}
    \varepsilon\|a^{\varepsilon}\|_{L^{\infty}(T_1,T;\dB_{q,1}^{\frac{d}{q}})} 
    \leqslant{}
    \frac{1}{2C_0},\qquad
    \varepsilon\|a^{\varepsilon}\|_{L^{\infty}(T_1,T;L^{\infty})}
    \leqslant{}
    \frac{1}{2},
\end{align}
which yields $\rho^{\varepsilon}(t,x) = 1 +\varepsilon a^{\varepsilon}(t,x) >0$ on $[T_1,T] \times \mathbb{R}^d$.
Since $T_1 \leqslant T_1^*$, we see that 
\begin{align}
    &\| (a^{\varepsilon},u^{\varepsilon})(T_1) \|_{D^{\varepsilon}}
    \leqslant
    \| (a^{\varepsilon},u^{\varepsilon}) \|_{X^{\varepsilon}(0,T_1)}
    \leqslant
    4C_0
    \| (a_0,u_0) \|_{D^{\varepsilon}},\\
    &
    \| (a^{\varepsilon},\mathbb{Q}u^{\varepsilon})(T_1) \|_{D_q^{\varepsilon}}^{h;\alpha_{0}}
    \leqslant
    \| (a^{\varepsilon},\mathbb{Q}u^{\varepsilon}) \|_{Y_{q,r}^{\varepsilon,\alpha_{0}}(0,T_1)}^{h;\alpha_{0}}
    \leqslant
    \frac{\delta_0}{(4C_0)^{N_0-1}},\\
    &\| (\mathbb{P}u^{\varepsilon} - w)(T_1) \|_{\dB_{q,1}^{\frac{d}{q}-1}}
    \leqslant
    \| \mathbb{P}u^{\varepsilon} - w \|_{
    {L^{\infty}}(0,T_1;\dB_{q,1}^{\frac{d}{q}-1})}
    \leqslant
    \frac{\delta_0}{(4C_0)^{N_0-1}},\\
    &\| (\mathbb{P}u^{\varepsilon} - w)(T_1) \|_{\dB_{q,1}^{\frac{d}{q}-1-\frac{1}{r}}}
    \leqslant
    \| \mathbb{P}u^{\varepsilon} - w \|_{
    {L^{\infty}}(0,T_1;\dB_{q,1}^{\frac{d}{q}-1-\frac{1}{r}})
    }
    \leqslant
    4C_0 \varepsilon^{\frac{1}{r}} \| (a_0, u_0) \|_{D^{\varepsilon}}.
\end{align}
From Lemma \ref{lemm:X} and \eqref{T-1-2}, we have 
\begin{align}\label{cont-1-2}
    \begin{split}
    \| (a^{\varepsilon},u^{\varepsilon}) \|_{X^{\varepsilon}(T_1,T)}
    \leqslant{}&
    4C_0^2
    \| (a_0,u_0) \|_{D^{\varepsilon}}
    +
    C_0
    \cdot 
    2\delta_0
    \cdot 
    (4C_0)^2
    \| (a_0,u_0) \|_{D^{\varepsilon}}\\
    \leqslant{}&
    \frac{1}{2}(4C_0)^2
    \| (a_0,u_0) \|_{D^{\varepsilon}}.
    \end{split}
\end{align}
By Lemma \ref{lemm:Y}, it holds
\begin{align}
    &
    \| (a^{\varepsilon},\mathbb{Q}u^{\varepsilon}) \|_{Y_{q,r}^{\varepsilon,\alpha_{0}}(T_1,T)}^{h;\alpha_{0}}
    \leqslant{}
    C_0
    \frac{2\delta_0}{(4C_0)^{N_0-1}}
    +
    C_0
    \cdot 3\delta_0 \cdot 
    \frac{\delta_0}{(4C_0)^{N_0-2}}
    \leqslant{}
    \frac{9}{16}
    \cdot
    \frac{\delta_0}{(4C_0)^{N_0-2}},\\
    &
    \begin{aligned}
    \| (a^{\varepsilon},\mathbb{Q}u^{\varepsilon}) \|_{Y_{q,r}^{\varepsilon,\alpha_{0}}(T_1,T)}^{\ell;\alpha_{0}}
    \leqslant{}
    &
    4C_0^2
    (\alpha_{0} \varepsilon)^{\frac{1}{r}}
    \| (a_0,u_0) \|_{D^{\varepsilon}}\\
    &
    +
    C_0
    (\alpha_{0} \varepsilon)^{\frac{1}{r}}
    \cdot 
    3\delta_0
    \cdot 
    (4C_0)^2\| (a_0,u_0) \|_{D^{\varepsilon}}
    \\
    \leqslant{}
    &
    \frac{1}{2}(4C_0)^2(\alpha_{0} \varepsilon)^{\frac{1}{r}}\| (a_0,u_0) \|_{D^{\varepsilon}},
    \end{aligned}
    \\
    &
    \begin{aligned}\label{cont-4-2}
    \| \mathbb{P}u^{\varepsilon}-w \|_{
    {L^{\infty}}(T_1,T;\dB_{q,1}^{\frac{d}{q}-1})
    \cap
    L^1(T_1,T;\dB_{q,1}^{\frac{d}{q}+1})}
    \leqslant{}&
    C_0
    \frac{\delta_0}{(4C_0)^{N_0-1}}
    +
    C_0
    \cdot 
    3\delta_0
    \cdot
    \frac{\delta_0}{(4C_0)^{N_0-2}}\\
    \leqslant{}&
    \frac{5}{16}
    \cdot
    \frac{\delta_0}{(4C_0)^{N_0-2}},
    \end{aligned}
\end{align}
which implies that
\begin{align}
    \| (a^{\varepsilon},u^{\varepsilon}-w) \|_{Y_{q,r}^{\varepsilon,\alpha_{0}}(T_1,T)}
    \leqslant{}
    \frac{7}{8}\cdot
    \frac{\delta_0}{(4C_0)^{N_0-2}}
    +
    \frac{1}{2}(4C_0)^2(\alpha_{0} \varepsilon)^{\frac{1}{r}}\| (a_0,u_0) \|_{D^{\varepsilon}}
\end{align}
and thus
\begin{align}\label{cont-2-2}
    \begin{split}
    A_{q,r}^{\varepsilon,\alpha_{0}}[a^{\varepsilon},u^{\varepsilon}-w](T_1,T)
    \leqslant{}&
    \frac{7}{8}\cdot
    \frac{\delta_0}{(4C_0)^{N_0-2}}
    +
    \frac{3}{2}(4C_0)^2(\alpha_{0} \varepsilon)^{\frac{1}{r(r-1)}}\| (a_0,u_0) \|_{D^{\varepsilon}}\\
    \leqslant{}&
    \frac{15}{16}\cdot
    \frac{\delta_0}{(4C_0)^{N_0-2}}.
    \end{split}
\end{align}
By Lemma \ref{lemm:lim}, we have 
\begin{align}
    \begin{split}
    &
    \| (a^{\varepsilon},\mathbb{Q}u^{\varepsilon}) \|_{{L^r}(T_1,T;\dB_{q,1}^{\frac{d}{q}-1+\frac{1}{r}})}
    +
    \| \mathbb{P}u^{\varepsilon}-w \|_{{L^{\infty}}(T_1,T;\dB_{q,1}^{\frac{d}{q}-1-\frac{1}{r}}) \cap L^1(
    T_1,T;\dB_{q,1}^{\frac{d}{q}+1-\frac{1}{r}})}\\
    &\quad
    \leqslant
    C_0\cdot4C_0 \varepsilon^{\frac{1}{r}}
    \| (a_0, u_0) \|_{D^{\varepsilon}}
    +
    C_0\cdot4C_0 \varepsilon^{\frac{1}{r}}
    \| (a_0, u_0) \|_{D^{\varepsilon}}\\
    &\qquad
    +
    C_0
    \varepsilon^{\frac{1}{r}}
    \cdot 
    2\delta_0
    \cdot 
    (4C_0)^2
    \| (a_0,u_0) \|_{D^{\varepsilon}}
    +
    C_0
    \cdot 
    3\delta_0
    \cdot 
    \| (a^{\varepsilon},u^{\varepsilon}-w) \|_{L^r(0,T;\dB_{q,1}^{\frac{d}{q}-1+\frac{1}{r}})}\\
    &\quad
    \leqslant
    \frac{7}{12}(4C_0)^2 \varepsilon^{\frac{1}{r}}
    \| (a_0, u_0) \|_{D^{\varepsilon}}\\
    &\qquad
    +
    \frac{1}{4}
    \left(
    \| (a^{\varepsilon},\mathbb{Q}u^{\varepsilon}) \|_{{L^r}(T_1,T;\dB_{q,1}^{\frac{d}{q}-1+\frac{1}{r}})}
    +
    \| \mathbb{P}u^{\varepsilon}-w \|_{{L^{\infty}}(T_1,T;\dB_{q,1}^{\frac{d}{q}-1-\frac{1}{r}}) \cap L^1(
    T_1,T;\dB_{q,1}^{\frac{d}{q}+1-\frac{1}{r}})}
    \right),
    \end{split}
\end{align}
which gives 
\begin{align}\label{cont-3-2}
    \begin{split}
    &
    \| (a^{\varepsilon},\mathbb{Q}u^{\varepsilon}) \|_{{L^r}(T_1,T;\dB_{q,1}^{\frac{d}{q}-1+\frac{1}{r}})}
    +
    \| \mathbb{P}u^{\varepsilon}-w \|_{{L^{\infty}}(T_1,T;\dB_{q,1}^{\frac{d}{q}-1-\frac{1}{r}}) \cap L^1(
    T_1,T;\dB_{q,1}^{\frac{d}{q}+1-\frac{1}{r}})}\\
    &\quad
    \leqslant
    \frac{7}{9}(4C_0)^2 \varepsilon^{\frac{1}{r}}
    \| (a_0, u_0) \|_{D^{\varepsilon}}.
    \end{split}
\end{align}
Hence, \eqref{cont-1-2}, \eqref{cont-4-2}, \eqref{cont-2-2} and \eqref{cont-3-2} contradict the definition of $T_2^*$ and we have $T_{\rm \max}^{\varepsilon} \geqslant T_2^*\geqslant T_2$.
Repeating the same procedure many times, we obtain that $T_{\rm max}^{\varepsilon} \geqslant T_{N_0}=\infty$, which implies that a unique global solution to \eqref{eq:re_comp-1} $(a^{\varepsilon},u^{\varepsilon})$ exists in the class \eqref{main:class}, and we also see that the global solution satisfies
\begin{align}
    &
    \begin{aligned}\label{T-1-n}
    \| (a^{\varepsilon},u^{\varepsilon}) \|_{X^{\varepsilon}(T_{n-1},T_{n})} 
    \leqslant{} 
    (4C_0)^n \| (a_0,u_0) \|_{D^{\varepsilon}},\quad
    \end{aligned}\\ 
    &\begin{aligned}\label{T-2-n}
    A_{q,r}^{\varepsilon,\alpha_{0}}[a^{\varepsilon},u^{\varepsilon}-w](T_{n-1},T_{n})
    \leqslant{}
    \frac{\delta_0}{(4C_0)^{N_0-n}},
    \end{aligned}\\
    &
    \begin{aligned}\label{T-3-n}
    \| \mathbb{P}u^{\varepsilon} - w \|_{
    {L^{\infty}}(T_{n-1},T_{n};\dB_{q,1}^{\frac{d}{q}-1})
    \cap
    L^1(T_{n-1},T_{n};\dB_{q,1}^{\frac{d}{q}+1})}
    \leqslant
    \frac{\delta_0}{(4C_0)^{N_0-n}},
    \end{aligned}\\
    &
    \begin{aligned}\label{T-4-n}
    &
    \|(a^{\varepsilon},\mathbb{Q}u^{\varepsilon})\|_{{L^r}(T_{n-1},T_{n};\dB_{q,1}^{\frac{d}{q}-1+\frac{1}{r}})}
    +
    \|\mathbb{P}u^{\varepsilon} - w \|_{{L^{\infty}}(T_{n-1},T_{n};\dB_{q,1}^{\frac{d}{q}-1-\frac{1}{r}}) \cap L^1(T_{n-1},T_{n};\dB_{q,1}^{\frac{d}{q}+1-\frac{1}{r}})}\\ 
    &\quad
    \leqslant (4C_0)^n \varepsilon^{\frac{1}{r}} \| (a_0,u_0) \|_{D^{\varepsilon}}.
    \end{aligned}
\end{align}
for all $n=1,2,...,N_0$ 
and there also holds
\begin{align}
    \varepsilon\|a^{\varepsilon}\|_{L^{\infty}(0,\infty;\dB_{q,1}^{\frac{d}{q}})} 
    \leqslant{}
    \frac{1}{2C_0},\qquad
    \varepsilon\|a^{\varepsilon}\|_{L^{\infty}(0,\infty;L^{\infty})}
    \leqslant{}
    \frac{1}{2},
\end{align}
which gives $\rho^{\varepsilon}(t,x) = 1 +\varepsilon a^{\varepsilon}(t,x) >0$ on $[0,\infty) \times \mathbb{R}^d$.

For the incompressible limit \eqref{main:incomp},
it follows from \eqref{T-4-n} that 
\begin{align}
    &
    \|(a^{\varepsilon},\mathbb{Q}u^{\varepsilon})\|_{{L^r}(0,\infty;\dB_{q,1}^{\frac{d}{q}-1+\frac{1}{r}})}
    +
    \|\mathbb{P}u^{\varepsilon} - w \|_{{L^{\infty}}(0,\infty;\dB_{q,1}^{\frac{d}{q}-1-\frac{1}{r}}) \cap L^1(0,\infty;\dB_{q,1}^{\frac{d}{q}+1-\frac{1}{r}})}\\ 
    &\quad
    \leqslant 
    \sum_{n=1}^{N_0}
    \left(
    \|(a^{\varepsilon},\mathbb{Q}u^{\varepsilon})\|_{{L^r}(T_{n-1},T_n;\dB_{q,1}^{\frac{d}{q}-1+\frac{1}{r}})}\right.\\
    &\quad \qquad \qquad\left.
    +
    \|\mathbb{P}u^{\varepsilon} - w \|_{{L^{\infty}}(T_{n-1},T_n;\dB_{q,1}^{\frac{d}{q}-1-\frac{1}{r}}) \cap L^1(T_{n-1},T_n;\dB_{q,1}^{\frac{d}{q}+1-\frac{1}{r}})}\right)\\ 
    &\quad 
    \leqslant
    \sum_{n=1}^{N_0}(4C_0)^n  \| (a_0,u_0) \|_{D^{\varepsilon}}\varepsilon^{\frac{1}{r}}\\ 
    &\quad 
    \leqslant
    (4C_0)^{N_0+1}  \| (a_0,u_0) \|_{D^{\varepsilon}}
    \varepsilon^{\frac{1}{r}}.
\end{align}

Finally, we prove \eqref{main:incomp-critical}.
Let $0 < \delta \leqslant \delta_0$ be arbitrary.
By the similar argument as above, 
there exists 
a number $N_{\delta} \in \mathbb{N}$, 
a time sequence $0=T_0^{\delta}<T_1^{\delta}<...<T_{N_{\delta}-1}^{\delta}<T_{N_{\delta}}^{\delta}=\infty$
and constants ${\alpha_{\delta}}>0$, $0<\varepsilon_{\delta}\leqslant \min \{ \varepsilon_0, 1/\alpha_{\delta},\beta_0/\alpha_{\delta} \}$
such that
\begin{gather}
    \| w \|_{L^r(T_{n-1}^{\delta},T_n^{\delta} ; \dB_{q,1}^{\frac{d}{q}-1+\frac{2}{r}})
    \cap 
    L^1(T_{n-1}^{\delta},T_n^{\delta} ; \dB_{q,1}^{\frac{d}{q}+1})}\leqslant \delta \qquad (n=1,2,...,N_{\delta}),\\
    \| a_0 \|_{\dB_{2,1}^{\frac{d}{2}}}^{h;\alpha_{0}}
    +
    \| (a_0, \mathbb{Q}u_0) \|_{\dB_{2,1}^{\frac{d}{2}-1}}^{h;\alpha_\delta}
    \leqslant
    \frac{\delta}{(4C_0)^{N_\delta}},\quad 
    \| (w \cdot \nabla) w \|_{L^1(0,\infty;\dB_{2,1}^{\frac{d}{2}-1})}^{h;\alpha_{\delta}}
    \leqslant
    \frac{\delta}{(4C_0)^{N_\delta}},\\
    (\alpha_{\delta} \varepsilon)^{\frac{1}{r(r-1)}}
    \| (a_0,\mathbb{Q}u_0) \|_{D^{\varepsilon}}
    \leqslant
    \frac{1}{24}
    \cdot
    \frac{\delta}{(4C_0)^{N_{\delta}}}
\end{gather}
for all $0 < \varepsilon \leqslant \varepsilon_{\delta}$.
Then, for $0 < \varepsilon \leqslant \varepsilon_{\delta}$, 
the similar continuation argument as above implies that the global solution $(a^{\varepsilon},u^{\varepsilon})$ to \eqref{eq:re_comp-1} constructed above satisfies
\begin{align}
    &
    \begin{aligned}\label{T-1-n-d}
    \| (a^{\varepsilon},u^{\varepsilon}) \|_{X^{\varepsilon}(T_{n-1}^{\delta},T_{n}^{\delta})} 
    \leqslant{} 
    (4C_0)^n \| (a_0,u_0) \|_{D^{\varepsilon}},\quad
    \end{aligned}\\ 
    &\begin{aligned}\label{T-2-n-d}
    A_{q,r}^{\varepsilon,\alpha_{\delta}}[a^{\varepsilon},u^{\varepsilon}-w](T_{n-1}^{\delta},T_{n}^{\delta})
    \leqslant{}
    \frac{\delta}{(4C_0)^{N_\delta-n}},
    \end{aligned}\\
    &
    \begin{aligned}\label{T-3-n-d}
    \| \mathbb{P}u^{\varepsilon} - w \|_{
    {L^{\infty}}(T_{n-1}^{\delta},T_{n}^{\delta};\dB_{q,1}^{\frac{d}{q}-1})
    \cap
    L^1(T_{n-1}^{\delta},T_{n}^{\delta};\dB_{q,1}^{\frac{d}{q}+1})}
    \leqslant
    \frac{\delta}{(4C_0)^{N_\delta-n}},
    \end{aligned}
\end{align}
for all $n=1,2,...,N_\delta$, which implies 
\begin{align}\label{lim-delta}
    \begin{split}
    &
    \|(a^{\varepsilon},\mathbb{Q}u^{\varepsilon})\|_{{L^r}(0,\infty;\dB_{q,1}^{\frac{d}{q}-1+\frac{2}{r}})}
    +
    \|\mathbb{P}u^{\varepsilon} - w \|_{{L^{\infty}}(0,\infty;\dB_{q,1}^{\frac{d}{q}-1}) \cap L^1(0,\infty;\dB_{q,1}^{\frac{d}{q}+1})}\\ 
    &\quad
    \leqslant 
    C\sum_{n=1}^{N_\delta}
    \left(
    \|(a^{\varepsilon},\mathbb{Q}u^{\varepsilon})\|_{{L^r}(T_{n-1}^{\delta},T_n^{\delta};\dB_{q,1}^{\frac{d}{q}-1+\frac{2}{r}})}\right.\\
    &\qquad \qquad \qquad\left.
    +
    \|\mathbb{P}u^{\varepsilon} - w \|_{{L^{\infty}}(T_{n-1}^{\delta},T_n^{\delta};\dB_{q,1}^{\frac{d}{q}-1}) \cap L^1(T_{n-1}^{\delta},T_n^{\delta};\dB_{q,1}^{\frac{d}{q}+1})}\right)\\ 
    &\quad
    \leqslant 
    C\sum_{n=1}^{N_\delta}
    \left(
    A_{q,r}^{\varepsilon,\alpha_{\delta}}[a^{\varepsilon},u^{\varepsilon}-w](T_{n-1}^{\delta},T_{n}^{\delta})\right.\\
    &\qquad \qquad \qquad\left.
    +
    \|\mathbb{P}u^{\varepsilon} - w \|_{{L^{\infty}}(T_{n-1}^{\delta},T_n^{\delta};\dB_{q,1}^{\frac{d}{q}-1}) \cap L^1(T_{n-1}^{\delta},T_n^{\delta};\dB_{q,1}^{\frac{d}{q}+1})}\right)\\ 
    &\quad 
    \leqslant
    C\sum_{n=1}^{N_\delta}\frac{\delta}{(4C_0)^{N_\delta-n}}\\ 
    &\quad 
    \leqslant
    C\delta
    \end{split}
\end{align}
for all $0 < \varepsilon \leqslant \varepsilon_\delta$.
Since $\delta$ may be chosen arbitrarily small, \eqref{lim-delta} implies \eqref{main:incomp-critical}.
Thus, we complete the proof.
\end{proof}


\noindent
{\bf Conflict of interest statement.}\\
The author has declared no conflicts of interest.

\noindent
{\bf Acknowledgements.} \\
This work was supported by Grant-in-Aid for JSPS Research Fellow, Grant Number JP20J20941.

\begin{bibdiv}
\begin{biblist}

\bib{Bah-Che-Dan-11}{book}{
   author={Bahouri, Hajer},
   author={Chemin, Jean-Yves},
   author={Danchin, Rapha\"{e}l},
   title={Fourier analysis and nonlinear partial differential equations},
   volume={343},
   publisher={Springer, Heidelberg},
   pages={xvi+523},
}
\bib{Can-Pla-02}{article}{
   author={Cannone, Marco},
   author={Planchon, Fabrice},
   title={More Lyapunov functions for the Navier--Stokes equations},
   conference={
      title={The Navier--Stokes equations: theory and numerical methods},
      address={Varenna},
      date={2000},
   },
   book={
      series={Lecture Notes in Pure and Appl. Math.},
      volume={223},
      publisher={Dekker, New York},
   },
   date={2002},
}
\bib{Cha-Dan-10}{article}{
   author={Charve, Fr\'{e}d\'{e}ric},
   author={Danchin, Rapha\"{e}l},
   title={A global existence result for the compressible Navier--Stokes
   equations in the critical $L^p$ framework},
   journal={Arch. Ration. Mech. Anal.},
   volume={198},
   date={2010},
   pages={233--271},
}
\bib{Che-Ler-95}{article}{
   author={Chemin, J.-Y.},
   author={Lerner, N.},
   title={Flot de champs de vecteurs non lipschitziens et \'{e}quations de
   Navier--Stokes},
   language={French},
   journal={J. Differential Equations},
   volume={121},
   date={1995},
   pages={314--328},
}
\bib{Che-Mia-Zha-07}{article}{
   author={Chen, Qionglei},
   author={Miao, Changxing},
   author={Zhang, Zhifei},
   title={A new Bernstein's inequality and the 2D dissipative
   quasi-geostrophic equation},
   journal={Comm. Math. Phys.},
   volume={271},
   date={2007},
   pages={821--838},
}
\bib{Che-Mia-Zha-10}{article}{
   author={Chen, Qionglei},
   author={Miao, Changxing},
   author={Zhang, Zhifei},
   title={Well-posedness in critical spaces for the compressible
   Navier--Stokes equations with density dependent viscosities},
   journal={Rev. Mat. Iberoam.},
   volume={26},
   date={2010},
   pages={915--946},
}
\bib{Che-Mia-Zha-15}{article}{
   author={Chen, Qionglei},
   author={Miao, Changxing},
   author={Zhang, Zhifei},
   title={On the ill-posedness of the compressible Navier--Stokes equations
   in the critical Besov spaces},
   journal={Rev. Mat. Iberoam.},
   volume={31},
   date={2015},
   pages={1375--1402},
}
\bib{Che-Zha-19}{article}{
   author={Chen, Zhi-Min},
   author={Zhai, Xiaoping},
   title={Global large solutions and incompressible limit for the
   compressible Navier--Stokes equations},
   journal={J. Math. Fluid Mech.},
   volume={21},
   date={2019},
   pages={Paper No. 26, 23},
}
\bib{Dan-01-L}{article}{
   author={Danchin, Rapha\"{e}l},
   title={Local theory in critical spaces for compressible viscous and
   heat-conductive gases},
   journal={Comm. Partial Differential Equations},
   volume={26},
   date={2001},
   pages={1183--1233},
}
\bib{Dan-00-G}{article}{
   author={Danchin, R.},
   title={Global existence in critical spaces for compressible Navier--Stokes
   equations},
   journal={Invent. Math.},
   volume={141},
   date={2000},
   pages={579--614},
}
\bib{Dan-02-T}{article}{
   author={Danchin, Rapha\"{e}l},
   title={Zero Mach number limit for compressible flows with periodic
   boundary conditions},
   journal={Amer. J. Math.},
   volume={124},
   date={2002},
   pages={1153--1219},
}
\bib{Dan-02-R}{article}{
   author={Danchin, Rapha\"{e}l},
   title={Zero Mach number limit in critical spaces for compressible
   Navier--Stokes equations},
   language={English, with English and French summaries},
   journal={Ann. Sci. \'{E}cole Norm. Sup. (4)},
   volume={35},
   date={2002},
   pages={27--75},
}
\bib{Dan-05-U}{article}{
   author={Danchin, Rapha\"{e}l},
   title={On the uniqueness in critical spaces for compressible
   Navier--Stokes equations},
   journal={NoDEA Nonlinear Differential Equations Appl.},
   volume={12},
   date={2005},
   pages={111--128},
}
\bib{Dan-05-L}{article}{
   author={Danchin, Rapha\"{e}l},
   title={Low Mach number limit for viscous compressible flows},
   journal={M2AN Math. Model. Numer. Anal.},
   volume={39},
   date={2005},
   pages={459--475},
}
\bib{Dan-14}{article}{
   author={Danchin, Rapha\"{e}l},
   title={A Lagrangian approach for the compressible Navier--Stokes
   equations},
   language={English, with English and French summaries},
   journal={Ann. Inst. Fourier (Grenoble)},
   volume={64},
   date={2014},
   pages={753--791},
}
\bib{Dan-16}{article}{
   author={Danchin, Rapha\"{e}l},
   title={Fourier analysis methods for compressible flows},
   conference={
      title={Topics on compressible Navier--Stokes equations},
   },
   book={
      series={Panor. Synth\`eses},
      volume={50},
      publisher={Soc. Math. France, Paris},
   },
   date={2016},
   pages={43--106},
}
\bib{Dan-He-16}{article}{
   author={Danchin, Rapha\"{e}l},
   author={He, Lingbing},
   title={The incompressible limit in $L^p$ type critical spaces},
   journal={Math. Ann.},
   volume={366},
   date={2016},
   pages={1365--1402},
}
\bib{Dan-Ducom-16}{article}{
   author={Danchin, Rapha\"{e}l},
   author={Ducomet, Bernard},
   title={The low Mach number limit for a barotropic model of radiative
   flow},
   journal={SIAM J. Math. Anal.},
   volume={48},
   date={2016},
   pages={1025--1053},
}
\bib{Dan-Muc-17}{article}{
   author={Danchin, Rapha\"{e}l},
   author={Mucha, Piotr Bogus\l aw},
   title={Compressible Navier--Stokes system: large solutions and
   incompressible limit},
   journal={Adv. Math.},
   volume={320},
   date={2017},
   pages={904--925},
}
\bib{Dan-Muc-19}{article}{
   author={Danchin, Rapha\"{e}l},
   author={Mucha, Piotr Bogus\l aw},
   title={From compressible to incompressible inhomogeneous flows in the
   case of large data},
   journal={Tunis. J. Math.},
   volume={1},
   date={2019},
   pages={127--149},
}
\bib{Dan-Xu-17}{article}{
   author={Danchin, Rapha\"{e}l},
   author={Xu, Jiang},
   title={Optimal time-decay estimates for the compressible Navier--Stokes
   equations in the critical $L^p$ framework},
   journal={Arch. Ration. Mech. Anal.},
   volume={224},
   date={2017},
   pages={53--90},
}
\bib{Des-Gre-Lio-Mas-99}{article}{
   author={Desjardins, B.},
   author={Grenier, E.},
   author={Lions, P.-L.},
   author={Masmoudi, N.},
   title={Incompressible limit for solutions of the isentropic Navier--Stokes
   equations with Dirichlet boundary conditions},
   journal={J. Math. Pures Appl. (9)},
   volume={78},
   date={1999},
   pages={461--471},
}
\bib{Des-Gre-99}{article}{
   author={Desjardins, Benoit},
   author={Grenier, E.},
   title={Low Mach number limit of viscous compressible flows in the whole
   space},
   journal={R. Soc. Lond. Proc. Ser. A Math. Phys. Eng. Sci.},
   volume={455},
   date={1999},
   pages={2271--2279},
}
\bib{Fei-09}{book}{
   author={Feireisl, Eduard},
   author={Novotn\'{y}, Anton\'{\i}n},
   title={Singular limits in thermodynamics of viscous fluids},
   publisher={Birkh\"{a}user Verlag, Basel},
   date={2009},
}
\bib{Ha-Lo-98}{article}{
	author={Hagstrom, Thomas},
	author={Lorenz, Jens},
	title={All-time existence of classical solutions for slightly
		compressible flows},
	journal={SIAM J. Math. Anal.},
	volume={29},
	date={1998},
	number={3},
	pages={652--672},
}
\bib{Has-11-2}{article}{
   author={Haspot, Boris},
   title={Existence of global strong solutions in critical spaces for
   barotropic viscous fluids},
   journal={Arch. Ration. Mech. Anal.},
   volume={202},
   date={2011},
   pages={427--460},
}
\bib{Has-20}{article}{
   author={Haspot, Boris},
   title={Fujita-Kato solution for compressible Navier--Stokes equations with
   axisymmetric initial data and zero Mach number limit},
   journal={Commun. Contemp. Math.},
   volume={22},
   date={2020},
   pages={1950041, 30},
}
\bib{Hoff}{article}{
   author={Hoff, David},
   title={The zero-Mach limit of compressible flows},
   journal={Comm. Math. Phys.},
   volume={192},
   date={1998},
   pages={543--554},
}
\bib{Iw-Og-22}{article}{
   author={Iwabuchi, Tsukasa},
   author={Ogawa, Takayoshi},
   title={Ill-posedness for the compressible Navier--Stokes equations under
   barotropic condition in limiting Besov spaces},
   journal={J. Math. Soc. Japan},
   volume={74},
   date={2022},
   pages={353--394},
}
\bib{Kre-Lor-Nau-91}{article}{
   author={Kreiss, H.-O.},
   author={Lorenz, J.},
   author={Naughton, M. J.},
   title={Convergence of the solutions of the compressible to the solutions
   of the incompressible Navier--Stokes equations},
   journal={Adv. in Appl. Math.},
   volume={12},
   date={1991},
   pages={187--214},
}
\bib{Pla-00}{article}{
   author={Planchon, Fabrice},
   title={Sur un in\'{e}galit\'{e} de type Poincar\'{e}},
   language={French, with English and French summaries},
   journal={C. R. Acad. Sci. Paris S\'{e}r. I Math.},
   volume={330},
   date={2000},
   pages={21--23},
}
\bib{Sa-18}{book}{
   author={Sawano, Yoshihiro},
   title={Theory of Besov spaces},
   series={Developments in Mathematics},
   volume={56},
   publisher={Springer, Singapore},
   date={2018},
}
\end{biblist}
\end{bibdiv}

\end{document}